\def\thm@space@setup{%
  \thm@preskip=\parskip \thm@postskip=0pt
}
\DeclareMathOperator{\fin}{\mathrm{f}}
\DeclareMathOperator{\id}{id}
\DeclareMathOperator{\img}{img}
\DeclareMathOperator{\Hom}{Hom}
\DeclareMathOperator{\Nat}{\mathrm{Nat}}
\DeclareMathOperator{\op}{\mathrm{op}}
\DeclareMathOperator{\rcf}{\mathrm{rcfd}}
\DeclareMathOperator{\sgn}{\mathrm{sgn}}
\DeclareMathOperator{\Span}{\mathrm{span}}
\DeclareMathOperator{\tr}{\mathrm{tr}}
\newcommand{\dual}[1]{#1^{*}}
\newcommand{\dualop}[1]{#1^{\tr}}
\newcommand{\dualco}[1]{\hat{#1}}
\newcommand{\dualcor}[1]{\check{#1}}
\newcommand{\Corep}{\mathrm{Corep}}
\newcommand{\Circt}{{\mathop{\ooalign{$\ovoid$\cr\hidewidth\raise-.05ex\hbox{$\scriptstyle\mathsf T\mkern3.5mu$}\cr}}}} 
\newcommand{\Circtv}[1]{\underset{#1}{\mathop{\ooalign{$\ovoid$\cr\hidewidth\raise-.05ex\hbox{$\scriptstyle\mathsf T\mkern3.5mu$}\cr}}}} 
\newcommand{\smCirct}{\mathop{\ooalign{$\scriptstyle\ovoid$\cr\hidewidth\raise-.05ex\hbox{$\scriptscriptstyle\mathsf T\mkern2.8mu$}\cr}}}  
\newcommand{\n}{\mathfrak{n}}
\newcommand{\G}{\mathbb{G}}
\newcommand{\C}{\mathbb{C}}
\newcommand{\R}{\mathbb{R}}
\newcommand{\Z}{\mathbb{Z}}
\newcommand{\N}{\mathbb{N}}
\newcommand{\X}{\mathbb{X}}
\newcommand{\Hsp}{\mathcal{H}}
\newcommand{\Tr}{\mathrm{Tr}}
\newcommand{\CatC}{\mathcal{C}}
\newcommand{\CatD}{\mathcal{D}}
\newcommand{\CatCC}{\mathscr{C}}
\newcommand{\CatDD}{\mathscr{D}}
\newcommand{\Vectif}{\Gr{\mathrm{Vect}}{I}{I}{}{\fin}}
\newcommand{\Vectrcf}{\Gr{\mathrm{Vect}}{I}{I}{}{\rcf}}
\newcommand{\Hilb}{\mathrm{Hilb}}
\newcommand{\Hilbif}{\Gr{\mathrm{Hilb}}{I}{I}{}{\fin}}
\newcommand{\Mor}{\mathrm{Mor}}
\newcommand{\ev}{\mathrm{ev}}
\newcommand{\coev}{\mathrm{coev}}
\newcommand{\Fun}{\mathrm{Fun}}
\newcommand{\itimes}{\underset{I}{\otimes}}
\newcommand{\osum}[1]{\underset{#1}{\sum}^{\oplus}}
\newcommand{\Rep}{\mathrm{Rep}}
\newcommand{\fd}{\mathrm{fd}}
\newcommand{\Vect}{\mathrm{Vect}}
\newcommand{\Grs}[3]{#1{\begin{pmatrix} #2,  #3\end{pmatrix}}}
\newcommand{\GrDA}[3]{{}_{\;#2}#1_{#3}} 
\newcommand{\Grt}[3]{#1{\tiny{\begin{pmatrix} #2\\#3\end{pmatrix}}}} 
\newcommand{\GrRA}[3]{#1^{#2}_{#3}} 
\newcommand{\Unit}{\mathbf{1}}
\newcommand{\Unitb}{\mathbbm{1}}
\newcommand{\UnitC}[2]{\Grt{\mathbf{1}}{#1}{#2}} 
\newcommand{\Grru}[2]{{\tiny \begin{pmatrix} #1 \\ #2\end{pmatrix}}}
\newcommand{\eGr}[5]{#1{{\tiny \begin{pmatrix} #2 \quad #3 \\ #4 \quad #5\end{pmatrix}}}}
\newcommand{\pmat}[4]{{\tiny \begin{pmatrix} #1 \quad #2 \\ #3 \quad #4\end{pmatrix}}}
\newcommand{\Gr}[5]{\fourIdx{#2}{#4}{#3}{#5}{#1}}
\newcommand{\Gru}[3]{\Gr{#1}{}{}{#2}{#3}}
\newcommand{\Grd}[3]{\Gr{#1}{#2}{#3}{}{}}
\newcommand{\wmult}{\cdot}
\newcommand{\bmult}{*}
\newcommand{\aste}[1]{\underset{#1}{\ast}}
\newtheorem{Theorem}{Theorem}[section]
\newtheorem{Lem}[Theorem]{Lemma}
\newtheorem{Prop}[Theorem]{Proposition}
\newtheorem{Cor}[Theorem]{Corollary}
\theoremstyle{definition}
\newtheorem{Def}[Theorem]{Definition}
\newtheorem{Rem}[Theorem]{Remark}
\newtheorem{Exa}[Theorem]{Example}
\newtheorem{Not}[Theorem]{Notation}
\newcommand{\phic}[2]{\Grt{\phi}{#1}{#2}}
\date{}
\numberwithin{equation}{section}
\begin{document}
\title{Partial compact quantum groups}

\author{Kenny De Commer$^{1}$}\address{$^{1}$Department of Mathematics, Vrije
    Universiteit Brussel, VUB, B-1050 Brussels, Belgium}\email{kenny.de.commer@vub.ac.be}
\author{Thomas Timmermann$^{2}$}\address{$^{2}$University of M\"{u}nster,
  Einsteinstrasse 62, 48149 Muenster}\email{timmermt@uni-muenster.de}
\thanks{second author supported by the SFB 878 of the DFG}

\begin{abstract}
\noindent Compact quantum groups of face type, as introduced by
Hayashi, form a class of compact quantum groupoids with a classical,
finite set of objects. Using the notions of a weak multiplier bialgebra
and weak multiplier Hopf algebra (resp.~ due to
B{\"o}hm--G\'{o}mez-Torrecillas--L\'{o}pez-Centella and Van
Daele--Wang), we generalize Hayashi's definition to allow for an
infinite set of objects, and call the resulting objects partial
compact quantum groups. We prove a Tannaka-Kre$\breve{\textrm{\i}}$n-Woronowicz
reconstruction result for such partial compact quantum groups using
the notion of a partial fusion C$^*$-category. As examples, we consider the dynamical quantum $SU(2)$-groups from the point of view of partial compact quantum groups.
\end{abstract}

\keywords{quantum groups, quantum groupoids,  weak multiplier Hopf algebras,  fiber functors,
Tannaka-Krein duality}
\subjclass{81R50; 16T05, 16T15, 18D10, 20G42}


\maketitle


\tableofcontents

\section*{Introduction}

The concept of  a \emph{face algebra} was introduced by T. Hayashi in \cite{Hay2}, motivated by the theory of solvable lattice models in statistical mechanics. It was further studied in \cite{Hay1,Hay3,Hay4,Hay5,Hay6,Hay7,Hay8}, where for example associated $^*$-structures and a canonical Tannaka duality were developed. This canonical Tannaka duality allows one to construct a canonical face algebra from any (finite) fusion category. For example, a face algebra can be associated to the fusion category of a quantum group at root unity, for which no genuine quantum group implementation can be found. 

In \cite{Nil1,Sch1,Sch2}, it was shown that face algebras are particular kinds of $\times_R$-algebras \cite{Tak2} and of weak bialgebras \cite{Boh3,BCJ,Nik1}. More intuitively, they can be considered as quantum groupoids with a classical, finite object set. In this article, we want to extend Hayashi's theory by allowing an \emph{infinite} (but still discrete) object set. This requires passing from weak bialgebras to weak \emph{multiplier} bialgebras \cite{Boh1}. At the same time, our structures admit a piecewise description by what we call a \emph{partial bialgebra}, which is more in the spirit of Hayashi's original definition. In the presence of an antipode, an invariant integral and a compatible $^*$-structure, we call our structures \emph{partial compact quantum groups}. 

The passage to the infinite object case is delicate at points, and requires imposing the proper finiteness conditions on associated structures. However, once all conditions are in place, many of the proofs are similar in spirit to the finite object case. 

Our main result is a Tannaka-Kre$\breve{\textrm{\i}}$n-Woronowicz duality result which states that partial compact quantum groups are in one-to-one correspondence with \emph{concrete partial fusion C$^*$-categories}. In essence, a partial fusion C$^*$-category is a multifusion C$^*$-category \cite{ENO1}, except that (in a slight abuse of terminology) we allow an infinite number of irreducible objects as well as an infinite number of summands inside the unit object. By a \emph{concrete} multifusion C$^*$-category, we mean a multifusion C$^*$-category realized inside a category of (locally finite-dimensional) bigraded Hilbert spaces. Of course, Tannaka reconstruction is by now a standard procedure. For closely related results most relevant to our work, we mention \cite{Wor2,Sch3,Hay8,Ost1,Hai1,Szl1,Pfe1,DCY1,Nes1} as well as the surveys \cite{JoS1} and \cite[Section 2.3]{NeT1}.

As an application, we generalize Hayashi's Tannaka duality \cite{Hay8} (see also \cite{Ost1}) by showing that any module C$^*$-category over a multifusion C$^*$-category has an associated canonical partial compact quantum group. By the results of \cite{DCY1}, such data can be produced from ergodic actions of compact quantum groups. In particular,  we consider the case of ergodic actions of $SU_q(2)$ for $q$ a non-zero real. This will allow us to show that the construction of \cite{Hay4} generalizes to produce partial compact quantum group versions of the dynamical quantum $SU(2)$-groups of \cite{EtV1,KoR1}, see also \cite{Sto1} and references therein. This construction will immediately provide the right setting for the operator algebraic versions of these dynamical quantum $SU(2)$-groups, which was the main motivation for writing this paper. These operator algebraic details will be studied elsewhere \cite{DCT2}.

The precise layout of the paper is as follows.

The first two sections introduce the basic theory of the structures
which we will be concerned with in this paper. In the \emph{first
  section}, we introduce the notions of a \emph{partial bialgebra},
\emph{partial Hopf algebra} and \emph{partial compact quantum group},
and show how they are related to the notion of a weak multiplier
bialgebra \cite{Boh1}, weak multiplier Hopf algebra \cite{VDW1,VDW2}
and compact quantum group of face type \cite{Hay1}. In the
\emph{second section}, we introduce the corresponding notions of a \emph{partial tensor category} and \emph{partial fusion C$^*$-category}. 

In the next two sections, our main result is proven, namely the Tannaka-Kre$\breve{\textrm{\i}}$n-Woronowicz duality. In the \emph{third section} we develop the corepresentation theory of partial Hopf algebras and the representation theory of partial compact quantum groups, and we show that the latter allows one to construct a concrete partial fusion C$^*$-category. In the \emph{fourth} section, we show conversely how any concrete partial fusion C$^*$-category allows one to construct a partial compact quantum group, and we briefly show how the two constructions are inverses of each other.

In the final two sections, we provide some examples of our structures
and applications of our main result. In the \emph{fifth section}, we
first consider the construction of a canonical partial compact quantum
group from any partial module C$^*$-category for a partial fusion
C$^*$-category. We then introduce the notions of  \emph{Morita}, \emph{co-Morita} and \emph{weak Morita equivalence} \cite{Mug1} of partial compact quantum groups, and show that two partial compact quantum groups are weakly Morita equivalent if and only if they can be connected by a string of Morita and co-Morita equivalences. In the \emph{sixth section}, we study in more detail a concrete example of a canonical partial compact quantum group, constructed from an ergodic action of quantum $SU(2)$. In particular, we obtain a partial compact quantum group version of the dynamical quantum $SU(2)$-group. 

\emph{Note}: we follow the physicist's convention that inner products on Hilbert spaces are anti-linear in their \emph{first} argument.

\section{Partial compact quantum groups}

We generalize Hayashi's definition of a compact quantum group of face type \cite{Hay1} to the case where the commutative base algebra is no longer finite-dimensional. We will present two approaches, based on \emph{partial bialgebras} and \emph{weak multiplier bialgebras} \cite{Boh1,VDW1}. The first approach is piecewise and concrete, but requires some bookkeeping. The second approach is global but more abstract. As we will see from the general theory and the concrete examples, both approaches have their intrinsic value.

\subsection{Partial algebras}

Let $I$ be a set. We consider $I^2=I\times I$ as the pair groupoid with $\wmult$ denoting composition. That is, an element $K=(k,l)\in I^2$ has source $K_l = k$ and target $K_r=l$, and if $K=(k,l)$ and $L=(l,m)$ we write $K\wmult L = (k,m)$. 

\begin{Def} A \emph{partial algebra} $\mathscr{A}=(\mathscr{A},M)$ (over $\C$) is a set $I$ (the \emph{object} set) together with 
\begin{itemize}
\item[$\bullet$] for each $K=(k,l)\in I^2$ a vector space $A(K) = \Grs{A}{k}{l}=\!\!\GrDA{A}{k}{l}$ (possibly the zero vector space),
\item[$\bullet$] for each $K,L$ with $K_r = L_l$ a multiplication map \[M(K,L):A(K) \otimes A(L)\rightarrow A(K\cdot L),\qquad a\otimes b \mapsto ab\]  and 
\item[$\bullet$] elements $\Unit(k) = \Unit_k \in \Grs{A}{k}{k}$ (the units), 
\end{itemize}
such that the obvious associativity and unit conditions are satisfied. 

By an \emph{$I$-partial algebra} we will mean a partial algebra with object set $I$.
\end{Def}

\begin{Rem}
\begin{enumerate}[label=(\arabic*)]\item It will be important to allow the local units $\Unit_k$ to be zero.
\item A partial algebra is by definition the same as a small
  $\C$-linear category. However, we do not emphasize this viewpoint,
  as the natural notion of a morphism for partial algebras will be \emph{contravariant} on objects, see Definition \ref{DefMor}.
\end{enumerate}
\end{Rem}

Let $\mathscr{A}$ be an $I$-partial algebra. We define $A(K\wmult L)$ to be $\{0\}$ when $K\wmult L$ is ill-defined, i.e. $K_r\neq L_l$. We then let $\Grs{M}{K}{L}$ be the zero map.

\begin{Def} The \emph{total algebra} $A$ of an $I$-partial algebra $\mathscr{A}$ is the vector space \[A = \bigoplus_{K\in I^2} A(K)\] endowed with the unique multiplication whose restriction to $A(K)\otimes A(L)$ concides with $M(K,L)$. 
\end{Def} 

Clearly $A$ is an associative algebra. It will in general not possess a unit, but it is a \emph{locally unital algebra} as there exist mutually orthogonal idempotents $\mathbf{1}_k$ with $A = \osum{k,l} \mathbf{1}_kA\mathbf{1}_l$. An element $a\in A$ can be interpreted as a function assigning to each element $(k,l)\in I^2$ an element $a_{kl}\in A(k,l)$, namely the $(k,l)$-th component of $a$. This identifies $A$ with finitely supported $I$-indexed matrices whose $(k,l)$-th entry lies in $A(k,l)$, equipped with the natural matrix multiplication. 

\begin{Rem}\label{RemGrad} When $\mathscr{A}$ is an $I$-partial algebra with total algebra $A$, then $A\otimes A$ can be naturally identified with the total algebra of an $I\times I$-partial algebra $\mathscr{A}\otimes \mathscr{A}$, where \[(A\otimes A)((k,k'),(l,l')) = A(k,l)\otimes A(k',l')\] with the obvious tensor product multiplications and the $\Unit_{k,k'} = \Unit_k\otimes \Unit_{k'}$ as units. 
\end{Rem}

Working with non-unital algebras necessitates the use of their \emph{multiplier algebra}. Let us first recall some general notions concerning non-unital algebras from \cite{Dau1,VDae1}.

\begin{Def} Let $A$ be an algebra over $\C$, not necessarily with unit. We call $A$ \emph{non-degenerate} if $A$ is faithfully represented on itself by left and right multiplication. It is called \emph{idempotent} if $A^2 = A$. 
\end{Def}

\begin{Def} Let $A$ be an algebra. A \emph{multiplier} $m$ for $A$ consists of a couple of maps \begin{eqnarray*} L_m:A\rightarrow A,\quad a\mapsto ma\\ R_m:A\rightarrow A,\quad a\mapsto am\end{eqnarray*} such that $(am)b = a(mb)$ for all $a,b\in A$. 

The set of all multipliers forms an algebra under composition for the
$L$-maps and anti-composition for the $R$-maps. It is called the
\emph{multiplier algebra} of $A$, and is denoted by $M(A)$.
\end{Def}

One has a natural homomorphism $A\rightarrow M(A)$. When $A$ is non-degenerate,  this homomorphism is injective, and we can then identify $A$ as a subalgebra of the (unital) algebra $M(A)$. We then also have inclusions \[A\otimes A\subseteq M(A)\otimes M(A)\subseteq M(A\otimes A).\]

\begin{Exa}\label{ExaMult} 
\begin{enumerate}[label=(\arabic*)]
\item Let $I$ be a set, and $\Fun_{\fin}(I)$ the algebra of all finitely supported functions on $I$. Then $M(\Fun_{\fin}(I)) = \Fun(I)$, the algebra of all functions on $I$. 
\item Let $A$ be the total algebra of an $I$-partial algebra $\mathscr{A}$. As $A$ has local units, it is non-degenerate and idempotent. Then one can identify $M(A)$ with \[M(A) = \left(\prod_l \bigoplus_k A(k,l)\right) \bigcap \left(\prod_k\bigoplus_l A(k,l)\right) \subseteq \prod_{k,l} A(k,l),\] i.e.~ with the space of functions \[m:I^2\rightarrow A,\quad m_{kl}\in A(k,l)\] which have finite support in either one of the variables when the other variable has been fixed. The multiplication is given by the formula \[(mn)_{kl} = \sum_p m_{kp}n_{pl}.\]
\item Let $m_i$ be any collection of multipliers of $A$, and assume that for each $a\in A$, $m_ia =0$ for almost all $i$, and similarly $am_i=0$ for almost all $i$. Then one can define a multiplier $\sum_i m_i$ in the obvious way by termwise multiplication. One says that the sum $\sum_i m_i$ converges in the \emph{strict} topology. 
\end{enumerate}
\end{Exa}

The condition appearing in the second example above will appear time and again, so we introduce it formally in the next definition.

\begin{Def} We will call any general assignment $(k,l)\rightarrow m_{kl}$ into a set with a distinguished zero element \emph{row-and column-finite} (rcf) if the assignment has finite support in either one of the variables when the other variable has been fixed. 
\end{Def} 

Let us comment on the notion of a morphism for partial algebras. We first introduce the piecewise definition.

\begin{Def}\label{DefMor} Let $\mathscr{A}$ and $\mathscr{B}$ be respectively $I$ and $J$-partial algebras. Let \[\phi: I \ni k \mapsto J_k \subseteq J\] with the $J_k$ disjoint. A \emph{homomorphism} (based on $\phi$) from $\mathscr{A}$ to $\mathscr{B}$ consists of linear maps \[\GrDA{f}{r}{s}: A(k\;l)\rightarrow B(r\;s),\quad a\mapsto \GrDA{f(a)}{r}{s}\] for all $r\in J_k, s\in J_l$, satisfying 
\begin{enumerate}[label = (\arabic*)]
\item (Unitality) $\GrDA{f(\Unit_{k})}{r}{s} = \delta_{rs}\Unit_r$ for all $r,s\in J_k$.
\item (Local finiteness) For each $k,l\in I$ and $a\in A(k\;l)$, the assigment $(r,s)\rightarrow \GrDA{f(a)}{r}{s}$ on $J_k\times J_l$ is rcf. 
\item (Multiplicativity) For all $k,l,m\in I$, all $r\in J_k$ and all $t\in J_m$, and all $a\in A(k\;l)$ and $b\in A(l\;m)$, one has \[\GrDA{f(ab)}{r}{t} = \sum_{s\in J_l} \GrDA{f(a)}{r}{s}\GrDA{f(b)}{s}{t}.\]
\end{enumerate} 
The homomorphism is called \emph{unital} if $J=\bigcup \{J_k\mid k\in I\}$. 
\end{Def}
\begin{Rem}
\begin{enumerate}[label=(\arabic*)]
\item
Note that the multiplicativity condition makes sense because of the local finiteness condition.
\item
If $J = \bigcup_{k} J_k$, we can interpret $\phi$ as a map \[J\rightarrow I,\quad r\mapsto k \iff r\in J_k.\] In the more general case, we obtain a function $J\rightarrow I^*$, where $I^*$ is $I$ with an extra point `at infinity' added.
\end{enumerate}
\end{Rem}

The following lemma provides the global viewpoint concerning homomorphisms. 

\begin{Lem}\label{LemPAMor} Let $\mathscr{A}$ and $\mathscr{B}$ be respectively $I$- and $J$-partial algebras, and fix an assignment $\phi: k\mapsto J_k$. Then there is a one-to-one correspondence between homomorphisms $\mathscr{A}\rightarrow \mathscr{B}$ based on $\phi$ and homomorphisms $f:A\rightarrow M(B)$ with $f(\Unit_k) = \sum_{r\in J_k} \Unit_r$. 
\end{Lem}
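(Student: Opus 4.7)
My plan is to construct the correspondence explicitly in both directions and verify mutual inversion. For the forward direction, given a piecewise homomorphism with components $\Grs{f}{r}{s}\colon A(k,l)\to B(r,s)$, I would set, for $a\in A(k,l)$, the multiplier $f(a)\in M(B)$ to be the element whose $(r,s)$-component equals $\Grs{f(a)}{r}{s}$ when $(r,s)\in J_k\times J_l$ and vanishes otherwise; I then extend linearly to $A=\osum{k,l}A(k,l)$. By Example~\ref{ExaMult}(2), $M(B)$ is precisely the set of row-and-column-finite assignments $(r,s)\mapsto m_{rs}\in B(r,s)$, and condition (2) of Definition~\ref{DefMor} is exactly what guarantees $f(a)\in M(B)$. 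Multiplicativity is checked on homogeneous elements $a\in A(k,l)$ and $b\in A(l',m)$: if $l\neq l'$ then $ab=0$, and the $(r,t)$-entry of $f(a)f(b)$, namely $\sum_{s}\Grs{f(a)}{r}{s}\Grs{f(b)}{s}{t}$, vanishes because the first factor forces $s\in J_l$, the second forces $s\in J_{l'}$, and $J_l\cap J_{l'}=\emptyset$ by the disjointness built into $\phi$; if $l=l'$, the sum collapses to $\sum_{s\in J_l}\Grs{f(a)}{r}{s}\Grs{f(b)}{s}{t}=\Grs{f(ab)}{r}{t}$ by Definition~\ref{DefMor}(3). The identity $f(\Unit_k)=\sum_{r\in J_k}\Unit_r$ then follows directly from Definition~\ref{DefMor}(1).

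Conversely, from $f\colon A\to M(B)$ with $f(\Unit_k)=\sum_{r\in J_k}\Unit_r$, I would read off components $\Grs{f(a)}{r}{s}$ (for $a\in A(k,l)$) as the actual $(r,s)$-entry of the multiplier $f(a)$. The essential observation is that $f(a)$ is supported on $J_k\times J_l$: from $a=\Unit_k a\Unit_l$ we obtain $f(a)=f(\Unit_k)f(a)f(\Unit_l)$, and the multiplier $\Unit_r$ acts by projection onto the $r$-th row (on the left) or $r$-th column (on the right) of any multiplier, so $f(\Unit_k)$ and $f(\Unit_l)$ annihilate all components with row index outside $J_k$ or column index outside $J_l$. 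The three clauses of Definition~\ref{DefMor} then transcribe immediately from unitality, the rcf nature of multipliers, and the multiplicativity of $f$ expressed componentwise via $(mn)_{rt}=\sum_s m_{rs}n_{st}$. The two constructions are tautologically mutually inverse.

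The one genuinely delicate point, which I expect to be the main technical step, is legitimising the strictly convergent sum $\sum_{r\in J_k}\Unit_r$ in the sense of Example~\ref{ExaMult}(3) when $J_k$ is infinite. For any $b\in B$, the product $\Unit_r\cdot b$ extracts the $r$-th row of $b$, hence is nonzero only when $r$ appears as a row index of some nonzero component of $b$; since $b$ has finite support in $J^2$, only finitely many $r\in J_k$ can contribute, and the symmetric statement handles right multiplication. This is the piece of bookkeeping that underlies both the definition of $f(\Unit_k)$ in the forward direction and the projection argument $f(a)=f(\Unit_k)f(a)f(\Unit_l)$ in the reverse direction; once it is in place, the remainder of the proof is formal.
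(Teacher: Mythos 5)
Your argument is correct and is essentially the paper's intended proof: the paper simply says the correspondence is straightforward from the matrix description of the multiplier algebra in Example \ref{ExaMult}(2), and your write-up spells out exactly that bookkeeping (rcf components giving multipliers, disjointness of the $J_k$ for ill-matched products, support in $J_k\times J_l$ via $f(a)=f(\Unit_k)f(a)f(\Unit_l)$, and strict convergence of $\sum_{r\in J_k}\Unit_r$). No gaps to report.
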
 
\begin{proof}
Straightforward, using the characterisation of the multiplier algebra
provided in Remark \ref{ExaMult} (2).
\end{proof}

\subsection{Partial coalgebras}

The notion of  a partial algebra nicely dualizes, one of the main benefits of the local approach. For this we consider again $I^2$ as the pair groupoid, but now with elements considered as column vectors, and with $\bmult$ denoting the (vertical) composition. So $K=\Grt{}{k}{l}$ has source $K_u = k$ and target $K_d = l$, and if $K=\Grt{}{k}{l}$ and $L=\Grt{}{l}{m}$ then $K\bmult L = \Grt{}{k}{m}$.

\begin{Def} A \emph{partial coalgebra} $\mathscr{A}=(\mathscr{A},\Delta)$ (over $\C$) consists of a set $I$ (the object set) together with 
\begin{itemize}
\item[$\bullet$] for each $K=\Grru{k}{l}\in I^2$ a vector space $A(K) = \Grt{A}{k}{l}=\!\!\GrRA{A}{k}{l}$,
\item[$\bullet$] for each $K,L$ with $K_d = L_u$ a comultiplication map \[\Grt{\Delta}{K}{L}:A(K*L)\rightarrow A(K)\otimes A(L),\qquad a \mapsto a_{(1)K}\otimes a_{(2)L},\] and 
\item[$\bullet$] counit maps $\epsilon_k:\Grt{A}{k}{k}\rightarrow \C$,
\end{itemize} 
satisfying the obvious coassociativity and counitality conditions.

By \emph{$I$-partial coalgebra} we will mean a partial coalgebra with object set $I$.
\end{Def}

\begin{Not}\label{NotCom} As the index of $\epsilon_k$ is determined by the element to which it is applied, there is no harm in dropping the index $k$ and simply writing $\epsilon$.

Similarly, if $K = \Grt{}{k}{l}$ and $L = \Grt{}{l}{m}$, we abbreviate $\Delta_l = \Grt{\Delta}{K}{L}$, as the other indices are determined by the element to which $\Delta_l$ is applied.
\end{Not}

We also make again the convention that $A(K*L)=\{0\}$ and
$\Grt{\Delta}{K}{L}$ is the zero map when $K_d \neq L_u$. Similarly $\epsilon$ is seen as the zero functional on $A(K)$ when $K=\Grt{}{k}{l}$ with $k\neq l$. 

\subsection{Partial bialgebras}

We can now superpose the notions of a partial algebra and a partial coalgebra. Let $I$ be a set, and let $M_2(I)$ be the set of 4-tuples of elements of $I$ arranged as 2$\times$2-matrices. We can endow $M_2(I)$ with two compositions, namely $\cdot$ (viewing $M_2(I)$ as a row vector of column vectors) and $*$ (viewing $M_2(I)$ as a column vector of row vectors). When $K\in M_2(I)$, we will write $K = \Grs{}{K_l}{K_r} = \Grt{}{K_u}{K_d} = \eGr{}{K_{lu}}{K_{ru}}{K_{ld}}{K_{rd}}$. One can view $M_2(I)$ as a double groupoid, and in fact as a \emph{vacant} double groupoid in the sense of \cite{AN1}. 

In the following, a vector $(r,s)$ will sometimes be written simply as $r,s$ (without parentheses) or $rs$ in an index. We also follow Notation \ref{NotCom}, but the reader should be aware that the index of $\Delta$ will now be a 1$\times$2 vector in $I^2$ as we will work with partial coalgebras over $I^2$.

\begin{Def}\label{DefPartBiAlg} A \emph{partial bialgebra} $\mathscr{A}=(\mathscr{A},M,\Delta)$ consists of a set $I$ (the \emph{object set}) and a collection of vector spaces $A(K)$ for $K\in M_2(I)$ such that 
\begin{itemize}
\item[$\bullet$] the $\Grs{A}{K_l}{K_r}$ form an $I^2$-partial algebra,
\item[$\bullet$] the $\Grt{A}{K_u}{K_d}$ form an $I^2$-partial coalgebra,
\end{itemize} 
and for which the following compatibility relations are satisfied.
\begin{enumerate}[label=(\arabic*)]
\item\label{Propa} (Comultiplication of Units) For all $k,l,l',m\in I$, one has 
\[\Delta_{l,l'}(\UnitC{k}{m}) = \delta_{l,l'} \UnitC{k}{l}\otimes \UnitC{l}{m}.\]  
\item\label{Propb} (Counit of Multiplication) For all $K,L\in M_2(I)$ with $K_r = L_l$ and all $a\in A(K)$ and $b\in A(L)$, \[\epsilon(ab) = \epsilon(a)\epsilon(b).\]
\item\label{Propc} (Non-degeneracy) For all $k\in I$, $\epsilon(\UnitC{k}{k})=1$. 
\item\label{Propd} (Finiteness) For each $K\in M_2(I)$ and each $a\in A(K)$, the assignment $(r,s)\rightarrow \Delta_{rs}(a)$ is rcf.
\item\label{Prope} (Comultiplication is multiplicative) For all $a\in A(K)$ and $b\in A(L)$ with $K_r= L_l$,  \[\Delta_{rs}(ab) = \sum_t \Delta_{rt}(a)\Delta_{ts}(b).\]
\end{enumerate}
\end{Def}

\begin{Rem}\label{RemBA}\begin{enumerate}[label=(\arabic*)]
\item By assumption \ref{Propd}, the sum on the right hand side in condition \ref{Prope} is in fact finite and hence well-defined. 
\item Note that the object set of the above $\mathscr{A}$ as a partial bialgebra is $I$, but the object set of its underlying partial algebra (or coalgebra) is $I^2$.
\item Properties \ref{Propa}, \ref{Propd} and \ref{Prope} simply say that $\Delta$ is a homomorphism $\mathscr{A}\rightarrow \mathscr{A}\otimes \mathscr{A}$ of partial algebras based over the assignment $I^2\rightarrow \mathscr{P}(I^2\times I^2)$, the power set of $I^2\times I^2$, such that \[(I^2\times I^2)_{{\tiny \begin{pmatrix} k\\m \end{pmatrix}}} = \{\left(\begin{pmatrix} k \\ l \end{pmatrix},\begin{pmatrix} l \\ m \end{pmatrix}\right)\mid l\in I\}.\] 
\end{enumerate}
\end{Rem}

We relate the notion of a partial bialgebra to the recently introduced
notion of a weak multiplier bialgebra \cite{Boh1}. Let us first
introduce the following notation, using the notion introduced in
Example \ref{ExaMult} (2).

\begin{Not}
If $\mathscr{A}$ is an $I$-partial bialgebra, we write \[\lambda_k = \sum_l \UnitC{k}{l},\qquad \rho_l = \sum_k\UnitC{k}{l} \qquad \in M(A).\]
\end{Not}

\begin{Rem} From Property \ref{Propc} of Definition \ref{DefPartBiAlg}, it follows that $\lambda_k\neq 0\neq \rho_k$ for any $k\in I$. 
\end{Rem} 

To show that the total algebra of a partial bialgebra becomes a weak
multiplier bialgebra, we will need some easy lemmas. The first one is
an immediate consequence of Remark \ref{RemBA} (3) and Lemma \ref{LemPAMor}:

\begin{Lem} Let $\mathscr{A}$ be an $I$-partial bialgebra. Then for each $a\in A$, there exists a unique multiplier $\Delta(a) \in M(A\otimes A)$ such that \begin{align}\label{EqDel}
    \begin{aligned}
      \Delta_{rs}(a) &= (1\otimes \lambda_r)\Delta(a)(1\otimes
      \lambda_s) \\ &= (\rho_r\otimes 1)\Delta(a)(\rho_s\otimes 1)
    \end{aligned}
\end{align}  for all $r,s\in I$, all $K\in M_2(I)$ and all $a\in A(K)$. 

The resulting map \[\Delta:A\rightarrow M(A\otimes A),\quad a\mapsto \Delta(a)\] is a homomorphism.
\end{Lem}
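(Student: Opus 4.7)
The strategy is to apply Lemma \ref{LemPAMor} to the family of local comultiplications so that $\Delta$ as a homomorphism $A\to M(A\otimes A)$ is produced in one stroke, and then to check the two identities \eqref{EqDel} componentwise.

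Following Remark \ref{RemBA} (3), I regard $\{\Delta_{rs}\}$ as a piecewise homomorphism from the $I^2$-partial algebra $\mathscr{A}$ into the $I^2\times I^2$-partial algebra $\mathscr{A}\otimes\mathscr{A}$ (in the sense of Remark \ref{RemGrad} applied to the object set $I^2$), based on the assignment $\phi\colon \binom{k}{m}\mapsto \{(\binom{k}{l},\binom{l}{m})\mid l\in I\}$. A direct check shows the images of $\phi$ are pairwise disjoint as $(k,m)$ varies, so $\phi$ satisfies the hypotheses of Definition \ref{DefMor}. The three defining clauses there---unitality, rcf and multiplicativity---are supplied exactly by properties \ref{Propa}, \ref{Propd} and \ref{Prope} of Definition \ref{DefPartBiAlg}. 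Lemma \ref{LemPAMor} then furnishes a unique algebra homomorphism $\Delta\colon A\to M(A\otimes A)$ with $\Delta(\mathbf{1}_{(k,m)}) = \sum_{l}\mathbf{1}_{(\binom{k}{l},\binom{l}{m})}$, already yielding the last assertion of the lemma.

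To verify \eqref{EqDel}, I would work componentwise using the rcf-matrix picture of $M(A\otimes A)$ from Example \ref{ExaMult} (2). Tracing through the definitions, left multiplication by $\mathbf{1}\otimes\lambda_r$ is the identity on the first tensor leg and, on the second, cuts out those matrix entries whose left $I^2$-object has first coordinate $r$; right multiplication by $\mathbf{1}\otimes\lambda_s$ similarly forces the right $I^2$-object of the second leg to have first coordinate $s$. Under the labelling provided by $\phi$, the only possibly non-vanishing entry of $\Delta(a)$ satisfying both constraints is precisely $\Delta_{rs}(a)$, proving the first equality. The second equality comes from the symmetric observation that $\rho_r\otimes\mathbf{1}$ and $\rho_s\otimes\mathbf{1}$ pin down the second coordinates of the first tensor leg's $I^2$-objects to be $r$ and $s$; by the same bookkeeping this selects the same entry of $\Delta(a)$.

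Uniqueness of the multiplier $\Delta(a)$ is automatic: a multiplier is determined by its matrix components, the identities \eqref{EqDel} recover all entries in the support prescribed by $\phi$, and all remaining entries vanish by construction. The one mildly delicate point is the four-index bookkeeping in the previous paragraph, matching the $M_2(I)$ layout of $\mathscr{A}$ against the actions of $\lambda_r$ and $\rho_r$ on each tensor factor of $\mathscr{A}\otimes\mathscr{A}$; the rcf hypothesis \ref{Propd} is exactly what keeps all sums locally finite and the multipliers well-defined.
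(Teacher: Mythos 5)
Your proof is correct and follows exactly the route the paper takes: the paper derives this lemma as an immediate consequence of Remark \ref{RemBA} (3) together with Lemma \ref{LemPAMor}, which is precisely your argument, with the componentwise verification of \eqref{EqDel} and the uniqueness via the matrix picture of $M(A\otimes A)$ being the details the paper leaves implicit.
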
 

We will refer to $\Delta: A\rightarrow M(A\otimes A)$ as the
\emph{total comultiplication} of $\mathscr{A}$. We will then also use
the suggestive Sweedler notation for this map, \[\Delta(a) =
a_{(1)}\otimes a_{(2)}.\]  Note for example that \[\Delta(\UnitC{k}{m}) = \sum_{l}\UnitC{k}{l}\otimes \UnitC{l}{m} = \sum_l \lambda_k\rho_l\otimes \lambda_l\rho_m.\]

\begin{Lem} The element $E = \sum_{k,l,m} \UnitC{k}{l}\otimes \UnitC{l}{m}= \sum_l \rho_l\otimes \lambda_l$ is a well-defined idempotent in $M(A\otimes A)$, and satisfies \[\Delta(A)(A\otimes A)=E(A\otimes A),\quad (A\otimes A)\Delta(A)= (A\otimes A)E.\]
\end{Lem}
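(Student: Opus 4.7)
The first step is to check that $E$ is a well-defined multiplier and is idempotent. Given any homogeneous $a\in A(K)$, from the description of $M(A)$ in Example \ref{ExaMult}\,(2) one deduces that $\rho_l a = a$ when $l = K_{ld}$ and is zero otherwise, while $\lambda_l a = a$ iff $l = K_{lu}$, $a\rho_l = a$ iff $l = K_{rd}$ and $a\lambda_l = a$ iff $l = K_{ru}$. Applied to an elementary tensor $a\otimes b$, the sums $\sum_l(\rho_l\otimes\lambda_l)(a\otimes b)$ and $(a\otimes b)\sum_l(\rho_l\otimes\lambda_l)$ therefore contain at most one nonzero term each, so $E$ converges strictly and defines a multiplier of $A\otimes A$. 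More precisely, $E(a\otimes b) = a\otimes b$ precisely when $K_{ld} = L_{lu}$, and $(a\otimes b)E = a\otimes b$ precisely when $K_{rd} = L_{ru}$. Idempotency $E^2 = E$ is then immediate from the orthogonality relations $\rho_l\rho_{l'} = \delta_{l,l'}\rho_l$ and $\lambda_l\lambda_{l'} = \delta_{l,l'}\lambda_l$, which in turn reflect the mutual orthogonality of the units $\UnitC{k}{l}$ in the $I^2$-partial algebra.

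For the identity $\Delta(A)(A\otimes A) = E(A\otimes A)$, the key step is to prove the stronger relation $E\Delta(a) = \Delta(a)$ in $M(A\otimes A)$ for every $a\in A$. By the rcf condition \ref{Propd}, the sum $\Delta(a) = \sum_{r,s}\Delta_{r,s}(a)$ converges strictly; for each summand write $\Delta_{r,s}(a) = a_{(1)}\otimes a_{(2)}$, where $a_{(1)}$ has bottom-left index $r$ and $a_{(2)}$ has top-left index $r$. Then $(\rho_t\otimes\lambda_t)\Delta_{r,s}(a) = \delta_{t,r}\Delta_{r,s}(a)$, so summing over $t$ and then over $r,s$ yields $E\Delta(a) = \Delta(a)$, hence $\Delta(A)(A\otimes A)\subseteq E(A\otimes A)$. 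For the reverse inclusion, $E(A\otimes A)$ is spanned by elementary tensors $x\otimes y$ with $x\in A(K)$, $y\in A(L)$ and $K_{ld} = L_{lu} = t$. For such a tensor, set $a = \UnitC{K_{lu}}{L_{ld}}$: by Property \ref{Propa} one has $\Delta(a) = \sum_{p}\UnitC{K_{lu}}{p}\otimes\UnitC{p}{L_{ld}}$, and multiplying on the right by $x\otimes y$ annihilates every summand except $p = t$, whose contribution $(\UnitC{K_{lu}}{t}x)\otimes(\UnitC{t}{L_{ld}}y)$ reproduces $x\otimes y$ via the left-local-unit property. Thus $x\otimes y\in\Delta(A)(A\otimes A)$.

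The second identity $(A\otimes A)\Delta(A) = (A\otimes A)E$ is obtained by a mirror-image argument: $\Delta(a)E = \Delta(a)$ follows from the component identity $\Delta_{r,s}(a)(\rho_t\otimes\lambda_t) = \delta_{t,s}\Delta_{r,s}(a)$ based on the second form in \eqref{EqDel}, and for the reverse inclusion one uses elementary tensors $x\otimes y$ with matching right indices $K_{rd}(x) = L_{ru}(y)$ together with the witness $a = \UnitC{K_{ru}(x)}{L_{rd}(y)}$ and Property \ref{Propa}. Overall, the proof is essentially an exercise in bookkeeping with the four indices of an element of $M_2(I)$; the only real obstacle is keeping the strict convergence of the various sums under control, which in each case reduces to the rcf condition \ref{Propd} and to the finite-support behaviour of the local units on any fixed homogeneous element. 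Conceptually, $E$ encodes exactly the vertical matching condition on elementary tensors, and this is precisely the condition built into the comultiplication of any local unit via Property \ref{Propa}.
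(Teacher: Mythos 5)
Your proof is correct and follows essentially the same route as the paper: strict convergence and idempotency of $E$ from the orthogonality of the local units, the absorption identities $E\Delta(a)=\Delta(a)=\Delta(a)E$ for one inclusion, and for the reverse inclusion the identity $E(x\otimes y)=\Delta(\UnitC{k}{n})(x\otimes y)$ coming from Property \ref{Propa}, which is exactly the paper's key step (the paper states it for $x,y$ local units and lets the local-unit property of $A$ do the rest, which your computation simply spells out). No gaps.
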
 
\begin{proof} Clearly the sum defining $E$ is strictly convergent, and makes $E$ into an idempotent. It is moreover immediate that $E\Delta(a)=\Delta(a) = \Delta(a)E$ for all $a\in A$. Since \[E(\UnitC{k}{l}\otimes \UnitC{m}{n}) = \Delta(\UnitC{k}{n})(\UnitC{k}{l}\otimes \UnitC{m}{n}) \] by the property \ref{Propa} of Definition \ref{DefPartBiAlg}, and analogously for multiplication with $E$ on the right, the lemma is proven. 
\end{proof} 

By \cite[Proposition A.3]{VDW2}, there is a unique homomorphism $\Delta:M(A)\rightarrow M(A\otimes A)$ extending $\Delta$ and such that $\Delta(1) = E$.  Similarly the maps $\id\otimes \Delta$ and $\Delta\otimes \id$ extend to maps from $M(A\otimes A)$ to $M(A\otimes A\otimes A)$. 

For example, note that
\begin{align} \label{eq:delta-lambda-rho} \Delta(\lambda_{k}) &=
  (\lambda_{k} \otimes 1)\Delta(1), & \Delta(\rho_{m}) &= (1 \otimes \rho_{m})\Delta(1).
\end{align}

The following proposition gathers the properties of $\Delta$, $\epsilon$ and $\Delta(1)$ which guarantee that $(A,\Delta)$ forms a weak multiplier bialgebra in the sense of \cite[Definition 2.1]{Boh1}. We will call it the \emph{total weak multiplier bialgebra} associated to $\mathscr{A}$.

\begin{Prop} Let $\mathscr{A}$ be a partial bialgebra with total algebra $A$, total comultiplication $\Delta$ and counit $\epsilon$. Then the following properties are satisfied.
\begin{enumerate}[label={(\arabic*)}]
\item Coassociativity: $(\Delta\otimes \id)\Delta = (\id\otimes \Delta)\Delta$ (as maps $M(A)\rightarrow M(A^{\otimes 3})$).
\item Counitality: $(\epsilon\otimes \id)(\Delta(a)(1\otimes b)) = ab = (\id\otimes \epsilon)((a\otimes 1)\Delta(b))$ for all $a,b\in A$.
\item Weak Comultiplicativity of the Unit: \[(\Delta(1)\otimes 1)(1\otimes \Delta(1)) = (\Delta\otimes \id)\Delta(1) = (\id\otimes \Delta)\Delta(1) = (1\otimes \Delta(1))(\Delta(1)\otimes 1).\]
\item \label{WMC} Weak Multiplicativity of the Counit: For all $a,b,c\in A$, one has \[(\epsilon\otimes \id)(\Delta(a)(b\otimes c)) = (\epsilon\otimes \id)((1\otimes a)\Delta(1)(b\otimes c))\] and 
\[(\epsilon\otimes \id)((a\otimes b)\Delta(c)) = (\epsilon\otimes \id)((a\otimes b)\Delta(1)(1\otimes c)).\]
\item Strong multiplier property: For all $a,b\in A$, one has \[\Delta(A)(1\otimes A)\cup (A\otimes 1)\Delta(A)\subseteq  A\otimes A.\] 
\end{enumerate}
\end{Prop}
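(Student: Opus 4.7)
The plan is to verify each of the five axioms by combining the characterization of $\Delta$ from the preceding lemma,
\[ (1 \otimes \lambda_r)\Delta(a)(1 \otimes \lambda_s) = \Delta_{rs}(a) = (\rho_r \otimes 1)\Delta(a)(\rho_s \otimes 1), \]
with the strict-topology identities $\sum_k \lambda_k = 1 = \sum_k \rho_k$ in $M(A)$ and the finiteness axiom \ref{Propd} of Definition \ref{DefPartBiAlg}. The elementary computations that I will use throughout are the following: for $b \in A(\eGr{}{m_1}{m_2}{n_1}{n_2})$ one has $\lambda_s b = \delta_{s, m_1} b$, $\rho_s b = \delta_{s, n_1} b$, $b \lambda_s = \delta_{s, m_2} b$, $b\rho_s = \delta_{s, n_2} b$, and $\rho_l \lambda_m = \UnitC{m}{l} = \lambda_m \rho_l$. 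All of these follow immediately from the definitions of $\lambda$ and $\rho$ together with the matrix description of $M(A)$ in Example \ref{ExaMult} (2); their combined effect is to collapse the otherwise strict sums to single surviving terms whenever one side of a product is supported in a single block.

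For coassociativity (1), I would localise both $(\Delta \otimes \id)\Delta(a)$ and $(\id \otimes \Delta)\Delta(a)$ to a specific three-term strip by left- and right-multiplying with suitable combinations of $\rho$'s and $\lambda$'s; each side then collapses to an iterated partial comultiplication $(\Delta_{r', s'} \otimes \id)\Delta_{r, s}(a)$ respectively $(\id \otimes \Delta_{r, s})\Delta_{r', s'}(a)$, and these agree by the partial coalgebra's own coassociativity. For counitality (2), the identity $\lambda_s b = \delta_{s, m_1} b$ gives $\Delta(a)(1 \otimes b) = \bigl(\sum_r \Delta_{r, m_1}(a)\bigr)(1 \otimes b)$, a finite sum by rcf; applying $(\epsilon \otimes \id)$ and invoking the piecewise counit axiom yields $ab$, with the block-vanishing conditions matching those for $ab$ itself. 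For the weak comultiplicativity of the unit (3), I would substitute $\Delta(1) = \sum_l \rho_l \otimes \lambda_l$ into each of the four expressions and use \eqref{eq:delta-lambda-rho} together with $\rho_l \lambda_m = \UnitC{m}{l}$; all four collapse to the symmetric element $\sum_{m, l}\rho_m \otimes \UnitC{m}{l} \otimes \lambda_l$ in $M(A^{\otimes 3})$. For the weak multiplicativity of the counit (4), the identity $(1 \otimes a)\Delta(1) = \rho_{k_2} \otimes a$ (for $a$ with top-right index $k_2$) collapses the right-hand side, while the left-hand side reduces to the same expression via a computation parallel to (2), invoking the piecewise counit axiom together with the multiplicativity of $\epsilon$ from axiom \ref{Propb} of Definition \ref{DefPartBiAlg} to line up the block-vanishing conditions.

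The strong multiplier property (5) is the one place where the rcf condition is genuinely essential. For $a \in A$ and $b \in A(\eGr{}{m_1}{m_2}{n_1}{n_2})$, the identity
\[ \Delta(a)(1 \otimes b) = \Delta(a)(1 \otimes \lambda_{m_1})(1 \otimes b) = \Bigl(\sum_r \Delta_{r, m_1}(a)\Bigr)(1 \otimes b) \]
exhibits $\Delta(a)(1 \otimes b)$ as a finite sum in $A \otimes A$, the finiteness being supplied by rcf applied with $s = m_1$ fixed. The symmetric inclusion $(a \otimes 1)\Delta(b) \in A \otimes A$ follows from $(a \otimes 1)(\rho_\delta \otimes 1) = a \otimes 1$ (where $\delta$ is the bottom-right index of $a$) together with row-finiteness, giving $(a \otimes 1)\Delta(b) = (a \otimes 1)\sum_s \Delta_{\delta, s}(b)$.

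I anticipate the main obstacle to be not conceptual but the sustained four-to-eight-index bookkeeping that every computation demands; beyond that, the one genuine content is the observation that rcf is exactly what turns a formal strict sum $\sum_r \Delta_{rs}(a)$ (for fixed $s$, or vice versa) into a finite element of $A \otimes A$, and this same mechanism underlies every one of the five verifications.
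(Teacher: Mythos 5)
Your proposal is correct and takes essentially the same approach as the paper: the paper's own proof simply declares most items immediate from the definitions and, for demonstration, verifies the first identity of (4) by exactly the kind of localization to homogeneous components that you describe, using $\Delta(1)=\sum_{l}\rho_{l}\otimes\lambda_{l}$, the partial counit/coassociativity and $\epsilon$-multiplicativity axioms, and the rcf condition to collapse the strict sums to finite ones. Nothing further is needed.
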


\begin{proof} Most of these properties follow immediately from the definition of a partial bialgebra. For demonstrational purposes, let us check the first identity of property \ref{WMC}. Let us choose $a\in A(K)$, $b\in A(L)$ and $c\in A(M)$. Then \[\Delta(a)(b\otimes c) = \delta_{K_{ru},L_{lu}}\delta_{M_{lu},L_{ld}} \sum_r \Delta_{r,L_{ld}}(a)(b\otimes c).\]  Applying $(\epsilon\otimes \id)$ to both sides, we obtain by Proposition \ref{Propb} of Definition \ref{DefPartBiAlg} and counitality of $\epsilon$ that \[(\epsilon \otimes \id)(\Delta(a)(b\otimes c)) = \delta_{K_{ru},L_{lu},L_{ld},M_{lu}} \epsilon(b) ac.\] On the other hand, \begin{eqnarray*} (1\otimes a)\Delta(1)(b\otimes c) &=& \sum_{r,s,t} \UnitC{r}{s} b \otimes a\UnitC{s}{t}c \\ &=& \delta_{L_{ld},K_{ru},M_{lu}} b \otimes ac.\end{eqnarray*} Applying $(\epsilon\otimes \id)$, we find \begin{eqnarray*} (\epsilon\otimes \id)( (1\otimes a)\Delta(1)(b\otimes c) ) &=&  \delta_{L_{ld},K_{ru},M_{lu}}\delta_{L_{lu},L_{ld}}\delta_{L_{ru},L_{rd}} \epsilon(b)ac \\ &=&  \delta_{L_{ld},L_{lu},K_{ru},M_{lu}} \epsilon(b)ac,\end{eqnarray*} which agrees with the expression above.
\end{proof} 

\begin{Rem} 
Since also the expressions $\Delta(a)(b\otimes 1)$ and $(1\otimes a)\Delta(b)$ are in $A\otimes A$ for all $a,b\in A$, we see that $(A,\Delta)$ is in fact a \emph{regular} weak multiplier bialgebra \cite[Definition 2.3]{Boh1}.
\end{Rem} 

Recall from \cite[Section 3]{Boh1} that a regular weak multiplier
bialgebra admits four projections $A\rightarrow M(A)$, given
by \begin{align*} \bar{\Pi}^L(a) = (\epsilon\otimes \id)((a\otimes
  1)\Delta(1)),\quad & \bar{\Pi}^R(a) = (\id\otimes
  \epsilon)(\Delta(1)(1\otimes a)),\\ \Pi^L(a) = (\epsilon\otimes
  \id)(\Delta(1)(a\otimes 1)),\quad& \Pi^R(a) =
  (\id\otimes\epsilon)((1\otimes a)\Delta(1)),\end{align*} where the
right hand side expressions are interpreted as multipliers in the
obvious way. The relation  $\Delta(1)=\sum_{p} \rho_{p} \otimes \lambda_{p}$ and  condition \ref{Propc} in Definition \ref{DefPartBiAlg} imply
\begin{align*}
  \bar \Pi^{L}(A) &=\mathrm{span}\{\lambda_{p}:p\in I\} =  \Pi^{L}(A), &
  \bar \Pi^{R}(A) &= \mathrm{span}\{\rho_{p}:p\in I\} =\Pi^{R}(A).
\end{align*}
The \emph{base algebra} of $(A,\Delta)$ is therefore just the algebra
$\Fun_{\fin}(I)$ of finitely supported functions on $I$, and the
comultiplication of $A$ is (left and right) \emph{full} (meaning
roughly that the legs of $\Delta(A)$ span $A$) by \cite[Theorem 3.13]{Boh1}.  

 The maps $\Pi^{L}$ and $\Pi^{R}$ can also
be written in the form
\begin{align} \label{eq:pi} 
    \Pi^L(a) & = \sum_{p}\epsilon(\lambda_{p}a)\lambda_p, & \Pi^R(a) & =    \sum_{p}\epsilon(a \rho_{p}) \rho_p
\end{align}
because $\epsilon(\lambda_{k}\rho_{m} a \lambda_{l}\rho_{n})=0$  if $(k,l)\neq(m,n)$. These relations and  \eqref{EqDel}, \eqref{eq:delta-lambda-rho} imply
\begin{align} \label{eq:pi-l-delta}
  (\Pi^{L} \otimes \id)(\Delta(a)) &= \sum_{p} \lambda_{p}\otimes \lambda_{p}a, &
  (\id \otimes \Pi^{L})(\Delta(a)) &= \sum_{p} \rho_{p}a \otimes \lambda_{p}, & \\ \label{eq:pi-r-delta}
  (\Pi^{R} \otimes \id)(\Delta(a)) &= \sum_{p} \rho_{p} \otimes a\lambda_{p}, &
  (\id \otimes \Pi^{R})(\Delta(a)) &= \sum_{p} a\rho_{p} \otimes \rho_{p}.
\end{align}

Let us now show a converse. If $(A,\Delta)$ is a regular weak multiplier bialgebra, let us write $A^L = \Pi^L(A) = \bar{\Pi}^L(A)\subseteq M(A)$ and $A^R = \Pi^R(A)= \bar{\Pi}^R(A)\subseteq M(A)$ for the base algebras, where the identities follow from \cite[Theorem 3.13]{Boh1}. Then if moreover $(A,\Delta)$ is left and right full, we have that $A^L$ is (canonically) anti-isomorphic to $A^R$ by the map \[\sigma: A^L \rightarrow A^R, \quad \bar{\Pi}^L(a) \rightarrow \Pi^R(a), \qquad a\in A,\] by \cite[Lemma 4.8]{Boh1}. We then simply refer to $A^L$ as \emph{the} base algebra. 

\begin{Rem}\label{RemNak} We could also have used the map $\bar{\sigma}(\Pi^L(a)) = \bar{\Pi}^R(a)$ to identify $A^L$ and $A^R$. As it turns out, $\bar{\sigma}^{-1}\sigma$ is the (unique) Nakayama automorphism for some functional $\varepsilon$ on $A^L$, cf. \cite[Proposition 4.9]{Boh1}. Hence if $A^L$ is commutative, it follows that $\sigma = \bar{\sigma}$.
\end{Rem} 

\begin{Prop}\label{PropCharPBA} Let $(A,\Delta)$ be a left and right full regular weak multiplier bialgebra whose base algebra is isomorphic to $\Fun_{\fin}(I)$ for some set $I$, and such that moreover $A^LA^R \subseteq A$. Then $(A,\Delta)$ is the total weak multiplier bialgebra of a uniquely determined partial bialgebra $\mathscr{A}$ over $I$.
\end{Prop}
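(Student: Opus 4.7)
The plan is to read off an $I^{2}$-indexed decomposition of the unit from the counital subalgebras and then cut $A$ into pieces. Since $A^{L} \cong \Fun_{\fin}(I)$ is commutative, it carries a canonical family $\{\lambda_{k}\}_{k \in I}$ of mutually orthogonal idempotents with strict sum $1_{M(A)}$; via the anti-isomorphism $\sigma : A^{L} \to A^{R}$ of \cite[Lemma 4.8]{Boh1} (which by Remark \ref{RemNak} coincides with $\bar\sigma$ because $A^{L}$ is commutative), I would produce a parallel family $\rho_{k} := \sigma(\lambda_{k})$ in $A^{R}$. The hypothesis $A^{L} A^{R} \subseteq A$ places the products $\UnitC{k}{l} := \lambda_{k} \rho_{l}$ inside $A$, and commutativity of $A^{L}$ with $A^{R}$ in $M(A)$ --- a standard feature of regular weak multiplier bialgebras --- would make $\{\UnitC{k}{l}\}_{(k,l) \in I^{2}}$ into a family of mutually orthogonal idempotents with strict sum $1$.

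Next, for $K \in M_{2}(I)$ I would set $A(K) := \UnitC{K_{lu}}{K_{ld}}\, A\, \UnitC{K_{ru}}{K_{rd}}$, obtaining a direct sum $A = \bigoplus_{K} A(K)$ on which the product of $A$ restricts to an $I^{2}$-partial algebra structure (when $K_{r} = L_{l}$ the inner pair of units collapses, sending the product into $A(K \cdot L)$; otherwise it is zero). For $x \in A(K)$ with $K = \eGr{}{a}{b}{e}{f}$ and middle column $(r, s)$, I would define
\[ \Delta_{r,s}(x) := (\UnitC{a}{r} \otimes \UnitC{r}{e})\, \Delta(x)\, (\UnitC{b}{s} \otimes \UnitC{s}{f}); \]
using \eqref{eq:delta-lambda-rho} together with $\Delta(1) = \sum_{p} \rho_{p} \otimes \lambda_{p}$, the outer projections can be absorbed one leg at a time to recover both formulas of \eqref{EqDel}, and the strong multiplier property then places $\Delta_{r,s}(x)$ in $A \otimes A$. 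The counit on $A(\eGr{}{a}{b}{a}{b})$ would be the restriction of the ambient $\epsilon$; its vanishing on components with $K_{u} \neq K_{d}$ follows from the identity $\epsilon(\lambda_{k} \rho_{m} a \lambda_{l} \rho_{n}) = 0$ for $(k,l) \neq (m,n)$ noted in the lead-up to \eqref{eq:pi}.

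The axioms \ref{Propa}--\ref{Prope} of Definition \ref{DefPartBiAlg} would then reduce to direct computations: \ref{Propa} expands $\Delta(\UnitC{k}{m}) = \Delta(\lambda_{k})\Delta(\rho_{m})$ via \eqref{eq:delta-lambda-rho} and $\Delta(1) = \sum_{p} \rho_{p} \otimes \lambda_{p}$; \ref{Propb} reduces to weak multiplicativity \ref{WMC} of the counit after restricting to the components with $K_{u} = K_{d}$ on which $\epsilon$ is non-trivial; \ref{Propc} becomes $\epsilon(\UnitC{k}{k}) = 1$, forced by \eqref{eq:pi} together with the normalization of $\lambda_{k}$ as a minimal idempotent of $A^{L} \cong \Fun_{\fin}(I)$; \ref{Prope} is the homomorphism property of $\Delta$ read component-wise. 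Uniqueness is automatic, since the $\lambda_{k}$ are forced as the minimal idempotents of $A^{L}$ and then $\rho_{l} = \sigma(\lambda_{l})$; by construction the total weak multiplier bialgebra of the resulting $\mathscr{A}$ reproduces $(A,\Delta)$. The main obstacle I foresee is the finiteness axiom \ref{Propd}: row-and-column finiteness of $(r,s) \mapsto \Delta_{r,s}(x)$ has to be extracted from the strong multiplier property $\Delta(A)(1 \otimes A) \cup (A \otimes 1)\Delta(A) \subseteq A \otimes A$, and finiteness of the support --- with one of $r, s$ fixed, after cutting the free leg by a single unit $\UnitC{p}{q}$ --- rests crucially on the identification of the base algebra as \emph{finitely} supported functions $\Fun_{\fin}(I)$ rather than a larger commutative algebra.
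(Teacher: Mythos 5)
Your construction follows the same route as the paper --- the same idempotents $\UnitC{k}{l}=\lambda_k\rho_l$, the same cutting of $\Delta$ --- but it has a genuine gap at its centre: you take the identity $\Delta(1)=\sum_p\rho_p\otimes\lambda_p$ (and with it \eqref{eq:delta-lambda-rho} and \eqref{eq:pi}) as given. Those formulas were established in the paper only for the total weak multiplier bialgebra of a partial bialgebra, i.e.\ in the direction opposite to the one you are proving; for an abstract left and right full regular weak multiplier bialgebra they are precisely what has to be shown. Axiom \ref{Propa} is essentially equivalent to this form of $\Delta(1)$, so verifying \ref{Propa} ``via \eqref{eq:delta-lambda-rho} and $\Delta(1)=\sum_p\rho_p\otimes\lambda_p$'' is circular, and the same unproven input silently enters when you ``absorb the outer projections one leg at a time'' to recover \eqref{EqDel} and when you invoke \eqref{eq:pi} for \ref{Propc}. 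In the paper this is where the real work lies: one first shows $\Delta_{rs}(a)=(\rho_r\otimes 1)\Delta(a)(\rho_s\otimes 1)=(1\otimes\lambda_r)\Delta(a)(1\otimes\lambda_s)$ using \cite[Lemmas 3.3 and 3.9]{Boh1} together with Remark \ref{RemNak} (commutativity of $A^L$ forcing $\sigma=\bar\sigma$), deduces $\Delta(1)=\big(\sum_k\rho_k\otimes\lambda_k\big)\Delta(1)=\Delta(1)\big(\sum_k\rho_k\otimes\lambda_k\big)$ from the strict convergence of $\sum_{r,s}\Delta_{rs}(a)$, then invokes \cite[Lemma 4.10]{Boh1} to write $\Delta(1)=\sum_{k\in I'}\rho_k\otimes\lambda_k$ for some subset $I'\subseteq I$, and finally uses fullness ($\bar{\Pi}^L(A)=\Fun_{\fin}(I)$) to conclude $I'=I$. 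None of these steps appear in your proposal, and without them the key axiom \ref{Propa} remains unproven.

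Two smaller points. Your claim that the $\lambda_k$ sum strictly to $1$ (needed for the $\UnitC{k}{l}$ to be local units and for $A=\bigoplus_K A(K)$) is itself the nontrivial statement $A=A^LA=AA^L=A^RA=AA^R$, which the paper extracts from the proof of \cite[Theorem 3.13]{Boh1}; it does not follow merely from $A^L\cong\Fun_{\fin}(I)$. Likewise your treatment of \ref{Propc} is too thin: minimality of $\lambda_k$ does not by itself force $\epsilon(\UnitC{k}{k})=1$; the paper argues by contradiction, showing that $\epsilon(\UnitC{k}{k})=0$ together with partial multiplicativity of $\epsilon$ and counitality would force $\Gr{A}{k}{l}{m}{n}=0$ for all $l,m,n$, hence $\lambda_k=0$, and then uses idempotency to pin the value at $1$. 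By contrast, the point you flag as the main obstacle, the rcf axiom \ref{Propd}, is actually the easy part: once the one-sided formulas \eqref{EqDel} are available, $\Delta(a)(1\otimes\lambda_s)$ and $(1\otimes\lambda_r)\Delta(a)$ lie in $A\otimes A$ by regularity, and any element of $A\otimes A$ has finite support over the homogeneous components, so no special use of $\Fun_{\fin}(I)$ beyond this is required there.
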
 

\begin{Rem} The condition $A^LA^R \subseteq A$ is of course essential, as we want $A$ to behave locally as a bialgebra, not a multiplier bialgebra. Indeed, in case $A^L= \C$, the condition simply says that $A$ is unital. In general, it should be considered as a \emph{properness} condition. 
\end{Rem} 

\begin{proof} Let us write the standard generators (Dirac functions) of $A^L$ as $\lambda_k$ for $k\in I$, and write $\sigma(\lambda_k) = \rho_k\in A^R$. By assumption, $\UnitC{k}{l} = \lambda_k\rho_l\in A$. Further $A= AA^R = AA^L = A^LA=A^RA$, cf.~ the proof of \cite[Theorem 3.13]{Boh1}. Hence the $\UnitC{k}{l}$ make $A$ into the total algebra of an $I\times I$-partial algebra, as $A^L$ and $A^R$ pointwise commute by \cite[Lemma 3.5]{Boh1}. 

Define \[\Delta_{rs}(a) = (\rho_r\otimes \lambda_r)\Delta(a)(\rho_s\otimes \lambda_s).\] From \cite[Lemma 3.3]{Boh1}, it follows that $\Delta_{rs}$ is a map from $\Gr{A}{k}{l}{m}{n}$ to $\Gr{A}{k}{l}{r}{s}\otimes \Gr{A}{r}{s}{m}{n}$. That same lemma, together with the coassociativity of $\Delta$, show that the $\Delta_{rs}$ form a coassociative family.  

Now by \cite[Lemma 3.9]{Boh1}, we have $(\rho_k\otimes 1)\Delta(a) = (1\otimes \lambda_k)\Delta(a)$ for all $a$. By that same lemma and Remark \ref{RemNak}, we have as well $\Delta(a)(\rho_k\otimes 1) = \Delta(a)(1\otimes \lambda_k)$. Hence we may as well write \begin{eqnarray*} \Delta_{rs}(a) &=& (\rho_r\otimes 1)\Delta(a)(\rho_s\otimes 1) \\ &=& (1\otimes \lambda_r)\Delta(a)(1\otimes \lambda_s)\end{eqnarray*}  It is now straightforward that the counit map of $(A,\Delta)$ also provides a counit for the $\Delta_{rs}$, hence the $\Gr{A}{k}{l}{m}{n}$ also form a partial coalgebra. 

As $\Delta(a)(1\otimes \lambda_s)$ and $(1\otimes \lambda_r)\Delta(a)$ are already in $A\otimes A$, it is also clear that $\Delta_{rs}(a)$ is rcf for each $a$. The multiplicativity of the $\Delta_{rs}$ is then immediate from the multiplicativity of $\Delta$.

To show that $\Delta_{ll'}(\UnitC{k}{m}) = \delta_{l,l'} \UnitC{k}{l}\otimes \UnitC{l}{m}$, it suffices to show that $\Delta(1) = \sum_k \rho_k\otimes \lambda_k$. Now as $\Delta(1)(A\otimes A)  = \Delta(A)(A\otimes A)$, and as clearly $\Delta(a) = \sum_{r,s}\Delta_{rs}(a)$ in the strict topology for all $a\in A$, it follows that \[\Delta(1) = \left(\sum_k \rho_k\otimes \lambda_k\right)\Delta(1).\]  Similarly, $\Delta(1) = \Delta(1)\left(\sum_k\rho_k\otimes \lambda_k\right)$. On the other hand, by \cite[Lemma 4.10]{Boh1} it follows that we can then write \[\Delta(1) = \sum_{k\in I'} \rho_k\otimes \lambda_k\] for some subset $I'\subseteq I$. As by definition $\bar{\Pi}^L(A) = \Fun_{\fin}(I)$, we deduce that $I=I'$. 

For $a\in \Gr{A}{k}{l}{p}{q}$ and $b\in \Gr{A}{l}{m}{q}{r}$, we then have $\epsilon(ab) = \epsilon(a\UnitC{l}{q}b) = \epsilon(a)\epsilon(b)$ by \cite[Proposition 2.6.(4)]{Boh1}, which shows the partial multiplicativity of $\epsilon$. 

Finally, assume that $k$ was such that $\epsilon(\UnitC{k}{k})=0$. Then by the partial multiplication law, $\epsilon$ is zero on all $\Gr{A}{k}{l}{k}{l}$. Applying $\Delta_{kl}$ to $\Gr{A}{k}{l}{m}{n}$ and using the counit property on the first leg, it follows that $\Gr{A}{k}{l}{m}{n}=0$ for all $l,m,n$. In particular, $\UnitC{k}{m}=0$ for all $m$. But this entails $\lambda_k=0$, a contradiction. Hence $\epsilon(\UnitC{k}{k})\neq 0$. From the partial multiplication law, it follows that $\epsilon(\UnitC{k}{k})^2 = \epsilon(\UnitC{k}{k})$, hence $\epsilon(\UnitC{k}{k})=1$.

This concludes the proof that $(A,\Delta)$ determines a partial bialgebra $\mathscr{A}$. It is immediate that $(A,\Delta)$ is in fact the total weak multiplier bialgebra of $\mathscr{A}$. 
\end{proof} 


\subsection{Partial Hopf algebras}

We now formulate the notion of a partial Hopf algebra, whose total form will correspond to a weak multiplier Hopf algebra \cite{Boh1,VDW2,VDW1}. We will mainly refer to \cite{Boh1} for uniformity.

 Let us write $\circ$ for the inverse of $\wmult$, and $\bullet$ for the inverse of $\bmult$, so \[\begin{pmatrix} k & l \\ m & n \end{pmatrix}^{\circ} = \begin{pmatrix} l & k \\ n & m \end{pmatrix},\quad \begin{pmatrix} k & l \\ m & n \end{pmatrix}^{\bullet} = \begin{pmatrix} m & n \\ k & l \end{pmatrix},\quad \begin{pmatrix} k & l \\ m & n \end{pmatrix}^{\circ \bullet} = \begin{pmatrix} n & m \\ l & k \end{pmatrix}.\] The notation $\circ$ (resp. $\bullet$) will also be used for row vectors (resp. column vectors).

\begin{Def}\label{DefPartBiAlgAnt} An \emph{antipode} for an
  $I$-partial bialgebra $\mathscr{A}$ consists of linear
maps \[S:A(K)\rightarrow A(K^{\circ\bullet})\]
  such that the following property holds: for all $M,P\in M_2(I)$ and
  all $a\in A(M)$, \begin{align} \label{eq:antipode-pi-l}\underset{K\wmult
      L^{\circ\bullet}=P}{\sum_{K\bmult L = M}} a_{(1)K}S(a_{(2)L})&=
    \delta_{P_l,P_r}\epsilon(a)\mathbf{1}(P_l),
    \\ \label{eq:antipode-pi-r}
    \underset{K^{\circ\bullet}\wmult L=P}{\sum_{K\bmult L = M}}
    S(a_{(1)K})a_{(2)L}&=
    \delta_{P_l,P_r}\epsilon(a)\mathbf{1}(P_r).\end{align}

A partial bialgebra $\mathscr{A}$ is called a \emph{partial Hopf algebra} if it admits an antipode.
\end{Def} 

\begin{Rem} Note that condition \ref{Propd} of Definition \ref{DefPartBiAlg} again guarantees that the above sums are in fact finite.
\end{Rem}

If $S$ is an antipode for a partial bialgebra, we can extend $S$ to a
linear map \[S:A\rightarrow A\] on the total algebra $A$.  Conditions
\eqref{eq:antipode-pi-l} and \eqref{eq:antipode-pi-r} then take the
following simple form:
\begin{Lem} \label{lemma:antipode}
  A family of maps $S \colon A(K) \to A(K^{\circ\bullet})$ satisfies
  \eqref{eq:antipode-pi-l} and \eqref{eq:antipode-pi-r} if and only if
  the total map $S\colon A \to A$ satisfies 
  \begin{align} \label{eq:total-antipode}
 a_{(1)}S(a_{(2)}) &= \Pi^{L}(a), &
 S(a_{(1)})a_{(2)} &= \Pi^{R}(a)
  \end{align}
for all $a\in A$.
\end{Lem}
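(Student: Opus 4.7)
The plan is to match both sides of each partial identity \eqref{eq:antipode-pi-l} and \eqref{eq:antipode-pi-r} with the corresponding total identity by decomposing $\Delta(a)$ into its components and using that equality of multipliers in $M(A)$ can be tested by right (resp.\ left) multiplication with the local units $\UnitC{p}{q}$. Fix $a\in A(M)$ with $M = \pmat{k}{l}{m}{n}$ and write $\Delta_{rs}(a) = a_{(1)K_{rs}}\otimes a_{(2)L_{rs}}$ with $K_{rs} = \pmat{k}{l}{r}{s}$ and $L_{rs} = \pmat{r}{s}{m}{n}$. Since $S$ sends $a_{(2)L_{rs}}$ into $A(L_{rs}^{\circ\bullet}) = A\pmat{n}{m}{s}{r}$, the partial product $a_{(1)K_{rs}}\,S(a_{(2)L_{rs}})$ in the total algebra vanishes unless $l = n$, in which case it lies in $A(K_{rs}\wmult L_{rs}^{\circ\bullet}) = A\pmat{k}{m}{r}{r}$.

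The case $l \neq n$ is trivial on both sides: every partial product vanishes, $\epsilon(a) = 0$ by the partial counit convention, and $\Pi^{L}(a) = \epsilon(a)\lambda_{k} = 0$, so the total identity and every instance of \eqref{eq:antipode-pi-l} reduce to $0 = 0$. For the remaining case $l = n$ I would set
\[
b_{r} \;=\; \sum_{s} a_{(1)K_{rs}}\, S(a_{(2)L_{rs}}) \;\in\; A\pmat{k}{m}{r}{r},
\]
a finite sum by the rcf condition \ref{Propd}, so that $a_{(1)}S(a_{(2)}) = \sum_{r} b_{r}$ is a strict sum in $M(A)$. Using $\lambda_{p}a = \delta_{p,k}a$ inside $\Pi^{L}(a) = \sum_{p}\epsilon(\lambda_{p}a)\lambda_{p}$ gives $\Pi^{L}(a) = \epsilon(a)\lambda_{k} = \epsilon(a)\sum_{r}\UnitC{k}{r}$.

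The core step is then the reduction of the multiplier equation $\sum_{r} b_{r} = \epsilon(a)\sum_{r}\UnitC{k}{r}$ to the local family $b_{r} = \delta_{k,m}\epsilon(a)\UnitC{k}{r}$, $r\in I$. Right-multiplying both sides by an arbitrary $\UnitC{p}{q}$ and using that $b_{r}$ has right column $\binom{m}{r}$ gives $\delta_{p,m}b_{q} = \delta_{p,k}\epsilon(a)\UnitC{k}{q}$; a short case analysis in $(p,q)$, noting that $k\neq m$ forces $\epsilon(a) = 0$, extracts the claimed family. These local identities are exactly \eqref{eq:antipode-pi-l} applied at the reachable values $P = \pmat{k}{m}{r}{r}$, for which $P_{l} = \binom{k}{r}$, $P_{r} = \binom{m}{r}$, $\delta_{P_{l},P_{r}} = \delta_{k,m}$, and $\mathbf{1}(P_{l}) = \UnitC{k}{r}$; the instances of \eqref{eq:antipode-pi-l} at the remaining $P$ have an empty sum on the left and a right hand side that vanishes by the same component analysis.

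The identity \eqref{eq:antipode-pi-r} and its total form $S(a_{(1)})a_{(2)} = \Pi^{R}(a)$ follow from the symmetric argument, with the two columns of $M$ swapped, right multiplication by $\UnitC{p}{q}$ replaced by left multiplication, and $\Pi^{R}(a) = \epsilon(a)\rho_{n}$ used in place of $\Pi^{L}(a) = \epsilon(a)\lambda_{k}$. The main technical obstacle is the bookkeeping of the four-index structure of $M_{2}(I)$ and the verification that each strict sum $\sum_{r} b_{r}$ legitimately defines a multiplier in $M(A)$, both of which are routine consequences of the rcf condition and the locally unital character of $A$.
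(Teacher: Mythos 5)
Your argument is correct and is essentially the paper's own proof written out in full: the paper simply observes that, for each $P$, the two sides of \eqref{eq:antipode-pi-l} (resp.\ \eqref{eq:antipode-pi-r}) are the $P$-homogeneous components of $a_{(1)}S(a_{(2)})$ and $\Pi^{L}(a)=\sum_{p}\epsilon(\lambda_{p}a)\lambda_{p}$ (resp.\ of $S(a_{(1)})a_{(2)}$ and $\Pi^{R}(a)$), which is exactly what your extraction of components by multiplying with the local units $\UnitC{p}{q}$ establishes. Your dismissal of the remaining values of $P$ matches the reading implicit in the paper's one-line proof, so no further comment is needed.
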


Note that these should be considered a priori as equalities of left (resp. right) multipliers on $A$.

\begin{proof}
For $M,P\in M_{2}(I)$ and $a\in A(M)$, the left and the right hand side  of \eqref{eq:antipode-pi-l} are the $P$-homogeneous components of $ a_{(1)}S(a_{(2)})$ and $\Pi^{L}(a)=\sum_{p} \epsilon(\lambda_{p}a)\lambda_{p}$, respectively.
\end{proof}

\begin{Lem}\label{LemAntiUnit} Let $\mathscr{A}$ be a partial Hopf algebra with antipode $S$. For all $k,l\in I$, $S(\UnitC{k}{l}) = \UnitC{l}{k}$.
\end{Lem}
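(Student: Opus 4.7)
The plan is to apply the first antipode identity of Lemma~\ref{lemma:antipode}, namely $a_{(1)}S(a_{(2)}) = \Pi^{L}(a)$, to the diagonal unit $a = \UnitC{k}{k}$ and then read off homogeneous components in the $I^{2}$-grading of the total algebra.

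For the left hand side, Property~\ref{Propa} of Definition~\ref{DefPartBiAlg} gives $\Delta(\UnitC{k}{k}) = \sum_{r \in I} \UnitC{k}{r}\otimes \UnitC{r}{k}$, so that $a_{(1)}S(a_{(2)}) = \sum_{r} \UnitC{k}{r}\,S(\UnitC{r}{k})$. For the right hand side, formula \eqref{eq:pi} combined with $\lambda_{p} \UnitC{k}{k} = \delta_{p,k}\UnitC{k}{k}$ and the counit normalisation $\epsilon(\UnitC{k}{k}) = 1$ from Property~\ref{Propc} yields $\Pi^{L}(\UnitC{k}{k}) = \lambda_{k} = \sum_{r} \UnitC{k}{r}$.

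It then remains to match components. Since $S$ sends $A(K)$ into $A(K^{\circ\bullet})$, we have $S(\UnitC{r}{k}) \in A\begin{pmatrix} k & k \\ r & r \end{pmatrix}$, the same direct summand that contains $\UnitC{k}{r}$; hence both sides of the identity decompose over disjoint summands indexed by $r$, and comparing $r$-components gives $\UnitC{k}{r}\,S(\UnitC{r}{k}) = \UnitC{k}{r}$. Because $\UnitC{k}{r}$ is the local unit at the $I^{2}$-object $(k,r)$ and $S(\UnitC{r}{k})$ already has left source $(k,r)$, the left hand side simplifies to $S(\UnitC{r}{k})$, giving $S(\UnitC{r}{k}) = \UnitC{k}{r}$, which after relabelling $(r,k) \mapsto (l,k)$ is the desired identity.

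The one place requiring care is the grading bookkeeping, since $A$ is doubly indexed over $M_{2}(I)$ rather than singly over $I$; but the local units $\UnitC{k}{r}$ form a family of mutually orthogonal idempotents in pairwise distinct direct summands, so the matching of homogeneous pieces is essentially mechanical.
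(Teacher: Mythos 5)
Your proposal is correct and follows exactly the paper's own argument: apply the first identity of Lemma \ref{lemma:antipode} to $\UnitC{k}{k}$, compute $\Pi^{L}(\UnitC{k}{k})=\lambda_{k}$, and match the homogeneous components $\Gr{A}{k}{k}{r}{r}$ using that $S(\UnitC{r}{k})$ lands in the same summand as the local unit $\UnitC{k}{r}$. Nothing to add.
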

\begin{proof} For example the first identity in Equation \eqref{eq:total-antipode} of Lemma \ref{lemma:antipode} applied to $\UnitC{k}{k}$ gives \[\sum_l S(\UnitC{l}{k}) = \sum_l \UnitC{k}{l}S(\UnitC{l}{k}) = \lambda_k,\] as $S(\UnitC{l}{k}) \in \Gr{A}{k}{k}{l}{l}$ and $\Pi^{L}(\UnitC{k}{k}) = \lambda_k$. This implies the lemma.
\end{proof} 
\begin{Rem} \label{remark:index-equivalence}
  Let $\mathscr{A}$ be an $I$-partial Hopf algebra. Then the relation
  on $I$ defined by
  \begin{align*}
    k \sim l \Leftrightarrow \UnitC{k}{l} \neq 0
  \end{align*}
is an equivalence relation. Indeed, it is reflexive and transitive by
assumptions (3) and (1) in Definition \ref{DefPartBiAlg}, and
symmetric by the preceding result. We call the set $\mathscr{I}$ of equivalence classes the \emph{hyperobject} set of $\mathscr{A}$. 
\end{Rem}
The existence of an antipode is closely related to partial invertibility of
the maps $T_{1},T_{2} \colon A \otimes A \to A\otimes A$ given by
\begin{align} \label{eq:wt-12}
  T_{1} (a\otimes b)&= \Delta(a)(1 \otimes b), &
  T_{2} (a\otimes b)&= (a \otimes 1)\Delta(b).
 \end{align}
The precise formulation involves the linear maps $E_{i},G_{i}
 \colon A\otimes A\to A\otimes A$ given by
\begin{align} \label{eq:e1g1}
  G_{1}(a\otimes b) &=
 \sum_{p} a\rho_{p} \otimes \rho_{p}b, &  E_{1}(a \otimes b) &=\Delta(1)(a\otimes b)=\sum_{p} \rho_{p}a\otimes \lambda_{p}b, \\ \label{eq:e2g2}
 G_{2}(a \otimes b) &= \sum_{p} a\lambda_{p} \otimes
    \lambda_{p}b, &
E_{2}(a\otimes b) &= (a\otimes b)\Delta(1)=\sum_{p} a\rho_{p} \otimes b\lambda_{p}.
\end{align}
\begin{Prop} \label{prop:riti}
  Let $\mathscr{A}$ be a partial Hopf algebra with total algebra $A$,
  total comultiplication $\Delta$ and antipode  $S$. Then the maps
  $R_{1},R_{2} \colon A \otimes A \to M(A \otimes A)$ given by
  \begin{align*}
    R_{1}(a \otimes b) &= a_{(1)}\otimes S(a_{(2)})b, &
    R_{2}(a\otimes b) &= aS(b_{(1)})\otimes b_{(2)}
  \end{align*}
  take values in $A\otimes A$ and satisfy for $i=1,2$ the relations
  \begin{align} \label{eq:riti}
    T_{i}R_{i}&=E_{i}, & R_{i}T_{i}&= G_{i}, & T_{i}R_{i}T_{i}&= T_{i}, & R_{i}T_{i}R_{i} &= R_{i}.
  \end{align}
\end{Prop}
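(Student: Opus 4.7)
My plan is to verify each of the stated identities in turn using Sweedler notation, the antipode formula from Lemma \ref{lemma:antipode}, and the key projection formulas \eqref{eq:pi-l-delta}--\eqref{eq:pi-r-delta}.

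First I would check that $R_1$ and $R_2$ actually take values in $A \otimes A$. Reducing to homogeneous $a \in A(K)$ and $b \in A(L)$, the decomposition $\Delta(a) = \sum_{r,s} \Delta_{rs}(a)$ has rcf support on $(r,s)$ by condition \ref{Propd} of Definition \ref{DefPartBiAlg}, so each $\Delta_{rs}(a) = a_{(1)rs} \otimes a_{(2)rs}$ lies in $A\eGr{}{K_{lu}}{K_{ru}}{r}{s} \otimes A\eGr{}{r}{s}{K_{ld}}{K_{rd}}$. Applying $\id \otimes S$ to the second leg and then multiplying on the right by $b$ forces the column indices of $S(a_{(2)rs})$ to match those of $L$, so only finitely many $(r,s)$ contribute and the resulting expression is a bona fide element of $A \otimes A$. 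The argument for $R_2$ is symmetric.

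Next, I would compute $T_iR_i$ and $R_iT_i$ via three-fold Sweedler notation $(\id \otimes \Delta)\Delta(a) = a_{(1)} \otimes a_{(2)} \otimes a_{(3)}$:
\[
T_1R_1(a \otimes b) = \Delta(a_{(1)})(1 \otimes S(a_{(2)})b) = a_{(1)} \otimes a_{(2)}S(a_{(3)})b.
\]
Applying Lemma \ref{lemma:antipode} to the outer $a_{(2)}$-factor collapses $a_{(2)}S(a_{(3)})$ to $\Pi^{L}$ of that factor, yielding $(\id \otimes \Pi^L)(\Delta(a))(1 \otimes b) = \sum_{p}\rho_{p}a \otimes \lambda_{p}b = E_1(a \otimes b)$ by \eqref{eq:pi-l-delta}. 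Analogously,
\[
R_1T_1(a \otimes b) = a_{(1)} \otimes S(a_{(2)})a_{(3)}b = (\id \otimes \Pi^R)(\Delta(a))(1 \otimes b) = G_1(a \otimes b)
\]
by \eqref{eq:pi-r-delta}. The identities $T_2R_2 = E_2$ and $R_2T_2 = G_2$ are entirely symmetric: one expands $\Delta(b)$ using $(\Delta \otimes \id)\Delta$ and invokes the left-hand formulas in \eqref{eq:pi-r-delta} and \eqref{eq:pi-l-delta}.

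The cyclic identities then drop out. For $T_iR_iT_i = T_i$: since $\Delta(1)\Delta(a) = \Delta(a) = \Delta(a)\Delta(1)$, we get $E_1T_1 = T_1$ and $E_2T_2 = T_2$ immediately, hence $T_iR_iT_i = E_iT_i = T_i$. For $R_iT_iR_i = R_i$, it suffices to show $G_iR_i = R_i$. For $i=1$, a single homogeneous term $a_{(1)rs} \otimes S(a_{(2)rs})$ lies in $A\eGr{}{k_1}{k_2}{r}{s} \otimes A\eGr{}{k_4}{k_3}{s}{r}$, so both $a_{(1)rs}\rho_p$ (bottom-right of $a_{(1)rs}$) and $\rho_p S(a_{(2)rs})$ (bottom-left of $S(a_{(2)rs})$) vanish unless $p=s$. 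Hence $\sum_{p} a_{(1)rs}\rho_p \otimes \rho_p S(a_{(2)rs}) = a_{(1)rs} \otimes S(a_{(2)rs})$, and summing over $(r,s)$ and multiplying by $b$ gives $G_1R_1 = R_1$. The identity $G_2R_2 = R_2$ is analogous, this time with $\lambda_p$ matching the top indices of $S(b_{(1)rs})$ and $b_{(2)rs}$. The main obstacle throughout is simply the index bookkeeping: the partial algebra lives over $I^2$, the antipode $S$ reverses both row and column orientation (sending $A(K)$ to $A(K^{\circ\bullet})$), and the projections $\Pi^L, \Pi^R$ take values in $M(A)$ rather than $A$; once one keeps careful track of which components of $\Delta(a)$ survive after projection, every step reduces mechanically to coassociativity, Lemma \ref{lemma:antipode}, and \eqref{eq:pi-l-delta}--\eqref{eq:pi-r-delta}.
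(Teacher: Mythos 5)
Your proof is correct and follows essentially the same route as the paper: compute $T_iR_i$ and $R_iT_i$ in Sweedler notation, collapse $a_{(2)}S(a_{(3)})$ resp.\ $S(a_{(2)})a_{(3)}$ via Lemma \ref{lemma:antipode}, and invoke \eqref{eq:pi-l-delta}--\eqref{eq:pi-r-delta}. The only difference is cosmetic: you establish well-definedness by homogeneous decomposition and the rcf condition where the paper uses a one-line multiplier manipulation, and you spell out the absorption identities $E_iT_i=T_i$ and $G_iR_i=R_i$ that the paper leaves as "follow easily".
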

\begin{proof}
  The map $R_{1}$ takes values in $A\otimes A$ because
  \begin{align*}
  a_{(1)} \otimes
  S(a_{(2)})\lambda_{k}\rho_{l} =  a_{(1)} \otimes S(\rho_{l}\lambda_{k}a_{(2)}) \in A
  \otimes A
  \end{align*}
  for all $a\in A$, and Lemma
  \ref{lemma:antipode},  Equation \eqref{eq:pi-l-delta} and Lemma \ref{LemAntiUnit} imply
  \begin{align*}
    T_{1}R_{1}(a \otimes b)&= a_{(1)} \otimes a_{(2)}S(a_{(3)})b =
    a_{(1)} \otimes \Pi^{L}(a_{(2)})b =
    \sum_{p} \rho_{p}a \otimes \lambda_{p}b, \\
    R_{1}T_{1}(a \otimes b) &= a_{(1)} \otimes S(a_{(2)})a_{(3)}b =
    a_{(1)} \otimes \Pi^{R}(a_{(2)})b = \sum_{p} a\rho_{p}\otimes
    \rho_{p}b.
  \end{align*}
 The relations $T_{1}R_{1}T_{1}=T_{1}$ and
$R_{1}T_{1}R_{1}=R_{1}$ follow easily from  \eqref{EqDel} and
\eqref{eq:delta-lambda-rho}. The assertions concerning $R_{2}$ and
$T_{2}$ follow similarly.
\end{proof}
\begin{Theorem}  \label{theorem:partial-hopf-algebra}
  Let $\mathscr{A}$ be a partial bialgebra with total algebra $A$,
  total comultiplication $\Delta$ and counit $\epsilon$. Then the
  following conditions are equivalent:
  \begin{enumerate}[label={(\arabic*)}]
  \item\label{tph1} $\mathscr{A}$ is a partial Hopf algebra.
  \item\label{tph2} There exist linear maps $R_{1},R_{2} \colon A\otimes A\to
    A\otimes A$ satisfying  \eqref{eq:riti}.
  \item\label{tph3} $(A,\Delta,\epsilon)$  is a weak multiplier Hopf algebra in the sense of \cite{VDW1}.
  \end{enumerate}
  If these conditions hold, then the total  antipode of $\mathscr{A}$ coincides with the antipode of $(A,\Delta,\epsilon)$.
\end{Theorem}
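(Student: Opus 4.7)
My plan is to establish the cyclic chain of implications $(1) \Rightarrow (2) \Rightarrow (3) \Rightarrow (1)$. The first implication is already realised by Proposition \ref{prop:riti}, which produces $R_{1},R_{2}$ directly from the antipode via $R_{1}(a\otimes b) = a_{(1)}\otimes S(a_{(2)})b$ and the symmetric formula for $R_{2}$.

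For $(2)\Rightarrow (3)$, the material developed before the theorem already shows that $(A,\Delta)$ is a regular, left- and right-full weak multiplier bialgebra with base algebra $\Fun_{\fin}(I)$. What remains is to check the extra requirement in the Van Daele--Wang definition of a weak multiplier Hopf algebra, namely the existence of generalised inverses for the canonical maps $T_{1},T_{2}$ of \eqref{eq:wt-12}. The relations \eqref{eq:riti} supply exactly these: $T_{i}R_{i}=E_{i}$ identifies the range of $T_{i}$ with the range of the idempotent $E_{i}$ obtained by multiplication with $\Delta(1)$, while $R_{i}T_{i}=G_{i}$ handles the dual side, and together with $T_{i}R_{i}T_{i}=T_{i}$ and $R_{i}T_{i}R_{i}=R_{i}$ this exhausts the axiomatic list of \cite{VDW1}.

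For $(3)\Rightarrow (1)$, I begin with a weak multiplier Hopf algebra $(A,\Delta,\epsilon)$ with antipode $S\colon A\to M(A)$; regularity already ensures $S(A)\subseteq A$. The decisive step is to show that $S$ respects the bigrading, i.e.~that $S(A(K))\subseteq A(K^{\circ\bullet})$. A short computation starting from the defining axiom $a_{(1)}S(a_{(2)})=\Pi^{L}(a)$ applied at $a=\lambda_{k}$, using \eqref{eq:pi}, \eqref{eq:delta-lambda-rho} and the fact that $\epsilon(\lambda_{k})=1$, forces $S(\lambda_{k})=\rho_{k}$, and symmetrically $S(\rho_{l})=\lambda_{l}$. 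Anti-multiplicativity of $S$ then yields $S(\UnitC{k}{l})=\UnitC{l}{k}$, and writing any $a\in A(K)$ as $a=\UnitC{K_{lu}}{K_{ld}}\,a\,\UnitC{K_{ru}}{K_{rd}}$ one obtains $S(a)=\UnitC{K_{rd}}{K_{ru}}\,S(a)\,\UnitC{K_{ld}}{K_{lu}}\in A(K^{\circ\bullet})$. With grading preservation in place, decomposing the total identities $a_{(1)}S(a_{(2)})=\Pi^{L}(a)$ and $S(a_{(1)})a_{(2)}=\Pi^{R}(a)$ into their $M_{2}(I)$-homogeneous components produces the partial antipode axioms \eqref{eq:antipode-pi-l} and \eqref{eq:antipode-pi-r}; this is simply the `if' direction of Lemma \ref{lemma:antipode} read backwards.

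The subtlest point I expect is the bookkeeping in $(2)\Rightarrow (3)$: one has to confirm that the four relations in \eqref{eq:riti}, together with the explicit forms of $E_{i}$ and $G_{i}$ in terms of $\Delta(1)$, cover the full axiomatic list of \cite{VDW1} rather than only part of it. Once this is done, the coincidence of the total antipode of $\mathscr{A}$ with the Van Daele--Wang antipode is immediate, since both satisfy the same defining relations and the Van Daele--Wang antipode is uniquely determined by them.
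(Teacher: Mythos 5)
Your argument is correct and follows the paper's proof almost verbatim: $(1)\Rightarrow(2)$ is Proposition \ref{prop:riti}, and $(2)\Leftrightarrow(3)$ is, in both versions, a matter of matching $E_{i}$, $G_{i}$ and the relations \eqref{eq:riti} against Definition/Proposition 1.14 of \cite{VDW1} (the paper does the bookkeeping you flag as the subtle point by computing $G_{1}(a_{(1)}\otimes b)\otimes a_{(2)}c$ and $ac_{(1)}\otimes G_{2}(b\otimes c_{(2)})$ explicitly). The only place you diverge is $(3)\Rightarrow(1)$: the paper simply cites Lemma 6.14 and equation (6.14) of \cite{Boh1} to get both $S(A(K))\subseteq A(K^{\circ\bullet})$ and \eqref{eq:total-antipode}, and then invokes Lemma \ref{lemma:antipode}, whereas you unpack the grading statement by first establishing $S(\lambda_{k})=\rho_{k}$ and $S(\rho_{l})=\lambda_{l}$ and then sandwiching $a=\UnitC{K_{lu}}{K_{ld}}\,a\,\UnitC{K_{ru}}{K_{rd}}$. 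That sandwich argument is a nice, concrete alternative, but note two small caveats: $\lambda_{k}$ is a multiplier, not an element of $A$, so "applying the axiom at $a=\lambda_{k}$" should really be run on $\UnitC{k}{m}\in A$ (or one should use the identity $S(\bar\Pi^{L}(a))=\Pi^{R}(a)$ directly); and both the anti-multiplicativity of $S$ and the total relations \eqref{eq:total-antipode} for the Van Daele--Wang antipode are themselves theorems imported from \cite{VDW1}/\cite{Boh1}, so your route does not actually avoid those references — it just makes the grading-preservation step more explicit. With those facts in hand your derivation closes correctly, and the uniqueness argument for the final claim matches the paper's.
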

\begin{proof}
\ref{tph1} implies \ref{tph2} by Proposition \ref{prop:riti}. \ref{tph2} is equivalent to \ref{tph3} by Definition
1.14 in \cite{VDW1}. Indeed, the maps $G_{1},G_{2}$ defined in \eqref{eq:e1g1} and \eqref{eq:e2g2} satisfy
\begin{align*}
  G_{1}(a_{(1)} \otimes b) \otimes a_{(2)}c &= \sum_{p} a_{(1)} \otimes \rho_{p}b
  \otimes a_{(2)}\lambda_{p}c, \\
  ac_{(1)} \otimes G_{2}(b\otimes c_{(2)}) &=\sum_{p} a\rho_{p}c_{(1)} \otimes b\lambda_{p} \otimes c_{(2)}
\end{align*}
and therefore coincide with the maps introduced in Proposition 1.14 in
\cite{VDW1}.  Finally, assume \ref{tph3}. Then
 Lemma 6.14 and equation (6.14) in \cite{Boh1} imply that the antipode
$S$ of $(A,\Delta)$ satisfies $S(A(K))\subseteq A(K^{\circ\bullet})$ and relation \eqref{eq:total-antipode}.  Now, Lemma \ref{lemma:antipode} implies \ref{tph1}.
\end{proof}

From \cite[Proposition 3.5 and Proposition 3.7]{VDW1} or \cite[Theorem
6.12 and Corollary 6.16]{Boh1}, we can conclude that the antipode of a
partial Hopf algebra reverses the multiplication and
comultiplication. Denote by $\Delta^{\op}$ the composition of
$\Delta$ with the flip map.

\begin{Cor} \label{corollary:antipode} Let $\mathscr{A}$ be a partial
  Hopf algebra. Then the total antipode $S:A\rightarrow A$ is uniquely determined and satisfies
  $S(ab) = S(b)S(a)$ and $\Delta(S(a)) = (S\otimes S)\Delta^{\op}(a)$
  for all $a,b\in A$.
\end{Cor}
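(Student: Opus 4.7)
The plan is to reduce the statement to known results about weak multiplier Hopf algebras by invoking Theorem \ref{theorem:partial-hopf-algebra}, which has just been established. That theorem asserts that $(A,\Delta,\epsilon)$ is a weak multiplier Hopf algebra in the sense of \cite{VDW1} and that the total antipode of $\mathscr{A}$ coincides with the antipode of $(A,\Delta,\epsilon)$. Hence every structural property known for the antipode of a weak multiplier Hopf algebra transfers directly to the total antipode $S\colon A\to A$ of $\mathscr{A}$.

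First, I would treat uniqueness. Uniqueness is immediate once the total antipode is characterized by the identities $a_{(1)}S(a_{(2)}) = \Pi^{L}(a)$ and $S(a_{(1)})a_{(2)} = \Pi^{R}(a)$ of Lemma \ref{lemma:antipode}, since the maps $T_{1}, T_{2}$ in Proposition \ref{prop:riti} have well-defined generalised inverses $R_{1}, R_{2}$ satisfying \eqref{eq:riti}; alternatively, one cites the uniqueness part of the weak multiplier Hopf algebra axioms in \cite{VDW1} or the discussion in \cite[Section 6]{Boh1}.

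For the anti-multiplicativity $S(ab)=S(b)S(a)$ and anti-comultiplicativity $\Delta(S(a)) = (S\otimes S)\Delta^{\op}(a)$, I would quote \cite[Proposition 3.5 and Proposition 3.7]{VDW1}, or equivalently \cite[Theorem 6.12 and Corollary 6.16]{Boh1}. The first identity holds as an equality in $A$, and the second is an equality of multipliers in $M(A\otimes A)$ once one verifies that both sides make sense there; this is precisely the content of the cited results, whose hypotheses (regularity and left/right fullness) were already verified in the paragraphs preceding Proposition \ref{PropCharPBA}.

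The main obstacle, such as it is, lies in matching conventions: checking that the antipode produced in the cited weak multiplier Hopf algebra literature is indeed the same linear map $S\colon A\to A$ assembled from the piecewise maps $S\colon A(K)\to A(K^{\circ\bullet})$ of Definition \ref{DefPartBiAlgAnt}. This identification was already made in the final assertion of Theorem \ref{theorem:partial-hopf-algebra}, so once that is in hand, the proof amounts essentially to a citation.
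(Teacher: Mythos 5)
Your proposal is correct and follows essentially the same route as the paper: the anti-multiplicativity and anti-comultiplicativity are obtained by citing \cite[Propositions 3.5 and 3.7]{VDW1} (or \cite[Theorem 6.12 and Corollary 6.16]{Boh1}) after identifying the total antipode with the weak multiplier Hopf algebra antipode via Theorem \ref{theorem:partial-hopf-algebra}, and uniqueness is deduced from the identities \eqref{eq:total-antipode} (the paper points to \cite[Remark 2.8.(ii)]{VDW1} for this). No gaps.
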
 
\begin{proof} Uniqueness of the antipode follows from the identities \eqref{eq:total-antipode}, see also \cite[Remark 2.8.(ii)]{VDW1}. 
\end{proof} 

We will need the following relation between $\epsilon$ and $S$ at some point.

\begin{Lem}\label{LemCoAnt} Let $(\mathscr{A},\Delta)$ be a partial Hopf algebra. Then $\epsilon\circ S = \epsilon$ on each $\Gr{A}{k}{l}{m}{n}$.
\end{Lem}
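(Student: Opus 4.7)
The plan is to reduce to the diagonal case and then combine the partial antipode axiom \eqref{eq:antipode-pi-r} with counitality. Since $S$ sends $\Gr{A}{k}{l}{m}{n}$ into $\Gr{A}{n}{m}{l}{k}$ and $\epsilon$ is defined to vanish on any non-diagonal coalgebra component, both $\epsilon$ and $\epsilon\circ S$ are zero on $\Gr{A}{k}{l}{m}{n}$ unless $(k,l)=(m,n)$; so it suffices to prove $\epsilon(S(a))=\epsilon(a)$ for $a\in\Gr{A}{k}{l}{k}{l}$.

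For such $a$, I will apply \eqref{eq:antipode-pi-r} with $M=\pma{k}{l}{k}{l}$ and $P=\pma{l}{l}{l}{l}$. Writing $K=\pma{k}{l}{r}{s}$ and $L=\pma{r}{s}{k}{l}$ for the decomposition $K\bmult L=M$, one computes $K^{\circ\bullet}\wmult L=\pma{s}{s}{l}{l}$, which coincides with $P$ exactly when $s=l$, while $r$ remains free. Since $P_l=P_r=(l,l)$, the right-hand side of \eqref{eq:antipode-pi-r} becomes $\epsilon(a)\UnitC{l}{l}$, and evaluating $\epsilon$ together with condition (3) of Definition \ref{DefPartBiAlg} gives $\epsilon(a)$.

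On the left-hand side, applying $\epsilon$ termwise and using multiplicativity (Definition \ref{DefPartBiAlg}(2)) produces $\sum_{r}\epsilon(S(a_{(1)rl}))\,\epsilon(a_{(2)rl})$. The key bookkeeping observation is that $S(a_{(1)rl})\in A\pma{l}{r}{l}{k}$ lies in a diagonal coalgebra component only when $r=k$, and the same constraint is forced by $a_{(2)rl}\in A\pma{r}{l}{k}{l}$; hence the sum collapses to the single term $\epsilon(S(a_{(1)kl}))\,\epsilon(a_{(2)kl})$, which therefore equals $\epsilon(a)$.

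To finish, right counitality for $a\in\Grt{A}{(k,l)}{(k,l)}$ reads $a_{(1)kl}\,\epsilon(a_{(2)kl})=a$; applying $\epsilon\circ S$ and linearity gives $\epsilon(S(a))=\epsilon(S(a_{(1)kl}))\,\epsilon(a_{(2)kl})=\epsilon(a)$, completing the proof. The only real difficulty is the combinatorial bookkeeping of the partial index constraints needed to isolate the surviving term in the antipode identity.
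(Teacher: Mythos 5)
Your proof is correct: the reduction to the diagonal components (both $\epsilon$ and $\epsilon\circ S$ vanish on $\Gr{A}{k}{l}{m}{n}$ unless $(k,l)=(m,n)$), the identification of the admissible pairs $(K,L)$ in \eqref{eq:antipode-pi-r} for $M=\pma{k}{l}{k}{l}$ and $P=\pma{l}{l}{l}{l}$, the collapse of the sum over $r$ to the single term $r=k$ after applying $\epsilon$ and using its partial multiplicativity, and the final appeal to counitality all check out. The paper reaches the same conclusion by a slightly different route: it works globally with the maps $T_1,R_1$ of Proposition \ref{prop:riti}, observes that $(\id\otimes\epsilon)T_1=\id\otimes\epsilon$ and hence --- since $R_1$ inverts $T_1$ on $\Delta(1)(A\otimes A)$ --- also $(\id\otimes\epsilon)R_1=\id\otimes\epsilon$ there, and evaluates at $a\otimes\UnitC{k}{k}$ to obtain $(\id\otimes(\epsilon\circ S))\Delta_{kl}(a)=a$, to which it applies $\epsilon$. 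Both arguments boil down to hitting an antipode identity with $\epsilon$ and invoking counitality, but you use the right-hand identity \eqref{eq:antipode-pi-r} directly at the component level, which makes the argument entirely self-contained (only Definitions \ref{DefPartBiAlg} and \ref{DefPartBiAlgAnt} are needed) at the price of the index bookkeeping you acknowledge, whereas the paper's version reuses the Galois-map machinery already established and keeps the indices out of sight.
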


\begin{proof} Using the notation in Proposition \ref{prop:riti} and the discussion preceding it, we have that \[T_1: \sum_p(A\rho_p\otimes \rho_p A)\rightarrow \Delta(1)(A\otimes A)\] is a bijection with $R_1$ as inverse. As one easily verifies that $(\id\otimes \epsilon)T_1 = \id\otimes \epsilon$ by the partial multiplicativity and counit property of $\epsilon$, it follows that also $(\id\otimes \epsilon)R_1 = \id\otimes \epsilon$ on $\Delta(1)(A\otimes A)$. Applying both sides to $a\otimes \UnitC{k}{k}$ with $a\in \Gr{A}{k}{l}{k}{l}$, we find \[(\id\otimes (\epsilon\circ S))\Delta_{kl}(a) = a.\] Applying $\epsilon$ to this identity, we find $\epsilon\circ S = \epsilon$ on each $\Gr{A}{k}{l}{k}{l}$, and hence on all $\Gr{A}{k}{l}{m}{n}$.
\end{proof}

In practice, it is convenient to have an \emph{invertible} antipode around. Although the invertibility often comes for free in case extra structure is around, we will mostly just impose it to make life easier. The following definition follows the terminology of \cite{VDae1}. 

\begin{Def} Let $\mathscr{A}$ be a partial Hopf algebra. We call $\mathscr{A}$ a \emph{regular} partial Hopf algebra if the antipode maps on $\mathscr{A}$ are invertible.
\end{Def}

From the uniqueness of the antipode, it follows immediately that $S^{-1}$ is then an antipode for $(\mathscr{A},\Delta^{\op})$. Conversely, if both $(\mathscr{A},\Delta)$ and $(\mathscr{A},\Delta^{\op})$ have antipodes, then $(\mathscr{A},\Delta)$ is a regular partial Hopf algebra. 

\subsection{Invariant integrals}


\begin{Def}
  Let $\mathscr{A}$ be an $I$-partial bialgebra.  We call a family of
  functionals
\begin{align} \label{eq:functionals}
  \phic{k}{m} \colon A\pmat{k}{k}{m}{m} \to \C
\end{align}
a \emph{left invariant} \emph{integral} if
 $\phic{k}{k}(\UnitC{k}{k})=1$ for all $k\in
I$ and
\begin{align}
  \label{eq:integral}
   (\id \otimes \phic{l}{m})(\Delta_{ll}(a)) 
&= \delta_{k,p} \phic{k}{m}(a)
  \UnitC{k}{l} 
\end{align}
 for all $k,l,m,p\in I$, $a \in A\pmat{k}{p}{m}{m}$. 
 
 We call them a \emph{right invariant}  \emph{integral} if instead one has \begin{align}
  (\phic{k}{l} \otimes
  \id)(\Delta_{ll}(a))&= \delta_{m,p} \phic{k}{m}(a) \UnitC{l}{m}\end{align}
 for all $k,l,m,p\in I$, $a \in A\pmat{k}{k}{m}{p}$. 
 
 A left integral which is at the same time a right invariant integral will simply be called an \emph{invariant integral}.
\end{Def}

As before, we can extend a (left or right) invariant integral to a functional $\phi$ on $A$ by linearity and by putting $\phi=0$ on $\Gr{A}{k}{l}{m}{n}$ if $k\neq l$ or $m\neq n$. The total form of the invariance conditions
\eqref{eq:integral}  reads as follows. 

\begin{Lem} \label{lemma:total-integral}
  A family of functionals  as in   \eqref{eq:functionals}
  is left invariant
  if and only if
for all $a,b\in A$,
  \begin{align*}
(\id\otimes \phi)((b\otimes 1)\Delta(a)) &= \sum_{k}\phi(\lambda_{k}a)b\lambda_k.
      \end{align*}
      It defines a right invariant functional if and only if 
   \begin{align*}   (\phi\otimes \id)(\Delta(a)(1\otimes b)) &= \sum_{n}
\phi(\rho_{n} a)\rho_n b.\end{align*}
\end{Lem}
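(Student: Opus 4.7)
The plan is to show that the two global identities are direct-sum repackagings of the local invariance conditions, using the strict decomposition $\Delta(a)=\sum_{r,s}\Delta_{r,s}(a)$ together with the fact that $\phi$ vanishes on every block $A\pmat{i}{j}{u}{v}$ with $i\neq j$ or $u\neq v$. By bilinearity, it suffices to test each equality on homogeneous $a\in A(K)$ and $b\in A(L)$. Since $\Delta_{r,s}(a)$ lies in $A\pmat{K_{lu}}{K_{ru}}{r}{s}\otimes A\pmat{r}{s}{K_{ld}}{K_{rd}}$, only the diagonal terms with $r=s$ and with $K_{ld}=K_{rd}$ survive after applying $\id\otimes\phi$, so both sides of the alleged global identity vanish unless the lower indices of $a$ are equal.

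For the direction local $\Rightarrow$ global, I would fix $a\in A\pmat{k}{p}{m}{m}$ and expand
\[(\id\otimes\phi)((b\otimes 1)\Delta(a)) = b\sum_{l}(\id\otimes \phic{l}{m})(\Delta_{l,l}(a)),\]
which by the local left invariance hypothesis equals $\delta_{k,p}\phic{k}{m}(a)\,b\lambda_{k}$. On the other hand, since $\lambda_{r}a=\delta_{r,k}a$ (a direct consequence of the multiplier-algebra description in Example~\ref{ExaMult}(2)), one has $\phi(\lambda_{r}a)=\delta_{r,k}\delta_{k,p}\phic{k}{m}(a)$, so the right-hand side $\sum_{r}\phi(\lambda_{r}a)\,b\lambda_{r}$ also collapses to $\delta_{k,p}\phic{k}{m}(a)\,b\lambda_{k}$, and the two match.

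For the converse, given $a\in A\pmat{k}{p}{m}{m}$, I would specialize $b=\UnitC{k}{l}$ in the global identity. The relations $\UnitC{k}{l}\cdot x=\delta_{l,r}x$ for $x\in A\pmat{k}{p}{r}{s}$ and $\UnitC{k}{l}\lambda_{r}=\delta_{k,r}\UnitC{k}{l}$ reduce the left-hand side to $(\id\otimes \phic{l}{m})(\Delta_{l,l}(a))$ and the right-hand side to $\delta_{k,p}\phic{k}{m}(a)\UnitC{k}{l}$, which is exactly the local left invariance. The right-invariant case is handled by the symmetric argument, with $\rho_{n}$ replacing $\lambda_{k}$ and with the dual index bookkeeping (using that $\rho_{n}a=\delta_{n,m}a$ for $a\in A\pmat{k}{k}{m}{p}$, so that $\phi$ on the first leg of $\Delta(a)$ picks out the diagonal components).

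The only subtle point, and the one I would verify carefully, is that left or right multiplication by the idempotents $\lambda_{k}$, $\rho_{n}$, or $\UnitC{k}{l}$ picks out a single homogeneous component of $a$ or of $\Delta_{r,s}(a)$; this follows directly from Example~\ref{ExaMult}(2) and the compatibility conditions in Definition~\ref{DefPartBiAlg}. No substantive obstacle is expected, and the entire argument is essentially matrix-block bookkeeping.
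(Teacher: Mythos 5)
Your argument is correct, and it is exactly the direct block-by-block verification that the paper's proof (which simply says ``straightforward'') intends: decompose $\Delta(a)=\sum_{r,s}\Delta_{rs}(a)$, use the support of $\phi$ to kill all but the diagonal terms, and use multiplication by the homogeneous idempotents $\UnitC{k}{l}$, $\lambda_k$, $\rho_n$ to match components on both sides. The key bookkeeping identities you isolate ($\lambda_r a=\delta_{r,k}a$, $\UnitC{k}{l}\lambda_r=\delta_{k,r}\UnitC{k}{l}$, etc.) are all valid, so nothing further is needed.
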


\begin{proof}
  Straightforward.
\end{proof}

We have the following form of \emph{strong invariance}.

\begin{Lem} \label{lemma:strong-invariance}
  Let $\mathscr{A}$ be a partial Hopf algebra with left invariant integral $\phi$. Then
  for all $a\in A$,
  \begin{align*}
    S\left(( \id\otimes
    \phi)(\Delta(b)(1 \otimes a))\right) &= (\id \otimes \phi)((1 \otimes b)\Delta(a)).
  \end{align*}
  Similarly, if $\mathscr{A}$ is a partial Hopf algebra with right invariant integral $\phi$, then 
   \begin{align*} S\left((\phi \otimes
    \id)((a\otimes 1)\Delta(b))\right) &= (\phi \otimes \id)(\Delta(a)(b\otimes 1)).\end{align*}
\end{Lem}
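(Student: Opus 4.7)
My plan is to reduce the claim to an equation in the total algebra and prove it by combining the antipode relations of Lemma~\ref{lemma:antipode} with left invariance, adapting the classical Hopf-algebra proof so that $\Pi^{L}$ and $\Pi^{R}$ replace the counit. Via Lemma~\ref{lemma:total-integral}, the left-invariant assertion reduces to the Sweedler-form identity
\[
S(b_{(1)})\phi(b_{(2)}a) \;=\; a_{(1)}\phi(ba_{(2)}), \qquad a,b\in A.
\]

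I would first establish the auxiliary multiplier identity
\[
S(c_{(1)})\phi(c_{(2)}) \;=\; \sum_{k}\phi(\lambda_{k}c)\rho_{k}, \qquad c\in A,
\]
by applying $S$ to the total form of left invariance $c_{(1)}\phi(c_{(2)})=\sum_{k}\phi(\lambda_{k}c)\lambda_{k}$ from Lemma~\ref{lemma:total-integral}, using $S(\lambda_{k})=\rho_{k}$, which follows from Lemma~\ref{LemAntiUnit}. Specializing to $c=ba_{(2)}$, whose coproduct is $b_{(1)}a_{(2)}\otimes b_{(2)}a_{(3)}$ by coassociativity, and then left-multiplying by $a_{(1)}$, yields
\[
a_{(1)}\,S(b_{(1)}a_{(2)})\phi(b_{(2)}a_{(3)}) \;=\; \sum_{p} a_{(1)}\rho_{p}\,\phi(\lambda_{p}ba_{(2)}).
\]
An index analysis on the right-hand side -- keying on the fact that $a_{(1)}\rho_{p}$ forces $p$ to equal the lower-right index of $a_{(1)}$, while $\lambda_{p}ba_{(2)}$ forces $p$ to equal the upper-left index of $b$, and these two constraints are automatically compatible whenever either factor is nonzero -- collapses the sum to $a_{(1)}\phi(ba_{(2)})$, the desired right-hand side.

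To finish, I would rewrite the left-hand side using anti-multiplicativity of $S$ from Corollary~\ref{corollary:antipode}, obtaining $a_{(1)}S(a_{(2)})S(b_{(1)})\phi(b_{(2)}a_{(3)})$, and apply the antipode relation $a_{(1)}S(a_{(2)})=\Pi^{L}(a)$ from Lemma~\ref{lemma:antipode} with a shifted Sweedler index. Combined with coassociativity and formula~\eqref{eq:pi-l-delta}, this gives $a_{(1)}S(a_{(2)})\otimes a_{(3)} = \sum_{p}\lambda_{p}\otimes\lambda_{p}a$, and a symmetric index analysis then identifies $a_{(1)}S(a_{(2)})S(b_{(1)})\phi(b_{(2)}a_{(3)})$ with $S(b_{(1)})\phi(b_{(2)}a)$, completing the equation. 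The right-invariant case follows by an entirely analogous argument, using right invariance and the antipode relation $S(a_{(1)})a_{(2)}=\Pi^{R}(a)$ in place of their left-sided counterparts. The main obstacle will be the index bookkeeping in the partial setting: unlike the Hopf case where $\Pi^{L}$ and $\Pi^{R}$ both collapse to $\epsilon$, here one must repeatedly verify that the multipliers $\lambda_{p}$ and $\rho_{p}$ inserted along the way act as identities on the relevant components, which requires tracking the four indices carried by each element of the partial bialgebra.
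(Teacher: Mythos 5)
Your proposal is correct, and it reaches the key Sweedler identity $S(b_{(1)})\phi(b_{(2)}a)=a_{(1)}\phi(ba_{(2)})$ by a genuinely different chain of steps than the paper. The paper transforms $a_{(1)}\phi(ba_{(2)})$ directly into $S(b_{(1)})\phi(b_{(2)}a)$: it expands $b=\sum_n\epsilon(b_{(1)}\rho_n)b_{(2)}\lambda_n$ by counitality, trades $\lambda_n$ in the second leg for $\rho_n$ in the first via \eqref{EqDel}, recognizes $\sum_n\epsilon(b_{(1)}\rho_n)\rho_n=\Pi^{R}(b_{(1)})=S(b_{(1)})b_{(2)}$, and only then invokes left invariance. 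You instead apply $S$ to the invariance identity at the outset (legitimate termwise, using $S(\UnitC{k}{l})=\UnitC{l}{k}$, hence $S(\lambda_k)=\rho_k$ strictly), specialize to $c=ba_{(2)}$ inside $(1\otimes b)\Delta(a)\in A\otimes A$, and then unwind with anti-multiplicativity of $S$ together with $a_{(1)}S(a_{(2)})=\Pi^{L}(a)$ and \eqref{eq:pi-l-delta}. The trade-off is that your route needs Corollary \ref{corollary:antipode} (anti-multiplicativity of $S$), which the paper's computation avoids, but that corollary precedes the lemma, so there is no circularity; conversely you avoid the counit-expansion trick. Your index-collapse claims are the right ones and do hold: e.g.\ in $\sum_p a_{(1)}\rho_p\,\phi(\lambda_p b a_{(2)})$ the only possible contribution has $p$ equal to both the lower-right index of $a_{(1)}$ and the upper-left index of $b$, and these agree automatically whenever $\phi(ba_{(2)})\neq 0$, since the product then lies in a component $\Gr{A}{r}{w}{t}{n}$ on which $\phi$ vanishes unless $r=w$ (and $t=n$); the analogous collapse of $\sum_p\lambda_p S(b_{(1)})\phi(b_{(2)}\lambda_p a)$ to $S(b_{(1)})\phi(b_{(2)}a)$ is forced by $\lambda_pS(b_{(1)})\neq0$. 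This bookkeeping is exactly the kind the paper itself uses implicitly in its last step, so your proof is at the same level of rigor.
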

\begin{proof}
 The counit property, the relations \eqref{EqDel} and
 \eqref{eq:total-antipode} and Lemma \ref{lemma:total-integral} imply
  \begin{align*}
    a_{(1)}\phi(ba_{(2)}) &= \sum_{n}
    a_{(1)}\phi(\epsilon(b_{(1)}\rho_{n})b_{(2)}\lambda_{n}a_{(2)}) \\
&= \sum_{n} \epsilon(b_{(1)}\rho_{n})\rho_{n}a_{(1)}\phi(b_{(2)}a_{(2)})
\\
&= S(b_{(1)})b_{(2)}a_{(1)}\phi(b_{(3)}a_{(2)}) =
S(b_{(1)})\phi(b_{(2)}a)
  \end{align*}
for all $a,b \in A$. The second equation 
follows similarly.
\end{proof}

\begin{Lem} Assume that $\mathscr{A}$ is a regular  $I$-partial Hopf algebra which admits a left invariant integral $\phi$. Then the following hold.
\begin{enumerate}[label = {(\arabic*)}]
\item\label{LI1} $\phi(\UnitC{k}{m})=1$ for all $k,m\in I$ with $\UnitC{k}{m}\neq 0$.
\item\label{LI2} $\phi$ is uniquely determined.
\item\label{LI3} $\phi=\phi S$.
\item\label{LI4} $\phi$ is invariant.
\end{enumerate}
\end{Lem}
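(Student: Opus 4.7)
The plan is to treat the four claims in order, with \ref{LI3} being the main obstacle and \ref{LI4} following from it by a direct manipulation.

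For \ref{LI1}, I would substitute $a = \UnitC{k}{m}$ into the left invariance identity \eqref{eq:integral}. By Definition \ref{DefPartBiAlg}\ref{Propa}, $\Delta_{ll}(\UnitC{k}{m}) = \UnitC{k}{l}\otimes\UnitC{l}{m}$, so invariance reduces to $\phic{l}{m}(\UnitC{l}{m})\UnitC{k}{l} = \phic{k}{m}(\UnitC{k}{m})\UnitC{k}{l}$. When $\UnitC{k}{m}\neq 0$ we have $k\sim m$ (Remark \ref{remark:index-equivalence}), so setting $l = m$ gives $\UnitC{k}{l} = \UnitC{k}{m}\neq 0$, and cancellation together with the normalization $\phic{m}{m}(\UnitC{m}{m}) = 1$ yields $\phic{k}{m}(\UnitC{k}{m}) = 1$. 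For \ref{LI2}, I would pass via Theorem \ref{theorem:partial-hopf-algebra} to the regular weak multiplier Hopf algebra $(A,\Delta)$, whose base algebra is the commutative $\Fun_{\fin}(I)$, and invoke the standard uniqueness of left invariant integrals from \cite{VDW1}: any two such are related by left multiplication by an element of the base algebra, which is pinned down to $1$ by the normalization together with \ref{LI1}.

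For \ref{LI3}, the main obstacle, my plan is to invoke the standard results from \cite{VDW1} on regular weak multiplier Hopf algebras with integrals applied to $(A,\Delta)$. The key inputs are strong invariance (Lemma \ref{lemma:strong-invariance}), the antipode identities $S(\lambda_k) = \rho_k$ (Lemma \ref{LemAntiUnit}) and $\Delta\circ S = (S\otimes S)\Delta^{\op}$ (Corollary \ref{corollary:antipode}), and $\epsilon\circ S = \epsilon$ (Lemma \ref{LemCoAnt}). These combine to show that $\phi\circ S$ is again an invariant functional matching $\phi$ on the local units $\UnitC{k}{k}$ (using Lemma \ref{LemAntiUnit} and \ref{LI1}), and uniqueness from \ref{LI2} then forces the equality $\phi = \phi\circ S$. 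The delicate point is to correctly track the partial grading and the substitution $c = S(a)$, which is permissible thanks to regularity.

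For \ref{LI4}, assuming \ref{LI3}, I would apply $S$ to both sides of the total left invariance identity of Lemma \ref{lemma:total-integral}. Using the antihomomorphism property of $S$, Corollary \ref{corollary:antipode}, Lemma \ref{LemAntiUnit}, and $\phi = \phi\circ S$, after substitutions $c = S(a)$ and $b' = S(b)$ (both surjective by regularity) the identity transforms into $(\phi\otimes\id)(\Delta(c)(1\otimes b')) = \sum_k \phi(c\rho_k)\rho_k b'$. Since $\phi$ is supported on the diagonal components $A\pmat{r}{r}{s}{s}$, on which $c\rho_k = \rho_k c$, the right-hand side equals $\sum_k \phi(\rho_k c)\rho_k b'$, which is precisely the right invariance of $\phi$ from Lemma \ref{lemma:total-integral}.
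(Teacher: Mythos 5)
Your part \ref{LI1} is fine and essentially the paper's computation. The trouble starts with \ref{LI2} and \ref{LI3}, which together contain a genuine gap. For \ref{LI2} you appeal to a ``standard uniqueness of left invariant integrals'' in \cite{VDW1}, but that reference develops the structure theory of weak multiplier Hopf algebras, not their integral theory; the assertion that two normalized left invariant integrals differ by left multiplication by a base-algebra element is precisely the kind of statement that needs a Fubini-type argument and cannot be quoted off the shelf here. Worse, even granting \ref{LI2}, your derivation of \ref{LI3} is circular: since $\Delta\circ S=(S\otimes S)\Delta^{\op}$, the functional $\phi\circ S$ is a priori only a \emph{right} invariant integral, so uniqueness of \emph{left} invariant integrals does not apply to it. To make it left invariant you would need $\phi$ to be right invariant --- which is \ref{LI4}, which you in turn deduce from \ref{LI3}.

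The paper cuts through all of this with a single two-line Fubini computation that you should adopt: let $\psi$ be \emph{any} right invariant integral (one exists, namely $\phi\circ S$). For $a\in A\pmat{k}{k}{m}{m}$ evaluate $(\Grt{\psi}{k}{k}\otimes\phic{k}{m})(\Delta_{kk}(a))$ in two ways: left invariance of $\phi$ in the second leg gives $\phic{k}{m}(a)\,\psi(\UnitC{k}{k})=\phic{k}{m}(a)$, while right invariance of $\psi$ in the first leg gives $\Grt{\psi}{k}{m}(a)\,\phic{k}{m}(\UnitC{k}{m})=\Grt{\psi}{k}{m}(a)$ by part \ref{LI1}. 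Hence $\phi=\psi$ for every right invariant integral $\psi$, which yields \ref{LI2} (all left invariant integrals coincide with the fixed right invariant one $\phi\circ S$), \ref{LI3} (take $\psi=\phi\circ S$), and \ref{LI4} (being equal to a right invariant integral, $\phi$ is right invariant) simultaneously. Your step \ref{LI4} as written is correct \emph{given} \ref{LI3}, but it becomes redundant once the argument is organized this way.
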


\begin{proof} 
To see \ref{LI1}, take $a=\UnitC{k}{k}$ in \eqref{eq:integral}. 

Now by Corollary \ref{corollary:antipode}, we have that $\phi S$ is right invariant. But assume that $\psi$ is any
 right invariant integral.     Then for all $k,l,m\in I$, $a\in A\pmat{k}{k}{m}{m}$,
    \begin{align*}
      \phic{k}{m}(a)  &= (\Grt{\psi}{k}{k} \otimes
      \phic{k}{m})(\Delta_{kk}(a)) = \Grt{\psi}{k}{m}(a)\Grt{\phi}{k}{m}(\UnitC{k}{m}) = \Grt{\psi}{k}{m}(a) .
    \end{align*}
  This proves \ref{LI2}, \ref{LI3} and \ref{LI4}.  
     \end{proof}

We will need the following lemma at some point, cf.~ \cite[Proposition 3.4]{VDae2}.

\begin{Lem}\label{LemFaith} Let $\mathscr{A}$ be a regular partial Hopf
  algebra with an invariant integral $\phi$. Then
  $\phi$ is faithful in the following sense: if $a\in A$ and
  $\phi(ab) =0$ (resp. $\phi(ba)=0$) for all $b\in A$, then
  $a=0$.
\end{Lem}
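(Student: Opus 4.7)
Plan. The result is the weak multiplier Hopf algebra analogue of the faithfulness of the Haar integral on a regular multiplier Hopf algebra (cf.~\cite[Proposition 3.4]{VDae2}); given Theorem \ref{theorem:partial-hopf-algebra}, it may in principle be invoked from the general theory in \cite{VDW1, Boh1}. We outline a direct proof following Van Daele's template.

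First, using $\phi = \phi \circ S$ from the preceding lemma, the antipode identity $S(xy) = S(y) S(x)$ from Corollary \ref{corollary:antipode}, and the invertibility of $S$ (from regularity), the two faithfulness conditions are equivalent: $\phi(ab) = 0$ for all $b \in A$ if and only if $\phi(c S(a)) = 0$ for all $c \in A$ (via the bijection $c = S(b)$), and $S(a) = 0$ if and only if $a = 0$. So we assume $\phi(ab) = 0$ for all $b \in A$ and aim to show $a = 0$.

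Next, apply the second identity of Lemma \ref{lemma:strong-invariance}: for any $b \in A$,
\[
S\bigl((\phi \otimes \id)((a \otimes 1)\Delta(b))\bigr) = (\phi \otimes \id)\bigl(\Delta(a)(b \otimes 1)\bigr).
\]
Since $(a \otimes 1)\Delta(b) \in aA \otimes A$, we may write it as $\sum_i a c_i \otimes d_i$ with $c_i, d_i \in A$; applying $(\phi \otimes \id)$ and using the hypothesis yields $\sum_i \phi(a c_i) d_i = 0$, so the left hand side vanishes. Injectivity of $S$ then gives
\[
(\phi \otimes \id)\bigl(\Delta(a)(b \otimes 1)\bigr) = 0 \qquad \text{for all } b \in A,
\]
and multiplying on the right by an arbitrary $c \in A$ upgrades this to $(\phi \otimes \id)(\Delta(a)(b \otimes c)) = 0$ for all $b, c \in A$.

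Finally, one extracts $a = 0$ from this vanishing identity. Using the counit formula $(\epsilon \otimes \id)(\Delta(a)(1 \otimes c)) = a c$ (so that $(\epsilon \otimes \id)\Delta(a) = a$ in $M(A)$), the partial invertibility of the maps $T_1, T_2$ of Proposition \ref{prop:riti}, and careful manipulation with the homogeneous components $A\pmat{k}{l}{m}{n}$ under the partial comultiplication, one deduces $a c = 0$ for all $c \in A$, whence $a = 0$ by non-degeneracy of the multiplication on $A$. This last extraction step is the main obstacle: it requires tracking the partial comultiplication across the bigraded pieces and applying the counit-antipode manipulations that in the classical case recover $a$ itself from the vanishing formula. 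The argument is directly parallel to Van Daele's classical proof in \cite{VDae2}, with the additional bookkeeping forced by the indexed partial setting but no conceptually new difficulty.
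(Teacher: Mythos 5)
Your opening reduction (the two faithfulness conditions are exchanged by $a\mapsto S(a)$, using $\phi\circ S=\phi$, anti\-multiplicativity and bijectivity of $S$) is fine, and your use of strong invariance to pass from ``$\phi(ab)=0$ for all $b$'' to $(\phi\otimes\id)(\Delta(a)(b\otimes c))=0$ for all $b,c$ is correct; this is exactly the first half of the paper's argument, carried out in mirror image (the paper treats $\phi(ba)=0$ and left slices). The problem is the final extraction, which you flag as ``the main obstacle'' and then assert presents ``no conceptually new difficulty'': as sketched, it does not go through. From $\phi(a_{(1)}b)\,a_{(2)}c=0$ one cannot ``deduce $ac=0$'' via the counit formula and the generalized invertibility of $T_1,T_2$. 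Doing so would amount to replacing the functionals $b\mapsto\phi(\,\cdot\,b)$ on the first leg of $\Delta(a)$ by $\epsilon$, i.e.\ to knowing that these functionals separate points of the span of those first legs --- which is precisely the faithfulness being proved. The step is circular, not merely a matter of bigraded bookkeeping.

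The missing idea, and the way the paper actually concludes, is a two-stage argument about the set $N=\{a\in A:\phi(ab)=0\ \forall b\}$. First, your vanishing identity already shows $N$ is stable under slicing the comultiplication: writing $\Delta_{rs}(a)=\sum_i p_i\otimes q_i$ with the $q_i$ linearly independent, the relation $\sum_i\phi(p_ib)\,q_ic=0$ forces $\phi(p_ib)=0$ for all $i$ and $b$, so every left slice $(\id\otimes\omega)\Delta_{rs}(a)$ again lies in $N$. Second, every homogeneous $a\in N\cap\Gr{A}{k}{l}{k}{l}$ satisfies $\epsilon(a)=0$: since the $p_i$ lie in $N$ one has in particular $\sum_i\phi\big(p_i\,S(q_i)\big)=0$, while the antipode identity \eqref{eq:antipode-pi-l} gives $\sum a_{(1)K}S(a_{(2)L})=\epsilon(a)\UnitC{k}{k}$ for the appropriate components, and $\phi(\UnitC{k}{k})=1$ then yields $\epsilon(a)=0$. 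Combining the two stages: for any functional $\omega$ on $\Gr{A}{k}{l}{k}{l}$ the element $a'=(\id\otimes\omega)\Delta_{kl}(a)$ lies in $N$, so $0=\epsilon(a')=\omega\big((\epsilon\otimes\id)\Delta_{kl}(a)\big)=\omega(a)$, whence $a=0$. Without this $\epsilon$-vanishing step and the slice-stability observation, your proof is incomplete at its crux.
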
 

\begin{proof} Suppose $a\in A$ and $\phi(ba)=0$ for all $b\in A$. By the support condition of $\phi$, we may suppose $a$ is homogeneous, $a\in \Gr{A}{k}{l}{m}{n}$.

We will first show that necessarily $\epsilon(a)=0$, for which we may already assume $k=m$ and $l=n$. Indeed, the condition on $a$ implies also $(\id\otimes \phi)(\Delta_{lk}(b)(1\otimes a))=0$ for all $b\in \Gr{A}{s}{r}{l}{k}$. Applying the strong invariance identity, we deduce \begin{equation}\label{EqSwitch}(\id\otimes \phi)((1\otimes b)\Delta_{rs}(a))=0,\qquad \forall b\in \Gr{A}{s}{r}{l}{k}.\end{equation} Writing $\Delta_{rs}(a) = \sum_i p_i\otimes q_i$ with the $p_i$ linearly independent, we deduce $\phi(bq_i)=0$ for all $i$ and $b$, and so also $\sum_i \phi(S(p_i)q_i)=0$. Hence $0=\sum_r \phi(S(a_{(1){\tiny \begin{pmatrix} k & l \\ r & l \end{pmatrix}}})a_{(2){\tiny \begin{pmatrix} r & l \\ k & l\end{pmatrix}}}) = \phi(\epsilon(a) \UnitC{l}{l}) = \epsilon(a)$.

Note now that from \eqref{EqSwitch}, it follows that for any functional $\omega$ on $\Gr{A}{k}{l}{m}{n}$, also $a'=(\omega\otimes \id)\Delta_{mn}(a)$ satisfies $\phi(ba')=0$ for all $b\in A$. Hence, by what we have just shown, $\epsilon(a')=0$, i.e.~ $\omega(a)=0$. As $\omega$ was arbitrary, we deduce $a=0$.

The other case follows similarly, or by considering the opposite comultiplication.
\end{proof}

\subsection{Partial compact quantum groups}

Our main objects of interest are partial Hopf algebras with involutions and invariant integrals.
\begin{Def} A \emph{partial $*$-algebra} $\mathscr{A}$ is a partial
  algebra whose total algebra $A$ is equipped with an antilinear,
  antimultiplicative involution $*\colon A\rightarrow A$, $ a\mapsto
  a^*$,  such that the $\mathbf{1}_k$ are selfadjoint for all $k$ in
  the object set. 
\end{Def} 

One can of course give an alternative definition directly in terms of the partial algebra structure by requiring that we are given antilinear maps $A(k,l)\rightarrow A(l,k)$ satisfying the obvious antimultiplicativity and involution properties.

\begin{Def} A \emph{partial $*$-bialgebra} $\mathscr{A}$ is a
 partial bialgebra whose underlying partial algebra has been
  endowed with a partial $*$-algebra structure such that
$\Delta_{rs}(a)^* = \Delta_{sr}(a^*)$ for all $a \in \Gr{A}{k}{l}{m}{n}$.
A \emph{partial Hopf $*$-algebra} is a partial bialgebra which is at the same time a partial $*$-bialgebra and a partial Hopf algebra.
\end{Def} 
Thus, a partial bialgebra is a partial
$*$-bialgebra if and only if the underlying weak multiplier bialgebra
 is a weak multiplier $*$-bialgebra.

From Theorem \ref{theorem:partial-hopf-algebra} and \cite{Boh1},
\cite{VDW1}, we can deduce:
\begin{Cor} \label{cor:involutive}
  An $I$-partial $*$-bialgebra $\mathscr{A}$ is an $I$-partial Hopf
  $*$-algebra if and only if the weak multiplier $*$-bialgebra
  $(A,\Delta)$ is a weak multiplier Hopf $*$-algebra. In that case,
  the counit and antipode satisfy
  $\epsilon(a^{*})=\overline{\epsilon(a)}$ and $S(S(a)^{*})^{*}=a$ for
  all $a\in A$. In particular, the total antipode is bijective.
\end{Cor}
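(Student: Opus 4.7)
The plan is to reduce the statement entirely to the total/global picture, where we can invoke the existing weak multiplier Hopf $*$-algebra theory. The work has essentially already been done: Theorem \ref{theorem:partial-hopf-algebra} identifies partial Hopf algebras with weak multiplier Hopf algebras at the level of the total algebra, and the paragraph immediately before the corollary identifies partial $*$-bialgebras with weak multiplier $*$-bialgebras. So for the first assertion, I would simply combine these two correspondences, noting that the additional condition $\Delta_{rs}(a)^{*} = \Delta_{sr}(a^{*})$ translates exactly into compatibility of the total comultiplication $\Delta \colon A \to M(A\otimes A)$ with the induced $*$-structure on $M(A\otimes A)$, and that the antipode $S$ from Theorem \ref{theorem:partial-hopf-algebra} is the restriction of the antipode of the weak multiplier Hopf algebra to the homogeneous subspaces. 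Thus $\mathscr{A}$ being a partial Hopf $*$-algebra is the same as $(A,\Delta)$ being a weak multiplier Hopf $*$-algebra.

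For the identity $\epsilon(a^{*}) = \overline{\epsilon(a)}$, I would observe that it suffices to check it on a homogeneous component $A(K)$ with $K_{lu}=K_{ru}$ and $K_{ld}=K_{rd}$ (by support of $\epsilon$). Using the characterisation $\Pi^{L}(a) = \sum_{p}\epsilon(\lambda_{p}a)\lambda_{p}$ from \eqref{eq:pi} and the fact that $\lambda_{p}^{*} = \lambda_{p}$ (since the $\mathbf{1}_{k}$ are self-adjoint), one gets $\Pi^{L}(a)^{*} = \Pi^{L}(a^{*})$ after using that $\Pi^{L}$ is implemented by $\Delta$ and $\epsilon$, which reduces the claim to the corresponding $*$-compatibility of $\epsilon$ on the base algebra. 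Alternatively, this is a standard fact about weak multiplier Hopf $*$-algebras proved in \cite{VDW1}, and I would just cite it.

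For $S(S(a)^{*})^{*} = a$, this is the statement that the \emph{unitary antipode} $R(a) := S(a^{*})^{*}$ squares to the identity, equivalently that $S$ is invertible with $S^{-1}(a) = S(a^{*})^{*}$. This is a standard consequence of the weak multiplier Hopf $*$-algebra axioms: applying $*$ to $S(a_{(1)})a_{(2)} = \Pi^{R}(a)$ and using $\Delta(a^{*}) = \Delta(a)^{*\otimes *}$ together with the multiplicativity/antimultiplicativity properties of $S$ from Corollary \ref{corollary:antipode} shows that $a \mapsto S(a^{*})^{*}$ satisfies the defining identities \eqref{eq:total-antipode} for an antipode of $(\mathscr{A},\Delta^{\op})$; by uniqueness of such an antipode (and using that inverting the comultiplication turns the antipode into its inverse), this map equals $S^{-1}$. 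This directly yields invertibility of the total antipode, and hence regularity of $\mathscr{A}$. The only real task is therefore bookkeeping of $*$ versus $\op$ conventions; no new computation is required beyond what is already in \cite{Boh1,VDW1}.
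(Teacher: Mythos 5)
Your overall strategy coincides with the paper's: the equivalence is read off from Theorem \ref{theorem:partial-hopf-algebra} together with the observation that the partial $*$-compatibility of $\Delta$ is exactly the statement that $(A,\Delta)$ is a weak multiplier $*$-bialgebra, and the two identities for $\epsilon$ and $S$ are delegated to the weak multiplier Hopf algebra literature (the paper cites uniqueness of the counit from \cite[Theorem 2.8]{Boh1} and \cite[Proposition 4.11]{VDW1} respectively). One caveat: your \emph{primary} argument for $\epsilon(a^{*})=\overline{\epsilon(a)}$ via $\Pi^{L}$ is circular as written. From $\Pi^{L}(a)=\sum_{p}\epsilon(\lambda_{p}a)\lambda_{p}$ one gets $\Pi^{L}(a)^{*}=\sum_{p}\overline{\epsilon(\lambda_{p}a)}\lambda_{p}$ and $\Pi^{L}(a^{*})=\sum_{p}\epsilon(\lambda_{p}a^{*})\lambda_{p}$, so establishing $\Pi^{L}(a)^{*}=\Pi^{L}(a^{*})$ already requires the identity $\overline{\epsilon(b)}=\epsilon(b^{*})$ you are trying to prove; there is no independent handle on $\Pi^{L}(a)^{*}$ here. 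The clean mechanism (and the one the paper uses) is that $a\mapsto\overline{\epsilon(a^{*})}$ is itself a counit for $(A,\Delta)$, because $\Delta(a^{*})=\Delta(a)^{*\otimes*}$, and counits are unique; your "alternatively, just cite it" fallback amounts to this, so the proposal survives. Your sketch for $S(S(a)^{*})^{*}=a$ --- showing $a\mapsto S(a^{*})^{*}$ satisfies the antipode identities for $(\mathscr{A},\Delta^{\op})$ and invoking uniqueness, hence equals $S^{-1}$ --- is a correct rendering of the standard argument behind \cite[Proposition 4.11]{VDW1}, modulo the bookkeeping of how $\Pi^{L}$ and $\Pi^{R}$ exchange under $*$ and $\op$, which you rightly flag.
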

\begin{proof}
  The if and only if part follows immediately from  Theorem
  \ref{theorem:partial-hopf-algebra}, the relation for the counit  from
uniqueness of the counit  \cite[Theorem 2.8]{Boh1}, and the relation
for the antipode from \cite[Proposition 4.11]{VDW1}.
\end{proof}

We are finally ready to formulate our main definition.
\begin{Def} A \emph{partial compact quantum group} $\mathscr{G}$ is a
  partial Hopf $*$-algebra $\mathscr{A} = P(\mathscr{G})$ with an invariant integral  $\phi$ that is positive in the sense  that $\phi(a^*a)\geq 0$ for all $a\in A$. We also say that $\mathscr{G}$ is the partial compact quantum group \emph{defined by} $\mathscr{A}$.
\end{Def} 

\begin{Rem} It will follow from our Proposition \ref{prop:rep-cosemisimple} and  \cite[Theorem 3.3 and Theorem 4.4]{Hay1} that for $I$ finite, a partial compact quantum group is precisely a compact quantum group of face type \cite[Definition 4.1]{Hay1}. However, we feel that terminology could be misleading if the object set is not finite. When referring to partial compact quantum groups, we feel that it is better reflected that only the \emph{parts} of this object are to be considered compact, not the total object. 
\end{Rem} 

\section{Partial tensor categories}

The notion of a partial algebra has a nice categorification. Recall first that the appropriate (vertical) categorification of a unital $\C$-algebra is a $\C$-linear additive tensor category. From now on, by `category' we will by default mean a $\C$-linear additive category. 

\begin{Def} A \emph{partial tensor category} $\CatCC$ over a set $\mathscr{I}$ consists of 
\begin{itemize}
\item[$\bullet$] a collection of (small) categories $\mathcal{C}_{\alpha\beta}$ with $\alpha,\beta\in \mathscr{I}$, 
\item[$\bullet$] $\C$-bilinear functors \[\otimes: \CatC_{\alpha\beta}\times \CatC_{\beta\gamma}\rightarrow \CatC_{\alpha\gamma},\] 
\item[$\bullet$] natural isomorphisms \[ a_{X,Y,Z}: (X\otimes Y)\otimes Z \rightarrow X\otimes (Y\otimes Z),\qquad X \in \CatC_{\alpha\beta},Y\in \CatC_{\beta\gamma},Z\in \CatC_{\gamma\delta},\] 
\item[$\bullet$] non-zero objects $\Unitb_{\alpha} \in \CatC_{\alpha\alpha}$,
\item[$\bullet$] natural isomorphisms \[\lambda_X^{(\alpha)}:\Unitb_\alpha\otimes X \rightarrow X,\qquad \rho_X^{(\beta)}:X\otimes \Unitb_\beta\rightarrow X, \qquad X\in \CatC_{\alpha\beta},\]
\end{itemize}
satisfying the obvious associativity and unit constraints. 
\end{Def}

\begin{Rem} In true analogy with the partial algebra case, we could let the $\Unitb_\alpha$ also be zero objects, but this generalisation will not be needed in the following. 
\end{Rem}

The corresponding total notion is as follows. 

\begin{Def} A \emph{tensor category with local units (indexed by $\mathscr{I}$)} consists of
\begin{itemize}
\item[$\bullet$] a (small) category $\CatC$, 
\item[$\bullet$] a $\C$-bilinear functor $\otimes: \CatC\times \CatC \rightarrow \CatC$ with compatible associativity constraint $a$, 
\item[$\bullet$]\label{FinSup} a collection $\{\Unitb_\alpha\}_{\alpha\in \mathscr{I}}$ of objects such that 
\begin{enumerate}[label=(\arabic*)] 
\item $\Unitb_\alpha\otimes \Unitb_\beta \cong 0$ for each $\alpha\neq \beta$, and
\item for each object $X$,  $\Unitb_\alpha\otimes X \cong 0 \cong X\otimes \Unitb_\alpha$ for all but a finite set of $\alpha$,
\end{enumerate}
\item[$\bullet$]\label{UnCon} natural isomorphisms $\lambda_X:\oplus_\alpha (\Unitb_\alpha\otimes X) \rightarrow X$ and $\rho_X:\oplus_\alpha(X\otimes \Unitb_\alpha)\rightarrow X$ satisfying the obvious unit conditions. 
\end{itemize} 
\end{Def}

Note that the condition \ref{UnCon} makes sense because of the local support condition in \ref{FinSup}. 

\begin{Rem} \begin{enumerate}[label=(\arabic*)]
\item There is no problem in modifying Mac Lane's coherence theorem, and we will henceforth assume that our partial tensor categories and tensor categories with local units are strict, just to lighten notation. 
\item One can also see the global tensor category $\CatC$ as an inductive limit of (unital) tensor categories. 
\end{enumerate}
\end{Rem}

\begin{Not} If $(\CatC,\otimes,\{\Unitb_\alpha\})$ is a tensor category with local units, and $X\in \CatC$, we define \[X_{\alpha\beta} = \Unitb_\alpha\otimes X \otimes \Unitb_\beta,\] and we denote by \[\eta_{\alpha\beta}:X_{\alpha\beta} \rightarrow \oplus_{\gamma,\delta} \left(\Unitb_\gamma \otimes X \otimes \Unitb_\delta\right) \cong X\] the natural inclusion maps. 
\end{Not}

\begin{Lem} Up to equivalence, there is a canonical one-to-one correspondence between partial tensor categories and tensor categories with local units. 
\end{Lem}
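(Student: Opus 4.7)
The plan is to exhibit explicit constructions in both directions and then verify they are mutually inverse up to equivalence. Throughout we exploit the strictness reduction allowed by the preceding remark, so no coherence hexagons need to be chased.

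\textbf{From partial to total.} Given a partial tensor category $\{\mathcal{C}_{\alpha\beta}\}$, I form the total category $\mathcal{C}$ whose objects are families $(X_{\alpha\beta})_{\alpha,\beta\in\mathscr{I}}$ with $X_{\alpha\beta}\in\mathcal{C}_{\alpha\beta}$ and only finitely many $X_{\alpha\beta}$ nonzero; morphisms are defined componentwise. I denote such an object suggestively by $\bigoplus_{\alpha,\beta} X_{\alpha\beta}$. The tensor product is defined by
\begin{equation*}
\Bigl(\bigoplus_{\alpha,\beta} X_{\alpha\beta}\Bigr)\otimes \Bigl(\bigoplus_{\gamma,\delta} Y_{\gamma\delta}\Bigr) \;=\; \bigoplus_{\alpha,\delta}\Bigl(\bigoplus_{\beta} X_{\alpha\beta}\otimes Y_{\beta\delta}\Bigr),
\end{equation*}
where the inner tensor products are taken in the $\mathcal{C}_{\alpha\delta}$ using the given bilinear functors. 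The global local units $\Unitb_\alpha\in\mathcal{C}$ are the families concentrated in the $(\alpha,\alpha)$-slot. The associator and unit constraints are assembled componentwise from those of the partial data. The finiteness condition in the definition of a tensor category with local units holds because every object has finite support, and orthogonality $\Unitb_\alpha\otimes\Unitb_\beta\cong 0$ for $\alpha\neq\beta$ is forced by the fact that no slot $(\alpha,\beta)$ with $\alpha\neq\beta$ is hit.

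\textbf{From total to partial.} Given $(\mathcal{C},\otimes,\{\Unitb_\alpha\})$, I let $\mathcal{C}_{\alpha\beta}$ be the full subcategory of $\mathcal{C}$ on those objects $X$ for which the natural map $\Unitb_\alpha\otimes X\otimes\Unitb_\beta\to X$ is an isomorphism; equivalently, the essential image of $X\mapsto X_{\alpha\beta}=\Unitb_\alpha\otimes X\otimes\Unitb_\beta$. Because $\Unitb_\alpha\otimes\Unitb_\gamma\cong 0$ for $\alpha\neq\gamma$, the tensor product restricts to bilinear functors $\mathcal{C}_{\alpha\beta}\times\mathcal{C}_{\beta\gamma}\to\mathcal{C}_{\alpha\gamma}$, and the unit $\Unitb_\alpha$ already lives in $\mathcal{C}_{\alpha\alpha}$. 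The associator and unit constraints of $\mathcal{C}$ restrict, giving a partial tensor category.

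\textbf{Mutual inverseness.} Starting from a partial tensor category, applying the total construction and then the partial one recovers (up to natural equivalence of categories) each $\mathcal{C}_{\alpha\beta}$, because $\Unitb_\alpha\otimes(\bigoplus Z_{\gamma\delta})\otimes\Unitb_\beta$ isolates the $(\alpha,\beta)$-summand by the orthogonality built into the total construction. Conversely, starting from a tensor category with local units, the inclusion maps $\eta_{\alpha\beta}\colon X_{\alpha\beta}\hookrightarrow X$ and the local finiteness axiom assemble into an isomorphism $\bigoplus_{\alpha,\beta}X_{\alpha\beta}\xrightarrow{\sim}X$ (using $\lambda$ and $\rho$ together with orthogonality), which is natural in $X$ and compatible with $\otimes$; this produces a tensor equivalence between $\mathcal{C}$ and the total category rebuilt from its partial pieces.

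\textbf{Main obstacle.} The routine part is transporting structure; the genuinely non-obvious step is the reconstruction isomorphism $X\cong\bigoplus_{\alpha,\beta}X_{\alpha\beta}$ on the total side. It uses both the orthogonality $\Unitb_\alpha\otimes\Unitb_\beta\cong 0$ (for $\alpha\neq\beta$) and the local finiteness condition to make $\oplus_\alpha(\Unitb_\alpha\otimes X)=\oplus_\alpha\oplus_\beta(\Unitb_\alpha\otimes X\otimes\Unitb_\beta)$ make sense and to identify it with $X$ via iterated application of $\lambda$ and $\rho$. Once this identification is in hand, checking that it intertwines the tensor products and is preserved under the two constructions is bookkeeping.
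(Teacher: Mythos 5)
Your proof is correct and follows essentially the same route as the paper: formal finite direct sums with componentwise morphisms and orthogonal extension of $\otimes$ in one direction, and the full subcategories on which $\Unitb_\alpha\otimes(-)\otimes\Unitb_\beta$ acts as the identity in the other. The only difference is that you spell out the reconstruction isomorphism $X\cong\bigoplus_{\alpha,\beta}X_{\alpha\beta}$ via $\lambda$, $\rho$ and local finiteness, which the paper leaves implicit.
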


The reader can easily cook up the definition of equivalence referred to in this lemma.

\begin{proof} Let $(\CatC,\otimes,\{\Unitb_\alpha\}_{\alpha\in \mathscr{I}})$ be a tensor category with local units indexed by $\mathscr{I}$. Then the $\CatC_{\alpha\beta} = \{X \in \CatC\mid X_{\alpha\beta} \underset{\eta_{\alpha\beta}}{\cong} X\}$, seen as full subcategories of $\CatC$, form a partial tensor category upon restriction of $\otimes$.

Conversely, let $\CatCC$ be a partial tensor category. Then we let $\CatC$ be the category formed by formal finite direct sums $\oplus X_{\alpha\beta}$ with $X_{\alpha\beta}\in \CatC_{\alpha\beta}$, and with \[\Mor(\oplus X_{\alpha\beta},\oplus Y_{\alpha\beta}) := \oplus_{\alpha\beta} \Mor(X_{\alpha\beta},Y_{\alpha\beta}).\] The tensor product can be extended to $\CatC$ by putting $X_{\alpha\beta} \otimes X_{\gamma\delta} = 0$ when $\beta\neq \gamma$. The associativity constraints can then be summed to an associativity constraint for $\CatC$. It is evident that the $\Unitb_\alpha$ provide local units for $\CatC$. 
\end{proof}

\begin{Rem} Another global viewpoint is to see the collection of
  $\CatC_{\alpha\beta}$ as a 2-category with 0-cells indexed by the
  set $\mathscr{I}$, the objects of the $C_{\alpha\beta}$ as 1-cells,
  and  the morphisms of the $C_{\alpha\beta}$ as 2-cells. As for
  partial algebras vs.~ linear categories, we will not emphasize this
  way of looking at our structures, as this viewpoint is not
  compatible with the notion of a monoidal functor between partial tensor categories.
\end{Rem} 

Continuing the analogy with the algebra case, we define the enveloping \emph{multiplier tensor category} of a tensor category with local units. 

\begin{Def} Let $\CatCC$ be a partial tensor category over $\mathscr{I}$ with total tensor category $\CatC$. The \emph{multiplier tensor category} $M(\CatC)$ of $\CatC$ is defined to be the category consisting of formal sums $\oplus_{\alpha,\beta\in \mathscr{I}} X_{\alpha\beta}$ which are rcf, and with \[\Mor(\oplus X_{\alpha\beta},\oplus Y_{\alpha\beta}) = \left(\prod_\beta\bigoplus_\alpha  \Mor(X_{\alpha\beta},Y_{\alpha\beta}) \right) \cap \left(\prod_\alpha\bigoplus_\beta \Mor(X_{\alpha\beta},Y_{\alpha\beta})\right),\] the composition of morphisms being entry-wise (`Hadamard product'). 
\end{Def}

\begin{Rem} Because of the rcf condition on objects, we could in fact have written simply $\Mor(\oplus X_{\alpha\beta},\oplus Y_{\alpha\beta}) = \prod_{\alpha\beta} \Mor(X_{\alpha\beta},Y_{\alpha\beta})$. 
\end{Rem} 

The tensor product of $\CatC$ can be extended to $M(\CatC)$ by putting \[\left(\oplus X_{\alpha\beta}\right)\otimes \left(\oplus Y_{\alpha\beta}\right) = \oplus_{\alpha,\beta,\gamma} \left(X_{\alpha\beta}\otimes Y_{\beta\gamma}\right),\] and similarly for morphism spaces. This makes sense because of the rcf condition of the objects of $M(\CatC)$. The associativity constraints of the $\CatC_{\alpha\beta}$ can be summed to an associativity constraint for $M(\CatC)$, while $\Unitb := \oplus_{\alpha\in \mathscr{I}} \Unitb_\alpha$ becomes a unit for $M(\CatC)$, rendering $M(\CatC)$ into an ordinary tensor category (with unit object).

\begin{Rem} With some effort, a more intrinsic construction of the
  multiplier tensor category can be given in terms of couples of
  endofunctors, in the same vein as the construction of the multiplier
  algebra of  a non-unital algebra.
\end{Rem} 

\begin{Exa}\label{ExaVectBiGr} Let $I$ be a set. We can consider the partial tensor category $\CatCC = \{\Vect_{\fd}\}_{i,j\in I}$ where each $\CatC_{ij}$ is a copy of the category of finite-dimensional vector spaces $\Vect_{\fd}$, and with each $\otimes$ the ordinary tensor product. The total category $\CatC$ can then be identified with the category $\Vectif$ of finite-dimensional bi-graded vector spaces with the `balanced' tensor product over $I$. More precisely, the tensor product of $V$ and $W$ is $V\itimes W$ with components \[\Gru{(}{k}{}V\itimes W\Gru{)}{}{m} = \oplus_l \;(\Gru{V}{k}{l}\otimes \Gru{W}{l}{m})\subseteq V\otimes W.\] The multiplier category $M(\Vectif)$ equals $\Vectrcf$, the category of bigraded vector spaces which are rcfd (i.e.~ finite-dimensional on each row and column).
\end{Exa}

We now formulate the appropriate notion of a functor between partial tensor categories. Let us first give an auxiliary definition.

\begin{Def} Let $\CatCC$ be a partial tensor category over $\mathscr{I}$. If $\mathscr{J}\subseteq \mathscr{I}$, we call $\CatDD = \{\CatC_{\alpha\beta}\}_{\alpha,\beta\in \mathscr{J}}$ a \emph{restriction} of $\CatCC$. 
\end{Def} 

\begin{Def} Let $\CatCC$ and $\CatDD$ be partial tensor categories over respective sets $\mathscr{I}$ and $\mathscr{J}$, and let \[\phi:\mathscr{J}\rightarrow \mathscr{I},\quad k \mapsto k'\] determine a decomposition $\mathscr{J} = \{\mathscr{J}_\alpha\mid \alpha\in \mathscr{I}\}$ with $k\in \mathscr{J}_\alpha \iff \phi(k)=\alpha$. 

A \emph{unital morphism} from $\CatCC$ to $\CatDD$ (based on $\phi$)
consists of $\C$-linear functors \[F_{kl}: \CatC_{k'l'}\rightarrow
\CatD_{kl},\quad X\mapsto F_{kl}(X) = \Gru{F(X)}{k}{l}\] natural
monomorphisms \[\iota^{(klm)}_{X,Y}: \GrDA{F(X)}{k}{l} \otimes
\GrDA{F(Y)}{l}{m} \hookrightarrow \GrDA{F(X\otimes Y)}{k}{m}, \quad
X\in \CatC_{k'l'},Y\in \CatD_{l'm'},\] and isomorphisms \[\mu_{k}:
\Unitb_k \cong \GrDA{F(\Unitb_{k'})}{k}{k}\] satisfying the following conditions. \begin{enumerate}[label=(\arabic*)]
\item (Unitality)  $\GrDA{F(\Unitb_{\alpha})}{k}{l}= 0$ if $k\neq l$ in $\mathscr{J}_\alpha$.
\item (Local finiteness) For each $\alpha,\beta\in \mathscr{I}$ and $X\in \CatC_{\alpha\beta}$, the application $(k,l)\mapsto \GrDA{F(X)}{k}{l}$ is rcf on $\mathscr{J}_{\alpha}\times \mathscr{J}_{\beta}$. 
\item (Multiplicativity) For all $X\in \CatC_{k'\beta}$ and $Y\in \CatC_{\beta m'}$, one has\[\bigoplus_{l\in \mathscr{J}_\beta} \iota^{(klm)}_{X,Y}: \left(\bigoplus_{l\in \mathscr{J}_\beta} \GrDA{F(X)}{k}{l} \otimes \GrDA{F(Y)}{l}{m}\right) \cong \GrDA{F(X\otimes Y)}{k}{m}.\]
\item (Coherence) The $\iota^{(klm)}$ satisfy the 2-cocycle condition making \[\xymatrix{F_{kl}(X)\otimes F_{lm}(Y)\otimes F_{mn}(Z) \ar[rr]^{\id\otimes \iota^{(lmn)}_{Y,Z}} \ar[d]_{\iota^{(klm)}_{X,Y}\otimes\id}&& F_{kl}(X)\otimes F_{ln}(Y\otimes Z)\ar[d]^{\iota^{(kln)}_{X,Y\otimes Z}}\\ F_{km}(X\otimes Y)\otimes F_{mn}(Z) \ar[rr]_{\iota^{(kmn)}_{X\otimes Y,Z}}&& F_{kn}(X\otimes Y \otimes Z)}\] commute for all $X\in \CatC_{k'l'},Y\in \CatC_{l'm'}, Z\in \CatC_{m'n'}$, and the $\mu_k$ satisfy the commutation relations \[\xymatrix{ \GrDA{F(X)}{k}{l}\otimes \Unitb_l \ar[r]^{\!\!\!\!\id\otimes \mu_l} \ar@{=}[d] & \GrDA{F(X)}{k}{l} \otimes \GrDA{F(\Unitb_{l'})}{l}{l} \ar[d]^{\iota^{(kll)}_{X, \Unitb_{l'}}} \\ \GrDA{F(X)}{k}{l} & \ar@{=}[l] \GrDA{F(X\otimes \Unitb_{l'})}{k}{l}} \qquad \xymatrix{  \Unitb_k\otimes \GrDA{F(X)}{k}{l}\ar[r]^{\!\!\!\!\mu_k\otimes \id} \ar@{=}[d] & \GrDA{F(\Unitb_{k'})}{k}{k} \otimes \GrDA{F(X)}{k}{l} \ar[d]^{\iota^{(kkl)}_{\Unitb_{k'},X}} \\ \GrDA{F(X)}{k}{l} & \ar@{=}[l] \GrDA{ F(\Unitb_{k'}\otimes X)}{k}{l}} \]
\end{enumerate}

A \emph{morphism} from $\CatCC$ to $\CatDD$ is a unital morphism from $\CatCC$ to a restriction of $\CatDD$. 
\end{Def}

The corresponding global notion (of a unital morphism) is as follows.

\begin{Lem} Let $\CatCC$ and $\CatDD$ be partial tensor categories over respective sets $\mathscr{I}$ and $\mathscr{J}$. Fix an application \[\phi: \mathscr{J}\rightarrow \mathscr{I}\] inducing a disjoint decomposition $\{\mathscr{J}_\alpha\mid \alpha\in \mathscr{I}\}$. Then there is a one-to-one correspondence between unital morphisms $\CatCC\rightarrow \CatDD$ based on $\phi$ and functors $F:\CatC \rightarrow M(\CatD)$ with isomorphisms \[\iota_{X,Y}:F(X)\otimes F(Y)\cong F(X\otimes Y),\qquad \mu_\alpha:\oplus_{k\in \mathscr{J}_\alpha} \Unitb_k \cong F(\Unitb_\alpha)\] satisfying the natural coherence conditions. 
\end{Lem}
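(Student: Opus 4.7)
The strategy is to mimic the proof of Lemma \ref{LemPAMor}, treating the multiplier tensor category $M(\mathcal{D})$ as the categorical analogue of the multiplier algebra $M(B)$.

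First, I would construct the forward direction. Starting from a unital morphism $(F_{kl},\iota^{(klm)}_{X,Y},\mu_k)$ based on $\phi$, for any homogeneous object $X\in\mathcal{C}_{\alpha\beta}$ I define
\[
F(X) \;=\; \bigoplus_{k\in\mathscr{J}_\alpha,\, l\in\mathscr{J}_\beta} F_{kl}(X),
\]
which is an object of $M(\mathcal{D})$ by the local finiteness condition. I extend this to arbitrary $X\in\mathcal{C}$ by direct sum and to morphisms componentwise. The isomorphisms $\iota_{X,Y}$ arise by noting that in $M(\mathcal{D})$, the tensor product forces a common middle index $l$, so for $X\in\mathcal{C}_{\alpha\beta},Y\in\mathcal{C}_{\beta\gamma}$ one gets
\[
F(X)\otimes F(Y) \;=\; \bigoplus_{k,m}\bigoplus_{l\in\mathscr{J}_\beta} F_{kl}(X)\otimes F_{lm}(Y) \;\cong\; \bigoplus_{k,m} F_{km}(X\otimes Y)
\]
by multiplicativity. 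Similarly, unitality implies $F(\mathbbm{1}_\alpha)=\bigoplus_{k\in\mathscr{J}_\alpha}F_{kk}(\mathbbm{1}_\alpha)$, and summing the $\mu_k$ yields $\mu_\alpha$. The coherence diagrams for the partial morphism translate componentwise into the pentagon/triangle coherence conditions for $(F,\iota,\mu)$.

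Second, I would construct the backward direction. Given $(F,\iota,\mu)$, for each $k\in\mathscr{J}_\alpha$ and $l\in\mathscr{J}_\beta$ and $X\in\mathcal{C}_{\alpha\beta}$, the object $F(X)\in M(\mathcal{D})$ is canonically a formal rcf direct sum indexed by pairs of elements of $\mathscr{J}$; I let $F_{kl}(X)$ be the $(k,l)$-component, which lies in $\mathcal{D}_{kl}$. Local finiteness is then immediate from the rcf condition on $M(\mathcal{D})$. The isomorphisms $\iota_{X,Y}$ and $\mu_\alpha$ decompose into the required natural monomorphisms $\iota^{(klm)}_{X,Y}$ and the isomorphisms $\mu_k$ via the same index bookkeeping, and the coherence diagrams for the global functor restrict to those for the partial morphism. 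Finally, one checks that these two constructions are mutually inverse, which is straightforward since each global object/morphism in $M(\mathcal{D})$ is determined by its rcf family of homogeneous components.

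The main obstacle is purely bookkeeping: one must keep careful track of which indices range over which subsets $\mathscr{J}_\alpha$, verify that the rcf condition on objects of $M(\mathcal{D})$ corresponds exactly to the local finiteness condition on the partial morphism, and confirm that the pentagon/triangle diagrams for $\iota$ and $\mu$ decompose cleanly into the 2-cocycle and unit coherence conditions for the $\iota^{(klm)}$ and $\mu_k$. No new ideas are needed beyond this translation, and nothing delicate happens since all sums involved are finite once one or two indices are fixed.
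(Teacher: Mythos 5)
The paper gives no proof of this lemma (it is treated as the categorical analogue of its partial-algebra counterpart, Lemma \ref{LemPAMor}, whose proof is likewise declared straightforward), and your construction is exactly the intended one. The only step worth making explicit in the backward direction is that for $X\in \CatC_{\alpha\beta}$ the components $F(X)_{kl}$ vanish unless $k\in\mathscr{J}_\alpha$ and $l\in\mathscr{J}_\beta$ (so that the restricted data really is \emph{based on} $\phi$); this follows from $F(X)\cong F(\Unitb_\alpha)\otimes F(X)\otimes F(\Unitb_\beta)\cong \left(\oplus_{k\in\mathscr{J}_\alpha}\Unitb_k\right)\otimes F(X)\otimes\left(\oplus_{l\in\mathscr{J}_\beta}\Unitb_l\right)$, which annihilates the rows and columns outside $\mathscr{J}_\alpha\times\mathscr{J}_\beta$.
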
 
\begin{Rem} If $J_{\alpha}=\emptyset$, the global functor $F$ sends
  $\Unitb_{\alpha}$ to the zero object in $M(\mathcal{D})$.
\end{Rem} 

The reader has already furnished for himself the notion of 
equivalence of partial tensor categories. There is a closely related
but weaker notion of equivalence corresponding to chopping up a partial tensor category into smaller pieces (or, vice versa, gluing certain blocks of a partial tensor category together). Let us formalize this in the following definition.

\begin{Def} Let $\CatCC$ and $\CatDD$ be partial tensor categories. We say $\CatDD$ is a \emph{partitioning} of $\CatCC$ (or $\CatCC$ a \emph{globalisation} of $\CatDD$) if there exists a unital morphism $\CatCC\rightarrow \CatDD$ inducing an equivalence of categories $\CatC\rightarrow \CatD$.
\end{Def}

The partial tensor categories that we will be interested in will be required to have some further structure. 

\begin{Def} A partial tensor category $\CatCC$ is called \emph{semi-simple} if all $\CatC_{\alpha\beta}$ are semi-simple. 

A partial tensor category is said to have \emph{indecomposable units} if all units $\Unitb_\alpha$ are indecomposable. 
\end{Def}

It is easy to see that any semi-simple partial tensor category can be
partitioned into a semi-simple partial tensor category with indecomposable units.  Hence we will from now on only consider semi-simple partial tensor categories with indecomposable units.

The following definition introduces the notion of duality for partial tensor categories.

\begin{Def} Let $\CatCC$ be a partial tensor category. 

An object $X\in \CatC_{\alpha\beta}$ is said to admit a \emph{left dual} if there exists an object $Y=\hat{X} \in \CatC_{\beta\alpha}$ and morphisms $\ev_{X}: Y\otimes X \rightarrow \Unitb_\beta$ and $\coev_X: \Unitb_\alpha\rightarrow X\otimes Y$ satisfying the obvious snake identities.

We say $\CatCC$ \emph{admits left duality} if each object of each $\CatC_{\alpha\beta}$ has a left dual.
\end{Def}

Similarly, one defines right duality $X\rightarrow \check{X}$ and (two-sided) duality $X\rightarrow \bar{X}$. As for tensor categories with unit, if $X$ admits a (left or right) dual, it is unique up to isomorphism. 

\begin{Lem}\label{LemMorDua}
\begin{enumerate}[label=(\arabic*)]
\item Let $\CatCC$ be a partial tensor category. If $X$ has left dual $\hat{X}$, then $X$ is a right dual to $\hat{X}$. 
\item Let $F$ be a morphism $\CatCC\rightarrow \CatDD$ based over
  $\phi:\mathscr{J}\rightarrow \mathscr{I}$. If $X\in \CatC_{k'l'}$
  has a left dual $\hat X$, then $F_{lk}(\hat{X})$ is a left dual to $F_{kl}(X)$.
 \end{enumerate}
\end{Lem}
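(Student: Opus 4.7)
Part (1) is essentially tautological. If $\hat{X}\in\CatC_{\beta\alpha}$ is a left dual of $X\in\CatC_{\alpha\beta}$ with structure maps $\ev_X\colon\hat{X}\otimes X\to\Unitb_\beta$ and $\coev_X\colon\Unitb_\alpha\to X\otimes\hat{X}$, then the same two morphisms exhibit $X$ as a right dual of $\hat X$: the pair of snake identities required in each case are the same two identities, merely listed in opposite order. No further argument is needed beyond matching the definitions.

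For part (2), I would construct explicit candidates for evaluation and coevaluation. Unitality gives $F_{kl}(\Unitb_\alpha)=0$ whenever $k\neq l$, so $\mu_l\colon\Unitb_l\xrightarrow{\cong}F_{ll}(\Unitb_{l'})$ and $\mu_k\colon\Unitb_k\xrightarrow{\cong}F_{kk}(\Unitb_{k'})$ are both iso\-morphisms. Using the natural monomorphism $\iota^{(lkl)}_{\hat X,X}\colon F_{lk}(\hat X)\otimes F_{kl}(X)\hookrightarrow F_{ll}(\hat X\otimes X)$ and the multiplicativity isomorphism
\[
\bigoplus_{l''\in\mathscr{J}_{l'}} F_{kl''}(X)\otimes F_{l''k}(\hat X)\;\xrightarrow{\;\cong\;}\;F_{kk}(X\otimes\hat X),
\]
which I denote by $I$, define
\[
\ev' := \mu_l^{-1}\circ F_{ll}(\ev_X)\circ \iota^{(lkl)}_{\hat X,X},\qquad
\coev' := \pi_l\circ I^{-1}\circ F_{kk}(\coev_X)\circ \mu_k,
\]
where $\pi_l$ is the projection onto the $l''=l$ summand.

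The snake identities should then follow by unpacking, fusing consecutive copies of $\iota$ via the 2-cocycle coherence, absorbing $\mu_k$ and $\mu_l$ using their compatibility with $\iota$ at the unit, and applying naturality of $\iota$ in both slots. Schematically, the composite $(\id_{F_{kl}(X)}\otimes\ev')\circ(\coev'\otimes\id_{F_{kl}(X)})$ on $F_{kl}(X)$ reassembles, under $I$ and $\iota$, into $F_{kl}$ applied to the triple tensor product and then to $(\id_X\otimes\ev_X)\circ(\coev_X\otimes\id_X)=\id_X$; functoriality finishes this side. The other snake identity is symmetric, using $\iota^{(klk)}_{X,\hat X}$ and $F_{ll}(\hat X\otimes X\otimes\hat X)$ in place of the triple product involving $X$.

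The step I expect to be the main obstacle is the bookkeeping caused by $\coev'$ landing a priori in a single summand of a direct sum indexed by $\mathscr{J}_{l'}$. One must argue that when $\id_{F_{kl}(X)}\otimes\ev'$ is applied, only the $l''=l$ component of $I^{-1}\circ F_{kk}(\coev_X)\circ\mu_k$ survives, because $\ev'$ factors through $\iota^{(lkl)}_{\hat X,X}$ which targets precisely the $l''=l$ summand under the identification $\bigoplus_{l''}F_{l''k}(\hat X)\otimes F_{kl}(X)\cong F_{lk}(\hat X\otimes X)$ (obtained from the analogous multiplicativity isomorphism for $\hat X\otimes X$). Once this orthogonality is established, the remaining manipulation is purely coherence-theoretic and reduces the two snake identities to $F$ applied to the originals.
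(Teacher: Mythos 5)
Your proposal is correct, but it takes a genuinely different route from the paper. The paper's proof of both parts is a reduction: restrict $\mathscr{I}$ (resp.\ $\mathscr{J}$) to a two-element subset so that the total category becomes an honest unital tensor category, invoke the standard facts that left duals are right duals and that strong monoidal functors $\CatC'\rightarrow M(\CatD')$ preserve duals to get $F(\hat X)\cong\widehat{F(X)}$ in the multiplier category, and then ``cut down with unit objects'' using self-duality of the $\Unitb_k$ and anti-multiplicativity of duality to isolate the component statement about $F_{kl}(X)$ and $F_{lk}(\hat X)$. You instead build $\ev'$ and $\coev'$ explicitly at the level of components; this is precisely the construction the paper records later (the unproved ``straightforward observation'' with $\ev^{kl}_X=F_{ll}(\ev_X)\circ\iota^{(lkl)}_{\hat X,X}$ and $\coev^{kl}_X=\pi^{(klk)}_{X,\hat X}\circ F_{kk}(\coev_X)$ preceding Proposition \ref{PropAnti}), so your argument in effect proves that observation directly rather than deducing it from the global statement. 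What your approach buys is the explicit duality data, which is needed downstream anyway, and it avoids the somewhat implicit ``cutting down'' step; what the paper's approach buys is brevity, since all coherence bookkeeping is outsourced to the classical unital theory. Your part (1) is likewise fine: with the paper's conventions the same pair $(\ev_X,\coev_X)$ witnesses $X$ as a right dual of $\hat X$, which is at least as direct as the paper's reduction.

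One small correction to the step you flag as the main obstacle: the displayed identification $\bigoplus_{l''}F_{l''k}(\hat X)\otimes F_{kl}(X)\cong F_{lk}(\hat X\otimes X)$ is not the right one (multiplicativity sums over the \emph{middle} index, and $\hat X\otimes X$ lives in $\CatC_{l'l'}$, so its $F$-components carry two indices from $\mathscr{J}_{l'}$). The clean reason the cross-terms vanish is unitality: the $(l'',k)$-component of $F_{kl}(\id_X\otimes\ev_X)$ is $\id\otimes\bigl(F_{l''l}(\ev_X)\circ\iota^{(l''kl)}_{\hat X,X}\bigr)$, which lands in $F_{kl''}(X)\otimes F_{l''l}(\Unitb_{l'})$ and is therefore zero for $l''\neq l$ since $F_{l''l}(\Unitb_{l'})=0$. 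With that substitution your orthogonality argument goes through and the rest is, as you say, coherence.
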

\begin{proof}
We can consider the restriction $\CatCC'$ of $\CatCC$ to any two-element subset $\mathscr{I}'$ of $\mathscr{I}$, and the first property then follows from the usual argument inside the global (unital) tensor category $\CatCC'$. For the second property, consider also the associated restriction $\mathscr{D}'$ to $\phi^{-1}(\mathscr{I})$. We can then again apply the usual arguments to the associated global category $\CatC'$ and global unital morphism $F:\CatC'\rightarrow M(\CatD')$ to see that $F(\hat{X})\cong \widehat{F(X)}$. Using that local units are evidently self-dual and that duality behaves anti-multiplicatively w.r.t.~ tensor products, we can cut down with unit objects on both sides to obtain the statement in the lemma.
\end{proof}

A final ingredient which will be needed is an analytic structure on our partial tensor categories.

\begin{Def} A \emph{partial fusion C$^*$-category} is a partial tensor category $(\CatC,\otimes)$ with duality such that all $\CatC_{\alpha\beta}$ are semi-simple C$^*$-categories,  all functors $\otimes$ are $^*$-functors (in the sense that $(f\otimes g)^* = f^*\otimes g^*$ for morphisms), and the associativity and unit constraints are unitary.
\end{Def} 

\begin{Rem}
\begin{enumerate}[label=(\arabic*)]
\item If $\CatCC$ is a partial tensor C$^*$-category, the total category $\CatC$ only has pre-C$^*$-algebras as endomorphism spaces, as the morphisms spaces need not be closed in the C$^*$-norm. On the other hand, $M(\CatC)$ only has $^*$-algebras as endomorphism spaces, since we did not restrict our direct products. 
\item The notion of duality for a partial tensor C$^*$-category is the
  same as in the absence of a C$^*$-structure. However, because of the
  presence of the $^*$-structure, any left dual is automatically a
  two-sided dual, and the dual object of $X$ is then simply denoted by $\overline{X}$.  
\item We slightly abuse the terminology `fusion', as strictly speaking this would require there to be only a finite set of mutually non-equivalent irreducible objects in each $\CatC_{\alpha\beta}$.
\item In the same vein, the total C$^*$-category with local units associated to a partial fusion C$^*$-category could be called a \emph{multiplier fusion C$^*$-category}.
\end{enumerate}
\end{Rem}

\begin{Exa} Let $I$ be a set. Then we can consider the partial fusion C$^*$-category $\CatCC = \{\Hilb_{\fd}\}_{I\times I}$ of finite-dimensional Hilbert spaces, with all $\otimes$ the ordinary tensor product. The associated global category is the category $\Hilbif$ of finite-dimensional bi-graded Hilbert spaces. The dual of a Hilbert space $\Hsp \in \CatC_{kl}$ is just the ordinary dual Hilbert space $\Hsp^* \cong \overline{\Hsp}$, but considered in the category $\CatC_{lk}$. 
\end{Exa}

The notion of a morphism for partial semi-simple tensor C$^*$-categories has to be adapted in the following way.

\begin{Def} Let $\CatCC$ and $\CatDD$ be partial fusion
  C$^*$-categories over respective sets $\mathscr{I}$ and
  $\mathscr{J}$, and let $\phi:\mathscr{J}\rightarrow \mathscr{I}$.  A
  \emph{morphism} from $\CatCC$ to $\CatDD$ (based on $\phi$) is a
  $\phi$-based morphism $(F,\iota,\mu)$ from $\CatCC$ to $\CatDD$ as
  partial tensor categories, with the added requirement that all
  $F_{kl}$ are $^*$-functors and all $\iota$- and $\mu$-maps are
  isometric.
\end{Def} 

\begin{Rem} If a morphism of partial fusion C$^*$-categories is based over a \emph{surjective} map $\varphi: \mathscr{J}\rightarrow \mathscr{I}$, then it is automatically faithful. Indeed, by semi-simplicity a non-faithful morphism would send some irreducible object to zero. However, by the duality assumption this would mean that some irreducible unit is sent to zero, which is excluded by surjectivity of $\varphi$ and the definition of morphism.
\end{Rem}

\section{Representations of partial compact quantum groups}

In this section, the representation theory of partial compact quantum
groups is investigated.

\subsection{Corepresentations of partial bialgebras}

Let $\mathscr{A}$ be an $I$-partial bialgebra. We will now write its
homogeneous components in the form $A(K) = \eGr{A}{k}{l}{m}{n}
\Gr{A}{k}{l}{m}{n}$.

We denote by $\Hom_\C(V,W)$  the vector space of linear
maps between two vector spaces $V$ and $W$.

Let $I$ be a set. As in Example \ref{ExaVectBiGr}, an $I^{2}$-graded
vector space $V=\bigoplus_{k,l\in I} \Gru{V}{k}{l}$ will be called
\emph{row-and column finite-dimensional} (rcfd) if the $\oplus_l
V_{kl}$ (resp.~ $\oplus_k V_{kl}$) are finite-dimensional for each $k$
(resp.~ $l$) fixed, and $\Vectrcf$ denotes the category whose objects are rcfd
$I^{2}$-graded vector spaces. Morphisms are linear maps $T$ that
preserve the grading and therefore can be written $T=\prod_{k,l\in I}
\Gru{T}{k}{l}$.

\begin{Def} \label{definition:corep} Let $\mathscr{A}$ be an
  $I$-partial bialgebra and let $V=\bigoplus_{k,l} \Gru{V}{k}{l}$
   be
an rcfd $I^{2}$-graded vector space.  A \emph{corepresentation}
  $\mathscr{X}=(\Gr{X}{k}{l}{m}{n})_{k,l,m,n}$ of $\mathscr{A}$ on $V$
  is a family of elements
 \begin{align} \label{eq:rep-blocks}
   \Gr{X}{k}{l}{m}{n} \in \Gr{A}{k}{l}{m}{n} \otimes
  \Hom_\C(\Gru{V}{m}{n},\Gru{V}{k}{l})
 \end{align}
 satisfying 
 \begin{align}
   \label{eq:rep-comultiplication}
    (\Delta_{pq} \otimes
    \id)(\Gr{X}{k}{l}{m}{n}) &=
    \Big{(}\Gr{X}{k}{l}{p}{q}\Big{)}_{13}\Big{(}\Gr{X}{p}{q}{m}{n}\Big{)}_{23},
    \\ \label{eq:rep-counit}
(\epsilon \otimes
  \id)(\Gr{X}{k}{l}{m}{n})&=\delta_{k,m}\delta_{l,n}\id_{\Gru{V}{k}{l}}
 \end{align}
  for all possible indices. We also call $(V,\mathscr{X})$ a
  \emph{corepresentation}.
\end{Def}
Here, we use here the standard leg numbering notation, e.g.~ $a_{23}=1\otimes a$.
\begin{Exa} \label{example:rep-triv} Equip the vector space
  $\C^{(I)}=\bigoplus_{k\in I} \C$ with the diagonal
  $I^{2}$-grading. Then the family $\mathscr{U}$ given by
  \begin{align} \label{eq:rep-triv}
    \Gr{U}{k}{l}{m}{n} = \delta_{k,l}\delta_{m,n} \UnitC{k}{m} \in
    \Gr{A}{k}{l}{m}{n}
  \end{align}
is a corepresentation of $\mathscr{A}$ on $\C^{(I)}$. We call it the
\emph{trivial corepresentation}.
\end{Exa}

\begin{Exa} \label{example:rep-regular}
  Assume  given an rcfd family of subspaces
  \begin{align*}
    \Gru{V}{m}{n} \subseteq \bigoplus_{k,l} \Gr{A}{k}{l}{m}{n}
  \end{align*}
  satisfying
  \begin{align} \label{eq:rep-regular-inclusion}
    \Delta_{pq}(\Gru{V}{m}{n}) &\subseteq \Gru{V}{p}{q} \otimes
    \Gr{A}{p}{q}{m}{n}.
  \end{align}
Then the  elements $\Gr{X}{k}{l}{m}{n} \in \Gr{A}{k}{l}{m}{n} \otimes
  \Hom_{\C}(\Gru{V}{m}{n},\Gru{V}{k}{l})$ defined by 
  \begin{align*}
    \Gr{X}{k}{l}{m}{n}(1 \otimes b) &= \Delta^{\op}_{kl}(b) \in
    \Gr{A}{k}{l}{m}{n} \otimes \Gru{V}{k}{l} \quad
    \text{for all } b\in \Gru{V}{m}{n}
  \end{align*}
  form a corepresentation $\mathscr{X}$ of $\mathscr{A}$ on
  $V$. Indeed, 
  \begin{align*}
    (\Delta_{pq} \otimes \id)(\Gr{X}{k}{l}{m}{n})(1 \otimes 1 \otimes
    b) &=(\Delta_{pq}\otimes \id)(\Delta^{\op}_{kl}(b)) =
    \Big{(}\Gr{X}{k}{l}{p}{q}\Big{)}_{13}\Big{(}\Gr{X}{p}{q}{m}{n}\Big{)}_{23}(1
    \otimes 1 \otimes b), \\
    (\epsilon \otimes \id)(\Gr{X}{k}{l}{m}{n})b &= (\epsilon \otimes
    \id)(\Delta^{\op}_{kl}(b)) = \delta_{k,m}\delta_{l,n}b
  \end{align*}
  for all $b\in \Gru{V}{m}{n}$.  We call $\mathscr{X}$ the
  \emph{regular corepresentation on $V$}. 
\end{Exa}

Morphisms of corepresentations are defined as follows.
\begin{Def}
  Let $\mathscr{A}$ be an $I$-partial bialgebra.  A \emph{morphism}
  $T$ between  corepresentations
  $(V,\mathscr{X})$ and $(W,\mathscr{Y})$ of $\mathscr{A}$ is a family
  of linear maps
  \[\Gru{T}{k}{l} \in
  \Hom_\C(\Gru{V}{k}{l},\Gru{W}{k}{l})\] satisfying \[(1 \otimes
  \Gru{T}{k}{l})\Gr{X}{k}{l}{m}{n} = \Gr{Y}{k}{l}{m}{n}(1 \otimes
  \Gru{T}{m}{n})\]
\end{Def}

We denote the category of all corepresentations of $\mathscr{A}$ on rcfd $I^2$-graded vector spaces by $\Corep_{\rcf}(\mathscr{A})$.

We next consider the total form of a corepresentation.

Let $\mathscr{A}$ be a partial bialgebra with total algebra $A$, and
let $V$ be an rcfd $I^{2}$-graded vector space.
Denote by $\lambda^{V}_{k},\rho^{V}_{l} \in \Hom_{\C}(V)$ the
projections onto the summands $\Gru{V}{k}{} = \bigoplus_{q}
\Gru{V}{k}{q}$ and $\Gru{V}{}{l}=\bigoplus_{p}\Gru{V}{p}{l}$
respectively, and identify $\Hom_{\C}(\Gru{V}{m}{n},\Gru{V}{k}{l})$ with
$\lambda^{V}_{k}\rho^{V}_{l}\Hom_{\C}(V)\lambda^{V}_{m}\rho^{V}_{n}$. Denote by $\Hom_{\C}^{0}(V) \subseteq \Hom_{\C}(V)$ the algebraic sum of all
these subspaces. Then we can define a homomorphism
\begin{align*}
  \Delta \otimes \id \colon M(A \otimes \Hom_{\C}^{0}(V)) \to M(A
  \otimes A \otimes \Hom_{\C}^{0}(V))
\end{align*}
similarly as we defined $ \Delta \colon A \to M(A\otimes A)$.
\begin{Lem} \label{lemma:rep-multiplier}
  Let  $\mathscr{A}$ be an $I$-partial bialgebra and $V$  an rcfd $I^{2}$-graded vector space.  If $\mathscr{X}$ is a
  corepresentation of  $\mathscr{A}$ on $V$, then the sum
  \begin{align}
    \label{eq:rep-multiplier}
  X:=\sum_{k,l,m,n} \Gr{X}{k}{l}{m}{n} \in  M(A
  \otimes \Hom_{\C}^{0}(V))
  \end{align}
 converges strictly and satisfies the following conditions:
  \begin{enumerate}[label=(\arabic*)] 
  \item\label{repma} $(\lambda_{k}\rho_{m} \otimes \id){X}(\lambda_{l}\rho_{n}
    \otimes \id) = (1 \otimes \lambda^{V}_{k}\rho^{V}_{l}){X}(1 \otimes
    \lambda^{V}_{m}\rho^{V}_{n}) = \Gr{X}{k}{l}{m}{n}$,
  \item\label{repmb} $(A \otimes 1){X}$, $ {X}(A \otimes 1)$ and $(1 \otimes
    \Hom^{0}_{\C}(V))X(1 \otimes \Hom^{0}_{\C}(V))$ lie in $A \otimes \Hom_{\C}^{0}(V)$,
  \item\label{repmc} $(\Delta\otimes \id)(X)=X_{13}X_{23}$, 
  \item\label{repmd} the sum $(\epsilon \otimes \id)({X}) :=\sum (\epsilon \otimes
    \id)(\Gr{X}{k}{l}{m}{n})$ converges in $M(\Hom^{0}_{\C}(V))$ strictly
    to $\id_{V}$.
  \end{enumerate}
  Conversely, if $ X \in M(A \otimes \Hom_{\C}^{0}(V))$ satisfies
  \ref{repma}--\ref{repmd} with $\Gr{X}{k}{l}{m}{n}$ defined by \ref{repma}, then
  $\mathscr{X}=(\Gr{X}{k}{l}{m}{n})_{k,l,m,n}$ is a corepresentation
  of $\mathscr{A}$ on $V$.
\end{Lem}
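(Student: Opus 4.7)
The plan is to prove the equivalence by direct translation between the piecewise and global forms, in the same spirit as Lemma \ref{LemPAMor}. In the forward direction, assume $\mathscr{X}$ is a corepresentation and fix homogeneous $a\in A$ and $T\in \Hom_\C^0(V)$. A straightforward case analysis shows that nonvanishing of $\Gr{X}{k}{l}{m}{n}(a\otimes T)$ pins three of the four indices $k,l,m,n$ and leaves the fourth constrained by a condition of the form $\Gru{V}{\alpha}{\beta}\neq 0$ with one of $\alpha,\beta$ already fixed; this is a finite set by the rcfd hypothesis. The symmetric analysis applies to $(a\otimes T)\Gr{X}{k}{l}{m}{n}$. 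Hence the sum \eqref{eq:rep-multiplier} converges strictly to an element $X\in M(A\otimes \Hom_\C^0(V))$, and the same argument together with the homogeneity \eqref{eq:rep-blocks} establishes \ref{repma} and \ref{repmb} by inspection.

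For \ref{repmc}, extend $\Delta\otimes \id$ uniquely to a homomorphism $M(A\otimes \Hom_\C^0(V))\to M(A\otimes A\otimes \Hom_\C^0(V))$ exactly as $\Delta\colon M(A)\to M(A\otimes A)$ is extended via \cite[Proposition A.3]{VDW2}. Cutting both sides of \ref{repmc} by $(\lambda_k\rho_m)\otimes(\lambda_p\rho_q)\otimes \id$ on the left and $(\lambda_l\rho_n)\otimes \id\otimes \id$ on the right isolates, on the left hand side, $(\Delta_{pq}\otimes \id)(\Gr{X}{k}{l}{m}{n})$ inside $\Gr{A}{k}{l}{p}{q}\otimes \Gr{A}{p}{q}{m}{n}\otimes \Hom_\C(\Gru{V}{m}{n},\Gru{V}{k}{l})$; the same cutdown applied to $X_{13}X_{23}$ picks out precisely the term $(\Gr{X}{k}{l}{p}{q})_{13}(\Gr{X}{p}{q}{m}{n})_{23}$. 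Thus \ref{repmc} is equivalent to the family \eqref{eq:rep-comultiplication}. Property \ref{repmd} is the strict sum of \eqref{eq:rep-counit}: diagonal pieces contribute $\id_{\Gru{V}{k}{l}}$, off-diagonal blocks vanish, and together they sum strictly to $\id_V$.

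Conversely, given $X\in M(A\otimes \Hom_\C^0(V))$ satisfying \ref{repma}--\ref{repmd}, define $\Gr{X}{k}{l}{m}{n}$ by \ref{repma}. The combined use of the two equal expressions in \ref{repma} together with \ref{repmb} places each block in $\Gr{A}{k}{l}{m}{n}\otimes \Hom_\C(\Gru{V}{m}{n},\Gru{V}{k}{l})$, matching \eqref{eq:rep-blocks}. The same block cutdowns used above then translate \ref{repmc} into \eqref{eq:rep-comultiplication} piece by piece, while restricting \ref{repmd} by $1\otimes \lambda^V_k\rho^V_l$ on both sides yields \eqref{eq:rep-counit}. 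The main obstacle throughout is verifying that the formal product $X_{13}X_{23}$ is a well-defined multiplier of $A\otimes A\otimes \Hom_\C^0(V)$: this product involves an infinite sum over the middle index pair $(p,q)$, and one must check that after multiplying by any fixed element only finitely many intermediate terms survive. The rcfd hypothesis on $V$, together with the row-and-column finiteness built into the partial bialgebra, is precisely what secures this, so that $X_{13}X_{23}$ decomposes block-diagonally as described and the block identifications carry through in both directions.
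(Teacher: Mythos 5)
Your proof is correct, and it is exactly the direct block-by-block verification that the paper's proof (which is simply declared ``straightforward'' and omitted) intends: strict convergence from the rcfd hypothesis, and translation of \ref{repmc}--\ref{repmd} into \eqref{eq:rep-comultiplication}--\eqref{eq:rep-counit} by cutting down with the local units $\lambda_k\rho_m$ and $\lambda^V_k\rho^V_l$. Nothing to add.
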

\begin{proof}
 Straightforward.
\end{proof}

\begin{Def} If $\mathscr{X}$ and $X$ are as in Lemma \ref{lemma:rep-multiplier}, we will call $X$ the \emph{corepresentation multiplier} of $\mathscr{X}$. 
\end{Def}

Let us relate the notion of a corepresentation multiplier to the
notion of a full comodule for a weak multiplier bialgebra introduced in \cite[Definition 2.2 and Definition 4.2]{Boh2}. Recall first from \cite[Theorem 4.5]{Boh2} that if $(A,\Delta)$ is a weak multiplier bialgebra, then any full comodule over $A$ carries the structure of a firm bimodule over the base algebra. In particular, if $A$ arises from a partial bialgebra, any comodule is bigraded over the object set. 

\begin{Prop} Let $\mathscr{A}$ be a partial bialgebra with corepresentation $X$ on $V = \oplus_{m,n} \Gru{V}{m}{n}$. Then the couple \[\lambda_X: V\otimes A \rightarrow V\otimes A,\quad v\otimes a \rightarrow X_{21}(v\otimes a),\] 
\[\rho_X: V\otimes A\rightarrow V\otimes A,\quad v\otimes a \mapsto (1\otimes a)X_{21}(v\otimes 1)\] is well-defined and defines a full comodule for the weak multiplier bialgebra $(A,\Delta)$. Conversely, any full comodule which is rcfd for the induced bigrading arises in this way.
\end{Prop}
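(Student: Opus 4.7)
My plan is to read off all required properties of $(\lambda_{X},\rho_{X})$ directly from the four conditions on the corepresentation multiplier $X$ listed in Lemma \ref{lemma:rep-multiplier}, and for the converse to build the multiplier $X$ block by block from the comodule maps by inverting this dictionary.

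First I would check well-definedness. For $v\in \Gru{V}{m}{n}$ and $a\in A$, only the blocks $\Gr{X}{k}{l}{m}{n}$ with $(k,l)$ varying contribute to $X_{21}(v\otimes a)$, and rcfd-ness of $V$ together with property \ref{repmb} of Lemma \ref{lemma:rep-multiplier} (that $(A\otimes 1)X$ and $X(A\otimes 1)$ lie in $A\otimes \Hom_{\C}^{0}(V)$) ensure that $X_{21}(v\otimes a)$ and $(1\otimes a)X_{21}(v\otimes 1)$ are honest elements of $V\otimes A$ with only finitely many nonzero components. Bilinearity is clear. I would then verify that $(\lambda_{X},\rho_{X})$ is a bimodule-type couple in the sense of \cite[Definition 2.2]{Boh2}: compatibility $\lambda_{X}(v\otimes ab) = \lambda_{X}(\lambda_{X}(v\otimes a)\cdot b\text{-insertion})$ and similar identities for $\rho_{X}$ and the mixed case follow from associativity in $M(A\otimes\Hom_{\C}^{0}(V))$, exactly as $X$ being a multiplier encodes the compatibility. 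The coaction axiom, i.e.\ coassociativity against $\Delta$, is property \ref{repmc}: $(\Delta\otimes\id)(X)=X_{13}X_{23}$, read on elements $v\otimes a\otimes b$, gives the desired equation relating $\lambda_{X}$, $\rho_{X}$ and $\Delta$. The counit axiom follows from property \ref{repmd}.

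Next I would address fullness in the sense of \cite[Definition 4.2]{Boh2}. The $A^{L}$-$A^{R}$-bigrading on $V$ induced by the comodule structure is given by $\lambda_{k}^{V}v=(\id\otimes\epsilon)(\lambda_{X}(v\otimes \lambda_{k}))$-type formulas applied using the counit property, and property \ref{repma} together with property \ref{repmd} shows that these operators coincide with the projections onto $\Gru{V}{k}{}$ and $\Gru{V}{}{l}$ already present on $V$. Hence the induced bigrading recovers the original decomposition $V=\bigoplus_{k,l}\Gru{V}{k}{l}$, which is the defining condition for fullness. Combined with the fact that $\Gr{X}{k}{l}{m}{n}\in \Gr{A}{k}{l}{m}{n}\otimes\Hom_{\C}(\Gru{V}{m}{n},\Gru{V}{k}{l})$, this matches the $A$-bigrading of the comodule with the rcfd grading on $V$.

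For the converse, start from a full rcfd comodule $(V,\lambda_{X},\rho_{X})$. Fullness yields a bigrading $V=\bigoplus_{k,l}\Gru{V}{k}{l}$ for the base-algebra actions, and rcfd-ness is assumed. For each quadruple $(k,l,m,n)$ define
\begin{equation*}
  \Gr{X}{k}{l}{m}{n}(a\otimes v) := \tau\bigl(\lambda_{X}(v\otimes \UnitC{k}{m}a\UnitC{l}{n})\bigr)\big|_{a\in \Gr{A}{k}{l}{m}{n},\, v\in \Gru{V}{m}{n}}
\end{equation*}
where $\tau$ is the flip, and check that this lands in $\Gr{A}{k}{l}{m}{n}\otimes\Hom_{\C}(\Gru{V}{m}{n},\Gru{V}{k}{l})$ using fullness to control the grading of the output. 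I would then assemble these blocks into a strict multiplier $X\in M(A\otimes\Hom_{\C}^{0}(V))$ using Example \ref{ExaMult} (2) and rcfd-ness, and verify \ref{repma}--\ref{repmd}: property \ref{repma} is the definition, \ref{repmb} follows from the fact that the comodule maps take values in $V\otimes A$, \ref{repmc} is the coaction axiom rewritten through $X_{21}$, and \ref{repmd} is counitality. Lemma \ref{lemma:rep-multiplier} then produces the corepresentation $\mathscr{X}=(\Gr{X}{k}{l}{m}{n})$ whose associated comodule structure coincides with the given one by construction.

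The main obstacle I anticipate is the careful matching of conventions between the flipped multiplier $X_{21}$ used here and the non-flipped conventions of \cite{Boh2}, and in particular checking that fullness in Böhm's sense really is equivalent to the $V$-grading being induced by $\Gru{V}{k}{l}=\lambda_{k}^{V}\rho_{l}^{V}V$; once this bookkeeping is settled the verifications of the comodule axioms are direct consequences of the four properties in Lemma \ref{lemma:rep-multiplier}.
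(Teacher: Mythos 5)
Your overall strategy coincides with the paper's: read the comodule axioms of \cite{Boh2} off the four conditions in Lemma \ref{lemma:rep-multiplier}, and invert the dictionary for the converse. Two concrete points need repair, however. The first concerns fullness. Fullness in the sense of \cite[Definition 4.2]{Boh2} is not the statement that an induced bigrading recovers a pre-existing decomposition of $V$; a full comodule carries no a priori grading, and it is \cite[Theorem 4.5]{Boh2} that \emph{produces} the firm bimodule structure (hence the bigrading) \emph{from} fullness. What actually has to be verified is the non-degeneracy condition in that definition, and the paper does this by the single computation $(\id\otimes\epsilon)\bigl(X_{21}(v\otimes\UnitC{n}{n})\bigr)=(\epsilon\otimes\id)(\Gr{X}{m}{n}{m}{n})v=v$ for $v\in\Gru{V}{m}{n}$, i.e.\ by condition \eqref{eq:rep-counit}. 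Your appeal to properties (1) and (4) gestures at the right ingredients, but as written you prove a consequence of fullness rather than fullness itself — the bookkeeping issue you flag at the end is a real one and has to be resolved in the direction just indicated.

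The second point is that your block-extraction formula in the converse is not the right one. In $\lambda_{X}(v\otimes b)=X_{21}(v\otimes b)$ the $A$-leg of $X$ multiplies $b$ from the left, so inserting $\UnitC{k}{m}a\UnitC{l}{n}$ into the argument multiplies the $A$-leg of $X$ on the \emph{right} by $\UnitC{k}{m}$, which by condition (1) of Lemma \ref{lemma:rep-multiplier} selects the blocks whose \emph{source} column indices are $(k,m)$ — not the block $\Gr{X}{k}{l}{m}{n}$. The correct recipe, used in the paper, is $v\mapsto(1\otimes\UnitC{k}{m})\lambda(v\otimes\UnitC{l}{n})$ for $v\in\Gru{V}{m}{n}$: the unit $\UnitC{k}{m}$ must cut the output $A$-leg on the left, and $\UnitC{l}{n}$ the input on the right. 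Moreover, realizing the resulting map $\Gru{V}{m}{n}\to\Gru{V}{k}{l}\otimes\Gr{A}{k}{l}{m}{n}$ as an element of $\Gr{A}{k}{l}{m}{n}\otimes\Hom_{\C}(\Gru{V}{m}{n},\Gru{V}{k}{l})$ uses the finite-dimensionality of $\Gru{V}{m}{n}$ coming from the rcfd hypothesis; ``fullness to control the grading'' does not by itself produce a finite tensor. With these two corrections the remaining verifications go through essentially as you describe, via (2.12) of \cite{Boh2} for \eqref{eq:rep-comultiplication} and the proof of \cite[Theorem 4.5]{Boh2} for \eqref{eq:rep-counit}.
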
 

\begin{proof} Well-definedness of the couple $(\lambda_X,\rho_X)$ is immediate from the local support condition, and it is clear then that $(1\otimes a)\lambda_X(v\otimes b) = \rho_X(v\otimes a)(1\otimes b)$.  The conditions (2.11) and (2.12) in \cite[Definition 2.12]{Boh2} are then easily seen to follow from the identity $(\Delta\otimes \id)(X) = X_{13}X_{23}$. Finally, as for $v\in \Gru{V}{m}{n}$ one has $(\id\otimes \varepsilon)(X_{21}(v\otimes \UnitC{n}{n}) = (\varepsilon\otimes \id)(\Gr{X}{m}{n}{m}{n}) v  = v$, it follows that $(\lambda_X,\rho_X)$ is full.

Assume now conversely that $(\lambda,\rho)$ defines a full comodule structure on $V = \oplus_{m,n} \Gru{V}{m}{n}$. From the definition of the grading, it follows that we obtain maps \[\Gru{V}{m}{n}\rightarrow \Gru{V}{k}{l}\otimes \Gr{A}{k}{l}{m}{n},\quad v \mapsto (1\otimes \UnitC{k}{m})\lambda(v\otimes \UnitC{l}{n}).\] As the $\Gru{V}{m}{n}$ are finite-dimensional, there hence exists $\Gr{X}{k}{l}{m}{n} \in \Gr{A}{k}{l}{m}{n}\otimes \Hom_{\C}(\Gru{V}{m}{n},\Gru{V}{k}{l})$ such that  
\[(\Gr{X}{k}{l}{m}{n})_{21}(v\otimes 1)=(1\otimes \UnitC{k}{m})\lambda(v\otimes \UnitC{l}{n}).\] As $(\lambda,\rho)$ form a multiplier, it is then moreover immediate that in fact \[(\Gr{X}{k}{l}{m}{n})_{21}(v\otimes a) = (1\otimes \UnitC{k}{m})\lambda(v\otimes a).\] From (2.12) in \cite[Definition 2.12]{Boh2}, it is then immediate that the $\Gr{X}{k}{l}{m}{n}$ satisfy \eqref{eq:rep-comultiplication}. Moreover, from the proof of \cite[Theorem 4.5]{Boh2} it follows that for $v\in \Gru{V}{m}{n}$, one has \[v = (\id\otimes \epsilon)(X_{21}(v\otimes \UnitC{n}{n}),\] hence \eqref{eq:rep-counit} holds, and $\mathscr{X}$ forms a corepresentation. 
\end{proof}

We present some more general constructions for corepresentations of partial bialgebras. Given an rcfd $I^{2}$-graded vector space $V=\bigoplus_{k,l} \Gru{V}{k}{l}$
and a family of subspaces $\Gru{W}{k}{l} \subseteq \Gru{V}{k}{l}$, we
denote by $\iota^{W}\colon W\to V$ and $\pi^{W} \colon V \to
V/W=\bigoplus_{k,l} \Gru{V}{k}{l}/\Gru{W}{k}{l}$ the embedding and the
quotient map.
\begin{Def} Let $(V,\mathscr{X})$ be a
  corepresentation of a partial bialgebra $\mathscr{A}$.  We call a
  family of subspaces $\Gru{W}{k}{l} \subseteq \Gru{V}{k}{l}$
  \emph{invariant (w.r.t.\ $\mathscr{X}$)} if
 \begin{align} \label{eq:rep-invariant} (1\otimes
   \Gr{\pi}{}{W}{k}{l})\Gr{X}{k}{l}{m}{n}(1 \otimes
   \Gr{\iota}{}{W}{m}{n}) =0.
  \end{align}
We call $(V,\mathscr{X})$ 
 \emph{irreducible} if the only invariant families of subspaces are
 $(0)_{k,l}$ and $(\Gru{V}{k}{l})_{k,l}$.
\end{Def}

The next lemmas deal with restriction, factorisation and Schur's lemma. We skip their proofs which are straightforward.

\begin{Lem}
  Let $(V,\mathscr{X})$ be a corepresentation
  of a partial bialgebra and let $\Gru{W}{k}{l}
  \subseteq \Gru{V}{k}{l}$ be an invariant family of subspaces. Then
  there exist unique  corepresentations
  $(W,(\iota^{W})^{*}\mathscr{X})$ and $(V/W,\pi^{W}_{*}\mathscr{X})$ 
  such that $\iota^{W}$  and  $\pi^{W}$  are  morphisms  $(W,(\iota^{W})^{*}\mathscr{X}) \to (V,\mathscr{X}) \to (V/W,\pi^{W}_{*}\mathscr{X})$.
\end{Lem}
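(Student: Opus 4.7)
The plan is to construct $(\iota^{W})^{*}\mathscr{X}$ and $\pi^{W}_{*}\mathscr{X}$ directly from the invariance condition, and to read off uniqueness from the morphism identity.

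First I would construct the sub-corepresentation. The invariance condition \eqref{eq:rep-invariant} says exactly that the image of $\Gr{X}{k}{l}{m}{n}(1\otimes \Gr{\iota}{}{W}{m}{n})$ inside $\Gr{A}{k}{l}{m}{n}\otimes \Gru{V}{k}{l}$ actually sits inside $\Gr{A}{k}{l}{m}{n}\otimes \Gru{W}{k}{l}$. Since $\Gr{\iota}{}{W}{k}{l}$ is injective and the $\Gru{W}{k}{l}$ are finite-dimensional, there is a unique element $\Gr{Y}{k}{l}{m}{n}\in \Gr{A}{k}{l}{m}{n}\otimes \Hom_{\C}(\Gru{W}{m}{n},\Gru{W}{k}{l})$ with
\begin{align*}
  (1\otimes \Gr{\iota}{}{W}{k}{l})\Gr{Y}{k}{l}{m}{n} = \Gr{X}{k}{l}{m}{n}(1\otimes \Gr{\iota}{}{W}{m}{n}).
\end{align*}
Applying $(\Delta_{pq}\otimes \id)$ to both sides and using \eqref{eq:rep-comultiplication} for $\mathscr{X}$, together with the injectivity of $\Gr{\iota}{}{W}{k}{l}$, one obtains the corresponding identity for $\mathscr{Y}:=(\Gr{Y}{k}{l}{m}{n})$; similarly \eqref{eq:rep-counit} transfers. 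Hence $\mathscr{Y}$ is a corepresentation on $W$ and $\iota^{W}$ is a morphism of corepresentations by construction.

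For the quotient, I would reverse the roles: the invariance condition means the composite $(1\otimes \Gr{\pi}{}{W}{k}{l})\Gr{X}{k}{l}{m}{n}$ vanishes on $1\otimes \Gr{W}{m}{n}{}{}$, so it descends to a map into $\Gr{A}{k}{l}{m}{n}\otimes \Gru{(V/W)}{k}{l}$. Defining $\Gr{Z}{k}{l}{m}{n}\in\Gr{A}{k}{l}{m}{n}\otimes \Hom_{\C}(\Gru{(V/W)}{m}{n},\Gru{(V/W)}{k}{l})$ by
\begin{align*}
  \Gr{Z}{k}{l}{m}{n}(1\otimes \Gr{\pi}{}{W}{m}{n})=(1\otimes \Gr{\pi}{}{W}{k}{l})\Gr{X}{k}{l}{m}{n},
\end{align*}
the surjectivity of $\Gr{\pi}{}{W}{m}{n}$ shows $\Gr{Z}{k}{l}{m}{n}$ is well-defined and unique; the corepresentation axioms for $\mathscr{Z}:=(\Gr{Z}{k}{l}{m}{n})$ follow by postcomposing \eqref{eq:rep-comultiplication} and \eqref{eq:rep-counit} for $\mathscr{X}$ with the quotient maps and cancelling $\pi^{W}$ on the right.

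Uniqueness in both cases is built into the construction: the morphism equation defining $(\iota^{W})^{*}\mathscr{X}$ (resp.\ $\pi^{W}_{*}\mathscr{X}$) determines $\Gr{Y}{k}{l}{m}{n}$ (resp.\ $\Gr{Z}{k}{l}{m}{n}$) uniquely by injectivity of $\iota^{W}$ (resp.\ surjectivity of $\pi^{W}$). There is essentially no obstacle here: the only mild subtlety is checking that the rcfd condition persists, which is immediate because the gradings on $W$ and $V/W$ are restrictions and quotients of the grading on $V$, so the finite-dimensionality of each $\Gru{V}{k}{l}$ passes automatically to $\Gru{W}{k}{l}$ and $\Gru{(V/W)}{k}{l}$.
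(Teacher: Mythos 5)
Your construction is correct and is precisely the standard argument the paper has in mind (the paper explicitly skips this proof as straightforward): invariance lets $\Gr{X}{k}{l}{m}{n}(1\otimes\Gr{\iota}{}{W}{m}{n})$ factor through $\Gr{A}{k}{l}{m}{n}\otimes\Hom_{\C}(\Gru{W}{m}{n},\Gru{W}{k}{l})$ and lets $(1\otimes\Gr{\pi}{}{W}{k}{l})\Gr{X}{k}{l}{m}{n}$ descend to the quotient, with the corepresentation axioms and uniqueness following from injectivity of $\iota^{W}$ and surjectivity of $\pi^{W}$. No gaps.
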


\begin{Lem} Let $T$ be a morphism of 
  corepresentations $(V,\mathscr{X})$ and $(W,\mathscr{Y})$ of a
  partial bialgebra. Then the families of subspaces $\ker
  \Gru{T}{k}{l} \subseteq \Gru{V}{k}{l}$ and $\img\Gru{T}{k}{l}
  \subseteq \Gru{W}{k}{l}$ are invariant.  In particular, if
  $(V,\mathscr{X})$ and $(W,\mathscr{Y})$ are irreducible, then either
  all $\Gru{T}{k}{l}$ are zero or all $\Gru{T}{k}{l}$ are
  isomorphisms.
\end{Lem}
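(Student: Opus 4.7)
The plan is to deduce both invariance claims directly from the intertwining relation
\[(1\otimes \Gru{T}{k}{l})\Gr{X}{k}{l}{m}{n} = \Gr{Y}{k}{l}{m}{n}(1\otimes \Gru{T}{m}{n}),\]
and then to read off the Schur-type dichotomy from the definition of irreducibility.

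Fix $k,l,m,n$ and write $\iota$ and $\pi$ for the inclusion of $\ker\Gru{T}{m}{n}$ into $\Gru{V}{m}{n}$ and the projection of $\Gru{V}{k}{l}$ onto $\Gru{V}{k}{l}/\ker\Gru{T}{k}{l}$, respectively. The first step is to post-compose $\Gr{X}{k}{l}{m}{n}(1\otimes \iota)$ with $1\otimes\Gru{T}{k}{l}$: the intertwining relation rewrites this as $\Gr{Y}{k}{l}{m}{n}(1\otimes \Gru{T}{m}{n}\iota)$, which vanishes by definition of $\iota$. Hence the range of $\Gr{X}{k}{l}{m}{n}(1\otimes \iota)$ is contained in $\Gr{A}{k}{l}{m}{n}\otimes \ker\Gru{T}{k}{l}$, and therefore $(1\otimes \pi)\Gr{X}{k}{l}{m}{n}(1\otimes \iota) = 0$, which is precisely the invariance condition for $(\ker \Gru{T}{k}{l})_{k,l}$. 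The argument for $\img \Gru{T}{k}{l}\subseteq \Gru{W}{k}{l}$ is dual: applying $\Gr{Y}{k}{l}{m}{n}$ to a tensor of the form $a\otimes \Gru{T}{m}{n}v$ and using the intertwining relation once more produces $(1\otimes \Gru{T}{k}{l})\Gr{X}{k}{l}{m}{n}(a\otimes v)$, which manifestly lies in $\Gr{A}{k}{l}{m}{n}\otimes \img \Gru{T}{k}{l}$.

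For the final assertion, irreducibility of $(V,\mathscr{X})$ forces the invariant family $(\ker \Gru{T}{k}{l})_{k,l}$ to be either identically zero or identically $(\Gru{V}{k}{l})_{k,l}$, so $T$ is either componentwise injective or identically zero. Symmetrically, irreducibility of $(W,\mathscr{Y})$ forces $(\img \Gru{T}{k}{l})_{k,l}$ to be identically zero or identically $(\Gru{W}{k}{l})_{k,l}$, so $T$ is either componentwise surjective or identically zero. Combining the two dichotomies yields the claim. There is no real technical obstacle here; this is simply the classical Schur argument translated componentwise, the only point to keep in mind being that the definition of irreducibility forces the \emph{all or nothing} alternative to hold uniformly in $(k,l)$ rather than pair by pair.
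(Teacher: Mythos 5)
Your proof is correct; the paper omits the proof of this lemma as ``straightforward'', and your argument is exactly the intended componentwise Schur argument, including the observation that irreducibility forces the all-or-nothing alternative uniformly in $(k,l)$. The only step worth spelling out is the passage from $(1\otimes \Gru{T}{k}{l})\Gr{X}{k}{l}{m}{n}(1\otimes\iota)=0$ to the conclusion that $\Gr{X}{k}{l}{m}{n}(1\otimes\iota)$ lies in $\Gr{A}{k}{l}{m}{n}\otimes\Hom_{\C}(\ker\Gru{T}{m}{n},\ker\Gru{T}{k}{l})$, which follows by writing the element as a sum of simple tensors with linearly independent first legs (and the analogous point, via a right inverse of the corestriction of $\Gru{T}{m}{n}$ onto its image, in the image case).
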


Given corepresentations $\mathscr{X}$ and $\mathscr{Y}$ of
a partial bialgebra $\mathscr{A}$ on respective rcfd $I^{2}$-graded vector spaces $V$ and $W$,
we  obtain an $I^{2}$-graded vector space $V\oplus W$ by taking
component-wise direct sums, and use the canonical embedding 
\begin{align*}
  \Hom(\Gru{V}{m}{n},\Gru{V}{k}{l}) \oplus
  \Hom(\Gru{W}{m}{n},\Gru{W}{k}{l}) \hookrightarrow
  \Hom(\Gru{V}{m}{n} \oplus \Gru{W}{m}{n},\Gru{V}{k}{l} \oplus
  \Gru{W}{k}{l})
\end{align*}
to define the \emph{direct sum} $\mathscr{X} \oplus \mathscr{Y}$,
which is a corepresentation of $\mathscr{A}$ on $V\oplus W$. Then the
natural embeddings from $V$ and $W$ into $V\oplus W$ and the
projections onto $V$ and $W$ are evidently morphisms of
corepresentations.  More generally, given a family of  corepresentations
$((V_{\alpha},\mathscr{X}_{\alpha}))_{\alpha}$ such that the sum
$\bigoplus_{\alpha} V_{\alpha}$ is rcfd again, one
can form the direct sum $\bigoplus_{\alpha} \mathscr{X}_{\alpha}$,
which is a corepresentation on $\bigoplus_{\alpha} V_{\alpha}$.
\begin{Prop}
  Let $\mathscr{A}$ be an $I$-partial bialgebra. Then $\Corep_{\rcf}(\mathscr{A})$
  is a $\C$-linear abelian category, and the forgetful functor
  $\Corep_{\rcf}(\mathscr{A}) \to \Vectrcf$ lifts kernels, cokernels and biproducts.
\end{Prop}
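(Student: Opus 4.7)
The plan is to unpack the proposition into four claims and handle them by reducing everything to the corresponding statements in $\Vectrcf$, which is itself abelian, and then invoking the invariance and restriction lemmas that precede the proposition.

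First, I would observe that $\Hom_{\Corep_{\rcf}(\mathscr{A})}((V,\mathscr{X}),(W,\mathscr{Y}))$ is a vector subspace of $\prod_{k,l}\Hom_{\C}(\Gru{V}{k}{l},\Gru{W}{k}{l})$, because the intertwining condition $(1\otimes\Gru{T}{k}{l})\Gr{X}{k}{l}{m}{n}=\Gr{Y}{k}{l}{m}{n}(1\otimes\Gru{T}{m}{n})$ is $\C$-linear in $T$, and composition is bilinear. This gives the $\C$-linear enrichment. Finite biproducts are handled by the construction already described just before the proposition: given $(V_\alpha,\mathscr{X}_\alpha)$ with finite index set, the direct sum $\bigoplus_\alpha V_\alpha$ is again rcfd, and $\bigoplus_\alpha \mathscr{X}_\alpha$ is a corepresentation on it, with the canonical inclusions and projections being intertwiners. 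Since any bilinear relation that must hold for the biproduct in $\Vectrcf$ holds componentwise, $\bigoplus_\alpha(V_\alpha,\mathscr{X}_\alpha)$ is a biproduct in $\Corep_{\rcf}(\mathscr{A})$, and its forgetful image is exactly the biproduct in $\Vectrcf$.

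For kernels and cokernels, let $T\colon(V,\mathscr{X})\to(W,\mathscr{Y})$ be a morphism. By the second lemma on invariance above, $(\ker\Gru{T}{k}{l})_{k,l}$ and $(\img\Gru{T}{k}{l})_{k,l}$ are invariant families of subspaces of $V$ and $W$ respectively. The preceding restriction/quotient lemma then gives unique corepresentation structures on $\ker T:=\bigoplus_{k,l}\ker\Gru{T}{k}{l}$ and $W/\img T:=\bigoplus_{k,l}\Gru{W}{k}{l}/\img\Gru{T}{k}{l}$ such that the embedding $\iota\colon\ker T\hookrightarrow V$ and the quotient map $\pi\colon W\twoheadrightarrow W/\img T$ are intertwiners. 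Universality then reduces to universality in $\Vectrcf$: any intertwiner $S\colon(U,\mathscr{Z})\to(V,\mathscr{X})$ with $TS=0$ factors in $\Vectrcf$ uniquely through $\iota$ as $S=\iota\tilde S$, and the formula $(1\otimes\Gru{\tilde S}{k}{l})\Gr{Z}{k}{l}{m}{n}=\Gr{X}{k}{l}{m}{n}(1\otimes\Gru{\tilde S}{m}{n})$ holds because it holds after post-composing with the monomorphism $\iota$. Thus $\ker T=((\ker T),(\iota)^{*}\mathscr{X})$ is the categorical kernel, and the forgetful functor lifts it; dually for the cokernel.

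Finally, to conclude abelianness it remains to check that every monomorphism is normal and every epimorphism is conormal. Since the forgetful functor is faithful and lifts kernels and cokernels, a morphism $T$ is mono (resp.\ epi) in $\Corep_{\rcf}(\mathscr{A})$ iff all $\Gru{T}{k}{l}$ are injective (resp.\ surjective). For a monomorphism $T\colon(V,\mathscr{X})\to(W,\mathscr{Y})$, one forms $\coker T$ and then $\ker(\coker T)$ by the construction above; both constructions coincide componentwise with the corresponding ones in $\Vectrcf$, where $\ker(\coker T)=\img T$ canonically agrees with $V$ via $T$. Since the induced map $V\to\ker(\coker T)$ is a bijection in every bidegree and an intertwiner, and since corepresentation structures are uniquely determined by the underlying rcfd graded vector space together with the corepresentation multiplier via Lemma \ref{lemma:rep-multiplier}, it is an isomorphism in $\Corep_{\rcf}(\mathscr{A})$; the dual argument handles epimorphisms. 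I expect the only mild subtlety to be this last uniqueness-of-corepresentation-structure step, but it is immediate from the defining condition \eqref{eq:rep-comultiplication} and the fact that an isomorphism of rcfd graded vector spaces transports $\Gr{X}{k}{l}{m}{n}$ to $\Gr{Y}{k}{l}{m}{n}$ in a unique way.
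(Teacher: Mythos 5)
Your argument is correct and follows essentially the same route as the paper's (very terse) proof: biproducts, kernels and cokernels are lifted via the preceding direct-sum, invariance and restriction/quotient lemmas, and abelianness is deduced by transporting the fact that every monic is a kernel and every epic is a cokernel from $\Vectrcf$, using that a bidegree-wise bijective intertwiner is invertible in $\Corep_{\rcf}(\mathscr{A})$. Your write-up just makes explicit the steps the paper leaves implicit.
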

\begin{proof}
  The preceding considerations show that the forgetful functor lifts
  kernels, cokernels and biproducts. Moreover, in
  $\Corep_{\rcf}(\mathscr{A})$, every monic is a kernel
  and every epic is a cokernel because the same is true in $\Vectrcf$
  and because kernels and cokernels lift.
\end{proof}

\subsection{Corepresentations of partial Hopf algebras}

If $\mathscr{A}$ is a partial Hopf algebra,  then every
corepresentation multiplier has a generalized inverse.
\begin{Lem} \label{lemma:rep-invertible}
  Let $(V,\mathscr{X})$ be a corepresentation of a partial Hopf
  algebra $\mathscr{A}$. Then with $\Gr{Z}{k}{l}{m}{n} = (S\otimes \id)(\Gr{X}{n}{m}{l}{k})$, we have $\Gr{Z}{k}{l}{m}{n}\in \Gr{A}{k}{l}{m}{n}\otimes \Hom_{\C}(\Gru{V}{l}{k},\Gru{V}{n}{m})$ and
  \begin{align*}
    \Gr{X}{k}{l}{m}{n}  \cdot \Gr{Z}{l}{k'}{n}{m'} &=0 \text{ if } m'\neq m &
      \sum_{n} \Gr{X}{k}{l}{m}{n} \cdot \Gr{Z}{l}{k'}{n}{m} &= \delta_{k,k'}\UnitC{k}{m} \otimes
      \id_{\Gru{V}{k}{l}}, \\
      \Gr{Z}{n}{m}{l}{k}\cdot \Gr{X}{m}{n'}{k}{l'} &= 0
      \text{ if } n\neq n' & 
      \sum_{m} \Gr{Z}{n}{m}{l}{k}\cdot \Gr{X}{m}{n}{k}{l'} &=
      \delta_{l,l'} \UnitC{n}{l} \otimes \id_{\Gru{V}{k}{l}}.
  \end{align*}
  In particular, the multiplier $Z:=     (S \otimes
  \id)(X) \in M(A \otimes \Hom_{\C}^{0}(V))$
  satisfies
  \begin{align} \label{eq:rep-generalized-inverse}
    XZ &= \sum_{k} \lambda_{k} \otimes \lambda^{V}_{k}, &
    ZX &= \sum_{l} \rho_{l} \otimes \rho^{V}_{l},
  \end{align}
  and is a generalized inverse of $X$ in the sense that $XZX=X$ and $ZXZ=Z$.
\end{Lem}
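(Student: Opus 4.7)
My plan is to derive all claimed identities from the antipode relations of Lemma~\ref{lemma:antipode} applied to the corepresentation identity $(\Delta\otimes\id)(X) = X_{13}X_{23}$ of Lemma~\ref{lemma:rep-multiplier}. For the placement of $\Gr{Z}{k}{l}{m}{n}$: since $\left(\begin{smallmatrix}n & m \\ l & k\end{smallmatrix}\right)^{\circ\bullet} = \left(\begin{smallmatrix}k & l\\ m & n\end{smallmatrix}\right)$, the antipode sends $\Gr{A}{n}{m}{l}{k}$ into $\Gr{A}{k}{l}{m}{n}$, and the $\Hom$-factor is preserved, so indeed $\Gr{Z}{k}{l}{m}{n} \in \Gr{A}{k}{l}{m}{n}\otimes\Hom_{\C}(\Gru{V}{l}{k}, \Gru{V}{n}{m})$.

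The key step is to apply $(m_{A}\circ(\id\otimes S))\otimes\id$ to both sides of $(\Delta\otimes\id)(X) = X_{13}X_{23}$ in $M(A\otimes A\otimes \Hom_{\C}^{0}(V))$. The right hand side becomes $X\cdot (S\otimes\id)(X) = XZ$ in $M(A\otimes\Hom_{\C}^{0}(V))$, while the left hand side equals $(\Pi^{L}\otimes\id)(X)$ by Lemma~\ref{lemma:antipode}. Combining $\Pi^{L}(a) = \sum_p\epsilon(\lambda_p a)\lambda_p$, the fact that $\lambda_p a = \delta_{p,k}a$ for $a\in \Gr{A}{k}{l}{m}{n}$, and the counit condition $(\epsilon\otimes\id)(\Gr{X}{k}{l}{m}{n}) = \delta_{k,m}\delta_{l,n}\id_{\Gru{V}{k}{l}}$, this simplifies to $\sum_{k,l}\lambda_k\otimes \id_{\Gru{V}{k}{l}} = \sum_k\lambda_k\otimes\lambda^{V}_k$. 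Hence $XZ = \sum_k\lambda_k\otimes\lambda^{V}_k$; the analogous computation using $S(a_{(1)})a_{(2)} = \Pi^{R}(a)$ yields $ZX = \sum_l\rho_l\otimes\rho^{V}_l$.

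The componentwise identities are obtained by extracting homogeneous pieces. A general product $\Gr{X}{k}{l}{r}{s}\cdot \Gr{Z}{l}{k'}{s'}{r'}$ has $\Hom$-factor given by the composition $\Hom_{\C}(\Gru{V}{r}{s},\Gru{V}{k}{l})\circ\Hom_{\C}(\Gru{V}{k'}{l},\Gru{V}{r'}{s'})$, which vanishes unless $r=r'$ and $s=s'$; this immediately gives $\Gr{X}{k}{l}{m}{n}\cdot\Gr{Z}{l}{k'}{n}{m'}=0$ for $m'\neq m$. Extracting the component of $XZ$ in $\Gr{A}{k}{k'}{m}{m}\otimes\Hom_{\C}(\Gru{V}{k'}{l},\Gru{V}{k}{l})$ and using $\lambda_k = \sum_p\UnitC{k}{p}$ yields $\sum_n \Gr{X}{k}{l}{m}{n}\cdot\Gr{Z}{l}{k'}{n}{m} = \delta_{k,k'}\UnitC{k}{m}\otimes\id_{\Gru{V}{k}{l}}$. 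The remaining pair of identities follows analogously from the decomposition of $ZX = \sum_l\rho_l\otimes\rho^{V}_l$.

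For the generalized inverse, a direct check shows that multiplying each component $\Gr{X}{k}{l}{m}{n}$ on the right by $\rho_{l'}\otimes\rho^{V}_{l'}$ produces a factor $\delta_{l',n}$ in both tensor legs, so $X\cdot\sum_{l'}\rho_{l'}\otimes\rho^{V}_{l'} = X$, giving $XZX=X\cdot ZX=X$; the relation $ZXZ=Z$ is symmetric. The main obstacle is the careful four-index bookkeeping throughout; the mathematical content reduces to a routine Sweedler-style manipulation once the antipode and counit axioms are applied to the corepresentation identity.
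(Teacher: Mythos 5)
Your argument is correct and is essentially the paper's proof: both rest on applying $M_A\circ(\id\otimes S)$ to the corepresentation identity $(\Delta\otimes\id)(X)=X_{13}X_{23}$ and then invoking the antipode relation $a_{(1)}S(a_{(2)})=\Pi^L(a)$ (resp.\ its $\Pi^R$ counterpart) together with the counit condition $(\epsilon\otimes\id)(\Gr{X}{k}{l}{m}{n})=\delta_{k,m}\delta_{l,n}\id$. The only difference is one of bookkeeping direction — you work in the total multiplier picture and then extract homogeneous components, while the paper computes componentwise via $\sum_n(M_A(\id\otimes S)\Delta_{m,n}\otimes\id)(\Gr{X}{k}{l}{k'}{l})$ and notes that the global statements are direct consequences — and your grading check and the $\delta_{l',n}$ verification of $XZX=X$ are both sound.
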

\begin{proof}
  The grading property of $\Gr{Z}{k}{l}{m}{n}$ follows from  $S(\Gr{A}{p}{q}{r}{s})\subseteq \Gr{A}{s}{r}{q}{p}$, and then the upper left hand identity is immediate.  To
  verify the upper right hand one, we use identities \eqref{eq:rep-comultiplication}, \eqref{eq:rep-counit} and \eqref{eq:antipode-pi-l}. Namely, with $M_{A}$ denoting the multiplication of $A$, we find
  \begin{align*}
      \sum_{n} \Gr{X}{k}{l}{m}{n} \cdot (S \otimes
      \id)(\Gr{X}{m}{n}{k'}{l}) &= \sum_{n} (M_{A}  (\id \otimes S)
      \otimes \id)((\Gr{X}{k}{l}{m}{n})_{13}(\Gr{X}{m}{n}{k'}{l})_{23})
 \\ &= \sum_{n} (M_{A} (\id \otimes S)  \Delta_{m,n} \otimes
      \id)(\Gr{X}{k}{l}{k'}{l}) \\
      &= \delta_{k,k'} \UnitC{k}{l} \otimes (\epsilon \otimes
      \id)(\Gr{X}{k}{l}{k'}{l})
      \\ &=
\delta_{k,k'}\UnitC{k}{m} \otimes
      \id_{\Gru{V}{k}{l}}. \end{align*} The other
equations follow similarly, and the assertions concerning $Z$ are
direct consequences.
\end{proof}
\begin{Def}
  Let $\mathscr{X}$ be a corepresentation of a  partial Hopf
  algebra.  We  denote the generalized inverse $(S \otimes \id)(X)$
  of $X$  by $X^{-1}$ and let
  \begin{align*}
   \Gr{(X^{-1})}{k}{l}{m}{n}=(S \otimes \id)(\Gr{X}{n}{m}{l}{k}) \in
   \Gr{A}{k}{l}{m}{n} \otimes \Hom_{\C}(\Gru{V}{l}{k},\Gru{V}{n}{m})
  \end{align*}
\end{Def}
For completeness, we mention the following converse to Lemma \ref{lemma:rep-invertible}.
\begin{Lem}
  Let $\mathscr{A}$ be an $I$-partial bialgebra, $V$ an rcfd
  $I^{2}$-graded vector space and $X,Z \in M(A \otimes
  \Hom_{\C}^{0}(V))$. If conditions \ref{repma}--\ref{repmc} in Lemma
  \ref{lemma:rep-multiplier} and \eqref{eq:rep-generalized-inverse}
  hold, then the corresponding family
  $\mathscr{X}=(\Gr{X}{k}{l}{m}{n})_{k,l,m,n}$ is a corepresentation
  of $\mathscr{A}$ on $V$.
\end{Lem}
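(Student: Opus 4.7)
The plan is to invoke the converse direction of Lemma \ref{lemma:rep-multiplier}: conditions \ref{repma}, \ref{repmb} and \ref{repmc} hold by hypothesis, so it suffices to establish \ref{repmd}, namely that $(\epsilon\otimes\id)(X)=\id_V$ converges strictly, or equivalently that $(\epsilon\otimes\id)(\Gr{X}{k}{l}{m}{n})=\delta_{k,m}\delta_{l,n}\id_{\Gru{V}{k}{l}}$. The off-diagonal case $(k,l)\neq(m,n)$ is automatic, since $\epsilon$ vanishes on $\Gr{A}{k}{l}{m}{n}$ for those indices, so writing $\Gr{T}{m}{n}{m}{n}:=(\epsilon\otimes\id)(\Gr{X}{m}{n}{m}{n})\in\Hom_{\C}(\Gru{V}{m}{n})$ and $F:=\sum_{m,n}\Gr{T}{m}{n}{m}{n}$, what remains is to show $F=\id_{V}$.

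The core computation is to apply the slice map $\epsilon\otimes\id\otimes\id$ to both sides of $(\Delta\otimes\id)(X)=X_{13}X_{23}$ from \ref{repmc}, working block by block. For the left-hand side, I first verify that $(\epsilon\otimes\id)(\Delta(a))=a$ for every homogeneous $a\in\Gr{A}{k}{l}{m}{n}$; this follows from the counit axiom $(\epsilon\otimes\id)(\Delta(a)(1\otimes b))=ab$ applied with $b=\UnitC{l}{n}$, since multiplying $\Delta(a)$ by $1\otimes\UnitC{l}{n}$ restricts the second tensor factor to the block on which only the $(p,q)=(k,l)$ summand of $\sum_{p,q}\Delta_{pq}(a)$ survives $\epsilon$. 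Applied componentwise, the left-hand side reduces to $X$. For the right-hand side, \ref{repma} gives $X_{13}X_{23}=\sum_{K,L,M,N,M',N'}(\Gr{X}{K}{L}{M}{N})_{13}(\Gr{X}{M}{N}{M'}{N'})_{23}$, as non-matching adjacent indices yield zero Hom composition; applying $\epsilon$ to the first tensor factor then forces $(K,L)=(M,N)$ and the expression collapses to $(1\otimes F)X$. Equating the two sides yields $(1\otimes F)X=X$.

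To extract $F=\id_{V}$, I multiply this identity on the right by $Z$ and substitute $XZ=\sum_{k}\lambda_{k}\otimes\lambda^{V}_{k}$, obtaining
\[\sum_{k}\lambda_{k}\otimes F\lambda^{V}_{k}=(1\otimes F)XZ=XZ=\sum_{k}\lambda_{k}\otimes\lambda^{V}_{k}.\]
Linear independence of the $\lambda_{k}$'s then forces $F\lambda^{V}_{k}=\lambda^{V}_{k}$ for every $k$, and strict summation over $k$ gives $F=\id_{V}$. Since $F$ is block-diagonal with respect to the decomposition $V=\bigoplus_{m,n}\Gru{V}{m}{n}$, this forces each summand $\Gr{T}{m}{n}{m}{n}$ to equal $\id_{\Gru{V}{m}{n}}$, which is precisely \eqref{eq:rep-counit}. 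The main subtlety throughout is that $X$ is a genuine multiplier rather than an element of $A\otimes\Hom_{\C}^{0}(V)$; the rcfd hypothesis on $V$ together with condition \ref{repma} guarantees that each slice-map manipulation reduces to a finite calculation at the level of blocks, so the apparent infinite sums are always well defined.
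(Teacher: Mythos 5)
Your proposal is correct and follows essentially the same route as the paper: both reduce to verifying condition (4) of Lemma \ref{lemma:rep-multiplier}, apply $\epsilon\otimes\id\otimes\id$ to $(\Delta\otimes\id)(X)=X_{13}X_{23}$ to get $(1\otimes T)X=X$ with $T=\prod_{k,l}(\epsilon\otimes\id)(\Gr{X}{k}{l}{k}{l})$, and then multiply on the right by $Z$ and use $XZ=\sum_k\lambda_k\otimes\lambda_k^V$ to conclude $T=\id_V$. The only cosmetic difference is that you spell out the verification of $(\epsilon\otimes\id)(\Delta(a))=a$ on homogeneous components, which the paper takes for granted as "the counit property."
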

\begin{proof}
  We have to verify condition \ref{repmd} in Lemma
  \ref{lemma:rep-multiplier}.  If $(k,l) \neq (p,q)$, then
  $\epsilon(\Gr{A}{k}{l}{p}{q})=0$ and hence $(\epsilon
  \otimes \id)(\Gr{X}{k}{l}{p}{q}) =0$. The counit property and condition
  \ref{repmc} in Lemma \ref{lemma:rep-multiplier} imply 
\begin{align*}
  \Gr{X}{k}{l}{m}{n} &= ((\epsilon\otimes \id)  \Delta \otimes
  \id)(  \Gr{X}{k}{l}{m}{n}) 
\\ &  = \sum_{p,q} (\epsilon\otimes \id \otimes
  \id)\left((\Gr{X}{k}{l}{p}{q})_{13}(\Gr{X}{p}{q}{m}{n})_{23}\right)
  =  (1 \otimes \Gru{T}{k}{l})\Gr{X}{k}{l}{m}{n},
\end{align*}
where $\Gru{T}{k}{l}=(\epsilon \otimes \id)(\Gr{X}{k}{l}{k}{l}) \in
\Hom_{\C}(\Gru{V}{k}{l})$.  Therefore,  $T=\prod_{k,l} T_{k,l}$  satisfies $(1 \otimes T)X =
X$. Multiplying on the right by $Z$, we find
$T\lambda^{V}_{k}=\lambda^{V}_{k}$ for all $k$. Thus, $T=\id_{V}$.
\end{proof}

\begin{Lem} \label{lemma:rep-total-morphism}
 A bigraded map $T$ defines a morphism from 
    $(V,\mathscr{X})$ to $(W,\mathscr{Y})$ if and only if one of the following relations hold:
    \begin{align*}
      Y^{-1}(1 \otimes T)X&=\sum_{m,n} \rho_{n} \otimes \Gru{T}{m}{n},
      &
    Y(1\otimes T)X^{-1} &=\sum_{k,l} \lambda_{k} \otimes \Gru{T}{k}{l}.
    \end{align*}
\end{Lem}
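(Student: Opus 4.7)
The plan is to recast the componentwise morphism condition
$(1\otimes \Gru{T}{k}{l})\Gr{X}{k}{l}{m}{n} = \Gr{Y}{k}{l}{m}{n}(1\otimes \Gru{T}{m}{n})$
into the total multiplier identity $(1\otimes T)X = Y(1\otimes T)$ in the ambient multiplier algebra $M(A\otimes \Hom_{\C}^{0}(V,W))$, where $T = \sum_{k,l}\Gru{T}{k}{l}$. This equivalence is routine: cutting the total equation with $\lambda_{k}\rho_{m}\otimes\id$ on the left and $\lambda_{l}\rho_{n}\otimes\id$ on the right and invoking Lemma \ref{lemma:rep-multiplier}\ref{repma} recovers the componentwise identity, and conversely the strict sum of the componentwise identities reconstructs the total one.

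Granted this reformulation, the forward direction of the first equivalence is obtained by multiplying $(1\otimes T)X = Y(1\otimes T)$ on the left by $Y^{-1}$ and applying the identity $Y^{-1}Y = \sum_{n}\rho_{n}\otimes \rho^{W}_{n}$ from \eqref{eq:rep-generalized-inverse}. Since $T$ is bigraded, $\rho^{W}_{n}T = \sum_{m}\Gru{T}{m}{n}$, which gives $Y^{-1}(1\otimes T)X = \sum_{m,n}\rho_{n}\otimes \Gru{T}{m}{n}$. The forward direction of the second equivalence is entirely symmetric: multiplying on the right by $X^{-1}$ and using $XX^{-1} = \sum_{k}\lambda_{k}\otimes\lambda^{V}_{k}$ together with $T\lambda^{V}_{k} = \sum_{l}\Gru{T}{k}{l}$ yields $Y(1\otimes T)X^{-1} = \sum_{k,l}\lambda_{k}\otimes \Gru{T}{k}{l}$.

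For the converse, suppose $Y^{-1}(1\otimes T)X = \sum_{m,n}\rho_{n}\otimes \Gru{T}{m}{n}$ and multiply on the left by $Y$. Using $YY^{-1} = \sum_{k}\lambda_{k}\otimes \lambda^{W}_{k}$ from Lemma \ref{lemma:rep-invertible}, the left-hand side becomes $\sum_{k}(\lambda_{k}\otimes \lambda^{W}_{k})(1\otimes T)X$; a homogeneous-component inspection shows that only the diagonal $k = k'$ survives in the product with each piece $(1\otimes \Gru{T}{k'}{l'})\Gr{X}{k'}{l'}{m}{n}$, because $\lambda_{k}$ cuts the first factor to the row $k' = k$ in $A$ while $\lambda^{W}_{k}$ cuts $\Gru{T}{k'}{l'}$ to the row $k' = k$ in $W$, so the sum collapses to $(1\otimes T)X$. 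A parallel analysis of the right-hand side $Y\sum_{m,n}\rho_{n}\otimes \Gru{T}{m}{n}$, using the block supports of the $\Gr{Y}{k'}{l'}{m'}{n'}$ and the $\Gru{T}{m}{n}$, produces $Y(1\otimes T)$. One thus concludes $(1\otimes T)X = Y(1\otimes T)$, which is the morphism condition; the converse for the second identity is analogous, multiplying on the right by $X$ and using $X^{-1}X = \sum_{l}\rho_{l}\otimes\rho^{V}_{l}$. The main point requiring care is tracking the projections $\lambda_{k}$ and $\rho_{n}$ through the multiplier algebra so that the support of each homogeneous piece matches the support appearing in the generalized-inverse relations of Lemma \ref{lemma:rep-invertible}; once this bookkeeping is handled the proof is entirely formal.
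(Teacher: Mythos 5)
Your proof is correct. The paper actually states this lemma without any proof (it is treated as a routine consequence of Lemma \ref{lemma:rep-multiplier} and Lemma \ref{lemma:rep-invertible}), so there is nothing to compare against; your argument — passing to the total form $(1\otimes T)X=Y(1\otimes T)$, then composing with the generalized inverses and using $XX^{-1}=\sum_k\lambda_k\otimes\lambda^V_k$, $X^{-1}X=\sum_l\rho_l\otimes\rho^V_l$ together with the block supports of the $\Gru{T}{k}{l}$ — is exactly the verification the authors leave implicit, and it matches how they manipulate these total expressions elsewhere (e.g.\ in the proof of Lemma \ref{lem:rep-average}). The one point worth making explicit, which you do handle via the support inspection, is that $(YY^{-1})\bigl((1\otimes T)X\bigr)=(1\otimes T)X$ holds for any bigraded $T$, not only for morphisms, so the converse direction is legitimate.
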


\subsection{Tensor product and duality}

Recall from Example \ref{ExaVectBiGr} that the category $\Vectrcf$ is a tensor category. The tensor product of morphisms is the
restriction of the ordinary tensor product.  We will interpret this product as being strictly associative.  The unit for this product is the vector
space $\C^{(I)}=\bigoplus_{k\in I} \C$. 

Given $V$ and $W$ in $\Vectrcf$, we identify $\Hom_\C(\Gru{V}{m}{n},\Gru{V}{k}{l})\otimes
   \Hom_\C(\Gru{W}{n}{q},\Gru{W}{l}{p})$ with a subspace of
\begin{align*}
   \Hom_\C(\Gru{V}{m}{n}\otimes
   \Gru{W}{n}{q},\Gru{V}{k}{l}\otimes \Gru{W}{l}{p})\subseteq
   \Hom_\C(\Gru{(}{m}{}V\itimes
     W\Gru{)}{}{q},\Gru{(}{k}{}V\itimes W\Gru{)}{}{p}).
\end{align*}

We can now construct a product of corepresentations as follows.
\begin{Lem} Let $\mathscr{X}$ and $\mathscr{Y}$ be copresentations of
  $\mathscr{A}$ on respective  rcfd $I^{2}$-graded vector spaces $V$ and
  $W$. Then the sum
  \begin{align} \label{eq:rep-product-blocks}
     \Gr{(X\Circt Y)}{k}{p}{m}{q} := \sum_{l,n}
    \left(\Gr{X}{k}{l}{m}{n}\right)_{12}\left(\Gr{Y}{l}{p}{n}{q}\right)_{13}
  \end{align}
  has only finitely many non-zero terms, and the elements
 \[\Gr{(X\Circt
    Y)}{k}{p}{m}{q}\in \Gr{A}{k}{p}{m}{q} \otimes
  \Hom_\C(\Gru{(}{m}{}V\itimes W\Gru{)}{}{q},\Gru{(}{k}{}V\itimes W\Gru{)}{}{p})
\]
define a  corepresentation $\mathscr{X} \Circt \mathscr{Y}$ of
$\mathscr{A}$ on $V\itimes W$. 
\end{Lem}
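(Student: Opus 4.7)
The plan is to verify three things in order: (i) the sum in \eqref{eq:rep-product-blocks} has only finitely many nonzero terms, (ii) each term lies in the claimed Hom space, and (iii) the family $\mathscr{X}\Circt\mathscr{Y}$ satisfies the axioms \eqref{eq:rep-comultiplication} and \eqref{eq:rep-counit}. For (i), with $(k,p,m,q)$ fixed, the rcfd assumption forces $\Gru{V}{k}{l}\neq 0$ for only finitely many $l$ and $\Gru{V}{m}{n}\neq 0$ for only finitely many $n$, so the corresponding blocks of $\mathscr{X}$ vanish outside a finite set of $(l,n)$. For (ii), the algebra factors lie in $\Gr{A}{k}{l}{m}{n}\cdot\Gr{A}{l}{p}{n}{q}\subseteq\Gr{A}{k}{p}{m}{q}$ by the partial-algebra structure, and after summation over $n$ and $l$ the Hom factors fit into the canonical inclusion of $\bigoplus_{l,n}\Hom_\C(\Gru{V}{m}{n},\Gru{V}{k}{l})\otimes\Hom_\C(\Gru{W}{n}{q},\Gru{W}{l}{p})$ into $\Hom_\C(\Gru{(}{m}{}V\itimes W\Gru{)}{}{q},\Gru{(}{k}{}V\itimes W\Gru{)}{}{p})$.

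For the counit identity, I would expand $\Gr{X}{k}{l}{m}{n}=\sum_i a_i\otimes f_i$ and $\Gr{Y}{l}{p}{n}{q}=\sum_j b_j\otimes g_j$, so that $(\Gr{X}{k}{l}{m}{n})_{12}(\Gr{Y}{l}{p}{n}{q})_{13}=\sum_{i,j}a_ib_j\otimes f_i\otimes g_j$. Applying $\epsilon\otimes\id\otimes\id$ and using the partial multiplicativity of $\epsilon$ (property \ref{Propb} of Definition \ref{DefPartBiAlg}) turns $\epsilon(a_ib_j)$ into $\epsilon(a_i)\epsilon(b_j)$, so the result factors as $(\epsilon\otimes\id)(\Gr{X}{k}{l}{m}{n})\otimes(\epsilon\otimes\id)(\Gr{Y}{l}{p}{n}{q})$. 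Invoking \eqref{eq:rep-counit} for each of $\mathscr{X}$ and $\mathscr{Y}$ gives $\delta_{k,m}\delta_{l,n}\delta_{p,q}\id_{\Gru{V}{k}{l}}\otimes\id_{\Gru{W}{l}{p}}$, and summing over $l$ and $n$ collapses this to $\delta_{k,m}\delta_{p,q}\id_{\Gru{(}{k}{}V\itimes W\Gru{)}{}{p}}$.

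For the comultiplication axiom I would apply $\Delta_{rs}\otimes\id\otimes\id$ to each summand of \eqref{eq:rep-product-blocks}, invoke multiplicativity of $\Delta_{rs}$ (property \ref{Prope} of Definition \ref{DefPartBiAlg}) in the form $\Delta_{rs}(a_ib_j)=\sum_{t\in I}\Delta_{rt}(a_i)\Delta_{ts}(b_j)$, and rewrite each of the resulting two factors using \eqref{eq:rep-comultiplication} for $\mathscr{X}$ and $\mathscr{Y}$ as products of two blocks. In the four-slot space $A\otimes A\otimes \Hom_\C(V)\otimes \Hom_\C(W)$, I would swap the middle two factors, which act in disjoint legs and therefore commute, and then regroup so that the sum over $l$ is absorbed into a $(k,p;r,t)$-block of $X\Circt Y$ placed in legs $(1,3,4)$, and the sum over $n$ into a $(r,t;m,q)$-block placed in legs $(2,3,4)$. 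Matching this with $(\Gr{(X\Circt Y)}{k}{p}{r}{s})_{134}(\Gr{(X\Circt Y)}{r}{s}{m}{q})_{234}$, whose defining expansion has independent sums over the two intermediate bottom indices, requires observing that composition of the two $V$-Hom factors in slot 3 (equivalently of the two $W$-Hom factors in slot 4) vanishes unless those intermediate indices agree, which is exactly what identifies them with the $t$ produced by $\Delta_{rs}$. This leg-numbering bookkeeping, together with the implicit diagonalisation of the double sum enforced by the bigrading, is the main obstacle; everything else is a direct transcription of the underlying partial-bialgebra axioms.
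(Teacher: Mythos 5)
Your proposal is correct and follows essentially the same route as the paper: the finiteness comes from the rcfd condition, the grading of the product term comes from the partial algebra structure and the canonical identification of Hom spaces stated before the lemma, and the two corepresentation axioms follow from the multiplicativity of $\Delta$ (property \ref{Prope}) and the weak multiplicativity of $\epsilon$ (property \ref{Propb}). The paper merely asserts these last two points "follow easily"; your write-up supplies the leg-numbering and diagonal-collapse bookkeeping, including the key observation that composition of graded Hom components kills the off-diagonal intermediate indices.
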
 
\begin{proof}
  The sum \eqref{eq:rep-product-blocks} is finite because $V$ and
  $W$ are  rcfd. Using the identification above, we
  see that
 \[
  \left(\Gr{X}{k}{l}{m}{n}\right)_{12}\left(\Gr{Y}{l}{p}{n}{q}\right)_{13}\in \Gr{A}{k}{p}{m}{q} \otimes \Hom_\C(\Gru{(}{m}{}V\itimes
    W\Gru{)}{}{q},\Gru{(}{k}{}V\itimes W\Gru{)}{}{p}).\] Now,   the fact that $\Gr{(X\Circt
    Y)}{k}{p}{m}{q}$ is a corepresentation follows easily
  from the multiplicativity of $\Delta$ and the weak multiplicativity
  of $\epsilon$.
\end{proof}
\begin{Rem} \label{remark:rep-tensor-multiplier}
  The corepresentation multiplier associated to $\mathscr{X}\Circt
  \mathscr{Y}$   is  just $X_{12}Y_{13}$.
\end{Rem}

\begin{Prop} \label{prop:rep-tensor} Let $\mathscr{A}$ be an
  $I$-partial bialgebra. Then  $\Corep_{\rcf}(\mathscr{A})$ carries the
  structure of a strict tensor category such that the product of  corepresentations $(V,\mathscr{X})$ and
  $(W,\mathscr{Y})$ is the corepresentation $(V\itimes
  W,\mathscr{X}\Circt \mathscr{Y})$, the unit is the trivial
  corepresentation $(\C^{(I)},\mathscr{U})$, and the forgetful functor
  $\Corep_{\rcf}(\mathscr{A}) \to \Vectrcf$ is a strict tensor functor.
\end{Prop}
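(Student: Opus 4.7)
The plan is to check the axioms of a strict tensor category on $\Corep_{\rcf}(\mathscr{A})$ step by step, leveraging the fact that the underlying data already lives in the strict tensor category $\Vectrcf$ and using the total corepresentation multiplier picture of Lemma \ref{lemma:rep-multiplier} and Remark \ref{remark:rep-tensor-multiplier} to simplify bookkeeping.

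First I would define the tensor product on morphisms: for morphisms $S\colon (V_1,\mathscr{X}_1)\to(V_2,\mathscr{X}_2)$ and $T\colon(W_1,\mathscr{Y}_1)\to(W_2,\mathscr{Y}_2)$, set $S\itimes T$ to be the componentwise tensor product of $S$ and $T$ restricted to $V_1\itimes W_1\to V_2\itimes W_2$ (which is well-defined since $S$ and $T$ preserve the bigrading). The key point to verify is that $S\itimes T$ is again a morphism of corepresentations. By Lemma \ref{lemma:rep-total-morphism} and Remark \ref{remark:rep-tensor-multiplier}, this amounts to the identity
\begin{equation*}
(1\otimes S\otimes T)(X_1)_{12}(Y_1)_{13} = (X_2)_{12}(Y_2)_{13}(1\otimes S\otimes T),
\end{equation*}
which follows by applying the morphism condition for $S$ on leg $2$ and for $T$ on leg $3$. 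Bifunctoriality (compatibility with composition, identities and $\C$-bilinearity) is then inherited directly from $\Vectrcf$.

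Next I would verify strict associativity. For three corepresentations $\mathscr{X},\mathscr{Y},\mathscr{Z}$ on $V,W,U$, the underlying tensor product $V\itimes W\itimes U$ is strictly associative in $\Vectrcf$, and a direct computation shows that the total corepresentation multiplier associated to both $(\mathscr{X}\Circt\mathscr{Y})\Circt\mathscr{Z}$ and $\mathscr{X}\Circt(\mathscr{Y}\Circt\mathscr{Z})$ equals $X_{12}Y_{13}Z_{14}$, so the two coincide on the nose. Strict unitality is checked from the block formula \eqref{eq:rep-product-blocks}: since $\Gr{U}{l}{p}{n}{q}=\delta_{l,p}\delta_{n,q}\UnitC{l}{n}$, only the terms with $l=p$, $n=q$ survive in $\mathscr{X}\Circt \mathscr{U}$, and after the canonical identifications $\Gru{V}{k}{p}\otimes \C\cong \Gru{V}{k}{p}$ and $\Gr{A}{k}{p}{m}{q}\cdot \UnitC{p}{q}=\Gr{A}{k}{p}{m}{q}$ one gets back $\Gr{X}{k}{p}{m}{q}$ exactly; symmetrically for $\mathscr{U}\Circt \mathscr{X}$. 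Hence the left and right unit constraints are identities.

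Finally, the forgetful functor $\Corep_{\rcf}(\mathscr{A})\to\Vectrcf$ is strictly monoidal by construction, since both its action on objects and on morphisms is obtained by applying $\itimes$ in $\Vectrcf$. The only step requiring genuine computation is the verification that $S\itimes T$ is a morphism of corepresentations; the remaining checks are essentially bookkeeping via leg numbering, and the main potential pitfall is keeping track of the graded tensor product $\itimes$ versus the full tensor product $\otimes$ in $\Vectrcf$.
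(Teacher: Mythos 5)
Your proof is correct and fills in exactly the routine verification that the paper dismisses with ``This is clear'': the intertwining of $S\itimes T$ via legs $2$ and $3$ of $X_{12}Y_{13}$, strict associativity from the common multiplier $X_{12}Y_{13}Z_{14}$, and strict unitality from $\Gr{U}{l}{p}{n}{q}=\delta_{l,p}\delta_{n,q}\UnitC{l}{n}$ together with $a\,\UnitC{p}{q}=a$ for $a\in\Gr{A}{k}{p}{m}{q}$. One small correction: Lemma \ref{lemma:rep-total-morphism} involves $X^{-1}$ and is only available for partial Hopf algebras, so in the bialgebra setting you should instead invoke the total form $(1\otimes T)X=Y(1\otimes T)$ of the definition of a morphism, which is all your computation actually uses.
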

\begin{proof}
This is clear.
\end{proof}

Given a  corepresentation of a partial Hopf algebra, one can use the
antipode to define a contragredient corepresentation on a dual space.
Denote the dual of vector spaces $V$ and  linear maps $T$ by
$\dual{V}$ and $\dualop{T}$, respectively, and define the dual of an
$I^{2}$-graded vector space $V=\bigoplus_{k,l} \Gru{V}{k}{l}$ to be
the space
\begin{align*}
  \dual{V}=\bigoplus_{k,l} \Gru{(\dual{V})}{k}{l}, \quad \text{where }
\Gru{(\dual{V})}{k}{l} = \dual{(\Gru{V}{l}{k})}.
\end{align*}

\begin{Prop}
  Let $\mathscr{A}$ be an $I$-partial Hopf algebra with antipode $S$
  and  let $(V,\mathscr{X})$ be a 
  corepresentation of $\mathscr{A}$. Then $\dual{V}$ and the family
  $\dualco{\mathscr{X}}$ given by
   \begin{align} \label{eq:rep-left-dual}
\Gr{\dualco{X}}{k}{l}{m}{n}   :=  (S \otimes \dualop{-})(\Gr{X}{n}{m}{l}{k}) 
   \end{align} 
   form a  corepresentation of $\mathscr{A}$  which is a left dual of $(V,\mathscr{X})$. If the antipode
   $S$ of $\mathscr{A}$ is bijective, then $\dual{V}$ and the family
   $\dualcor{\mathscr{X}}$ given by 
   \begin{align} \label{eq:rep-right-dual}
 \Gr{\dualcor{X}}{k}{l}{m}{n} :=(S^{-1}
   \otimes \dualop{-})(\Gr{X}{n}{m}{l}{k})    
   \end{align}
 form a  corepresentation
 of $\mathscr{A}$ which is a
   right dual of $(V,\mathscr{X})$.
  \end{Prop}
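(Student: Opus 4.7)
The strategy is to verify, in three stages, that (i) $(\dual V, \dualco{\mathscr{X}})$ is a corepresentation, (ii) the natural pairing and canonical element define evaluation and coevaluation morphisms of corepresentations, and (iii) the snake identities hold. The analogous statement for $\dualcor{\mathscr{X}}$ and right duality then follows by replacing $S$ with $S^{-1}$ throughout.

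For step (i), the comultiplication identity \eqref{eq:rep-comultiplication} for $\dualco{\mathscr{X}}$ I would derive by combining the partial form of Corollary \ref{corollary:antipode}, namely $\Delta_{pq} \circ S = (S\otimes S)\circ \sigma\circ \Delta_{qp}$ with $\sigma$ the flip, together with the corepresentation axiom for $\mathscr{X}$ at the indices $(n,m,l,k)$ decomposed through $(q,p)$. Writing $\Gr{X}{n}{m}{q}{p} = \sum_\beta b_\beta \otimes U_\beta$ and $\Gr{X}{q}{p}{l}{k} = \sum_\alpha a_\alpha \otimes T_\alpha$, both sides of the required identity collapse to $\sum_{\alpha,\beta} S(a_\alpha) \otimes S(b_\beta) \otimes \dualop{T_\alpha}\dualop{U_\beta}$, using that the transpose reverses composition. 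The counit identity \eqref{eq:rep-counit} for $\dualco{\mathscr{X}}$ is immediate from Lemma \ref{LemCoAnt} together with the counit property of $\mathscr{X}$.

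For step (ii), I would define $\ev_V \colon \dual V \itimes V \to \C^{(I)}$ on the diagonal components $(\dual V \itimes V)_{kk} = \bigoplus_p \dual{V_{pk}} \otimes V_{pk}$ by the natural pairing $\dual{e}\otimes v \mapsto \dual{e}(v)$, and $\coev_V \colon \C^{(I)} \to V \itimes \dual V$ by sending the $k$-th unit to the canonical element $\sum_{p, i} e_i^{(p)} \otimes \dual{e_i^{(p)}}$ of $\bigoplus_p V_{kp} \otimes \dual{V_{kp}}$; both are well defined since the rcfd condition on $V$ forces the relevant direct sums to be finite-dimensional. Expanding the morphism condition for $\ev_V$ via the tensor-product formula \eqref{eq:rep-product-blocks} and contracting on the vector-space legs reduces the equation, for each $n$, to
\[
\sum_l \Gr{Z}{k}{l}{m}{n}\cdot \Gr{X}{l}{k}{n}{m} = \UnitC{k}{m}\otimes \id_{\Gru{V}{n}{m}},
\]
which is exactly the second family of identities in Lemma \ref{lemma:rep-invertible}; the morphism condition for $\coev_V$ reduces symmetrically, after evaluating on the canonical element, to the first family $\sum_p \Gr{X}{k}{l}{m}{p} \cdot \Gr{Z}{l}{k}{p}{m} = \UnitC{k}{m} \otimes \id_{\Gru{V}{k}{l}}$.

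Finally, the snake identities $(\id_V \itimes \ev_V)\circ(\coev_V \itimes \id_V) = \id_V$ and $(\ev_V \itimes \id_{\dual V})\circ(\id_{\dual V} \itimes \coev_V) = \id_{\dual V}$ follow by inspection from finite-dimensional linear algebra, since on each homogeneous component $\ev_V$ and $\coev_V$ reduce to the standard pairing and copairing on a finite-dimensional vector space. The right-dual case uses that $S^{-1}$ is the antipode of $(\mathscr{A}, \Delta^{\op})$ and satisfies mirror-image versions of the identities used above, giving $\dualcor{\mathscr{X}}$ together with evaluation on $V \itimes \dual V$ and coevaluation into $\dual V \itimes V$. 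The principal obstacle throughout will be the careful bookkeeping of the partial grading and of the ``transposed'' index conventions for $\dualco{X}$ and the generalized inverse $Z = X^{-1}$; once these are set up correctly, each verification reduces to a single invocation of Lemma \ref{lemma:rep-invertible}, Corollary \ref{corollary:antipode}, or Lemma \ref{LemCoAnt}.
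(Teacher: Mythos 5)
Your proposal is correct and follows essentially the same route as the paper's proof: the corepresentation axioms for $\dualco{\mathscr{X}}$ via $\Delta\circ S=(S\otimes S)\Delta^{\op}$ and $\epsilon\circ S=\epsilon$, the componentwise evaluation/coevaluation built from the standard pairing and canonical element, the snake identities checked in $\Vectrcf$, and the morphism property of $\ev$ and $\coev$ reduced to the generalized-inverse identities of Lemma \ref{lemma:rep-invertible}. The only cosmetic difference is the order in which the duality in $\Vectrcf$ and the intertwiner property are verified.
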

  \begin{proof}
    We only prove the assertion concerning
    $(\dual{V},\dualco{\mathscr{X}})$. To see that this is a corepresentation, note that the element
    \eqref{eq:rep-left-dual} belongs to $\Gr{A}{k}{l}{m}{n} \otimes
    \Hom_{\C}(\Gru{(\dual{V})}{m}{n},\Gru{(\dual{V})}{k}{l})$ and use
    the relations $\Delta \circ S = (S \otimes S)\Delta^{\op}$ and
    $\epsilon \circ S = \epsilon$ from Corollary
    \ref{corollary:antipode} and Lemma \ref{LemCoAnt}.  
    Let us show that $(\dual{V},\dualco{\mathscr{X}})$ is a left dual
    of $(V,\mathscr{X})$.

    Given a finite-dimensional vector space $W$, denote by $\ev_{W}
    \colon \dual{W} \otimes W \to \C$ the evaluation map and by $\coev_{W}
    \colon \C \to W \otimes \dual{W}$ the coevaluation map, given by
    $1\mapsto \sum_{i} w_{i} \otimes \dual{w_{i}}$ if $(w_{i})_{i}$
    and $(\dual{w_{i}})_{i}$ are dual bases of $W$ and
    $\dual{W}$. With respect to these maps, $\dual{W}$ is a left dual
    of $W$. If $F\colon W_{1}\to W_{2}$ is a linear map between
    finite-dimensional spaces, then
\begin{align} \label{eq:coev-vee} (\id_{W_{2}} \otimes F^{\tr}) \circ \coev_{W_{2}} &= (F \otimes \id_{W_{1}^{*}})\circ
  \coev_{W_{1}}, &
\ev_{W_{1}}(F^{\tr}
  \otimes \id_{W_{2}})&=  \ev_{W_{2}}(\id_{W_{2}^{*}} \otimes F).
\end{align}

Now, define morphisms $\coev \colon \C^{(I)} \to V\itimes \dual{V}$ and
$\ev \colon \dual{V} \itimes V \to \C^{(I)}$ by
\begin{align*}
  \Gru{\coev}{k}{l} &= \delta_{k,l} \sum_{p} \coev_{\tiny\Gru{V}{k}{p}} \colon
  \C \to 
    \Gru{(}{k}{}V\itimes \dual{V}\Gru{)}{}{l}, &
  \Gru{\ev}{k}{l} &= \delta_{k,l} \sum_{p} \ev_{\Gru{V}{p}{k}} \colon
    \Gru{(}{k}{}V\itimes \dual{V}\Gru{)}{}{l} \to \C.
\end{align*}
One easily checks that with respect to these maps, $\dual{V}$ is a
left dual of $V$ in $\Vectrcf$. 

We therefore only need to show that $\ev$ is a morphism from
$\dualco{\mathscr{X}}\Circt\mathscr{X}$ to $\mathscr{U}$ and that $\coev$ is
a morphism from $\mathscr{U}$ to
$\mathscr{X}\Circt\dualco{\mathscr{X}}$.  But \eqref{eq:coev-vee} and
Lemma \ref{lemma:rep-invertible} imply
  \begin{align*}
    (1\otimes \Gru{\ev}{k}{k})
 \sum_{l,n}  \big(
\Gr{\dualco{X}}{k}{l}{m}{n}\big)_{12}
\big(\Gr{X}{l}{k}{n}{q}\big)_{13} &=
    (1\otimes \Gru{\ev}{k}{k})
 \sum_{l,n} 
(S \otimes \dualop{-})(\Gr{X}{n}{m}{l}{k})_{12}
    (\Gr{X}{l}{k}{n}{q})_{13} \\ &=
(1\otimes \Gru{\ev}{m}{m})  \sum_{l,n}
      (S \otimes \id)(\Gr{X}{n}{m}{l}{k})_{13}(\Gr{X}{l}{k}{n}{q})_{13} \\
    &= \delta_{m,q}\UnitC{k}{q}\otimes \Gru{\ev}{m}{m} \\
    &= \Gr{U}{k}{k}{m}{q}(1 \otimes \Gru{\ev}{m}{m}).
  \end{align*}
A similar  calculation shows that also $\coev$ is a morphism, whence the claim follows.
\end{proof}
\begin{Cor} \label{cor:rep-tensor-duality}
  Let $\mathscr{A}$ be a partial Hopf algebra. Then
  $\Corep_{\rcf}(\mathscr{A})$ is a tensor category with left
  duals and, if the antipode of $\mathscr{A}$ is invertible, with right duals.
\end{Cor}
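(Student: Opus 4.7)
The plan is to read the corollary as a direct assembly of the two immediately preceding results. By Proposition \ref{prop:rep-tensor}, $\Corep_{\rcf}(\mathscr{A})$ is already a strict tensor category with product $\Circt$ and unit the trivial corepresentation $(\C^{(I)},\mathscr{U})$. The preceding Proposition constructs, for every corepresentation $(V,\mathscr{X})$, an object $(\dual{V},\dualco{\mathscr{X}}) \in \Corep_{\rcf}(\mathscr{A})$ together with explicit morphisms $\coev \colon (\C^{(I)},\mathscr{U}) \to (V \itimes \dual{V}, \mathscr{X}\Circt \dualco{\mathscr{X}})$ and $\ev \colon (\dual{V} \itimes V, \dualco{\mathscr{X}}\Circt\mathscr{X}) \to (\C^{(I)},\mathscr{U})$, and verifies the snake identities. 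By definition, this means every object of $\Corep_{\rcf}(\mathscr{A})$ admits a left dual, so $\Corep_{\rcf}(\mathscr{A})$ is a tensor category with left duals. If $S$ is bijective, the same proposition exhibits $(\dual{V},\dualcor{\mathscr{X}})$ as a right dual of $(V,\mathscr{X})$, and the analogous argument gives right duality.

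The only point that is not literally contained in the preceding proposition is that the duality is compatible with morphisms, i.e.\ that the assignment $(V,\mathscr{X}) \mapsto (\dual{V},\dualco{\mathscr{X}})$ extends to a contravariant functor on $\Corep_{\rcf}(\mathscr{A})$. For this I would check that if $T \colon (V,\mathscr{X}) \to (W,\mathscr{Y})$ is a morphism, with $(1 \otimes \Gru{T}{k}{l})\Gr{X}{k}{l}{m}{n} = \Gr{Y}{k}{l}{m}{n}(1 \otimes \Gru{T}{m}{n})$, then $\dualop{T}$ intertwines $\dualco{\mathscr{Y}}$ and $\dualco{\mathscr{X}}$. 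This follows by transposing the above identity (swapping the indices $(k,l) \leftrightarrow (m,n)$) and applying $S \otimes \dualop{-}$, using only functoriality of $\dualop{-}$ and linearity of $S$; no further properties of the antipode are needed. Alternatively, one can deduce this from the standard categorical fact that left duals, when they exist, automatically give rise to a contravariant duality functor through the rigidity structure.

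Combining these observations, left duality for $\Corep_{\rcf}(\mathscr{A})$ is established in full. The second statement is proved in exactly the same way, using $\dualcor{\mathscr{X}}$ in place of $\dualco{\mathscr{X}}$ and noting that the invertibility of $S$ is precisely what is needed for $\dualcor{\mathscr{X}}$ to be well-defined. The main conceptual obstacle was already overcome in the preceding proposition, where one had to verify that the antipode relations $\Delta \circ S = (S\otimes S)\Delta^{\op}$ and $\epsilon \circ S = \epsilon$ (from Corollary \ref{corollary:antipode} and Lemma \ref{LemCoAnt}) make the contragredient family satisfy \eqref{eq:rep-comultiplication} and \eqref{eq:rep-counit}; here at the level of the corollary, there is no residual calculation of substance.
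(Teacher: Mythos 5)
Your proposal is correct and matches the paper's (implicit) argument: the corollary is stated without proof precisely because it is the immediate assembly of Proposition \ref{prop:rep-tensor} (tensor structure) with the preceding proposition (existence of left, and when $S$ is invertible, right duals for every object). Your additional check that $T\mapsto\dualop{T}$ intertwines the contragredients is a harmless extra, and as you note it also follows from the standard fact that existence of left duals automatically yields the duality functor.
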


Let $\mathscr{A}$ be an $I$-partial Hopf algebra.  Then the tensor
unit in $\Corep_{\rcf}(\mathscr{A})$, which is the trivial corepresentation
$\mathscr{U}$ on $\C^{(I)}$, need not be irreducible. Instead, it decomposes
into irreducible corepresentations indexed by the hyperobject set $\mathscr{I}$ of equivalence
classes for the relation $\sim$ on $I$ given by  $k \sim l \iff
  \UnitC{k}{l}\neq 0$ (see Remark
\ref{remark:index-equivalence}).
\begin{Lem}
  Let $\mathscr{A}$ be an $I$-partial Hopf algebra and let
  $(I_{\alpha})_{\alpha\in \mathscr{I}}$ be a labelled partition of $I$ into
  equivalence classes for the relation $\sim$.  Then for each $\alpha\in \mathscr{I}$, the subspace
  $\C^{(I_{\alpha})} \subseteq \C^{(I)}$ is invariant, and the restriction
  $\mathscr{U_{\alpha}}$ of $\mathscr{U}$ to $\C^{(I_{\alpha})}$ is
  irreducible. In particular, $\mathscr{U}=\bigoplus_{\alpha\in\mathscr{I}}
  \mathscr{U_{\alpha}}$ is a decomposition into irreducible corepresentations.
\end{Lem}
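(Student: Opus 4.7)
The plan is to trace through the definitions carefully. Recall that $\C^{(I)}$ carries the diagonal $I^2$-grading, so $\Gru{(\C^{(I)})}{k}{l}=\C$ if $k=l$ and $0$ otherwise, and $\Gr{U}{k}{l}{m}{n}=\delta_{k,l}\delta_{m,n}\UnitC{k}{m}$ under the canonical identification of $\Hom_\C(\C,\C)$ with $\C$. Hence for any family $(\Gru{W}{k}{l})$ of subspaces of $\C^{(I)}$, the invariance condition \eqref{eq:rep-invariant} is automatically satisfied off the diagonal and reduces, on the diagonal, to the requirement that
\[
(1\otimes \Gr{\pi}{}{W}{k}{k})\,\UnitC{k}{m}\,(1\otimes \Gr{\iota}{}{W}{m}{m})=0 \qquad \text{for all } k,m\in I.
\]

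First I would verify invariance of $\C^{(I_\alpha)}$. Here $\Gru{W}{k}{k}=\C$ if $k\in I_\alpha$ and $\Gru{W}{k}{k}=0$ otherwise, so the only non-trivial case of the reduced invariance condition occurs when $m\in I_\alpha$ and $k\notin I_\alpha$, where it demands $\UnitC{k}{m}=0$. This is immediate from the definition of $\sim$ in Remark \ref{remark:index-equivalence}: $m\in I_\alpha$ and $k\notin I_\alpha$ means $k\not\sim m$, hence $\UnitC{k}{m}=0$.

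For irreducibility of $\mathscr{U}_\alpha$, I would let $(\Gru{W}{k}{l})$ be any invariant subfamily of $\C^{(I_\alpha)}$. The grading forces $\Gru{W}{k}{l}=0$ unless $k=l\in I_\alpha$, and then $\Gru{W}{k}{k}\in\{0,\C\}$. Setting $J:=\{k\in I_\alpha:\Gru{W}{k}{k}=\C\}$, the reduced invariance condition now forces $\UnitC{k}{m}=0$ whenever $m\in J$ and $k\in I_\alpha\setminus J$. But both indices lie in $I_\alpha$, so $k\sim m$ and hence $\UnitC{k}{m}\neq 0$ by the very definition of the equivalence classes. This contradiction forces $J=\emptyset$ or $J=I_\alpha$, i.e.\ $W=0$ or $W=\C^{(I_\alpha)}$.

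Finally, the decomposition $\mathscr{U}=\bigoplus_{\alpha\in\mathscr{I}}\mathscr{U}_\alpha$ follows from the underlying splitting $\C^{(I)}=\bigoplus_\alpha \C^{(I_\alpha)}$ together with the invariance of each summand established above; that $\mathscr{U}$ respects this decomposition is witnessed by $\UnitC{k}{m}=0$ for $k,m$ in distinct classes. No step presents a real obstacle: the argument is a direct unraveling of the definitions, the point being that the hyperobject partition is set up precisely so that the non-vanishing pattern of $\UnitC{k}{m}$ matches the irreducible block structure of the trivial corepresentation.
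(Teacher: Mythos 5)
Your proof is correct and follows exactly the paper's (much terser) argument: the paper simply observes that $\Gr{U}{k}{k}{m}{m}=\UnitC{k}{m}$ is non-zero precisely when $k\sim m$, which is what your unwinding of the invariance condition makes explicit. The details you supply — reduction of \eqref{eq:rep-invariant} to the diagonal, the dichotomy $J=\emptyset$ or $J=I_\alpha$, and the block-diagonality giving the direct sum — are all accurate.
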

\begin{proof}
Immediate from the fact that $\Gr{U}{k}{k}{m}{m} = 
  \UnitC{k}{m}$  is $1$  if $k\sim m$  and $0$ if $k\not\sim m$. 
\end{proof}

\begin{Def} We denote by $\Corep(\mathscr{A})$ the category of  corepresentations $(V,\mathscr{X})$ for which there exists a finite subset of the hyperobject set $\mathscr{I}$ such that $\Gru{V}{k}{l}=0$ for the equivalence classes of $k,l$ outside this subset.
\end{Def}

It is easily seen that $\Corep(\mathscr{A})$ is then a tensor category with local units indexed by $\mathscr{I}$. We will use the same notation for the associated partial tensor category.

\subsection{Decomposition into irreducibles}

When there is an invariant integral around, one can average morphisms of vector spaces to obtain morphisms of corepresentations. 
\begin{Lem} \label{lem:rep-average}  Let $(V,\mathscr{X})$ and
  $(W,\mathscr{Y})$ be  corepresentations of  a partial
  Hopf algebra $\mathscr{A}$ with an invariant integral $\phi$, and let
  $\Gru{T}{k}{l} \in \Hom_{\C}(\Gru{V}{k}{l},\Gru{W}{k}{l})$ for all $k,l\in I$. Then for each $m,n$ fixed, the families
  \begin{align*}
    \Gr{\check T}{m}{n}{k}{l} &:= (\phi \otimes
    \id)(\Gr{(Y^{-1})}{n}{m}{l}{k}(1\otimes
    \Gru{T}{m}{n})\Gr{X}{m}{n}{k}{l}), \\
    \Gr{\hat T}{m}{n}{k}{l} &:=(\phi \otimes
    \id)(\Gr{Y}{k}{l}{m}{n}(1\otimes
    \Gru{T}{m}{n})\Gr{(X^{-1})}{l}{k}{n}{m})
  \end{align*} 
form  morphisms $\Grd{\check{T}}{m}{n}$ and $\Grd{\hat{T}}{m}{n}$ from $(V,\mathscr{X})$ to $(W,\mathscr{Y})$. 
\end{Lem}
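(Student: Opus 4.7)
The plan is to use the characterization of morphisms of corepresentations provided by Lemma~\ref{lemma:rep-total-morphism}. I focus on $\Grd{\check T}{m}{n}$; the case of $\Grd{\hat T}{m}{n}$ is entirely analogous, with $Y$ and $X^{-1}$ replacing $Y^{-1}$ and $X$ and with left-invariance of $\phi$ replacing right-invariance. First I would verify that $\Gr{Y^{-1}}{n}{m}{l}{k}(1 \otimes T_{mn})\Gr{X}{m}{n}{k}{l}$ lies in $\Gr{A}{n}{n}{l}{l} \otimes \Hom_\C(\Gru{V}{k}{l}, \Gru{W}{k}{l})$: the $A$-factors multiply consistently in the partial algebra structure, the $\Hom$-factors compose along $\Gru{V}{k}{l} \to \Gru{V}{m}{n} \to \Gru{W}{m}{n} \to \Gru{W}{k}{l}$, and applying $\phi$ to the $A$-leg then produces the required element of $\Hom_\C(\Gru{V}{k}{l}, \Gru{W}{k}{l})$.

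For the intertwining property I would pass to the total corepresentation multipliers. Assemble $\check T := \sum_{k,l}\Gr{\check T}{m}{n}{k}{l}$ and set $Z := Y^{-1}(1 \otimes T_{mn}) X$, interpreted in $A \otimes \Hom_\C^0(V, W)$, so that $\check T = (\phi \otimes \id)(Z)$. By Lemma~\ref{lemma:rep-total-morphism}, it then suffices to verify
\begin{equation*}
Y^{-1}(1 \otimes \check T) X \;=\; \sum_{k,l} \rho_l \otimes \Gr{\check T}{m}{n}{k}{l}.
\end{equation*}

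The heart of the argument will be to apply $(\phi \otimes \id \otimes \id)$ (that is, $\phi$ on the first leg) to the three-leg identity
\begin{equation*}
(\Delta \otimes \id)(Z) \;=\; Y^{-1}_{23} Y^{-1}_{13}(1 \otimes 1 \otimes T_{mn}) X_{13} X_{23},
\end{equation*}
which follows from $(\Delta \otimes \id)(X) = X_{13}X_{23}$ in Definition~\ref{definition:corep} together with its companion $(\Delta \otimes \id)(Y^{-1}) = Y^{-1}_{23}Y^{-1}_{13}$, derived from $\Delta \circ S = (S \otimes S)\circ \Delta^{\op}$ in Corollary~\ref{corollary:antipode}. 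Rewriting the right-hand side as $Y^{-1}_{23}\cdot (Y^{-1}(1 \otimes T_{mn}) X)_{13}\cdot X_{23}$, a direct computation collects the $\phi$-weighted inner sum into $\check T$ and produces $Y^{-1}(1 \otimes \check T) X$. Separately, right-invariance of $\phi$ from Lemma~\ref{lemma:total-integral}---namely $(\phi \otimes \id)(\Delta_{ll}(a)) = \phi(a)\UnitC{l}{m}$ for $a \in \Gr{A}{k}{k}{m}{m}$---selects only the diagonal blocks $\Gr{A}{n}{n}{c}{n'}$ of $Z$ with $c = n'$, and summing over $l$ yields $\sum_{m',n'} \rho_{n'} \otimes \Gr{\check T}{m}{n}{m'}{n'}$.

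The principal technical difficulty will be the bookkeeping in the three-leg multiplier calculus: one must track carefully which $A$-factors live in which tensor leg, the order of operator composition in the $\Hom$-leg, and which homogeneous blocks of $Z$ are picked out by the support condition on $\phi$. Once the three-leg identity and the two evaluations of $(\phi \otimes \id \otimes \id)$ are established, equating them gives the identity of the second paragraph, and Lemma~\ref{lemma:rep-total-morphism} converts this total identity into the sought block-wise intertwining property.
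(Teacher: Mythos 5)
Your proposal is correct and follows essentially the same route as the paper: pass to the total multipliers, so that $\check T=(\phi\otimes\id)(Y^{-1}(1\otimes T)X)$, expand $Y^{-1}(1\otimes\check T)X$ via $(\Delta\otimes\id)(X)=X_{13}X_{23}$ and $(\Delta\otimes\id)(Y^{-1})=Y^{-1}_{23}Y^{-1}_{13}$, apply the invariance of $\phi$ from Lemma \ref{lemma:total-integral}, and conclude with the criterion of Lemma \ref{lemma:rep-total-morphism}. The only differences are cosmetic (you keep the upper indices $(m,n)$ where the paper drops them, and you spell out the derivation of the companion identity for $Y^{-1}$ from Corollary \ref{corollary:antipode}), so no changes are needed.
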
 
\begin{proof} Clearly, we may suppose that $T$ is supported only on the component at index $(m,n)$, and we may then drop the upper indices and simply write $\Gru{\check{T}}{k}{l}$ and $\Gru{\hat{T}}{k}{l}$. Then 
 in total form, $\check{T}=(\phi \otimes \id)(Y^{-1}(1 \otimes T)X)$
  and $\hat{T}=(\phi \otimes \id)(Y(1 \otimes T)X^{-1})$.  Now, Lemma
  \ref{lemma:rep-multiplier} and Lemma \ref{lemma:total-integral} 
  imply
  \begin{align*}
    Y^{-1}(1 \otimes \check{T})X &= (\phi \otimes \id \otimes
    \id)((Y^{-1})_{23}(Y^{-1})_{13}(1 \otimes 1
    \otimes T)X_{13}X_{23})  \\
    &= ((\phi \otimes\id)  \Delta  \otimes \id)(Y^{-1}(1 \otimes T)X) \\
    &= \sum_{l} \rho_{l} \otimes (\phi \otimes \id)((\rho_{l} \otimes
    1)Y^{-1}(1 \otimes T)X)  \\
    &= \sum_{k,l} \rho_{l} \otimes \Gru{\check T}{k}{l},
  \end{align*}
  whence $\check{T}$ is a morphism from $\mathscr{X}$ to $\mathscr{Y}$
  by Lemma \ref{lemma:rep-total-morphism}. The assertion for $\hat
  T$ follows similarly.
\end{proof}

\begin{Lem}
  Let $\mathscr{A}$ be an $I$-partial Hopf algebra with an invariant integral $\phi$.
  Let $(V,\mathscr{X})$ be a  corepresentation
  and $\Gru{W}{k}{l} \subseteq \Gru{V}{k}{l}$ an invariant family of
  subspaces. Then there exists an idempotent endomorphism $T$ of
  $(V,\mathscr{X})$ such that $\Gru{W}{k}{l}=\img\Gru{T}{k}{l}$ for
  all $k,l$.
\end{Lem}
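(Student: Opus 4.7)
The plan is an averaging argument via the invariant integral $\phi$, performed per equivalence class to accommodate a possibly infinite hyperobject set. Invariance of $W$ guarantees that the restriction $\mathscr{Y}$ of $\mathscr{X}$ to $W$ is a subcorepresentation, with $Y^{-1}$ equal to the restriction of $X^{-1}$. Choose a bigraded $\C$-linear idempotent $P \colon V \to V$ with $\img \Gru{P}{k}{l} = \Gru{W}{k}{l}$, so that $\Gru{P}{k}{l}|_{\Gru{W}{k}{l}} = \id$. For each pair $(m,n)$ with $\Gru{W}{m}{n} \neq 0$, Lemma~\ref{lem:rep-average} produces a morphism $\check P^{(m,n)} \colon (V,\mathscr{X}) \to (W,\mathscr{Y})$ of corepresentations by averaging the single-component map $\Gru{P}{m}{n}$.

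Let $\mathscr{I}$ denote the hyperobject set, and fix a representative $n_{\beta} \in \beta$ for each $\beta \in \mathscr{I}$. Set
\[
T := \sum_{\beta \in \mathscr{I}} \sum_{m} \iota^{W} \circ \check P^{(m,n_{\beta})}.
\]
The inner sum over $m$ is finite because $W$ is rcfd. The outer sum over $\beta$ is well-defined componentwise: the expression inside $(\phi \otimes \id)$ in the defining formula for $(\check P^{(m,n_{\beta})})_{k,l}$ lies in $\Gr{A}{n_{\beta}}{n_{\beta}}{l}{l} \otimes \Hom_{\C}(\Gru{V}{k}{l},\Gru{W}{k}{l})$, and $\Gr{A}{n_{\beta}}{n_{\beta}}{l}{l}$ vanishes whenever $n_{\beta} \not\sim l$ (any nonzero element is fixed by the now zero unit $\UnitC{n_{\beta}}{l}$). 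Hence only $\beta = [l]$ contributes to $T_{k,l}$, which reduces to a finite sum of morphism components.

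It remains to check that $T$ is an idempotent with image $W$. The inclusion $\img T \subseteq W$ is built into the construction. For $w \in \Gru{W}{k}{l}$ with $\beta_{0} = [l]$, invariance of $W$ gives $\Gr{X}{m}{n_{\beta_{0}}}{k}{l}(w) \in \Gr{A}{m}{n_{\beta_{0}}}{k}{l} \otimes \Gru{W}{m}{n_{\beta_{0}}}$, so $(1 \otimes \Gru{P}{m}{n_{\beta_{0}}})$ acts as the identity there. Summing over $m$ and applying the generalized-inverse identity $\sum_{m} \Gr{Y^{-1}}{n}{m}{l}{k} \cdot \Gr{Y}{m}{n}{k}{l} = \UnitC{n}{l} \otimes \id_{\Gru{W}{k}{l}}$ from Lemma~\ref{lemma:rep-invertible} (applied to $\mathscr{Y}$), then $(\phi \otimes \id)$, yields $T(w) = \phi(\UnitC{n_{\beta_{0}}}{l}) \cdot w = w$, since $n_{\beta_{0}} \sim l$ and $\phi$ takes value $1$ on nonzero local units. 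Finally, $T$ is a morphism of corepresentations: it is a componentwise-finite sum of the morphisms $\check P^{(m,n_{\beta})}$, and the constraint $\Gr{A}{k}{l}{m}{n} \neq 0 \Rightarrow [l] = [n]$ (forced by the units $\UnitC{l}{n}$) ensures that the pivots used on the two sides of the intertwining identity coincide.

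The main obstacle is the convergence issue at the summation step: a naive total-form averaging $(\phi \otimes \id)(X^{-1}(1 \otimes P) X)$ would produce a multiplicity factor $|[l]|$ on $\Gru{W}{k}{l}$, which can be infinite. The construction above sidesteps this by averaging over a single representative per equivalence class, exploiting the vanishing of $\UnitC{n}{l}$ off the diagonal of $\sim$ to render each componentwise sum finite and to recover precisely the identity on $W$.
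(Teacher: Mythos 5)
Your proof is correct and follows essentially the same route as the paper's: choose a bigraded idempotent onto $W$, average it via Lemma \ref{lem:rep-average} with a fixed column index, sum over rows using column-finiteness, and use the generalized-inverse identities of Lemma \ref{lemma:rep-invertible} together with $\phi(\UnitC{n}{l})=1$ to see that the resulting endomorphism restricts to the identity on $W$ while landing in $W$. The only difference is presentational: the paper first reduces to a single block $\Corep(\mathscr{A})_{\alpha\beta}$ by a direct-sum decomposition and then fixes one $n\in I_{\beta}$, whereas you work globally with one representative $n_{\beta}$ per hyperobject class and let the vanishing of $\Gr{A}{n_{\beta}}{n_{\beta}}{l}{l}$ for $n_{\beta}\not\sim l$ do the same bookkeeping.
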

\begin{proof}
By a direct sum decomposition, we may assume that $V$ is in a fixed component $\Corep(\mathscr{A})_{\alpha\beta}$. For all $k\in I_{\alpha},l\in I_{\beta}$, choose idempotent endomorphisms $\Gru{T}{k}{l}$ of $\Gru{V}{k}{l}$
  with image $\Gru{W}{k}{l}$.   Let
  $\mathscr{Y}$ be the restriction of $\mathscr{X}$ to $W$.  By Lemma
  \ref{lem:rep-average}, we obtain morphisms $\Grd{\check{T}}{m}{n}$
  of $(V,\mathscr{X})$ to $(W,\mathscr{Y})$, which we can also
  interpret as endomorphisms of $(V,\mathscr{X})$.  Fix $n\in
  I_{\beta}$ and write $\Grd{\check{T}}{}{n} = \sum_m
  \Grd{\check{T}}{m}{n}$ (using column-finiteness of $V$). We claim
  that $W$ is the image of $\Grd{\check{T}}{}{n}$.

    In
  total form, invariance of $W$ implies  \[(1 \otimes T)X(1
  \otimes T)=X(1\otimes T).\] Applying
 $(S \otimes \id)$, we get   \[(1 \otimes T)X^{-1}(1
  \otimes T)=X^{-1}(1\otimes T).\]
Now choose $n\in I_{\beta}$ and write $\Grd{\check{T}}{}{n} = \sum_m \Grd{\check{T}}{m}{n}$, which makes sense because of column-finiteness of $V$. We combine  Lemma
  \ref{lemma:rep-multiplier}, Lemma \ref{lemma:rep-invertible} and
  normalisation of $\phi$, and find
  \begin{align*}
    \Grd{\check{T}}{}{n} T &= (\phi \otimes \id)(X^{-1}(1 \otimes
    \rho_{n}^{V}T)X(1 \otimes T)) \\  &= 
     (\phi \otimes \id)(X^{-1}(1 \otimes
    \rho_{n}^{V})X(1 \otimes T)) \\
    &=
  \sum_l \phi(\UnitC{n}{l}) \rho^{V}_{l}T \\& =T,
  \end{align*}
 since we only have to sum over $l\in I_{\beta}$ as $n \in I_{\beta}$ by assumption. 
 
 Now as $W$ is invariant and $T$ sends $V$ into $W$, we have that
 $\Gr{\check{T}}{}{n}{k}{l}$ sends $\Gru{V}{k}{l}$ into
 $\Gru{W}{k}{l}$. Hence it follows that $\img{\check{T}^{n}}=\img T$,
 and $\check{T}^{n}$ is the desired intertwiner.
\end{proof}

\begin{Cor}  \label{cor:rep-cosemisimple}
  Let $\mathscr{A}$ be a partial Hopf algebra with an invariant integral.  Then
  every  corepresentation of $\mathscr{A}$ decomposes into a (possibly infinite) direct
  sum of irreducible corepresentations.
\end{Cor}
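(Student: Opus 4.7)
The plan is to derive the corollary from the preceding lemma, whose essential consequence is that every invariant subfamily of every corepresentation is the image of an idempotent endomorphism and hence a direct summand. Once complementability is in hand, the standard semisimplicity argument goes through by two applications of Zorn's lemma: first to locate an irreducible subcorepresentation inside any non-zero corepresentation, then to exhaust a given corepresentation by a direct sum of such.

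For the first step, given a non-zero corepresentation $(V,\mathscr{X})$, I would fix indices $k,l$ with $\Gru{V}{k}{l}\neq 0$ and observe that $\Gru{V}{k}{l}$ is finite-dimensional by the rcfd assumption. Consider the family $\mathcal{S}$ of invariant subfamilies $W\subseteq V$ with $\Gru{W}{k}{l}\neq 0$, ordered by reverse inclusion. Any chain in $\mathcal{S}$ admits a lower bound given by componentwise intersection: its $(k,l)$-block is a descending chain of non-zero subspaces of the finite-dimensional space $\Gru{V}{k}{l}$ and hence stabilizes at a non-zero subspace. Zorn thus yields a minimal element $W_{0}\in\mathcal{S}$. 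To see that $W_{0}$ is irreducible, suppose $W'\subsetneq W_{0}$ is a non-zero invariant subfamily. If $\Gru{W'}{k}{l}\neq 0$, then $W'\in\mathcal{S}$ and $W'\subsetneq W_{0}$ contradicts minimality. Otherwise $\Gru{W'}{k}{l}=0$, and the preceding lemma provides an invariant complement $W''$ of $W'$ inside $W_{0}$; then $\Gru{W''}{k}{l}=\Gru{W_{0}}{k}{l}\neq 0$ and $W''\subsetneq W_{0}$ (since $W'\neq 0$), again contradicting minimality.

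For the second step, I would apply Zorn's lemma to the collection of families $(W_{\alpha})_{\alpha}$ of irreducible subcorepresentations of $V$ whose internal sum is direct, ordered by inclusion. Directness is a finite condition, so unions of chains remain in the collection, and a maximal family exists. Its internal sum $W=\sum_{\alpha}W_{\alpha}$ is a subcorepresentation whose block at any pair $(k,l)$ is contained in the finite-dimensional $\Gru{V}{k}{l}$; this forces internal and external direct sums to agree, giving $W\cong\bigoplus_{\alpha}W_{\alpha}$. The preceding lemma then yields $V=W\oplus W'$ for some invariant subfamily $W'$. If $W'$ were non-zero, the first step would produce an irreducible subcorepresentation of $W'$ that could be adjoined to the family in violation of maximality; therefore $V=\bigoplus_{\alpha}W_{\alpha}$.

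The argument involves no genuine difficulty beyond the preceding lemma; the only point requiring a moment of care is that directness and the rcfd condition are compatible when passing to infinite internal sums of subcorepresentations of a fixed $V$, but this is automatic since every block of the sum is trapped inside the finite-dimensional $\Gru{V}{k}{l}$, so both the rcfd property and the internal-sum computation are inherited from $V$ block by block.
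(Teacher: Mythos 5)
Your proof is correct and follows essentially the same route as the paper, which likewise deduces the corollary from the complementability lemma together with Zorn's lemma. You merely spell out the two Zorn arguments (existence of irreducible subcorepresentations via minimal invariant subfamilies with a fixed non-zero finite-dimensional block, then maximal direct families) that the paper's one-line proof leaves implicit.
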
 
\begin{proof} 
The preceding lemma shows that  every non-zero corepresentation is either
irreducible or the direct sum of two non-zero corepresentations, and we can apply Zorn's lemma.
\end{proof}

We can now prove that the category $\Corep(\mathscr{A})$ of a partial Hopf algebra with invariant integral is semi-simple, that is, any object is a finite direct sum of irreducible objects. If one allows a more relaxed definition of semisimplicity allowing infinite direct sums, this will be true also for the potentially bigger category $\Corep_{\rcf}(\mathscr{A})$.

We will first state a lemma which will also be convenient at other occasions.

\begin{Lem}\label{LemInjMor}  Let $\mathscr{A}$ be a partial Hopf algebra and fix $\alpha,\beta$ in the hyperobject set.  Then if $T$ is a morphism in $\Corep(\mathscr{A})_{\alpha\beta}$ and $\sum_{k\in I_\alpha} \Gru{T}{k}{l}=0$ for some $l \in I_\beta$, then $T=0$.
\end{Lem}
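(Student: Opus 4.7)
The plan is to exploit the intertwining relation
\[(1\otimes \Gru{T}{k}{l})\Gr{X}{k}{l}{m}{n} = \Gr{Y}{k}{l}{m}{n}(1\otimes \Gru{T}{m}{n})\]
together with the generalized inverse of the corepresentation multiplier from Lemma \ref{lemma:rep-invertible} to propagate the vanishing of $T$ from a single column to the whole grading.

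First I would note that, since $V$ and $W$ have their grading supported on $I_{\alpha}\times I_{\beta}$ and the component maps $\Gru{T}{k}{l_{0}}$ for distinct $k$ act on disjoint summands of $V$, the hypothesis $\sum_{k\in I_{\alpha}}\Gru{T}{k}{l_{0}}=0$ (where $l_{0}$ denotes the distinguished element of $I_{\beta}$) already forces $\Gru{T}{k}{l_{0}}=0$ for every $k\in I$. Plugging this into the intertwining relation with $l=l_{0}$ immediately yields $\Gr{Y}{k}{l_{0}}{m}{n}(1\otimes \Gru{T}{m}{n})=0$ for all $k,m,n$.

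Next I would left-multiply by $\Gr{(Y^{-1})}{l_{0}}{k}{n}{m}$ and sum over $k$. The second of the generalized-inverse identities in Lemma \ref{lemma:rep-invertible}, applied with $(X,Z)\mapsto (Y,Y^{-1})$ and after relabeling, reads
\[\sum_{k} \Gr{(Y^{-1})}{l_{0}}{k}{n}{m}\cdot \Gr{Y}{k}{l_{0}}{m}{n} = \UnitC{l_{0}}{n}\otimes \id_{\Gru{W}{m}{n}},\]
and the previous vanishing therefore gives $\UnitC{l_{0}}{n}\otimes \Gru{T}{m}{n}=0$ for all $m,n$. For $n\in I_{\beta}$ we have $l_{0}\sim n$ by Remark \ref{remark:index-equivalence}, so $\UnitC{l_{0}}{n}\neq 0$, forcing $\Gru{T}{m}{n}=0$; for $n\notin I_{\beta}$ the component $\Gru{T}{m}{n}$ already vanishes by the support of $V$ and $W$. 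Hence $T=0$.

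The main bookkeeping obstacle will be aligning the source and target Hom-spaces in the generalized-inverse identity so that the compositions land in the correct block, and invoking the rcf-condition on $V$ and $W$ to ensure that the sum over $k$ is actually finite. Once the indexing is straightened out, the whole argument is a single application of Lemma \ref{lemma:rep-invertible}.
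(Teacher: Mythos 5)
Your argument is correct and is essentially the paper's proof: the paper's one-line reference to Lemma \ref{lemma:rep-total-morphism} is exactly the packaged total form of what you do componentwise, since that lemma's identity $Y^{-1}(1\otimes T)X=\sum_{m,n}\rho_n\otimes \Gru{T}{m}{n}$ is obtained precisely by hitting the intertwining relation with $Y^{-1}$ and using the generalized-inverse identity of Lemma \ref{lemma:rep-invertible}. Your index bookkeeping and the finiteness of the sum over $k$ (column-finiteness of $W$) check out, so nothing is missing.
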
 

\begin{proof} This follows from the equations in Lemma \ref{lemma:rep-total-morphism}
\end{proof}

\begin{Prop}\label{prop:rep-cosemisimple} Let $\mathscr{A}$ be a partial Hopf algebra with an invariant integral.   Then the components of the partial tensor category $\Corep(\mathscr{A})$ are semi-simple.
\end{Prop}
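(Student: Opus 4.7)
The plan is to combine Corollary \ref{cor:rep-cosemisimple} with the rigidity provided by Lemma \ref{LemInjMor} and the rcfd hypothesis on objects of $\Corep(\mathscr{A})$, in order to upgrade the (possibly infinite) decomposition into irreducibles to a finite one within each component.

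Fix hyperobjects $\alpha, \beta \in \mathscr{I}$ and a nonzero object $(V,\mathscr{X}) \in \Corep(\mathscr{A})_{\alpha\beta}$. By definition $V$ is supported on $I_{\alpha} \times I_{\beta}$ and is rcfd, so $\bigoplus_{k\in I_{\alpha}} \Gru{V}{k}{l}$ is a \emph{finite}-dimensional vector space for every $l \in I_{\beta}$. By Corollary \ref{cor:rep-cosemisimple}, I can write $V = \bigoplus_{i\in \Lambda} V_{i}$ as a direct sum of irreducible subcorepresentations, discarding zero summands so that each $V_{i}$ is nonzero.

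The key step is to observe that, by Lemma \ref{LemInjMor} applied to the identity morphism, for any nonzero subcorepresentation $W \subseteq V$ (in particular for each $V_{i}$) and any $l \in I_{\beta}$, one has $\bigoplus_{k\in I_{\alpha}} \Gru{W}{k}{l} \neq 0$. Indeed, otherwise $\sum_{k\in I_{\alpha}} \Gru{(\id_{W})}{k}{l} = 0$, forcing $\id_{W} = 0$ and hence $W = 0$. Now fix any $l \in I_{\beta}$ (which exists since the equivalence class $\beta$ is nonempty). The decomposition $V = \bigoplus_{i} V_{i}$ yields
\begin{align*}
  \bigoplus_{k\in I_{\alpha}} \Gru{V}{k}{l} = \bigoplus_{i\in \Lambda} \bigoplus_{k\in I_{\alpha}} \Gru{(V_{i})}{k}{l},
\end{align*}
where each summand on the right is a nonzero subspace by the observation above. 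Since the left-hand side is finite-dimensional, $\Lambda$ must be finite. Thus $V$ is a finite direct sum of irreducibles, proving semi-simplicity of $\Corep(\mathscr{A})_{\alpha\beta}$.

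The only real obstacle to worry about is ensuring that the rigidity lemma really does apply uniformly to all nonzero subobjects (and not just to morphisms between fixed pairs), but this is immediate since $\id_{W}$ is a morphism in $\Corep(\mathscr{A})_{\alpha\beta}$ for any $W \subseteq V$ in the component. Everything else is a bookkeeping consequence of the rcfd condition combined with the already-established existence of an irreducible decomposition.
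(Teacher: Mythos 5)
Your proof is correct and uses essentially the same ingredients as the paper's: Lemma \ref{LemInjMor}, column-finiteness of $V$, and Corollary \ref{cor:rep-cosemisimple}. The only cosmetic difference is that the paper deduces from the injectivity of $T\mapsto\sum_{k\in I_\alpha}\Gru{T}{k}{l}$ that the endomorphism algebra of $V$ is finite-dimensional and concludes from there, whereas you apply the same injectivity to $\id_{V_i}$ to show each irreducible summand contributes a nonzero subspace to the finite-dimensional column $\bigoplus_{k\in I_\alpha}\Gru{V}{k}{l}$ and count summands directly.
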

\begin{proof} 

Let $V$ be in any object of $\Corep(\mathscr{A})_{\alpha\beta}$ for $\alpha,\beta\in \mathscr{I}$.  From Lemma \ref{LemInjMor}, we see that for $T$ a morphism in $\Corep(\mathscr{A})_{\alpha\beta}$, the map $T\mapsto \sum_{k\in I_\alpha} \Gru{T}{k}{l}$ is injective for any choice of $l\in I_\beta$. It follows by column-finiteness of $V$ that the algebra of self-intertwiners of $V$ is finite-dimensional. We then immediately conclude from Corollary \ref{cor:rep-cosemisimple} that $V$ is a finite direct sum of irreducible invariant subspaces.
\end{proof} 

\subsection{Matrix coefficients of irreducible corepresentations}

Our next goal is to obtain the analogue of Schur's orthogonality
relations for matrix coefficients of corepresentations.

Given finite-dimensional vector spaces $V$ and $W$, the dual space of
$\Hom_{\C}(V,W)$ is linearly spanned by functionals of the form
\begin{align*}
  \omega_{f,v} \colon \Hom_{\C}(V,W) \to \C, \quad T \mapsto  (f|Tv),
\end{align*}
where $v\in V$, $f\in \dual{W}$, and $(-|-)$ denotes the natural
pairing of $\dual{W}$ with $W$.
\begin{Def} Let $\mathscr{A}$ be a partial bialgebra. The space of
  \emph{matrix coefficients} $\mathcal{C}(\mathscr{X})$ of a 
  corepresentation $(V,\mathscr{X})$ is the sum of the subspaces
\begin{align*}
  \Gr{\mathcal{C}(\mathscr{X})}{k}{l}{m}{n} &= \Span \left\{ (\id \otimes
    \omega_{f,v})(\Gr{X}{k}{l}{m}{n}) \mid v\in \Gru{V}{m}{n}, f \in
    \dual{(\Gru{V}{k}{l})} \right\} \subseteq \Gr{A}{k}{l}{m}{n}.
\end{align*}
\end{Def}
Let $(V,\mathscr{X})$ be  a  corepresentation of a partial bialgebra
$\mathscr{A}$.  Condition \eqref{eq:rep-comultiplication} in Definition \ref{definition:corep}
implies
\begin{align} \label{eq:rep-matrix-delta}
  \Delta_{pq}(\Gr{\mathcal{C}(\mathscr{X})}{k}{l}{m}{n}) \subseteq
  \Gr{\mathcal{C}(\mathscr{X})}{k}{l}{p}{q} \otimes
  \Gr{\mathcal{C}(\mathscr{X})}{p}{q}{m}{n}.
\end{align}
Thus, the $\Gr{\mathcal{C}(\mathscr{X})}{k}{l}{m}{n}$ form a partial
coalgebra with respect to $\Delta$ and $\epsilon$.  Moreover, for each
$k,l$, the $I^{2}$-graded vector  space
\begin{align*}
  \Grd{\mathcal{C}(\mathscr{X})}{k}{l}:=\bigoplus_{m,n }
  \Gr{\mathcal{C}(\mathscr{X})}{k}{l}{m}{n}
\end{align*}
is rcfd, and the inclusion above shows that one can
form the regular corepresentation on this space.
\begin{Lem} \label{lemma:rep-regular-embedding}
  Let $(V,\mathscr{X})$ be a  corepresentation
  of a partial bialgebra and let $f\in
  \dual{(\Gru{V}{k}{l})}$. Then the family of maps
  \begin{align*}
    \Gr{T}{}{(f)}{m}{n} \colon \Gru{V}{m}{n} \to
    \Gr{\mathcal{C}(\mathscr{X})}{k}{l}{m}{n}, \ w \mapsto (\id
    \otimes \omega_{f,w})(\Gr{X}{k}{l}{m}{n})=(\id \otimes
    f)(\Gr{X}{k}{l}{m}{n}(1 \otimes w)),
  \end{align*}
  is a morphism from $\mathscr{X}$ to the regular corepresentation on
  $\Grd{\mathcal{C}(\mathscr{X})}{k}{l}$. 
\end{Lem}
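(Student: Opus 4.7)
The plan is to verify, for all $p,q,m,n\in I$, the intertwining identity
\[
(1\otimes \Gr{T}{}{(f)}{p}{q})\Gr{X}{p}{q}{m}{n} = \Gr{Y}{p}{q}{m}{n}(1\otimes \Gr{T}{}{(f)}{m}{n}),
\]
where $\mathscr{Y}$ is the regular corepresentation on $W = \Grd{\mathcal{C}(\mathscr{X})}{k}{l}$ (with $\Gru{W}{m}{n} = \Gr{\mathcal{C}(\mathscr{X})}{k}{l}{m}{n}$). First I would note that $\mathscr{Y}$ is well-defined: the inclusion \eqref{eq:rep-matrix-delta}, which follows from \eqref{eq:rep-comultiplication} and was established just above the lemma, is precisely the invariance requirement of Example~\ref{example:rep-regular}. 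This gives me the explicit formula $\Gr{Y}{p}{q}{m}{n}(1\otimes c) = \Delta^{\op}_{pq}(c)$ for $c\in \Gr{\mathcal{C}(\mathscr{X})}{k}{l}{m}{n}$, which will be the handle on the right hand side. I should also record that $\Gr{T}{}{(f)}{m}{n}$ lands in the correct graded piece $\Gru{W}{m}{n}$ directly from the definition of $\Gr{\mathcal{C}(\mathscr{X})}{k}{l}{m}{n}$.

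My main tool is the coproduct identity \eqref{eq:rep-comultiplication},
\[
(\Delta_{pq}\otimes \id)(\Gr{X}{k}{l}{m}{n}) = (\Gr{X}{k}{l}{p}{q})_{13}\,(\Gr{X}{p}{q}{m}{n})_{23}.
\]
I evaluate both sides of this equation on $1\otimes 1\otimes w$ for an arbitrary $w\in \Gru{V}{m}{n}$, then apply $\id\otimes \id\otimes f$ to collapse the Hom leg, and finally flip the first two tensor factors. On the left, this yields $\Delta^{\op}_{pq}\bigl((\id\otimes f)(\Gr{X}{k}{l}{m}{n}(1\otimes w))\bigr) = \Gr{Y}{p}{q}{m}{n}\bigl(1\otimes \Gr{T}{}{(f)}{m}{n}(w)\bigr)$, i.e.\ the RHS of the intertwining identity evaluated at $1\otimes w$. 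On the right, after the flip the expression becomes $(\id\otimes\id\otimes f)\bigl((\Gr{X}{p}{q}{m}{n})_{13}(\Gr{X}{k}{l}{p}{q})_{23}(1\otimes 1\otimes w)\bigr)$, which, once I unfold the definition of $\Gr{T}{}{(f)}{p}{q}$ on an arbitrary element of $\Gru{V}{p}{q}$, coincides with $(1\otimes \Gr{T}{}{(f)}{p}{q})\Gr{X}{p}{q}{m}{n}(1\otimes w)$, the LHS.

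The argument is conceptually immediate from the coproduct identity; the only genuine obstacle is bookkeeping, namely keeping track of the three tensor legs (two algebra factors coming from the two copies of $\mathscr{X}$ plus one Hom leg on which $f$ acts) and of the flip implicit in passing from $\Delta$ to $\Delta^{\op}$. No further input beyond \eqref{eq:rep-comultiplication} and the definition of the regular corepresentation is needed, so the resulting proof should occupy just a few lines.
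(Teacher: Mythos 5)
Your proposal is correct and follows essentially the same route as the paper: the paper's proof is exactly the computation $\Gr{Y}{p}{q}{m}{n}(1\otimes \Gr{T}{}{(f)}{m}{n}(v)) = (\Delta^{\op}_{pq}\otimes\omega_{f,v})(\Gr{X}{k}{l}{m}{n}) = (\id\otimes\id\otimes f)\bigl((\Gr{X}{k}{l}{p}{q})_{23}(\Gr{X}{p}{q}{m}{n})_{13}(1\otimes 1\otimes v)\bigr) = (1\otimes \Gr{T}{}{(f)}{p}{q})\Gr{X}{p}{q}{m}{n}(1\otimes v)$, i.e.\ the coproduct identity \eqref{eq:rep-comultiplication} sliced by $f$ with the opposite-leg bookkeeping you describe. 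Your additional remarks on well-definedness via \eqref{eq:rep-matrix-delta} match what the paper records just before the lemma.
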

\begin{proof}
  Denote by $\mathscr{Y}$ the regular corepresentation on
  $\bigoplus_{m,n } \Gr{\mathcal{C}(\mathscr{X})}{k}{l}{m}{n}$. Then
  \begin{align*}
 \Gr{Y}{p}{q}{m}{n}    (1\otimes \Gr{T}{}{(f)}{m}{n}(v)) &= 
(\Delta^{\op}_{pq} \otimes \omega_{f,v})( \Gr{X}{k}{l}{m}{n}) 
\\ & = (\id \otimes \id \otimes
f)((\Gr{X}{k}{l}{p}{q})_{23}(\Gr{X}{p}{q}{m}{n})_{13}(1 \otimes 1
 \otimes v)) \\ &=(1 \otimes \Gr{T}{}{(f)}{p}{q})\Gr{X}{p}{q}{m}{n}(1 \otimes v)
  \end{align*}
for all $v \in \Gru{V}{m}{n}$.
\end{proof}
As before, we denote by $\dual{V}$ the dual of a vector space $V$.
\begin{Lem} \label{lemma:regular-corep} Let $\mathscr{A}$ be a partial
  Hopf algebra.
  \begin{enumerate}[label=(\arabic*)]
  \item  Let $a \in \bigoplus_{k,l} \Gr{A}{k}{l}{m}{n}$. Then the family of
  subspaces
  \begin{align} \label{eq:element-reg-corep}
    \Gru{V}{p}{q} = \{ (\id\otimes f)(\Delta_{pq}(a)) : f \in
    \dual{(\Gr{A}{p}{q}{m}{n})}\}
  \end{align}
  is rcfd and satisfies $\Delta_{rs}(\Gru{V}{p}{q}) \subseteq
  \Gru{V}{r}{s} \otimes \Gr{A}{r}{s}{p}{q}$ so that one can form the
  restriction of the regular corepresentation
  $(V,\mathscr{X})$. Moreover, $a \in \Gru{V}{m}{n}$.
\item Let $(V,\mathscr{X})$ be an irreducible restriction of the
  regular corepresentation. Then \eqref{eq:element-reg-corep} holds
  for any non-zero $a \in \Gru{V}{m}{n}$.
  \end{enumerate}
\end{Lem}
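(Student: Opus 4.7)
The plan is to prove the two parts in order, with part (2) reducing to part (1) via irreducibility. For part (1), I need to verify three things: the rcfd-ness of the family $(\Gru{V}{p}{q})$, the invariance inclusion, and the membership $a \in \Gru{V}{m}{n}$. Decomposing $a$ as a finite sum of homogeneous components in the various $\Gr{A}{k}{l}{m}{n}$, the finiteness condition \ref{Propd} of Definition \ref{DefPartBiAlg} shows that $\Delta_{pq}(a) = 0$ outside an rcf subset of $(p,q) \in I^{2}$, and at each such $(p,q)$ the space $\Gru{V}{p}{q}$ coincides with the finite-dimensional span of the ``first legs'' of any finite tensor decomposition of $\Delta_{pq}(a)$. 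The membership $a \in \Gru{V}{m}{n}$ is then immediate by choosing $f = \epsilon$ on $\Gr{A}{m}{n}{m}{n}$ and invoking the counit identity $(\id \otimes \epsilon)\Delta_{mn}(a) = a$.

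The key step in part (1) is the invariance inclusion $\Delta_{rs}(\Gru{V}{p}{q}) \subseteq \Gru{V}{r}{s} \otimes \Gr{A}{r}{s}{p}{q}$, which I would establish via coassociativity in the form $(\Delta_{rs} \otimes \id)\Delta_{pq}(a) = (\id \otimes \Delta_{pq})\Delta_{rs}(a)$. Writing $\Delta_{rs}(a) = \sum_i x_i \otimes y_i$ with the $y_i \in \Gr{A}{r}{s}{m}{n}$ linearly independent, the $x_i$ automatically span $\Gru{V}{r}{s}$, and coassociativity rewrites $\Delta_{rs}\big((\id \otimes f)(\Delta_{pq}(a))\big)$ as $\sum_i x_i \otimes (\id \otimes f)(\Delta_{pq}(y_i))$, which visibly lies in $\Gru{V}{r}{s} \otimes \Gr{A}{r}{s}{p}{q}$.

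For part (2), fix an irreducible restriction $(V,\mathscr{X})$ of the regular corepresentation together with a non-zero $a \in \Gru{V}{m}{n}$, and let $W$ be the family defined by the formula on the right-hand side of \eqref{eq:element-reg-corep}. Part (1) identifies $W$ as an invariant subfamily with $a \in \Gru{W}{m}{n}$, so $W$ is non-zero. The hypothesis that $V$ is itself a subrepresentation of the regular corepresentation yields $\Delta_{pq}(\Gru{V}{m}{n}) \subseteq \Gru{V}{p}{q} \otimes \Gr{A}{p}{q}{m}{n}$, and applying $\id \otimes f$ gives $\Gru{W}{p}{q} \subseteq \Gru{V}{p}{q}$; irreducibility of $V$ then forces $W = V$. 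The only genuine obstacle throughout is notational bookkeeping -- tracking the four-index components and keeping straight which of $\Delta_{rs}, \Delta_{pq}$ acts on which tensor factor in the coassociativity computation -- since once the identities are set up with the $y_i$ chosen linearly independent, the verifications are essentially mechanical.
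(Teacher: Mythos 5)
Your proof is correct and follows essentially the same route as the paper: $f=\epsilon$ for the membership $a\in\Gru{V}{m}{n}$, a tensor decomposition of $\Delta_{rs}(a)$ with linearly independent second legs combined with coassociativity for the invariance inclusion, and irreducibility to force $W=V$ in part (2). The only quibble is that $\Gru{V}{p}{q}$ equals the span of the first legs only for a decomposition with linearly independent second legs (for an arbitrary one you just get containment), but since you only use the equality where you impose that independence, nothing is affected.
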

\begin{proof}
(1)  Taking $f=\epsilon$, one finds $a \in \Gru{V}{m}{n}$. Next, write
  \begin{align*}
    \Delta_{pq}(a)=\sum_{i} b_{pq}^{i} \otimes c^{i}_{pq}
  \end{align*}
  with linearly independent $(c_{pq}^{i})_{i}$. Then $ \Gru{V}{p}{q} =
  \mathrm{span}\{b_{pq}^{i} : i \}$, and  $\Delta_{rs}(\Gru{V}{p}{q}) \subseteq
  \Gru{V}{r}{s} \otimes \Gr{A}{r}{s}{p}{q}$ because
  \begin{align*}
 \sum_{i}
    \Delta_{rs}(b^{i}_{pq}) \otimes c^{i}_{pq} =
    (\Delta_{rs} \otimes \id)\Delta_{pq}(a) = (\id \otimes
    \Delta_{pq}) \Delta_{rs}(a) = \sum_{j} b^{j}_{rs} \otimes
    \Delta_{pq}(c^{j}_{rs}).
  \end{align*}
(2)  If $a\in \Gru{V}{m}{n}$ is non-zero, then the right hand
  sides of \eqref{eq:element-reg-corep} form a non-zero invariant
  family of subspaces of $\Gru{V}{p}{q}$ by (1).
\end{proof}
\begin{Prop} \label{prop:rep-weak-pw} Let $\mathscr{A}$ be a partial
  Hopf algebra with an invariant integral. Then the total algebra $A$ is the sum
  of the matrix coefficients of irreducible  corepresentations.
\end{Prop}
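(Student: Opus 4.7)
My plan is to adapt the standard Peter--Weyl argument to the partial setting, using the regular corepresentation machinery already set up. It suffices to show that an arbitrary homogeneous element $a \in \Gr{A}{k}{l}{m}{n}$ lies in the sum of matrix coefficients of irreducible corepresentations, because then the claim follows by linearity and the grading $A = \bigoplus_{k,l,m,n} \Gr{A}{k}{l}{m}{n}$.

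Given such an $a$, the first step is to invoke Lemma \ref{lemma:regular-corep}(1) to manufacture a corepresentation $(V,\mathscr{X})$: the spaces $\Gru{V}{p}{q} := \{(\id \otimes f)(\Delta_{pq}(a)) : f \in \dual{(\Gr{A}{p}{q}{m}{n})}\}$ form an rcfd family and a subcorepresentation of the regular corepresentation, with $a \in \Gru{V}{m}{n}$. By Corollary \ref{cor:rep-cosemisimple}, $(V,\mathscr{X})$ decomposes as a direct sum of irreducible subcorepresentations $(V_{i},\mathscr{X}_{i})$. Since the decomposition is compatible with the $I^{2}$-grading, I can write $a = \sum_{i} a_{i}$ with $a_{i} \in \Gru{(V_{i})}{m}{n}$, and it suffices to check that each $a_{i}$ lies in the span of matrix coefficients of the irreducible $\mathscr{X}_{i}$.

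The key computation exploits the counit. Write $a_{i} = \sum_{p,q}(a_{i})_{pq}$ with $(a_{i})_{pq} \in \Gr{A}{p}{q}{m}{n}$ (a finite sum by rcfd-ness). Using the invariance property $\Delta_{pq}(\Gru{(V_{i})}{m}{n}) \subseteq \Gru{(V_{i})}{p}{q} \otimes \Gr{A}{p}{q}{m}{n}$, I may consider the restriction $\epsilon_{pq} := \epsilon|_{\Gru{(V_{i})}{p}{q}} \in \dual{(\Gru{(V_{i})}{p}{q})}$ as a legitimate functional on the corepresentation space. Recalling that in the regular corepresentation matrix coefficients at index $((p,q),(m,n))$ have the explicit form $(f \otimes \id)(\Delta_{pq}(v))$ for $v \in \Gru{(V_{i})}{m}{n}$ and $f \in \dual{(\Gru{(V_{i})}{p}{q})}$, the element $(\epsilon \otimes \id)(\Delta_{pq}(a_{i})) = (\epsilon_{pq} \otimes \id)(\Delta_{pq}(a_{i}))$ is by definition a matrix coefficient of $\mathscr{X}_{i}$. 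Since $\epsilon$ is non-zero only on diagonal blocks $\Gr{A}{p}{q}{p}{q}$, this expression equals exactly $(a_{i})_{pq}$, and summing over $p,q$ recovers $a_{i}$ by the total counit property. Thus each $a_{i}$ is a finite sum of matrix coefficients of $\mathscr{X}_{i}$, completing the argument.

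There is no real obstacle here; the only thing to watch is bookkeeping of the four indices in the partial structure, in particular that $\epsilon$ vanishes off the diagonal blocks so that only one component of $\Delta_{pq}(a_{i})$ survives applying the counit. The use of Lemma \ref{lemma:regular-corep}(2) is not strictly needed for this direction, but it guarantees that each $\mathscr{X}_{i}$ is genuinely controlled by a single element $a_{i}$, which is useful for later refinements.
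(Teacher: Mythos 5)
Your proof is correct and follows essentially the same route as the paper: embed $a$ into the restriction of the regular corepresentation via Lemma \ref{lemma:regular-corep}(1), use the counit (restricted to the corepresentation space) to exhibit the element as a matrix coefficient, and decompose into irreducibles by Corollary \ref{cor:rep-cosemisimple}. The only cosmetic difference is that you decompose first and then apply the counit argument to each irreducible summand, whereas the paper observes $a\in\Gr{\mathcal{C}(\mathscr{X})}{k}{l}{m}{n}$ directly and then decomposes; both orderings work.
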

\begin{proof} 
  Let $a \in \Gr{A}{k}{l}{m}{n}$, define $\Gru{V}{p}{q}$ as in
  \eqref{eq:element-reg-corep} and form the restriction of the regular
  corepresentation $(V,\mathscr{X})$. Then
  \begin{align*}
    a = (\id \otimes \epsilon)(\Delta^{\op}_{kl}(a)) =
    (\id \otimes \epsilon)(\Gr{X}{k}{l}{m}{n}(1 \otimes a)) \in
    \Gr{\mathcal{C}(\mathscr{X})}{k}{l}{m}{n}.
  \end{align*}
  Decomposing $(V,\mathscr{X})$, we find that
  $a$ is contained in the sum of matrix coefficients of irreducible
  corepresentations.
\end{proof}

The first part of the orthogonality relations concerns matrix
coefficients of inequivalent irreducible corepresentations. 
\begin{Prop} \label{prop:rep-orthogonality-1} Let $\mathcal{A}$ be a
  partial Hopf algebra with an invariant integral $\phi$ and inequivalent
  irreducible corepresentations $(V,\mathscr{X})$ and
  $(W,\mathscr{Y})$.  Then  for all
  $a\in \mathcal{C}(X), b \in \mathcal{C}(Y)$,
  \[\phi(S(b)a) = \phi(bS(a))=0.\]
\end{Prop}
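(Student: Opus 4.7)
The plan is to reduce the claim to Schur's lemma via the averaging construction of Lemma \ref{lem:rep-average}. Since $\phi$ vanishes off the ``diagonal'' index pattern $\Gr{A}{k}{l}{k}{l}$, both $\phi(S(b)a)$ and $\phi(bS(a))$ are automatically zero unless there exist indices $k,l,m,n$ such that $a\in\Gr{\mathcal{C}(\mathscr{X})}{m}{n}{k}{l}$ and $b\in\Gr{\mathcal{C}(\mathscr{Y})}{k}{l}{m}{n}$, so I would restrict to that case and expand $a = (\id \otimes g)(\Gr{X}{m}{n}{k}{l}(1 \otimes u))$ and $b = (\id \otimes f)(\Gr{Y}{k}{l}{m}{n}(1 \otimes v))$ for vectors $u\in\Gru{V}{k}{l}$, $v\in\Gru{W}{m}{n}$ and functionals $g\in\dual{(\Gru{V}{m}{n})}$, $f\in\dual{(\Gru{W}{k}{l})}$.

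Next I would define a grading-preserving linear map $L\colon V\to W$ whose only non-zero block is the rank-one map $\Gru{L}{m}{n}\colon u'\mapsto g(u')\,v$. By Lemma \ref{lem:rep-average} this $L$ is promoted to two morphisms $\check L,\hat L\colon(V,\mathscr{X})\to(W,\mathscr{Y})$, and since $\mathscr{X}$ and $\mathscr{Y}$ are inequivalent irreducibles the earlier Schur-type dichotomy forces $\check L=\hat L=0$. In particular every block $\Gru{\hat L}{k}{l}$ and $\Gru{\check L}{k}{l}$ is the zero map $\Gru{V}{k}{l}\to\Gru{W}{k}{l}$.

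To extract the orthogonality relations, I would pair $\Gru{\hat L}{k}{l}=0$ with the linear functional $\omega_{f,u}$. Using the identity $\Gr{(X^{-1})}{l}{k}{n}{m} = (S\otimes\id)(\Gr{X}{m}{n}{k}{l})$ from Lemma \ref{lemma:rep-invertible} together with the rank-one form of $L$, the expression $(\phi\otimes\omega_{f,u})\bigl(\Gr{Y}{k}{l}{m}{n}(1\otimes \Gru{L}{m}{n})\Gr{(X^{-1})}{l}{k}{n}{m}\bigr)$ collapses to $\phi(b\,S(a))$, yielding $\phi(bS(a))=0$. Repeating the computation with $\check L$ and the analogous identity $\Gr{(Y^{-1})}{n}{m}{l}{k}=(S\otimes\id)(\Gr{Y}{k}{l}{m}{n})$ gives $\phi(S(b)\,a)=0$.

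The only obstacle is pure bookkeeping: verifying that, after pairing with $\omega_{f,u}$ and inserting the rank-one $L$, the resulting double sum factors exactly as the scalars $f(R_j v)$ and $g(T_i u)$ (from expansions $\Gr{Y}{k}{l}{m}{n}=\sum_j y_j\otimes R_j$ and $\Gr{X}{m}{n}{k}{l}=\sum_i x_i\otimes T_i$) times $\phi(S(y_j)x_i)$ or $\phi(y_j S(x_i))$, with no stray cross terms. This is mechanical given the leg-numbering conventions, and no conceptually delicate step is expected.
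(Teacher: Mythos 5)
Your proposal is correct and follows the paper's own argument essentially verbatim: reduce to rank-one matrix coefficients, form the rank-one block map from the functional of one coefficient and the vector of the other, average it via Lemma \ref{lem:rep-average} into morphisms that vanish by Schur's lemma, and pair the vanishing blocks with the remaining functional and vector to recover $\phi(S(b)a)=0$ and $\phi(bS(a))=0$. The final "bookkeeping" step you defer is exactly the one-line identification the paper carries out, so nothing is missing.
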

\begin{proof}
Since $\phi$ vanishes on $S(\Gr{A}{k}{l}{m}{n})\Gr{A}{p}{q}{r}{s}$ and
on $\Gr{A}{p}{q}{r}{s}S(\Gr{A}{k}{l}{m}{n})$ unless
$(p,q,r,s) = (m,n,k,l)$, it suffices to prove the assertion for  elements of the form
\begin{align*}
  a&=(\id \otimes \omega_{f,v})(\Gr{X}{k}{l}{m}{n})  && \text{and} &
  b&=(\id \otimes \omega_{g,w})(\Gr{Y}{m}{n}{k}{l})
\end{align*}
where $f\in \dual{(\Gru{V}{k}{l})}, v \in \Gru{V}{m}{n}$ and $g \in
\dual{(\Gru{W}{m}{n})}, w \in \Gru{W}{k}{l}$.  Lemma
\ref{lem:rep-average}, applied to the family
  \begin{align*}
    \Gru{T}{p}{q} \colon \Gru{V}{p}{q} \to \Gru{W}{p}{q}, \quad u
    \mapsto  \delta_{p,k}\delta_{q,l}  f(u)w,
  \end{align*}
  yields morphisms $\Grd{\check{T}}{k}{l},\Grd{\hat{T}}{k}{l}$ from $(V,\mathscr{X})$ to
  $(W,\mathscr{Y})$ which necessarily are $0$. Inserting the
  definition of $\Grd{\check{T}}{k}{l}$, we find
  \begin{align*}
    \phi(S(b)a) &= \phi\big((S \otimes
    \omega_{g,w})(\Gr{Y}{m}{n}{k}{l}) \cdot (\id \otimes
    \omega_{f,v})(\Gr{X}{k}{l}{m}{n})\big) \\ &= (\phi \otimes \omega_{g,v})\left(\Gr{(Y^{-1})}{l}{k}{n}{m}(1 \otimes
      \Gru{T}{k}{l} )     \Gr{X}{k}{l}{m}{n}\right) 
    = \omega_{g,v}( \Gr{\check{T}}{k}{l}{m}{n}) = 0.
  \end{align*}
  
  A similar calculation involving $\hat{T}$ shows that
  $\phi(bS(a))=0$.  
\end{proof}

\begin{Theorem} \label{thm:rep-orthogonality} Let $\mathcal{A}$ be a
 partial Hopf algebra with an invariant integral $\phi$. Let $\alpha,\beta\in \mathscr{I}$, and let $(V,\mathscr{X})$
  be an irreducible corepresentation of $\mathscr{A}$ inside $\Corep(\mathscr{A})_{\alpha\beta}$. Suppose
  $F=F_{\mathscr{X}}$ is an isomorphism from $(V,\mathscr{X})$ to
  $(V,\hat{\hat{\mathscr{X}}})$ with inverse
  $ G=F^{-1}$. Then the following hold.
  \begin{enumerate}[label=(\arabic*)]
  \item The numbers $d_G:=\sum_{k} \Tr (\Gru{G}{k}{l})$ and $d_F:=\sum_{n} \Tr (\Gru{F}{m}{n})$ are non-zero and do not depend on the choice of $l \in I_\beta$ or $m\in I_\alpha$.
    \item  For all $k,m \in I_\alpha$ and $l,n\in I_\beta$,
    \begin{align*}
      (\phi \otimes \id)(\Gr{(X^{-1})}{l}{k}{n}{m}\Gr{X}{k}{l}{m}{n})
      &=d_G^{-1}\Tr(\Gru{G}{k}{l})
      \id_{\Gru{V}{m}{n}}, \\
      (\phi \otimes \id)(\Gr{X}{k}{l}{m}{n}\Gr{(X^{-1})}{l}{k}{n}{m})
      &=d_F^{-1}\Tr(\Gru{F}{m}{n})
      \id_{\Gru{V}{k}{l}}.
    \end{align*}
  \item Denote by $\Sigma_{klmn}$ the flip map $\Gru{V}{k}{l}
    \otimes \Gru{V}{m}{n} \to \Gru{V}{m}{n}
    \otimes \Gru{V}{k}{l}$. Then
 \begin{align*}
   (\phi \otimes \id \otimes
   \id)((\Gr{(X^{-1})}{l}{k}{n}{m})_{12}(\Gr{X}{k}{l}{m}{n})_{13}) &=
   d_G^{-1}
   (\id_{\Gru{V}{m}{n}} \otimes \Gru{G}{k}{l})
   \circ \Sigma_{klmn}, \\
   (\phi \otimes \id \otimes
   \id)((\Gr{X}{k}{l}{m}{n})_{13}(\Gr{(X^{-1})}{l}{k}{n}{m})_{12}) &= d_F^{-1} (\Gru{F}{m}{n}
   \otimes \id_{\Gru{V}{k}{l}}) \circ \Sigma_{klmn}.
 \end{align*}
\end{enumerate}
  \end{Theorem}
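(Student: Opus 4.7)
The proof combines three ingredients: Schur's lemma for irreducible corepresentations in $\Corep(\mathscr{A})$, the averaging construction of Lemma \ref{lem:rep-average}, and the inverse relations of Lemma \ref{lemma:rep-invertible}. I first establish Schur's lemma: for an irreducible $(V,\mathscr{X})\in \Corep(\mathscr{A})_{\alpha\beta}$, $\End(V,\mathscr{X}) = \C\cdot\id_{V}$. By Lemma \ref{LemInjMor} and the finite decomposition from Proposition \ref{prop:rep-cosemisimple}, this algebra is finite-dimensional; any non-zero endomorphism has zero kernel and full image by irreducibility (both being invariant subfamilies), hence is invertible, so the algebra is a finite-dimensional division $\C$-algebra and therefore equals $\C$.

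Given $\Gru{T}{k}{l}\in \End(\Gru{V}{k}{l})$, Lemma \ref{lem:rep-average} (with $\mathscr{Y}=\mathscr{X}$) yields an endomorphism of $\mathscr{X}$ whose $(m,n)$-block is $\Gr{\check T}{k}{l}{m}{n} = (\phi\otimes\id)\big(\Gr{(X^{-1})}{l}{k}{n}{m}(1\otimes \Gru{T}{k}{l})\Gr{X}{k}{l}{m}{n}\big)$. By Schur this equals $\Psi_{k,l}(\Gru{T}{k}{l})\id_{\Gru{V}{m}{n}}$ for a linear functional $\Psi_{k,l}$ on $\End(\Gru{V}{k}{l})$; choosing $\Gru{T}{k}{l}=\id$ shows that the LHS of the first formula in (2) is a scalar multiple of the identity. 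Writing $\Psi_{k,l}(T)=\Tr(A^{k,l}T)$ for some $A^{k,l}\in\End(\Gru{V}{k}{l})$, I identify $A^{k,l}$ as a scalar multiple of $\Gru{G}{k}{l}$ by exploiting that $G:\hat{\hat{\mathscr{X}}}\to \mathscr{X}$ is an intertwiner (so $\Gru{G}{k}{l}$ and $\Gru{G}{m}{n}$ relate $\Gr{X}{k}{l}{m}{n}$ to $(S^{2}\otimes \id)(\Gr{X}{k}{l}{m}{n})$), combined with $\phi\circ S=\phi$ and strong invariance (Lemma \ref{lemma:strong-invariance}). The parallel averaging with $\hat T$ yields the second formula of (2) with $\Gru{F}{m}{n}$ in place of $\Gru{G}{k}{l}$.

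The normalization and part (1) follow from applying $(\phi\otimes\id)$ to the identity $\sum_{n}\Gr{X}{k}{l}{m}{n}\Gr{(X^{-1})}{l}{k}{n}{m} = \UnitC{k}{m}\otimes\id_{\Gru{V}{k}{l}}$ of Lemma \ref{lemma:rep-invertible}: since $\phi(\UnitC{k}{m})=1$ for $k\sim m$ in the hyperobject class (Remark \ref{remark:index-equivalence}), one obtains $\sum_{n}\Tr(\Gru{F}{m}{n})=d_{F}$, simultaneously fixing the normalization of the second formula in (2) and showing that $d_{F}$ is independent of $m$ and non-zero; the dual identity in Lemma \ref{lemma:rep-invertible} yields the analogous statement for $d_{G}$. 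Part (3) is the non-contracted tensor version of part (2): the expression $(\phi\otimes\id\otimes\id)\big((\Gr{(X^{-1})}{l}{k}{n}{m})_{12}(\Gr{X}{k}{l}{m}{n})_{13}\big)$ lands in $\Hom(\Gru{V}{k}{l}\otimes\Gru{V}{m}{n},\Gru{V}{m}{n}\otimes\Gru{V}{k}{l})$, and pairing its first factor via the trace--Hom duality against an arbitrary $T\in\End(\Gru{V}{k}{l})$ recovers the formula of part (2) for every $T$, with the flip $\Sigma_{klmn}$ accounting for the tensor-slot reshuffling. The chief technical obstacle is the identification $A^{k,l}\propto\Gru{G}{k}{l}$, which is the Woronowicz-type argument using $S^{2}$-intertwining by $G$ together with the modular behaviour of $\phi$, transported here to the partial setting.
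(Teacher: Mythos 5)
Your overall architecture matches the paper's: Schur's lemma for the irreducible $\mathscr{X}$, the averaging maps of Lemma \ref{lem:rep-average}, and the generalized-inverse relations of Lemma \ref{lemma:rep-invertible} to fix the normalization. But the step you flag as ``the chief technical obstacle'' --- identifying the density $A^{k,l}$ in $\Psi_{k,l}(T)=\Tr(A^{k,l}T)$ as a multiple of $\Gru{G}{k}{l}$, \emph{with a proportionality constant independent of $(k,l)$} --- is exactly the content of the theorem, and you do not actually carry it out. Listing the ingredients (``$G$ is an intertwiner, $\phi\circ S=\phi$, strong invariance'') is not an argument; in particular strong invariance plays no role here. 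The mechanism in the paper is a \emph{second} application of Lemma \ref{lem:rep-average}, in the other pair of indices: starting from
$F_{mnkl}=(\phi\otimes\id\otimes\id)\bigl((\Gr{(X^{-1})}{l}{k}{n}{m})_{12}(\Gr{X}{k}{l}{m}{n})_{13}\bigr)\circ\Sigma_{mnkl}$,
one uses $\phi=\phi\circ S$ together with anti-multiplicativity of $S$ to rewrite this as
$(\phi\otimes\id\otimes\id)\bigl((\Gr{(X^{-1})}{k}{l}{m}{n})_{13}(\Sigma)_{23}(\Gr{\dualco{\dualco{X}}}{m}{n}{k}{l})_{13}\bigr)$,
which Lemma \ref{lem:rep-average} recognizes as an averaged morphism from $(\Gru{V}{m}{n}\otimes V,\dualco{\dualco{X}}_{13})$ to $(\Gru{V}{m}{n}\otimes V,X_{13})$. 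Since $\Mor(\dualco{\dualco{\mathscr{X}}},\mathscr{X})=\C\,G$ by Schur, this forces $F_{mnkl}=\Gru{T}{m}{n}\otimes\Gru{G}{k}{l}$ with $\Gru{T}{m}{n}$ independent of $(k,l)$; combined with the first averaging, which gives $F_{mnkl}=\id_{\Gru{V}{m}{n}}\otimes\Gru{R}{k}{l}$ with $\Gru{R}{k}{l}$ independent of $(m,n)$, one obtains a single global scalar $\lambda$ with $F_{mnkl}=\lambda(\id\otimes\Gru{G}{k}{l})$.

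Without this double-averaging step your proposal proves too little: you would only get $A^{k,l}=\lambda_{k,l}\Gru{G}{k}{l}$ with a priori $(k,l)$-dependent scalars, and then part (1) (independence of $d_G=\sum_k\Tr(\Gru{G}{k}{l})$ of the choice of $l\in I_\beta$) and the uniform constant $d_G^{-1}$ in parts (2) and (3) do not follow. Your normalization argument via $\sum_n\Gr{X}{k}{l}{m}{n}\Gr{(X^{-1})}{l}{k}{n}{m}=\UnitC{k}{m}\otimes\id$ and $\phi(\UnitC{k}{m})=1$ is the same as the paper's (which sums the dual identity over $k$ after choosing $n=l$ and, via Lemma \ref{LemInjMor}, an $m$ with $\Gru{V}{m}{n}\neq 0$), and your passage between the contracted form (2) and the operator form (3) by pairing against all $T$ is fine. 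So the proposal is salvageable, but as written the central identification is a gap, not a proof.
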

\begin{proof}
  We prove the assertions and equations involving $d_G$ in (1), (2)
  and (3)  simultaneously; the assertions involving $d_F$  follow similarly.

  Consider
  the following endomorphism $F_{mnkl}$ of $\Gru{V}{m}{n}\otimes \Gru{V}{k}{l}$, 
  \begin{align*}
    F_{mnkl}
    &:=(\phi \otimes \id \otimes \id)\left((\Gr{(X^{-1})}{l}{k}{n}{m})_{12}(\Gr{X}{k}{l}{m}{n})_{13}\right)
    \circ \Sigma_{mnkl} \\ &= (\phi \otimes \id \otimes
    \id)\left((\Gr{(X^{-1})}{m}{n}{k}{l})_{12}
      \Sigma_{klkl,23}(\Gr{X}{k}{l}{m}{n})_{12}\right).
  \end{align*}
  By applying Lemma \ref{lem:rep-average} with respect to the flip map $\Sigma_{klkl}$, we see that the family $(F_{mnkl})_{m,n}$ is
  an endomorphism of $(V \otimes \Gru{V}{k}{l}, X\otimes \id)$ and hence
  \begin{align}
    F_{mnkl} &= \id_{\Gru{V}{m}{n}} \otimes \Gru{R}{k}{l} \label{eq:rep-orthogonal-1}
  \end{align}
  with some $\Gru{R}{k}{l} \in \Hom_{\C}(\Gru{V}{k}{l})$ not
  depending on $m,n$. 
  
  On the other hand, since $\phi = \phi S$,
  \begin{align*}
    F_{mnkl} &= (\phi \otimes \id \otimes \id)((S \otimes
    \id)(\Gr{X}{m}{n}{k}{l})_{12}(\Gr{X}{k}{l}{m}{n})_{13})
    \circ \Sigma_{mnkl} \\
    &= (\phi \otimes \id \otimes \id)\left(((S \otimes
      \id)(\Gr{X}{k}{l}{m}{n}))_{13}
      ((S^{2} \otimes \id)(\Gr{X}{m}{n}{k}{l}))_{12}\right)     \circ \Sigma_{mnkl}\\
    &= (\phi \otimes \id \otimes
    \id)\left((\Gr{(X^{-1})}{k}{l}{m}{n})_{13} (\Sigma_{mnmn})_{23}
      (\Gr{(\dual{\dual{X}{}\!})}{m}{n}{k}{l})_{13}\right).
  \end{align*}
  Hence we can again apply Lemma \ref{lem:rep-average} and
  find that the family $(F_{mnkl})_{k,l}$ is a morphism \[(F_{mnkl})_{k,l}:
  (\Gru{V}{m}{n} \otimes V, \hat{\hat{X}}_{13})\rightarrow (\Gru{V}{m}{n} \otimes V,
 X_{13}).\] Therefore,
  \begin{align}
    F_{mnkl} &= \Gru{T}{m}{n} \otimes \Gru{G}{k}{l} \label{eq:rep-orthogonal-2}
  \end{align}
  with some $\Gru{T}{m}{n} \in \mathcal{\Hom_{\C}}(\Gru{V}{m}{n})$
  not depending on $k,l$. Combining \eqref{eq:rep-orthogonal-1} and
  \eqref{eq:rep-orthogonal-2}, we conclude that, for some $\lambda\in \C$, \[F_{mnkl} = \lambda
  (\id_{\Gru{V}{m}{n}} \otimes \Gru{G}{k}{l})\]
  
  Choose dual  bases
  $(v_{i})_{i}$ for $\Gru{V}{k}{l}$ and $(f_{i})_{i}$ for  $\dual{(\Gru{V}{k}{l})}$. Then
  \begin{align*}
    \lambda   \Tr( \Gru{G}{k}{l}) \id_{\Gru{V}{m}{n}}
 &= \sum_{i} (\id \otimes
    \omega_{f_{i},v_{i}})(F_{mnkl}) = (\phi \otimes
    \id)(\Gr{(X^{-1})}{l}{k}{n}{m} \Gr{X}{k}{l}{m}{n}).
  \end{align*}
  Take now $n=l$.  By Lemma \ref{LemInjMor}, we can choose $m\in I_{\alpha}$ with $\Gru{V}{m}{n}\neq 0$.   Then summing the previous relation over $k$, the relations $\sum_{k}
  \Gr{(X^{-1})}{l}{k}{n}{m} \Gr{X}{k}{l}{m}{n} = \UnitC{l}{n}
  \otimes \id_{\Gru{V}{m}{n}}$ and
  $\phi(\UnitC{l}{l})=1$ give
\begin{align*}
\lambda \cdot  \sum_{k} \Tr(\Gru{G}{k}{l}) = 1.  
\end{align*}
Now all assertions in (1)--(3) concerning $d_G$ follow.
\end{proof}

\begin{Rem} For semi-simple tensor categories with duals, it is known
  that any object is isomorphic to its left bidual \cite[Proposition
  2.1]{ENO1}, hence there always exists an isomorphism $F_{\mathscr{X}}$ as in the previous Theorem. In fact, from the faithfulness of $\phi$ and Proposition \ref{prop:rep-orthogonality-1}, it follows that not all $F_{mnkl}$ in the previous proof are zero. Hence $G=F_{\mathscr{X}}^{-1}$ is a non-zero morphism and thus an isomorphism from the left bidual of $\mathscr{X}$ to $\mathscr{X}$.  
\end{Rem}

\begin{Cor}\label{CorOrth}
  Let $\mathscr{A}$ be a partial Hopf algebra with an invariant integral $\phi$, let
  $(V,\mathscr{X})$ be an irreducible corepresentation of
  $\mathscr{A}$, let $F=F_{\mathscr{X}}$ be an isomorphism from
  $(V,\mathscr{X})$ to $(V,\dualco{\dualco{\mathscr{X}}})$ and
  $G=F^{-1}$, and let $a=(\id \otimes
  \omega_{f,v})(\Gr{X}{k}{l}{m}{n})$ and $b=(\id \otimes
  \omega_{g,w})(\Gr{X}{m}{n}{k}{l})$, where 
  $f \in   \dual{(\Gru{V}{k}{l})}$, $v \in\Gru{V}{m}{n}$, $g \in
  \dual{(\Gru{V}{m}{n})}$, $w \in  \Gru{V}{k}{l}$.  Then
\begin{align*}
  \phi(S(b)a) &= \frac{(g|v)(f|Gw)}{\sum_{r}
    \Tr(\Gru{G}{r}{n})}, & \phi(aS(b)) = \frac{(g|Fv)(f|w)}{\sum_{s}
    \Tr(\Gru{F}{m}{s})}.
\end{align*}
\end{Cor}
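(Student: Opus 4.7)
The corollary should follow as a routine unpacking of Theorem~\ref{thm:rep-orthogonality}(3), so the plan is to translate each of $\phi(S(b)a)$ and $\phi(aS(b))$ into an expression to which part (3) of that theorem directly applies, and then evaluate.

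\emph{Step 1: rewrite $S(b)$ in corepresentation form.} Since $S(b) = (\id \otimes \omega_{g,w})\bigl((S\otimes \id)(\Gr{X}{m}{n}{k}{l})\bigr)$ and by definition of $\Gr{(X^{-1})}{k'}{l'}{m'}{n'}$ this equals $(\id \otimes \omega_{g,w})\bigl(\Gr{(X^{-1})}{l}{k}{n}{m}\bigr)$, where the Hom-leg sits in $\Hom_{\C}(\Gru{V}{k}{l},\Gru{V}{m}{n})$. Thus
\begin{align*}
  \phi(S(b)a) = (\phi \otimes \omega_{g,w} \otimes \omega_{f,v})\bigl( (\Gr{(X^{-1})}{l}{k}{n}{m})_{12}(\Gr{X}{k}{l}{m}{n})_{13}\bigr),
\end{align*}
where one checks that the algebraic indices match so that the multiplication lands in $\Gr{A}{l}{l}{n}{n}$ as required for $\phi$ to be non-zero.

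\emph{Step 2: apply Theorem~\ref{thm:rep-orthogonality}(3).} The first identity of that part gives
\begin{align*}
  (\phi \otimes \id \otimes \id)\bigl((\Gr{(X^{-1})}{l}{k}{n}{m})_{12}(\Gr{X}{k}{l}{m}{n})_{13}\bigr) = d_G^{-1}(\id_{\Gru{V}{m}{n}} \otimes \Gru{G}{k}{l})\circ \Sigma_{klmn}.
\end{align*}
Evaluating $(\omega_{g,w} \otimes \omega_{f,v})$ on the right hand side means applying the map to $w \otimes v \in \Gru{V}{k}{l}\otimes \Gru{V}{m}{n}$, which gives $v \otimes \Gru{G}{k}{l}w$, and then pairing with $g \otimes f$ to obtain $(g|v)(f|Gw)$. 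Since $d_G = \sum_r \Tr(\Gru{G}{r}{l}) = \sum_r \Tr(\Gru{G}{r}{n})$ is independent of the choice of index in $I_\beta$, the first formula of the corollary follows.

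\emph{Step 3: the companion identity.} An analogous rewriting of $\phi(aS(b))$ yields
\begin{align*}
  \phi(aS(b)) = (\phi \otimes \omega_{f,v} \otimes \omega_{g,w})\bigl( (\Gr{X}{k}{l}{m}{n})_{12}(\Gr{(X^{-1})}{l}{k}{n}{m})_{13}\bigr),
\end{align*}
which, after swapping the roles of the two auxiliary legs, falls under the second identity of Theorem~\ref{thm:rep-orthogonality}(3). Tracing the flip $\Sigma_{klmn}$ and $(\Gru{F}{m}{n} \otimes \id_{\Gru{V}{k}{l}})$ on $w \otimes v$ and pairing with $g \otimes f$ gives $(g|Fv)(f|w)/d_F$, which is exactly the second formula since $d_F$ does not depend on the choice of $s \in I_\beta$.

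\emph{Main obstacle.} There is no genuine obstacle: once one is careful about which Hom-space lives in which leg and about the bookkeeping of the indices (the non-trivial point being that $\Gr{(X^{-1})}{l}{k}{n}{m}$ has Hom-leg in $\Hom_{\C}(\Gru{V}{k}{l},\Gru{V}{m}{n})$, matching perfectly with $\omega_{g,w}$), the identities drop out by direct application of Theorem~\ref{thm:rep-orthogonality}. The only minor point to double-check is that the denominators $d_G$ and $d_F$ appearing in the corollary use $n$ and $m$ respectively, but by part (1) of that theorem these are independent of the chosen representative in the respective equivalence class.
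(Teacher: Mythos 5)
Your proof is correct and is exactly the paper's argument: the paper's proof of Corollary \ref{CorOrth} consists of the single line ``apply $\omega_{g,w}\otimes\omega_{f,v}$ to the formulas in Theorem \ref{thm:rep-orthogonality}(3)'', which is precisely what you carry out, with the index bookkeeping and the identification of $S(b)$ with a slice of $\Gr{(X^{-1})}{l}{k}{n}{m}$ made explicit.
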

\begin{proof}
Apply $\omega_{g,w} \otimes
    \omega_{f,v}$ to the formulas in  Theorem
    \ref{thm:rep-orthogonality}.(c).
\end{proof}
\begin{Cor} \label{cor:rep-pw}
  Let $\mathscr{A}$ be a partial Hopf algebra with an invariant integral and let
  $((V^{(a)},\mathscr{X}_{a}))_{a \in \mathcal{I}}$ be a maximal family of mutually non-isomorphic irreducible corepresentations of
  $\mathscr{A}$. Then the map
  \begin{align*}
    \bigoplus_{a} \bigoplus_{k,l,m,n}
    (\dual{(\Gr{V}{}{(a)}{k}{l})} \otimes
    \Gr{V}{}{(a)}{m}{n}) \to A
  \end{align*}
  that sends $f \otimes w \in
  \dual{(\Gr{V}{}{(a)}{k}{l})} \otimes
  \Gr{V}{}{(a)}{m}{n}$ to $ (\id \otimes
  \omega_{f,w})(\Gr{(X_{a})}{k}{l}{m}{n})$,
  is a linear isomorphism. 
\end{Cor}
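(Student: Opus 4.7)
The plan is to prove the map is a linear isomorphism by splitting it into a direct sum, over $(k,l,m,n)$, of maps into the blocks $\Gr{A}{k}{l}{m}{n}$. Since a matrix coefficient of $\mathscr{X}_a$ indexed by $(k,l,m,n)$ lies in $\Gr{A}{k}{l}{m}{n}$, and since $A=\bigoplus_{k,l,m,n}\Gr{A}{k}{l}{m}{n}$, it suffices to show that for each fixed $(k,l,m,n)$ the restricted map
\[
\Phi^{(k,l,m,n)} : \bigoplus_a \dual{(\Gr{V}{}{(a)}{k}{l})}\otimes \Gr{V}{}{(a)}{m}{n} \longrightarrow \Gr{A}{k}{l}{m}{n}
\]
is a linear isomorphism.

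For surjectivity, the image of $\Phi^{(k,l,m,n)}$ coincides with $\sum_a\Gr{\mathcal{C}(\mathscr{X}_a)}{k}{l}{m}{n}$. A routine calculation using the intertwining identity in Definition 3.3 shows that an isomorphism of corepresentations relates their matrix coefficients via transpose and inverse, so the space of matrix coefficients depends only on the isomorphism class. By the maximality of the family $(\mathscr{X}_a)_{a\in \mathcal{I}}$, this sum therefore equals the sum of matrix coefficient spaces over \emph{all} irreducible corepresentations, which by Proposition \ref{prop:rep-weak-pw} equals $\Gr{A}{k}{l}{m}{n}$.

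For injectivity, suppose $\Phi^{(k,l,m,n)}(\sum_a \xi_a)=0$ with $\xi_a=\sum_i f_i^{(a)}\otimes v_i^{(a)}$ and only finitely many $a$ contributing. By the remark following Theorem \ref{thm:rep-orthogonality}, each $\mathscr{X}_a$ admits an isomorphism $F_a$ to its left bidual, with inverse $G_a$. Fix an index $a_0$, and for arbitrary $g\in\dual{(\Gr{V}{}{(a_0)}{m}{n})}$ and $w\in\Gr{V}{}{(a_0)}{k}{l}$ consider $b=(\id\otimes\omega_{g,w})(\Gr{(X_{a_0})}{m}{n}{k}{l})\in \Gr{A}{m}{n}{k}{l}$, which is the unique block of $A$ that pairs non-trivially against $\Gr{A}{k}{l}{m}{n}$ under $(\alpha,\beta)\mapsto \phi(S(\beta)\alpha)$ because of the behaviour of $S$ and the support of $\phi$. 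Applying $\phi(S(b)\,\cdot\,)$ to $\sum_a \Phi^{(k,l,m,n)}(\xi_a)=0$ and using Proposition \ref{prop:rep-orthogonality-1} to kill the $a\neq a_0$ terms and Corollary \ref{CorOrth} for the $a=a_0$ term, one obtains
\[
0 = \phi\bigl(S(b)\Phi^{(k,l,m,n)}(\xi_{a_0})\bigr) = \frac{1}{d_{G_{a_0}}}\sum_i (g\mid v_i^{(a_0)})(f_i^{(a_0)}\mid G_{a_0} w),
\]
where $d_{G_{a_0}}\neq 0$ by Theorem \ref{thm:rep-orthogonality}(1).

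Since $g$ and $w$ are arbitrary, this forces
\[
\sum_i (g\mid v_i^{(a_0)})(f_i^{(a_0)}\mid G_{a_0} w) = 0
\]
for all $g,w$. The left-hand side is the value at $\xi_{a_0}\otimes(g\otimes w)$ of the tensor product of the natural evaluation pairings $\dual{(\Gr{V}{}{(a_0)}{m}{n})}\times \Gr{V}{}{(a_0)}{m}{n}\to \C$ and $\dual{(\Gr{V}{}{(a_0)}{k}{l})}\times \Gr{V}{}{(a_0)}{k}{l}\to\C$, the latter precomposed with the invertible operator $G_{a_0}$; this tensor pairing is non-degenerate between finite-dimensional spaces of equal dimension, so $\xi_{a_0}=0$. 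As $a_0$ was arbitrary, all $\xi_a$ vanish, proving injectivity. The main technical point is the index bookkeeping for $S$ and $\phi$ in applying the orthogonality relations; once the block $\Gr{A}{m}{n}{k}{l}$ is correctly identified as the natural dual pairing partner of $\Gr{A}{k}{l}{m}{n}$, the remainder is standard non-degeneracy of evaluation pairings.
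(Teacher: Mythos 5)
Your proof is correct and uses exactly the ingredients the paper cites for this corollary: Proposition \ref{prop:rep-weak-pw} for surjectivity (together with the routine observation that matrix coefficients only depend on the isomorphism class), and Proposition \ref{prop:rep-orthogonality-1} plus Corollary \ref{CorOrth} for injectivity via the pairing $\phi(S(b)\,\cdot\,)$. The paper's own proof is just a citation of these three results, so your argument is the same one with the details (block decomposition, non-degeneracy of the evaluation pairing twisted by the invertible $G$) written out.
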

\begin{proof} This follows from Proposition \ref{prop:rep-weak-pw}, Proposition \ref{prop:rep-orthogonality-1} and Corollary \ref{CorOrth}.
\end{proof}
\begin{Cor} \label{cor:rep-pw-morphisms}
  Let $\mathscr{A}$ be a regular partial Hopf algebra with an invariant integral, let
  $((V^{(a)},\mathscr{X}_{a}))_{a\in \mathcal{I}}$ be a maximal
  family of mutually non-isomorphic irreducible corepresentations of $\mathscr{A}$,
  fix $a \in \mathcal{I}$ and $k,l\in I$, and denote by $\Gr{\mathscr{Y}}{k}{l}{}{a}$
  the regular corepresentation on
  $\Grd{\mathcal{C}(\mathscr{X}_a)}{k}{l}$. Then there exists a
  linear isomorphism
  \begin{align*}
    \dual{( \Gr{V}{}{(a)}{k}{l})} \to
    \Mor((V^{(a)},\mathscr{X}_{a}),
    (\Grd{\mathcal{C}(\mathscr{X}_a)}{k}{l},\Gr{\mathscr{Y}}{k}{l}{}{a}))
  \end{align*}
  assigning to each $f\in     \dual{( \Gr{V}{}{(a)}{k}{l})}$ the morphism
  $T^{(f)}$ of Lemma \ref{lemma:rep-regular-embedding}.
\end{Cor}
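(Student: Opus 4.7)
The assignment $f \mapsto T^{(f)}$ is linear by inspection, and it lands in the morphism space by Lemma \ref{lemma:rep-regular-embedding}. The plan is to exhibit an explicit inverse, namely $S \mapsto \epsilon \circ S_{kl}$, where $S_{kl}$ denotes the $(k,l)$-component of $S$ (viewed as a map into $\Gr{\mathcal{C}(\mathscr{X}_{a})}{k}{l}{k}{l} \subseteq \Gr{A}{k}{l}{k}{l}$, where $\epsilon$ is defined).

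For injectivity, suppose $T^{(f)}=0$. Applied in total form to the $(m,n)=(k,l)$-block, this gives $(\id \otimes f)(\Gr{X_{a}}{k}{l}{k}{l}(1\otimes w))=0$ for all $w\in \Gr{V^{(a)}}{k}{l}$. Applying $\epsilon$ to the first leg and invoking the counit condition \eqref{eq:rep-counit}, namely $(\epsilon\otimes \id)(\Gr{X_{a}}{k}{l}{k}{l})=\id_{\Gr{V^{(a)}}{k}{l}}$, yields $f(w)=0$ for all $w$, so $f=0$.

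For surjectivity, let $S$ be a morphism from $(V^{(a)},\mathscr{X}_{a})$ to the regular corepresentation $(\Grd{\mathcal{C}(\mathscr{X}_{a})}{k}{l},\Gr{\mathscr{Y}}{k}{l}{}{a})$ and set $f = \epsilon \circ S_{kl} \in \dual{(\Gr{V^{(a)}}{k}{l})}$. The intertwining identity for $S$, together with the definition of the regular corepresentation in Example \ref{example:rep-regular}, reads, for $w\in \Gr{V^{(a)}}{m}{n}$,
\begin{align*}
(\id \otimes S_{pq})\bigl(\Gr{X_{a}}{p}{q}{m}{n}(1\otimes w)\bigr) = \Delta^{\op}_{pq}\bigl(S_{mn}(w)\bigr)
\end{align*}
inside $\Gr{A}{p}{q}{m}{n} \otimes \Gr{\mathcal{C}(\mathscr{X}_{a})}{k}{l}{p}{q}$. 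Specialising to $(p,q)=(k,l)$ and applying $\id \otimes \epsilon$, the left hand side becomes $T^{(f)}_{mn}(w)$ by the definition of $T^{(f)}$ in Lemma \ref{lemma:rep-regular-embedding}, while the right hand side reduces to $S_{mn}(w)$ by the counit property $(\epsilon\otimes \id)\Delta_{kl}=\id$ applied to $S_{mn}(w) \in \Gr{A}{k}{l}{m}{n}$. Hence $S = T^{(f)}$, which gives surjectivity and finishes the proof.

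The only delicate point is the verification that one may apply the counit to the first tensor leg on the right hand side, but this is exactly where the hypothesis that the image of $S$ lands in matrix coefficients with the prescribed first index $(k,l)$ is used; no further computation is needed.
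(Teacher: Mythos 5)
Your proof is correct, and it is worth noting that it is more self-contained than what the paper intends: the statement is labelled a corollary and sits directly after Corollary \ref{cor:rep-pw}, so the implicit derivation is via the Peter--Weyl decomposition $A \cong \bigoplus_a \dual{(\Gr{V}{}{(a)}{k}{l})}\otimes\Gr{V}{}{(a)}{m}{n}$ together with Schur's lemma and Proposition \ref{prop:rep-cosemisimple} (identify $\Grd{\mathcal{C}(\mathscr{X}_a)}{k}{l}$ with $\dim\Gr{V}{}{(a)}{k}{l}$ copies of $\mathscr{X}_a$, count dimensions of morphism spaces, and check injectivity of $f\mapsto T^{(f)}$). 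You instead construct an explicit two-sided inverse $S\mapsto \epsilon\circ S_{kl}$, using only the counit axioms: injectivity follows from $(\epsilon\otimes\id)(\Gr{X_a}{k}{l}{k}{l})=\id$, and surjectivity from reading the intertwining relation against the definition of the regular corepresentation and applying counitality $(\epsilon\otimes\id)\Delta_{kl}=\id$ on elements of upper degree $(k,l)$ --- you correctly flag that this last step is exactly where the constraint that $S$ lands in $\Grd{\mathcal{C}(\mathscr{X}_a)}{k}{l}$ enters. As a consequence your argument never invokes the invariant integral, irreducibility, or the maximality of the family $(\mathscr{X}_a)_a$, so it actually proves the statement for an arbitrary corepresentation of an arbitrary partial bialgebra; the paper's route buys nothing extra here beyond consistency with the surrounding Peter--Weyl machinery.
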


\subsection{Unitary corepresentations of partial compact quantum groups}

Let us now enhance our partial Hopf algebras to partial compact
quantum groups. We write $B(\Hsp,\mathcal{G})$ for the linear space of
bounded morphisms between Hilbert spaces $\Hsp$ and $\mathcal{G}$. 

\begin{Def} Let $\mathscr{A}$ define a partial compact quantum group
  $\mathscr{G}$. We call a  corepresentation $\mathscr{X}$ of
  $\mathscr{A}$ on a collection of Hilbert spaces $\Gru{\Hsp}{k}{l}$
  \emph{unitary}
  if \[\Gr{(X^{-1})}{k}{l}{m}{n}=(\Gr{X}{l}{k}{n}{m})^{*}\quad
  \textrm{in }\Gr{A}{k}{l}{m}{n}\otimes
  B(\Gru{\Hsp}{l}{k},\Gru{\Hsp}{n}{m}).\] 
\end{Def}

\begin{Rem}
The total object $\Hsp$ will then only be a pre-Hilbert space, but as the local components are finite-dimensional, this will not be an issue.
\end{Rem}

\begin{Exa}\label{example:rep-trivial-unitary}
  Regard $\C^{(I)}$ as a direct sum of the trivial Hilbert spaces $\C$. Then the
  trivial corepresentation $\mathscr{U}$ on $\C^{(I)}$ is unitary.
\end{Exa}

The tensor product of corepresentations lifts to a tensor product
of unitary corepresentations as follows.  We define the tensor product
of rcfd $I^{2}$-graded Hilbert spaces similarly as for rcfd
$I^{2}$-graded vector spaces and pretend it to be strict again.  Let
$(\Hsp,\mathscr{X})$ and $(\mathcal{G},\mathscr{Y})$ be unitary rcfd
corepresentations. Then the tensor product $(\Hsp \itimes
\mathcal{G},\mathscr{X} \Circt \mathscr{Y})$ is unitary again. Indeed,
in  total form,  $(X\Circt Y)^{-1} = Y_{13}^{-1}X_{12}^{-1}
  =Y_{13}^{*}X_{12}^{*} = (X \Circt Y)^{*}$ by Remark \ref{remark:rep-tensor-multiplier}.
We hence obtain a tensor C$^*$-category $\Corep_{u,\rcf}(\mathscr{A})$ of unitary corepresentations. We denote again by $\Corep_u(\mathscr{A})$ the subcategory of all corepresentations with finite support on the hyperobject set. It is the total tensor C$^*$-category with local units of a semi-simple partial tensor C$^*$-category.

Our aim now is to show that every (irreducible) corepresentation is
equivalent to a unitary one. We show this by embedding the
corepresentation into a restriction of the regular corepresentation.
\begin{Lem} \label{lemma:rep-regular-unitary}
  Let $\mathscr{A}$ define a partial compact quantum group with
positive invariant  integral $\phi$, and let $\Gru{V}{m}{n} \subseteq
\bigoplus_{k,l} \Gr{A}{k}{l}{m}{n}$ be subspaces such that
$\Delta_{pq}(\Gru{V}{m}{n}) \subseteq \Gru{V}{p}{q} \otimes
    \Gr{A}{p}{q}{m}{n}$ and $V=\bigoplus_{k,l} \Gru{V}{k}{l}$ is rcfd. Then each $\Gru{V}{k}{l}$ is a Hilbert space with
    respect to the inner product given by $\langle
    a|b\rangle:=\phi(a^{*}b)$, and the regular corepresentation
    $\mathscr{X}$ on $V$ is unitary.
\end{Lem}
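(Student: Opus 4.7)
My plan is to establish the two claims separately: first the Hilbert space structure on each $\Gru{V}{k}{l}$, then the unitarity of $\mathscr{X}$, with the latter reducing to the strong invariance identity of $\phi$.

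For the Hilbert space structure, each $\Gru{V}{k}{l}$ is finite-dimensional (as a subspace of the rcfd object $V$), so only well-definedness and positive-definiteness of $\langle a|b\rangle=\phi(a^{*}b)$ need to be verified. For $a=\sum_{k',l'}a_{k'l'}$ with $a_{k'l'}\in\Gr{A}{k'}{l'}{k}{l}$, the product $a_{k'l'}^{*}a_{k''l''}$ lies in $\Gr{A}{l'}{l''}{l}{l}$ precisely when $k'=k''$; since $\phi$ is supported on diagonal blocks, only the terms with $l'=l''$ survive, and $\phi(a^{*}a)=\sum_{k',l'}\phi(a_{k'l'}^{*}a_{k'l'})$ is a finite sum of non-negative reals. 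If this sum vanishes, each $\phi(a_{k'l'}^{*}a_{k'l'})=0$, so Cauchy--Schwarz (valid for any positive functional on a $*$-algebra) gives $\phi(a_{k'l'}^{*}b)=0$ for all $b\in A$, and Lemma \ref{LemFaith} then forces $a_{k'l'}=0$, hence $a=0$.

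For unitarity, Lemma \ref{lemma:rep-invertible} gives $\Gr{(X^{-1})}{k}{l}{m}{n}=(S\otimes\id)(\Gr{X}{n}{m}{l}{k})$, so the claim becomes
\[
(S\otimes\id)(\Gr{X}{n}{m}{l}{k})=(\Gr{X}{l}{k}{n}{m})^{*}\quad\text{in}\quad \Gr{A}{k}{l}{m}{n}\otimes\Hom_{\C}(\Gru{V}{l}{k},\Gru{V}{n}{m}).
\]
Both sides are determined by their action on $1\otimes u$ for $u\in\Gru{V}{l}{k}$, and equality of elements of $\Gr{A}{k}{l}{m}{n}\otimes\Gru{V}{n}{m}$ can be tested by pairing each $\Gru{V}{n}{m}$-slot against $v\in\Gru{V}{n}{m}$ via the now-constructed (faithful) inner product. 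Writing $\Delta_{nm}(u)=u_{(1)}\otimes u_{(2)}$ with $u_{(1)}\in\Gru{V}{n}{m}$ and $u_{(2)}\in\Gr{A}{n}{m}{l}{k}$, the definition of the regular corepresentation gives $(S\otimes\id)(\Gr{X}{n}{m}{l}{k})(1\otimes u)=S(u_{(2)})\otimes u_{(1)}$, whose pairing with $v$ is $\phi(v^{*}u_{(1)})S(u_{(2)})$. For the RHS, using the identity $(\id\otimes\langle v|\cdot\rangle)(T^{*}(1\otimes u))=\bigl[(\id\otimes\langle u|\cdot\rangle)(T(1\otimes v))\bigr]^{*}$ together with $\Gr{X}{l}{k}{n}{m}(1\otimes v)=v_{(2)}\otimes v_{(1)}$ where $\Delta_{lk}(v)=v_{(1)}\otimes v_{(2)}$, I obtain $\phi(v_{(1)}^{*}u)v_{(2)}^{*}$.

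The remaining identity $\phi(v^{*}u_{(1)})S(u_{(2)})=\phi(v_{(1)}^{*}u)v_{(2)}^{*}$ is exactly the second form of strong invariance in Lemma \ref{lemma:strong-invariance} applied with $a=v^{*}$ and $b=u$, once one uses $\Delta(v^{*})=\Delta(v)^{*}$ (componentwise star), which follows by summing the defining relation $\Delta_{rs}(v)^{*}=\Delta_{sr}(v^{*})$. The main obstacle is the bookkeeping of the four-index gradings and correctly tracking the interaction of $\Delta^{\op}$, $S$ and the Hilbert-space adjoint; once the unitarity condition is rephrased as an identity between total $A$-valued pairings, strong invariance of $\phi$ delivers it directly.
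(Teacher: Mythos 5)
Your proof is correct, but the unitarity part takes a genuinely different route from the paper's. The paper does not try to identify $(S\otimes\id)(\Gr{X}{n}{m}{l}{k})$ with $(\Gr{X}{l}{k}{n}{m})^{*}$ directly; instead it verifies the single "isometry" relation $\sum_{k}(\Gr{X}{k}{l}{m}{n'})^{*}\Gr{X}{k}{l}{m}{n}=\delta_{n,n'}\UnitC{l}{n}\otimes\id_{\Gru{V}{m}{n}}$ by pairing against functionals $\omega_{b,a}$, rewriting the left-hand side as $\sum_{k}(\phi\otimes\id)(\Delta_{lk}(b^{*})\Delta_{kl}(a))=(\phi\otimes\id)(\Delta_{ll}(b^{*}a))$ via multiplicativity of $\Delta$, and then applying plain (right) invariance of $\phi$; Lemma \ref{lemma:rep-invertible} (i.e.\ the generalized-inverse relations $XZX=X$, $ZXZ=Z$, $XZ=\sum_k\lambda_k\otimes\lambda_k^V$) then forces $X^{*}=Z=X^{-1}$. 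You instead prove the identity $X^{*}=(S\otimes\id)(X^{\mathrm{flip}})$ in its literal componentwise form by pairing both sides against vectors and reducing to the second strong-invariance identity of Lemma \ref{lemma:strong-invariance} with $a=v^{*}$, $b=u$; the index bookkeeping you sketch does check out (only the $(n,m)$- and $(k,l)$-components of the total coproducts survive under $\phi$). The trade-off: the paper's computation uses only ordinary invariance and is a short chain of equalities, while yours invokes the heavier strong-invariance lemma (itself a consequence of invariance plus the antipode identities) but avoids the implicit "a one-sided isometry satisfying the generalized-inverse relations is the generalized inverse" step. A genuine plus of your write-up is that you actually prove positive-definiteness of $\langle a|b\rangle=\phi(a^{*}b)$ via Cauchy--Schwarz and the faithfulness Lemma \ref{LemFaith}; the paper asserts the Hilbert-space structure without comment. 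One small point to make explicit: your final step $\overline{\phi(u^{*}v_{(1)})}=\phi(v_{(1)}^{*}u)$ uses hermiticity of $\phi$, which follows from positivity together with the local units.
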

\begin{proof} 
By Lemma \ref{lemma:rep-invertible},  it suffices to show that
  \begin{equation}\label{EqUnit} \sum_{k}
    (\Gr{X}{k}{l}{m}{n'})^* \Gr{X}{k}{l}{m}{n} =
    \delta_{n,n'}\UnitC{l}{n}\otimes
    \id_{\Gru{V}{m}{n}}.
  \end{equation} 
Let  $a\in \Gru{V}{m}{n}$, $b\in \Gru{V}{m}{n'}$ and define $\omega_{b,a} \colon
\Hom_{\C}(\Gru{V}{m}{n},\Gru{V}{m}{n'}) \to \C$ by $T
\mapsto \langle b|Ta\rangle$. Then
\begin{eqnarray*}
\sum_{k }(\id \otimes \omega_{b,a})
((\Gr{X}{k}{l}{m}{n'})^* \Gr{X}{k}{l}{m}{n}))  &=& \sum_k
(\id\otimes \phi)(\Delta_{kl}^{\op}(b)^*\Delta_{kl}^{\op}(a))\\
  &=& \sum_k (\phi\otimes
  \id)(\Delta_{lk}(b^*)\Delta_{kl}(a)) \\ &=& (\phi\otimes
  \id)(\Delta_{ll}(b^*a)) \\ &=& \phi(b^*a)\UnitC{l}{n} \\&=&
  \delta_{n',n} \UnitC{l}{n} \otimes \langle b|a\rangle.
\end{eqnarray*} 
This proves \eqref{EqUnit}.
\end{proof} 

\begin{Prop} \label{prop:rep-unitarisable}   Let $\mathscr{A}$ define
  a partial compact quantum group. Then every  
  corepresentation of  $\mathscr{A}$  is
  isomorphic to a unitary one.
\end{Prop}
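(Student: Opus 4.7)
The plan is to reduce to the irreducible case by cosemisimplicity, then realize each irreducible as a subcorepresentation of a regular corepresentation, which we already know is unitary.

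First, by Corollary \ref{cor:rep-cosemisimple}, any corepresentation of $\mathscr{A}$ decomposes into a (possibly infinite but rcfd) direct sum of irreducibles. Since the orthogonal direct sum of unitary corepresentations is unitary (the unitarity condition $\Gr{(X^{-1})}{k}{l}{m}{n}=(\Gr{X}{l}{k}{n}{m})^{*}$ is preserved componentwise under direct sums), it suffices to produce, for every irreducible $(V,\mathscr{X})$, a unitary corepresentation isomorphic to it.

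So fix an irreducible $(V,\mathscr{X})$. Choose $k,l$ with $\Gru{V}{k}{l}\neq 0$ and any non-zero $f\in\dual{(\Gru{V}{k}{l})}$, and consider the morphism $T^{(f)}\colon(V,\mathscr{X})\to(\Grd{\mathcal{C}(\mathscr{X})}{k}{l},\Gr{\mathscr{Y}}{k}{l}{}{})$ provided by Lemma \ref{lemma:rep-regular-embedding}, where $\Gr{\mathscr{Y}}{k}{l}{}{}$ denotes the regular corepresentation on $\Grd{\mathcal{C}(\mathscr{X})}{k}{l}$. I claim $T^{(f)}$ is non-zero: if $v\in\Gru{V}{k}{l}$ satisfies $f(v)\neq 0$, then applying the counit to $T^{(f)}_{kl}(v)=(\id\otimes f)(\Gr{X}{k}{l}{k}{l}(1\otimes v))$ gives, by \eqref{eq:rep-counit}, the non-zero scalar $f(v)$. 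Since $\mathscr{X}$ is irreducible, the invariant family $\ker T^{(f)}$ must be zero, so $T^{(f)}$ is injective.

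Let $\Gru{W}{m}{n}:=T^{(f)}_{mn}(\Gru{V}{m}{n})\subseteq\Gr{\mathcal{C}(\mathscr{X})}{k}{l}{m}{n}\subseteq\bigoplus_{k',l'}\Gr{A}{k'}{l'}{m}{n}$. Invariance of the image under $\Gr{\mathscr{Y}}{k}{l}{}{}$ together with \eqref{eq:rep-matrix-delta} ensures $\Delta_{rs}(\Gru{W}{m}{n})\subseteq\Gru{W}{r}{s}\otimes\Gr{A}{r}{s}{m}{n}$, so $W=\bigoplus_{m,n}\Gru{W}{m}{n}$ carries a restricted regular corepresentation, which is isomorphic to $\mathscr{X}$ via $T^{(f)}$. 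The space $W$ is rcfd since it is the image of the rcfd space $V$. Applying Lemma \ref{lemma:rep-regular-unitary} to $W$, each $\Gru{W}{m}{n}$ is a (finite-dimensional) Hilbert space under $\langle a|b\rangle:=\phi(a^{*}b)$, and the regular corepresentation on $W$ is unitary. Transporting this inner product back to $V$ along the bijections $T^{(f)}_{mn}$ yields a unitary corepresentation isomorphic to $(V,\mathscr{X})$, completing the proof.

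The main technical point is justifying non-vanishing of $T^{(f)}$ together with verifying that positivity of $\phi$ actually makes $\langle\cdot|\cdot\rangle$ into an inner product (as opposed to a merely sesquilinear form); the former is a direct counit computation, while the latter is precisely the content of Lemma \ref{lemma:rep-regular-unitary} combined with the faithfulness of $\phi$ established in Lemma \ref{LemFaith}.
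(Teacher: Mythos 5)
Your proof is correct and follows essentially the same route as the paper: decompose into irreducibles by Corollary \ref{cor:rep-cosemisimple}, realize each irreducible inside the regular corepresentation, and invoke Lemma \ref{lemma:rep-regular-unitary}. The only difference is cosmetic — where the paper cites Corollary \ref{cor:rep-pw} to identify irreducibles with regular corepresentations, you spell out the embedding directly via Lemma \ref{lemma:rep-regular-embedding} and a counit evaluation to see that $T^{(f)}$ is non-zero, which is exactly the argument underlying that corollary.
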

\begin{proof}
  By Proposition \ref{prop:rep-cosemisimple} and Corollary
  \ref{cor:rep-pw}, every corepresentation is isomorphic to a direct
  sum of irreducible regular corepresentations, which are unitary by
  Lemma \ref{lemma:rep-regular-unitary}.
\end{proof}
\begin{Cor} The partial C$^*$-tensor category $\Corep_u(\mathscr{A})$ is a partial fusion C$^{*}$-category.
\end{Cor}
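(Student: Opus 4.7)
The plan is to check each defining property of a partial fusion C$^*$-category for $\Corep_u(\mathscr{A})$, assembling results already in hand. The partial tensor structure is essentially free: the paragraph preceding the corollary shows that the tensor product of two unitary corepresentations is unitary, the local units are the restrictions of the trivial corepresentation to $\C^{(I_\alpha)}$ (unitary by Example \ref{example:rep-trivial-unitary}), and the associativity and unit constraints, being those of $\Hilbrcf$, can be taken strict and are in any case unitary. The partition into components indexed by the hyperobject set $\mathscr{I}$ is inherited from $\Corep(\mathscr{A})$.

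For the C$^*$-structure on each $\Corep_u(\mathscr{A})_{\alpha\beta}$, I would take the adjoint of an intertwiner $T$ between unitary corepresentations $(\Hsp,\mathscr{X})$ and $(\mathcal{G},\mathscr{Y})$ to be its component-wise Hilbert space adjoint, and verify by a short calculation (using $\Gr{(X^{-1})}{k}{l}{m}{n} = (\Gr{X}{l}{k}{n}{m})^*$ together with Lemma \ref{lemma:rep-total-morphism}) that $T^*$ is again an intertwiner. Because the endomorphism algebras of objects in $\Corep(\mathscr{A})_{\alpha\beta}$ are finite-dimensional (Proposition \ref{prop:rep-cosemisimple}), the C$^*$-identity is automatic. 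Compatibility of $\otimes$ with $*$ on morphisms comes from the corresponding fact in $\Hilbrcf$. Semi-simplicity of each component then follows by combining Proposition \ref{prop:rep-cosemisimple}, Proposition \ref{prop:rep-unitarisable}, and the polar-decomposition argument in finite-dimensional C$^*$-algebras, which promotes any isomorphism between unitary objects to a unitary one.

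For duality, I would use that by Corollary \ref{cor:involutive} the antipode is bijective, so Corollary \ref{cor:rep-tensor-duality} provides both left and right duals $\dualco{\mathscr{X}},\dualcor{\mathscr{X}}$ inside $\Corep_{\rcf}(\mathscr{A})$; the explicit index swap in \eqref{eq:rep-left-dual} and \eqref{eq:rep-right-dual} shows that a corepresentation in $\Corep_u(\mathscr{A})_{\alpha\beta}$ has duals lying in $\Corep_\rcf(\mathscr{A})_{\beta\alpha}$ with finite support on $\mathscr{I}$. Proposition \ref{prop:rep-unitarisable} produces a unitary corepresentation isomorphic to each such dual; transporting the evaluation and coevaluation morphisms along the chosen isomorphisms yields duality data internal to $\Corep_u(\mathscr{A})$, and this suffices for the abstract definition of duality (Lemma \ref{LemMorDua} then ensures that in a $^*$-category the two one-sided duals may be taken to coincide, giving the expected two-sided dual $\overline{\mathscr{X}}$).

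The main obstacle I expect is the coherent bookkeeping in the duality step, specifically checking that the dual of a unitary corepresentation can be \emph{canonically} realised as a unitary one (not merely unitarily equivalent to one) in a way compatible with the tensor structure. The cleanest route is to avoid this altogether: since the definition of partial fusion C$^*$-category only requires existence of duals up to isomorphism and we already have full duality in $\Corep_\rcf(\mathscr{A})$, one simply appeals to Proposition \ref{prop:rep-unitarisable} object-by-object and then transports the evaluation/coevaluation data, without trying to canonicalise $\dualco{(\cdot)}$ as an endofunctor on $\Corep_u(\mathscr{A})$.
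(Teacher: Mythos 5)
Your proposal is correct and follows essentially the same route the paper takes implicitly: the corollary is stated without a separate proof precisely because it is the combination of the unitarity of tensor products of unitary corepresentations, semi-simplicity from Proposition \ref{prop:rep-cosemisimple}, unitarizability from Proposition \ref{prop:rep-unitarisable}, and the duality of Corollary \ref{cor:rep-tensor-duality} (with the antipode bijective by Corollary \ref{cor:involutive}). Your extra care in unitarizing the dual object and transporting the evaluation/coevaluation maps is exactly the right way to fill in the one step the paper leaves tacit.
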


\begin{Rem}
If $\mathscr{A}$ defines a partial compact quantum group $\mathscr{G}$, we will also write $\Corep_u(\mathscr{A})= \Rep_u(\mathscr{G})$, and talk of (unitary) representations of $\mathscr{G}$.
 \end{Rem}  
   
Let now $\mathscr{X}$ be a unitary corepresentation of $\mathscr{A}$. Then there exists an isomorphism from $\mathscr{X}$ to $\dualco{\dualco{\mathscr{X}}} = (S^2\otimes \id)\mathscr{X}$. The following proposition shows that it can be implemented by positive operators.

\begin{Prop} \label{prop:rep-unitary-bidual}
  Let $\mathscr{A}$ define a partial compact quantum group and let
  $(\Hsp,\mathscr{X})$ be an irreducible unitary corepresentation of
  $\mathscr{A}$.  Then there exists an isomorphism $F=F_{\mathscr{X}}$
  from $(\Hsp,\mathscr{X})$ to 
  $(\Hsp,(S^{2} \otimes \id)(\mathscr{X}))$ in $\Corep(\mathscr{A})$ such
  that each $\Gru{F}{k}{l}$ is positive.
\end{Prop}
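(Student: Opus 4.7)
My strategy is to construct the positive isomorphism $F$ as the inverse of a suitably rescaled isomorphism $G\colon (\Hsp,(S^{2}\otimes\id)\mathscr{X})\to(\Hsp,\mathscr{X})$, and to extract positivity of $G$ from the orthogonality relation of Theorem \ref{thm:rep-orthogonality}(3) together with positivity of the invariant integral $\phi$ in the form $\phi(c^{*}c)\geq 0$.

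I first invoke the Remark following Theorem \ref{thm:rep-orthogonality} to obtain an isomorphism $G_{0}\colon(\Hsp,(S^{2}\otimes\id)\mathscr{X})\to(\Hsp,\mathscr{X})$. Setting $d:=\sum_{k}\Tr(\Gru{G_{0}}{k}{l})$, which is nonzero and independent of $l\in I_{\beta}$ by Theorem \ref{thm:rep-orthogonality}(1), I define $G:=d^{-1}G_{0}$ and put $F:=G^{-1}$. Then $F$ is automatically an isomorphism $(\Hsp,\mathscr{X})\to(\Hsp,(S^{2}\otimes\id)\mathscr{X})$; what remains is to check positivity.

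Given $k,l$ with $\Gru{\Hsp}{k}{l}\neq 0$ and $u\in \Gru{\Hsp}{k}{l}$, I fix some $m\in I_{\alpha}$, $n\in I_{\beta}$ and a nonzero $v\in \Gru{\Hsp}{m}{n}$, which is possible thanks to Lemma \ref{LemInjMor}. Unitarity of $\mathscr{X}$ gives $(X^{-1})^{lknm}=(\Gr{X}{k}{l}{m}{n})^{*}$, so Theorem \ref{thm:rep-orthogonality}(3) reads
\[
(\phi\otimes\id\otimes\id)\!\left((\Gr{X}{k}{l}{m}{n})^{*}_{12}\,(\Gr{X}{k}{l}{m}{n})_{13}\right) = d^{-1}(\id_{\Gru{\Hsp}{m}{n}}\otimes\Gru{G_{0}}{k}{l})\circ\Sigma_{klmn}.
\]
Writing $\Gr{X}{k}{l}{m}{n}=\sum_{i}a_{i}\otimes T_{i}$, evaluating both sides on $u\otimes v$ and pairing the result in $\Gru{\Hsp}{m}{n}\otimes\Gru{\Hsp}{k}{l}$ with $v\otimes u$ gives
\[
\phi(c^{*}c)=d^{-1}\|v\|^{2}\langle u,\Gru{G_{0}}{k}{l}u\rangle=\|v\|^{2}\langle u,\Gru{G}{k}{l}u\rangle,
\]
where $c:=(\id\otimes\omega_{\langle u,-\rangle,v})(\Gr{X}{k}{l}{m}{n})=\sum_{i}a_{i}\langle u,T_{i}v\rangle\in \Gr{A}{k}{l}{m}{n}$ is a matrix coefficient. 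Positivity of $\phi$ forces $\langle u,\Gru{G}{k}{l}u\rangle\geq 0$ for every $u$, making $\Gru{G}{k}{l}$ self-adjoint and positive semi-definite; it is in fact positive definite because $G$ is invertible. Consequently $\Gru{F}{k}{l}=(\Gru{G}{k}{l})^{-1}$ is positive.

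The delicate part will be the algebraic bookkeeping in the triple-tensor multiplication: one needs to see that placing $(\Gr{X}{k}{l}{m}{n})^{*}_{12}$ to the left of $(\Gr{X}{k}{l}{m}{n})_{13}$ produces $a_{j}^{*}a_{i}$ rather than $a_{i}a_{j}^{*}$ in the algebra slot, so that the left-hand side is genuinely of the form $\phi(c^{*}c)$. This is precisely what makes the positivity hypothesis $\phi(a^{*}a)\geq 0$ directly applicable, and it is why one uses the $(X^{-1})^{lknm}_{12}X^{klmn}_{13}$ version of the formula (with $G$) rather than the reversed one (with $F$).
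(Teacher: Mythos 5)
Your proof is correct, but it follows a genuinely different route from the paper. The paper's argument is purely structural: it unitarizes the dual corepresentation via Proposition \ref{prop:rep-unitarisable}, obtaining an isomorphism $T\colon \dualco{\mathscr{X}}\to\mathscr{Y}$ with $\mathscr{Y}$ unitary, and then checks by applying $S\otimes{-}^{\tr}$ and ${-}^{*}\otimes{-}^{*\tr}$ to the intertwining relation that $F:=\dualop{T}T^{*\tr}$ intertwines $\mathscr{X}$ with $(S^{2}\otimes\id)\mathscr{X}$; positivity is then automatic from the form $\dualop{T}T^{*\tr}$, with no appeal to the orthogonality relations. You instead start from an arbitrary isomorphism to the bidual (via the Remark after Theorem \ref{thm:rep-orthogonality}), normalize it, and extract positivity of each block $\Gru{G}{k}{l}$ by contracting Theorem \ref{thm:rep-orthogonality}(3) against $u\otimes v$ and $v\otimes u$ so that the left-hand side becomes $\phi(c^{*}c)$ for a single matrix coefficient $c$ — your bookkeeping here is right: unitarity turns $\Gr{(X^{-1})}{l}{k}{n}{m}$ into $(\Gr{X}{k}{l}{m}{n})^{*}$ and the $(\cdot)^{*}_{12}(\cdot)_{13}$ ordering does yield $a_{j}^{*}a_{i}$ in the algebra leg. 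In effect you prove the diagonal case of Corollary \ref{cor:rep-unitary-schur-orthogonality} \emph{before} knowing $F$ is positive, and then read positivity off from it; this is the same mechanism the paper isolates later in Remark \ref{RemPos}, run in the opposite logical direction. What the paper's route buys is brevity and independence from the orthogonality machinery; what yours buys is that it makes explicit exactly where positivity of $\phi$ enters and avoids having to unitarize $\dualco{\mathscr{X}}$. One point you could state more cleanly: the nonzero $v\in\Gru{\Hsp}{m}{n}$ exists simply because an irreducible corepresentation is nonzero (Lemma \ref{LemInjMor} is not really needed there), and invertibility of each block $\Gru{G}{k}{l}$ follows because composition of morphisms in $\Corep(\mathscr{A})$ is componentwise, so an isomorphism has invertible components.
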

\begin{proof}
 By Proposition \ref{prop:rep-unitarisable}, there exists an
  isomorphism $T \colon \dualco{\mathscr{X}} \to \mathscr{Y}$ for some
  unitary corepresentation $\mathscr{Y}$ on $\dual{\Hsp}$, so that in total form,
  $(1\otimes T)\dualco{X} = Y(1 \otimes T)$.
We  apply   $S \otimes -^{\tr}$ and $-^{*} \otimes -^{*\tr}$,
respectively to find 
\begin{align*}
 \dualco{\dualco{X}}(1 \otimes \dualop{T}) &= (1 \otimes
  \dualop{T})\dualco{Y}, & (1 \otimes T^{*\tr})X=\dualco{Y}(1\otimes T^{*\tr}).
\end{align*}

Combining both equations, we
find $\dualco{\dualco{X}}(1 \otimes \dualop{T}T^{*\tr})=(1 \otimes
\dualop{T}T^{*\tr})X$. Thus, we can take
$F:=\dualop{T}T^{*\tr}$.
\end{proof}

The Schur orthogonality relations in Corollary \ref{CorOrth} can be
rewritten using the involution instead of the antipode as follows.
Let $(\Hsp,\mathscr{X})$ be a unitary corepresentation of
$\mathscr{A}$. Since $(S\otimes \id)(X)=X^{-1}=X^{*}$, the space of
matrix coefficients $\mathcal{C}(\mathscr{X})$ satisfies
\begin{align} \label{eq:rep-unitary-matrix-coefficients}
  S(\Gr{\mathcal{C}(\mathscr{X})}{k}{l}{m}{n}) &=
  (\Gr{\mathcal{C}(\mathscr{X})}{m}{n}{k}{l})^{*} \subseteq \Gr{A}{n}{m}{l}{k}.
\end{align}
More precisely, let $v \in \Gru{\Hsp}{k}{l}$, $v' \in \Gru{\Hsp}{m}{n}$
and denote by $\omega_{v,v'}$ the functional
given by $T \mapsto \langle v|Tv'\rangle$. Then
\begin{align*}
  S((\id \otimes \omega_{v,v'})(\Gr{X}{k}{l}{m}{n})) &=
  (\id \otimes \omega_{v,v'}) (\Gr{(X^{-1})}{n}{m}{l}{k})) \\ & =
  (\id \otimes \omega_{v,v'})( (\Gr{X}{m}{n}{k}{l})^{*}) =
  (\id \otimes \omega_{v',v})(\Gr{X}{m}{n}{k}{l})^{*}.
\end{align*}
This equation, Proposition \ref{prop:rep-orthogonality-1},  Lemma
\ref{LemFaith} and
Corollary \ref{CorOrth} imply the following
corollaries:
\begin{Cor} \label{cor:rep-unitary-orthogonality-1}   Let $\mathscr{A}$ define a partial compact quantum group with
  positive invariant integral $\phi$ and let   $(V,\mathscr{X})$ and
  $(W,\mathscr{Y})$ be inequivalent irreducible unitary
  corepresentations of $\mathcal{A}$.  Then for all $a\in
  \mathcal{C}(X), b \in \mathcal{C}(Y)$,
  \[\phi(b^{*}a) = \phi(ba^{*})=0.\] In particular, $\mathcal{C}(X)
  \cap \mathcal{C}(Y)=0$.
\end{Cor}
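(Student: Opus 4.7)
The plan is to reduce this corollary directly to Proposition \ref{prop:rep-orthogonality-1} by using unitarity to convert the antipode $S$ into the involution $*$ on matrix coefficients, and then to use positivity together with faithfulness of $\phi$ for the final ``in particular'' statement.

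First, I would invoke equation \eqref{eq:rep-unitary-matrix-coefficients}, which uses unitarity in the form $(S \otimes \id)(X) = X^{*}$ to show that $S$ maps $\Gr{\mathcal{C}(\mathscr{X})}{k}{l}{m}{n}$ bijectively onto $(\Gr{\mathcal{C}(\mathscr{X})}{m}{n}{k}{l})^{*}$, and similarly for $\mathscr{Y}$. Consequently, given $a\in \mathcal{C}(\mathscr{X})$ and $b\in \mathcal{C}(\mathscr{Y})$, the element $b^{*}$ lies in $S(\mathcal{C}(\mathscr{Y}))$, so we can write $b^{*}=S(b')$ for some $b'\in \mathcal{C}(\mathscr{Y})$, and analogously $a^{*}=S(a')$ for some $a'\in \mathcal{C}(\mathscr{X})$. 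Proposition \ref{prop:rep-orthogonality-1} then yields
\[
\phi(b^{*}a)=\phi(S(b')a)=0, \qquad \phi(ba^{*})=\phi(bS(a'))=0,
\]
which gives both orthogonality relations.

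For the final assertion, suppose $a\in \mathcal{C}(\mathscr{X})\cap \mathcal{C}(\mathscr{Y})$. Taking $b=a$ in the first relation (valid since we may view $a$ both as an element of $\mathcal{C}(\mathscr{X})$ and of $\mathcal{C}(\mathscr{Y})$), we obtain $\phi(a^{*}a)=0$. Since $\phi$ is positive, the Cauchy--Schwarz inequality $|\phi(ca)|^{2}\leq \phi(cc^{*})\phi(a^{*}a)$ then gives $\phi(ca)=0$ for every $c\in A$, whence $a=0$ by the faithfulness of $\phi$ established in Lemma \ref{LemFaith}.

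The whole argument is essentially bookkeeping; the only substantive point is that $S$ and $*$ interchange on matrix coefficients of unitary corepresentations, so the main thing to be careful about is matching up the index patterns in \eqref{eq:rep-unitary-matrix-coefficients} when passing from Proposition \ref{prop:rep-orthogonality-1} (formulated with $S$) to the present $*$-formulation.
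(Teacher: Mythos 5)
Your proposal is correct and follows essentially the same route as the paper: the preceding display \eqref{eq:rep-unitary-matrix-coefficients} is used to write $b^{*}=S(b')$ and $a^{*}=S(a')$ with $b'\in\mathcal{C}(\mathscr{Y})$, $a'\in\mathcal{C}(\mathscr{X})$, so that Proposition \ref{prop:rep-orthogonality-1} applies directly, and the final assertion is obtained from positivity of $\phi$ together with the faithfulness of Lemma \ref{LemFaith} (your Cauchy--Schwarz step is exactly the detail the paper leaves implicit).
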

\begin{Cor}\label{cor:rep-unitary-schur-orthogonality}
  Let $\mathscr{A}$ define a partial compact quantum group with
  positive invariant integral $\phi$, let $(\Hsp,\mathscr{X})$ be an irreducible
  unitary corepresentation of $\mathscr{A}$, let $F=F_{\mathscr{X}}$ be a positive
  isomorphism from $(\Hsp,\mathscr{X})$ to
  $(\Hsp,\dualco{\dualco{\mathscr{X}}})$ and
  $G=F^{-1}$, and let $a=(\id \otimes
  \omega_{v,v'})(\Gr{X}{k}{l}{m}{n})$ and $b=(\id \otimes
  \omega_{w,w'})(\Gr{X}{k}{l}{m}{n})$, where $v,w \in
  \Gru{\Hsp}{k}{l}$ and $v',w' \in \Gru{\Hsp}{m}{n}$.  Then
\begin{align*}
  \phi(b^{*}a) &= \frac{\langle w|v'\rangle\langle v|Gw'\rangle}{\sum_{r}
    \Tr(\Gru{G}{r}{n})}, & \phi(ab^{*}) = \frac{\langle
    w|Fv'\rangle \langle v|w'\rangle}{\sum_{s}
    \Tr(\Gru{F}{m}{s})}.
\end{align*}
\end{Cor}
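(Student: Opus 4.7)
The plan is to reduce the two claimed identities directly to Corollary \ref{CorOrth} via the substitution formula for the antipode on matrix coefficients of a unitary corepresentation derived just above the statement. Unitarity together with Lemma \ref{lemma:rep-invertible} gives $X_{klmn}^{*} = (S \otimes \id)(X_{mnkl})$, and combined with the elementary identity $\overline{\omega_{v,v'}(T)} = \omega_{v',v}(T^{*})$ this yields
\[
  b^{*} = S\bigl((\id \otimes \omega_{w',w})(X_{mnkl})\bigr).
\]
Setting $b' := (\id \otimes \omega_{w',w})(X_{mnkl})$, we therefore have $\phi(b^{*}a) = \phi(S(b')a)$ and $\phi(ab^{*}) = \phi(aS(b'))$, and the pair $(a,b')$ is precisely of the form required in the hypotheses of Corollary \ref{CorOrth}.

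It then remains to translate the dual pairings appearing in the conclusion of Corollary \ref{CorOrth} into the inner-product language used here. Under the anti-linear isomorphism $\Hsp_{kl} \to \dual{\Hsp_{kl}}$, $x \mapsto \langle x \mid \cdot \rangle$, the functional part of $a$ corresponds to $v \in \Hsp_{kl}$ with vector part $v' \in \Hsp_{mn}$, and the functional part of $b'$ corresponds to $w' \in \Hsp_{mn}$ with vector part $w \in \Hsp_{kl}$. Substituting these identifications into the two formulas $\phi(S(b')a) = (g|v')(f|Gw)/d_{G}$ and $\phi(aS(b')) = (g|Fv')(f|w)/d_{F}$ of Corollary \ref{CorOrth} produces the stated expressions for $\phi(b^{*}a)$ and $\phi(ab^{*})$.

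The only subtlety is bookkeeping: tracking how unitarity and the passage from $S$ to $*$ swap the index pairs on $X$, and how the two arguments of $\omega$ are permuted by the anti-linear identification $\Hsp \cong \dual{\Hsp}$. All of the analytic content --- the invariance of $\phi$, its faithfulness (Lemma \ref{LemFaith}), and the averaging argument that produces the intertwiner $F_{\mathscr{X}}$ --- has already been absorbed into Corollary \ref{CorOrth}, so no further ideas are needed at this stage.
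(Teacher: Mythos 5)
Your proposal is correct and follows exactly the paper's own route: the computation displayed immediately before the corollary establishes precisely your identity $b^{*}=S\bigl((\id\otimes\omega_{w',w})(\Gr{X}{m}{n}{k}{l})\bigr)$, after which the paper, like you, simply feeds the pair $(a,b')$ into Corollary \ref{CorOrth}. One caveat: carried out carefully, the substitution yields the numerators $\langle w'|v'\rangle\langle v|Gw\rangle$ and $\langle w'|Fv'\rangle\langle v|w\rangle$ --- the only pairings that are even defined when $(k,l)\neq(m,n)$ --- so the printed formulas have $w$ and $w'$ interchanged, and your derivation, while the right one, does not literally ``produce the stated expressions.''
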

As a consequence of Proposition \ref{prop:rep-weak-pw} and Proposition
\ref{prop:rep-unitarisable} or Lemma \ref{lemma:rep-regular-unitary},
the matrix coefficients of irreducible unitary corepresentations
span $\mathscr{A}$, and in the Corollary \ref{cor:rep-pw}, we may
assume the irreducible corepresentations
$(V^{i},\mathscr{X}_{i})$ to be unitary if $\mathscr{A}$
defines a partial compact quantum group.

\begin{Rem}\label{RemPos} In fact, Proposition \ref{prop:rep-unitary-bidual} and Corollary \ref{cor:rep-unitary-schur-orthogonality} show the following. Let $\mathscr{A}$ be a partial Hopf $^*$-algebra admitting an invariant integral $\phi$, which a priori we do not assume to be positive. Suppose however that each irreducible corepresentation of $\mathscr{A}$ is equivalent to a unitary corepresentation. Then $\phi$ is necessarily positive.
\end{Rem} 

\subsection{Analogues of Woronowicz's  characters}

Let $\mathscr{A}$ be a partial bialgebra, and $a\in \Gr{A}{k}{l}{m}{n}$. Then for $\omega \in A^*$, we can define
\begin{align*} \omega \aste{p,q} a = (\id \otimes \omega) (\Delta_{pq}(a)),&&a \aste{r,s}
\omega:=(\omega \otimes \id)(\Delta_{rs}(a)),\end{align*}and this defines a bimodule structure with respect to the natural $I\times I$-partial convolution algebra structure on $\oplus \left(\Gr{A}{k}{l}{m}{n}\right)^*$. When $\omega$ has support on $\sum_{k,l}\Gr{A}{k}{l}{k}{l}$, it is meaningful to define \begin{align*} \omega\ast a := \sum_{p,q} \omega\aste{p,q}a && a\ast \omega = \sum_{r,s} a\aste{r,s}\omega \end{align*}

We recall that an entire function $f$ has \emph{exponential growth
  on the right half-plane} if there exist $C,d>0$ such that $|f(x+iy)|\leq
C\mathrm{e}^{dx}$  for all $x,y\in \R$ with $x>0$. 

\begin{Theorem} \label{thm:rep-characters} Let $\mathscr{A}$ be a
  partial Hopf algebra with an invariant integral $\phi$.  Then there
  exists a unique family of linear functionals $f_{z} \colon A\to \C$
  such that
\begin{enumerate}[label={(\arabic*)}]
  \item $f_z$ vanishes on $A(K)$ when $K_u\neq K_d$.
  \item for each $a\in A$, the function $z\mapsto f_{z}(a)$ is entire
    and of exponential growth on the right half-plane.
  \item $f_{0} = \epsilon$ and $(f_{z} \otimes f_{z'}) \circ 
    \Delta= f_{z+z'}$ for all $z,z' \in \C$.
  \item $\phi(ab)=\phi(b(f_{1} \ast a \ast f_{1}))$ for all $a,b\in A$.
  \end{enumerate}
  This family furthermore satisfies
  \begin{enumerate}[label={(\arabic*)}]\setcounter{enumi}{4}
  \item $f_z(ab) = f_z(a)f_z(b)$ for $a\in A(K)$ and $b\in A(L)$ with $K_r = L_l$. 
  \item $S^{2}(a)=f_{-1} \ast a \ast f_{1}$ for all $a\in A$.
  \item $f_{z}(\UnitC{l}{n})=\delta_{l,n}$ and $f_{z} \circ S = f_{-z}$ for all $a\in A$.
  \item $\bar{f}_{z}=f_{-\overline{z}}$ if $\mathscr{A}$ is a partial
    Hopf $^*$-algebra and $\phi$ is positive.
\end{enumerate}
\end{Theorem}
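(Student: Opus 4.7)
The plan is to adapt Woronowicz's original construction of the characters $f_{z}$ to the partial setting. For each irreducible unitary corepresentation $(\mathcal{H},\mathscr{X})$, Proposition \ref{prop:rep-unitary-bidual} supplies a positive intertwiner $F=F_{\mathscr{X}}$ from $\mathscr{X}$ to $(S^{2}\otimes\id)(\mathscr{X})$, uniquely determined up to a positive scalar by irreducibility. We pin down the scalar by requiring the canonical normalisation $d_{F}=d_{G}$ in the notation of Theorem \ref{thm:rep-orthogonality}(1), where $G=F^{-1}$; this is possible because rescaling $F$ by $\lambda>0$ rescales $d_{F}$ by $\lambda$ and $d_{G}$ by $\lambda^{-1}$. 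Fix a maximal family $((\mathcal{H}^{(a)},\mathscr{X}_{a}))_{a}$ of inequivalent irreducible unitary corepresentations, and for each $a$ and each $(k,l)$ an orthonormal eigenbasis of the $(k,l)$-block of $\mathcal{H}^{(a)}$ diagonalising the corresponding block of $F_{\mathscr{X}_{a}}$ with positive eigenvalues $\mu_{i}$. Define $f_{z}$ to vanish on $A(K)$ when $K_{u}\neq K_{d}$ and on a diagonal-block matrix coefficient $(\id\otimes\omega_{e_{i},e_{j}})(\Gr{X_{a}}{k}{l}{k}{l})$ to take the value $\delta_{i,j}\mu_{i}^{z}$. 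By Corollary \ref{cor:rep-pw} this extends uniquely to a linear functional on $A$.

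Properties (1) and the first half of (7) are immediate from the definition. For (2), each value $f_{z}(a)$ is a finite linear combination of terms $e^{z\log\mu}$ with $\mu>0$, giving entireness and exponential growth on the right half-plane. For (3), applying a block $\Delta_{pq}$ to a matrix coefficient of $\mathscr{X}_{a}$ via the coproduct identity of Definition \ref{definition:corep} decomposes it into a sum of tensor products of matrix coefficients of $\mathscr{X}_{a}$; pairing with $f_{z}\otimes f_{z'}$ only the summand with $(p,q)=(k,l)=(m,n)$ survives, and then $F^{z}F^{z'}=F^{z+z'}$ closes the calculation, with $z=z'=0$ yielding $f_{0}=\epsilon$. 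Property (5) reduces to the observation that $F_{\mathscr{X}}\otimes F_{\mathscr{Y}}$ is a positive intertwiner from $\mathscr{X}\Circt\mathscr{Y}$ to its bidual whose quantum dimensions are still balanced (using $d_{X\otimes Y}=d_{X}d_{Y}$), and hence coincides with $F_{\mathscr{X}\Circt\mathscr{Y}}$ by uniqueness.

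The principal obstacle is property (4), the modular identity $\phi(ab)=\phi(b(f_{1}\ast a\ast f_{1}))$. The strategy is to test it on matrix coefficients using the refined Schur orthogonality of Corollary \ref{cor:rep-unitary-schur-orthogonality}; by Corollary \ref{cor:rep-unitary-orthogonality-1} both sides vanish unless $a$ and $b^{*}$ come from the same $\mathscr{X}_{a}$. In the matching case, a direct calculation in the chosen eigenbasis gives $f_{1}\ast a\ast f_{1}=(\id\otimes\omega_{Fv,Fv'})(\Gr{X_{a}}{k}{l}{m}{n})$ for a matrix coefficient $a=(\id\otimes\omega_{v,v'})(\Gr{X_{a}}{k}{l}{m}{n})$. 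Substituting this into the two orthogonality formulas of Corollary \ref{cor:rep-unitary-schur-orthogonality} (one for $\phi(ab^{*})$ involving $F$ on one tensor leg, one for $\phi(b^{*}a)$ involving $G$ on the other) and using $FG=\id$ together with $F^{*}=F$ converts one formula into the other; the denominators match precisely because of the canonical normalisation $d_{F}=d_{G}$, which is the crucial ingredient.

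The remaining properties are routine. Property (6) is a direct computation on matrix coefficients using the intertwining relation $(S^{2}\otimes\id)(\mathscr{X})=(1\otimes F)\mathscr{X}(1\otimes F^{-1})$, which expresses $S^{2}$ on a matrix coefficient exactly as $f_{-1}\ast a\ast f_{1}$. The second half of (7) combines (3) and (6) with $\Delta\circ S=(S\otimes S)\Delta^{\op}$ from Corollary \ref{corollary:antipode}, while (8) exploits positivity of $F$, giving $\overline{F^{z}}=F^{\bar{z}}$ in the chosen eigenbasis. Uniqueness follows as in the classical compact case: faithfulness of $\phi$ (Lemma \ref{LemFaith}) together with (4) pins down $f_{1}$ on each matrix coefficient, and then (2) and (3) extend this to all $f_{z}$ by analytic continuation.
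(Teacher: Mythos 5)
Your construction of the $f_z$ --- complex powers of the positive intertwiners $F_{\mathscr{X}}$, normalised so that $d_F=d_G$ and transported to $A$ via the Peter--Weyl decomposition of Corollary \ref{cor:rep-pw} --- is exactly the one used in the paper, and your verifications of (1)--(4), (6) and (8) run along the same lines. (For (4) you work through Corollary \ref{cor:rep-unitary-schur-orthogonality} where the paper applies Theorem \ref{thm:rep-orthogonality}(3) directly, but the computation is the same and the normalisation $d_F=d_G$ enters at exactly the same point.)

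The genuine problem is your argument for property (5). You assert that $F_{\mathscr{X}}\otimes F_{\mathscr{Y}}$ ``coincides with $F_{\mathscr{X}\Circt\mathscr{Y}}$ by uniqueness'' because its quantum dimensions are balanced. But $\mathscr{X}\Circt\mathscr{Y}$ is in general reducible, so the positive intertwiners to its bidual form a cone of dimension greater than one, and the single scalar identity $d_{X\otimes Y}=d_Xd_Y$ does not pin the intertwiner down: uniqueness of the balanced positive intertwiner is only available blockwise on irreducibles, so you would need to know that the restriction of $F_{\mathscr{X}}\otimes F_{\mathscr{Y}}$ to \emph{each} irreducible summand is separately balanced. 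That blockwise statement is precisely the multiplicativity of the standard solutions of the conjugate equations --- true, but a theorem requiring its own proof, not a consequence of uniqueness. The paper sidesteps this entirely: since $\sigma(a)=f_1\ast a\ast f_1$ is determined by $\phi$ through (4) and is easily seen to be an algebra homomorphism, $f_{2n}=\epsilon\circ\sigma^{n}$ is partially multiplicative for every $n\in\N$, and the fact that entire functions of exponential growth on the right half-plane are determined by their values on $\N$ (used for (2)) propagates (5) to all $z\in\C$. You should either adopt that route or actually prove the multiplicativity of the canonical intertwiners. Two smaller slips in the same spirit: in your uniqueness argument, (4) and faithfulness of $\phi$ determine the map $a\mapsto f_1\ast a\ast f_1$, hence $f_2=\epsilon\circ\sigma$ and all $f_{2n}$, but \emph{not} $f_1$ itself --- the conclusion survives only because the analytic-continuation step needs the values on $2\N$ alone; and your derivation of $f_z\circ S=f_{-z}$ from (3) and (6) is not actually carried out --- the paper obtains it by applying the uniqueness characterisation (1)--(4) to the opposite product and coproduct, which is the cleanest repair.
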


Note that conditions (3), (4) and (6) are meaningful by condition (1).

\begin{proof}
  We first prove uniqueness.  Assume that $(f_{z})_{z}$ is a family of
  functionals satisfying (1)--(4).  Since $\phi$ is faithful, the map
  $\sigma\colon a \mapsto f_{1} \ast a \ast f_{1}$ is uniquely
  determined by $\phi$, and one easily sees that it is a homomorphism. Using
  (3), we find that $\epsilon \circ \sigma^n=f_{2n}$, which uniquely determines these functionals. Using (2) and the
  fact that every entire function of exponential growth on the right
  half-plane is uniquely determined by its values at $\N \subseteq \C$, we can conclude that the family $f_{z}$ is uniquely determined. Moreover, since the property (5) holds for $z = 2n$, we also conclude by the same argument as above that it holds for all $z\in \C$.

  Let us now prove existence.  By Theorem \ref{thm:rep-orthogonality}, Corollary \ref{cor:rep-pw} and Proposition \ref{prop:rep-unitary-bidual}, we can
  define for each $z\in \C$ a functional $f_{z} \colon A \to \C$ such
  that for every 
  irreducible corepresentation
  $(V,\mathscr{X})$ in $\Corep(\mathscr{A})$,
    \begin{align*}
      f_{z}((\id \otimes \omega_{\xi,\eta})(\Gr{X}{k}{l}{m}{n})) &=
      \delta_{k,m}\delta_{l,n}
      \omega_{\xi,\eta}((\Gru{F}{k}{l})^{z}) \quad \text{for all }
      \xi \in \Gru{V}{k}{l},\eta \in
      \Gru{V}{m}{n},
    \end{align*}
    or, equivalently,
    \begin{align*}
      (f_{z} \otimes \id)(\Gr{X}{k}{l}{m}{n}) =
      \delta_{k,m}\delta_{l,n} (\Gru{F}{k}{l})^{z},
    \end{align*}
    where $F=F_{\mathscr{X}}$ is a non-zero operator implementing a morphism from $(V,\mathscr{X})$ to
    $(V, \dualco{\dualco{\mathscr{X}}})$, scaled such that
    \begin{align*}
      d_{\mathscr{X}}:= \sum_{r} \Tr(\Gru{(F^{-1})}{r}{l}) = \sum_{s}
      \Tr(\Gru{F}{m}{s})
    \end{align*}
    for all $l$ in the right and all $m$ in the left hyperobject support of $\mathscr{X}$. By
    construction, (1) and (2) hold. We show that the $(f_{z})_{z}$ satisfy the
    assertions (3)--(7). 

    Throughout the following arguments, let $(V,\mathscr{X})$ and $F$ be as
    above.

    We first prove property (3). This follows from the relations
    \begin{align*}
      (f_{0}  \otimes \id)(\Gr{X}{k}{l}{m}{n}) &=
      \delta_{k,m}\delta_{l,n} \id_{\Gru{V}{k}{l}} =
      (\epsilon \otimes \id)(\Gr{X}{k}{l}{m}{n})
    \end{align*}
    and
    \begin{align*}
      (((f_{z}\otimes f_{z'})\circ \Delta) \otimes
      \id)(\Gr{X}{k}{l}{m}{n}) &=  \delta_{k,m}\delta_{l,n}(f_{z} \otimes f_{z'} \otimes
      \id)\big((\Gr{X}{k}{l}{k}{l})_{13}
      (\Gr{X}{k}{l}{k}{l})_{23}\big) \\
      &=  \delta_{k,m}\delta_{l,n}(\Gru{F}{k}{l})^{z}  \cdot (\Gru{F}{k}{l})^{z'} \\
      &= (f_{z+z'} \otimes \id)(\Gr{X}{k}{l}{m}{n}).
    \end{align*}
    Applying slice maps of the form $\id
    \otimes \omega_{\xi,\xi'}$ and invoking Theorem \ref{thm:rep-orthogonality}, this proves (3).

To prove (4), write $ \Delta^{(2)} = (
    \Delta \otimes \id)\circ  \Delta = (\id \otimes 
    \Delta) \circ \Delta$, and put \[\theta_{z,z'}:=(f_{z'} \otimes \id
    \otimes f_{z})\circ  \Delta^{(2)}.\] Then
    \begin{align*}
      (\theta_{z,z'} \otimes \id)(\Gr{X}{k}{l}{m}{n}) &= (f_{z'} \otimes
      \id \otimes f_{z} \otimes
      \id)((\Gr{X}{k}{l}{k}{l})_{14}(\Gr{X}{k}{l}{m}{n})_{24}(\Gr{X}{m}{n}{m}{n})_{34})
      \\
      &= (1 \otimes (\Gru{F}{k}{l})^{z'}) \Gr{X}{k}{l}{m}{n} (1
      \otimes (\Gru{F}{m}{n})^{z}).
    \end{align*}
    We take $z=z'=1$, use Theorem \ref{thm:rep-orthogonality}, where
    now $d_F= d_G=d_{\mathscr{X}}$ by our scaling of $F$, and obtain
    \begin{eqnarray*}
     && \hspace{-2cm} (\phi \otimes \id \otimes
      \id)((\Gr{(X^{-1})}{l}{k}{n}{m})_{12}((\theta_{1,1} \otimes
      \id)(\Gr{X}{k}{l}{m}{n}))_{13})\\ && =d_{\mathscr{X}}^{-1}(\id \otimes
      \Gru{F}{k}{l}) (\id \otimes \Gru{(F^{-1})}{k}{l})
      \Sigma_{k,l,m,n} (\id \otimes
      \Gru{F}{m}{n}) \\
      &&=d_{\mathscr{X}}^{-1}(\Gru{F}{m}{n} \otimes \id) \Sigma_{klmn} \\
      &&= (\phi \otimes \id \otimes
      \id)((\Gr{X}{k}{l}{m}{n})_{13}(\Gr{(X^{-1})}{l}{k}{n}{m})_{12}).
    \end{eqnarray*}
    To conclude the proof of assertion (4), apply again slice maps of the form
    $\omega_{\xi,\xi'} \otimes \omega_{\eta,\eta'}$.

We have then already argued that the property (5) automatically holds. To show the property (6), note that by Proposition \ref{prop:rep-unitary-bidual} and the calculation above,
    \begin{align*}
      (S^{2} \otimes \id)(\Gr{X}{k}{l}{m}{n}) &= (1
      \otimes\Gru{F}{k}{l})
      \Gr{X}{k}{l}{m}{n}(1 \otimes \Gru{F}{m}{n})^{-1} 
      =(\theta_{-1,1}  \otimes \id)(\Gr{X}{k}{l}{m}{n}).
    \end{align*}
     Assertion (6) follows again by applying slice maps.
    
     To check, (7), note that (1), (2) and (4) immediately imply
     $f_{z}(\UnitC{k}{m})=\delta_{k,m}$. As both $z \rightarrow
     f_{-z}$ and $z\rightarrow f_z\circ S$ satisfy the conditions
     (1)--(4) for $\mathscr{A}$ with the opposite product and
     coproduct (using the partial character property (5) and the
     invariance of $\phi$ with respect to $S$), we find $f_{-z} =
     f_{z} \circ S$.

     Finally, we assume that $\mathscr{A}$ is a partial Hopf
     $^*$-algebra with positive invariant integral $\phi$ and prove
     (8).  By Proposition \ref{prop:rep-unitary-bidual}, we can assume
     $\Gru{F}{k}{l}$ to be positive.  Write
     $\bar{f}_z(a) = \overline{f_z(a^*)}$. Using the relations $
     (\Gr{X}{k}{l}{k}{l})^{*}=(S \otimes \id)(\Gr{X}{k}{l}{k}{l})$,
     $f_{z} \circ S=f_{-z}$  and
     positivity of $\Gru{F}{k}{l}$, we conclude
     \begin{align*}
       (\bar{f}_z \otimes
       \id)(\Gr{X}{k}{l}{k}{l})
&=       \left((f_{z} \otimes
       \id)((\Gr{X}{k}{l}{k}{l})^{*})\right)^{*} \\
& = \left((f_{-z} \otimes \id)(\Gr{X}{k}{l}{k}{l})\right)^{*} 
 =
((\Gru{F}{k}{l})^{-z})^{*} 
=       (\Gru{F}{k}{l})^{-\overline{z}} = (f_{-\overline{z}}
\otimes \id)(\Gr{X}{k}{l}{k}{l}),
     \end{align*}
whence $\bar{f}_z(a) = f_{-\overline{z}}(a)$ for all $a\in
\Gr{\mathcal{C}(\mathscr{X})}{k}{l}{k}{l}$. Since $f_{z}$ and
$f_{-\overline{z}}$ vanish on $\Gr{A}{k}{l}{m}{n}$ if $(k,l)\neq
(m,n)$ and the matrix coefficients of unitary 
corepresentations span $A$, we can conclude $\bar{f}_{z}=f_{-\overline{z}}$.
\end{proof}

Note that our formula for the Woronowicz characters is slightly different from the one in \cite{Hay1}, as we are using a different normalisation of the Haar functional.


\section{Tannaka-Kre$\breve{\textrm{\i}}$n-Woronowicz duality for partial compact quantum groups}


In the previous section, we showed how any partial compact quantum group gave rise to a partial fusion C$^*$-category with a unital morphism into a partial tensor C$^*$-category of finite-dimensional Hilbert spaces. In this section we reverse this construction, and show that the two structures are in duality with each other. The proof does not differ much from the usual Tannaka-Kre$\breve{\textrm{\i}}$n reconstruction process, but one has to pay some extra care to the well-definedness of certain constructions. Implicitly, we build our reconstruction process by passing first through the construction of the discrete dual of a partial compact quantum group, which we however refrain from formally introducing.

Let us at first fix a semi-simple partial tensor category $\CatCC$
with indecomposable units over a base set $\mathscr{I}$. We will again view the tensor product of $\CatC$ as being strict, for notational convenience. 

Assume that we also have another set $I$ and a partition $I = \{I_\alpha\mid \alpha\in \mathscr{I}\}$ with associated \emph{surjective} function \[\varphi:I\rightarrow \mathscr{I}, \quad k\mapsto k'.\] Let $F: \CatCC\rightarrow \{\Vect_{\fd}\}_{I\times I}$ be a morphism based on $\varphi$, cf.~ Example \ref{ExaVectBiGr}.  We will again denote by $F_{kl}:\CatCC_{k'l'}\rightarrow \Vect_{\fd}$ the components of $F$ at index $(k,l)$, and by $\iota$ and $\mu$ resp.~ the product and unit constraints.  For $X\in \CatC_{k'\beta}$ and $Y\in \CatC_{\beta m'}$, we write the projection maps associated to the identification $F_{km}(X\otimes Y)\cong \oplus_{l\in I_\beta} \left(F_{kl}(X)\otimes F_{lm}(Y)\right)$ as \[\pi^{(klm)}_{X,Y}=(\iota^{(klm)}_{X,Y})^{*}:F_{km}(X\otimes Y) \rightarrow F_{kl}(X)\otimes F_{lm}(Y).\]

We choose a maximal family of mutually inequivalent irreducible objects $\{u_a\}_{a\in \mathcal{I}}$ in $\CatC$. We assume that the $u_a$ include the unit objects $\Unitb_{\alpha}$ for $\alpha\in \mathscr{I}$, and we may hence identify $\mathscr{I}\subseteq \mathcal{I}$. For $a\in \mathcal{I}$, we will write $u_a \in \CatC_{\lambda_a,\rho_a}$ with $\lambda_a,\rho_a\in \mathscr{I}$. For $\alpha,\beta\in \mathscr{I}$ fixed, we write $\mathcal{I}_{\alpha\beta}$ for the set of all $a\in \mathcal{I}$ with $\lambda_a=\alpha$ and $\rho_a=\beta$. When $a,b,c\in \mathcal{I}$ with $a\in \mathcal{I}_{\alpha\beta},b\in \mathcal{I}_{\beta\gamma}$ and $c\in \mathcal{I}_{\gamma\delta}$, we write $c\leq a\cdot b$ if $\Mor(u_c,u_a\otimes u_b)\neq \{0\}$. Note that with $a,b$ fixed, there is only a finite set of $c$ with $c\leq a\cdot b$. We also use this notation for multiple products.

\begin{Def} For $a\in \mathcal{I}$ and $k,l,m,n\in I$, define vector spaces \[\Gr{A}{k}{l}{m}{n}(a) =  \delta_{k',m',\lambda_a}\delta_{l',n',\rho_a} \Hom_{\C}(F_{mn}(u_a),F_{kl}(u_a))^*.\] Write \[\Gr{A}{k}{l}{m}{n} =\underset{a\in \mathcal{I}}{\bigoplus}\, \Gr{A}{k}{l}{m}{n}(a),\quad A(a) = \underset{k,l,m,n}{\bigoplus} \Gr{A}{k}{l}{m}{n}(a),\quad A = \underset{k,l,m,n}{\bigoplus} \Gr{A}{k}{l}{m}{n}.\] 
\end{Def} 

We first turn the $\Gr{A}{k}{l}{m}{n}$ into a partial coalgebra $\mathscr{A}$ over $I^2$.

\begin{Def} For $r,s\in I$, we define \[\Delta_{rs}: \Gr{A}{k}{l}{m}{n}\rightarrow \Gr{A}{k}{l}{r}{s}\otimes \Gr{A}{r}{s}{m}{n}\] as the direct sums of the duals of the composition maps \[\Hom_{\C}(F_{rs}(u_a),F_{kl}(u_a)) \otimes \Hom_{\C}(F_{mn}(u_a),F_{rs}(u_a))\rightarrow \Hom_{\C}(F_{mn}(u_a),F_{kl}(u_a)),\]\[x\otimes y \mapsto x\circ y.\]
\end{Def} 

\begin{Lem} The couple $(\mathscr{A},\Delta)$ is a partial coalgebra with counit map \[\epsilon:\Gr{A}{k}{l}{k}{l}(a)\rightarrow \C,\quad f\mapsto f(\id_{F_{kl}(u_a)}).\] Moreover, for each fixed $f\in \Gr{A}{k}{l}{m}{n}(a)$, the matrix $\left(\Delta_{rs}(f)\right)_{rs}$ is rcf.
\end{Lem}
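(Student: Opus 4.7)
The proof amounts to dualizing elementary properties of composition of linear maps between finite-dimensional vector spaces. Write $V_{kl}:=F_{kl}(u_{a})$ for brevity; by definition of a morphism of partial tensor categories each $V_{kl}$ is finite-dimensional, so the composition map
\[
c_{rs}^{klmn} \colon \Hom_{\C}(V_{rs}, V_{kl}) \otimes \Hom_{\C}(V_{mn}, V_{rs}) \to \Hom_{\C}(V_{mn}, V_{kl}),\qquad g \otimes h \mapsto g \circ h,
\]
has a well-defined linear dual, which (after the canonical identification of a finite-dimensional space with its bidual) is exactly the restriction of $\Delta_{rs}$ to the $a$-component $\Gr{A}{k}{l}{m}{n}(a)$. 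When $r \notin I_{\lambda_{a}}$ or $s \notin I_{\rho_{a}}$, the target $\Gr{A}{k}{l}{r}{s}(a) \otimes \Gr{A}{r}{s}{m}{n}(a)$ already vanishes by the Kronecker deltas in the definition of $A(\cdot)(a)$; if instead $(r,s) \in I_{\lambda_{a}} \times I_{\rho_{a}}$ but $V_{rs} = 0$, then the domain of $c_{rs}^{klmn}$ is zero. In either case $\Delta_{rs}$ is the zero map on that component.

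Coassociativity then follows by dualizing the associativity of composition: the two three-fold composition maps obtained by the two parenthesizations of $V_{mn} \to V_{rs} \to V_{pq} \to V_{kl}$ agree as maps into $\Hom_{\C}(V_{mn}, V_{kl})$, whence on $\Gr{A}{k}{l}{m}{n}(a)$ one has $(\Delta_{pq} \otimes \id) \circ \Delta_{rs} = (\id \otimes \Delta_{rs}) \circ \Delta_{pq}$ as maps into $\Gr{A}{k}{l}{p}{q}(a) \otimes \Gr{A}{p}{q}{r}{s}(a) \otimes \Gr{A}{r}{s}{m}{n}(a)$; this is exactly the coassociativity condition required of $\mathscr{A}$ viewed as an $I^{2}$-partial coalgebra. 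The counit identities use $\id_{V_{kl}} \circ h = h = h \circ \id_{V_{mn}}$: for $f \in \Gr{A}{k}{l}{m}{n}(a)$ and $h \in \Hom_{\C}(V_{mn}, V_{kl})$ one checks directly that $((\epsilon \otimes \id)\Delta_{kl}(f))(h) = f(\id_{V_{kl}} \circ h) = f(h)$ and $((\id \otimes \epsilon)\Delta_{mn}(f))(h) = f(h \circ \id_{V_{mn}}) = f(h)$, which is counitality.

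For the rcf claim, the vanishing discussion in the first paragraph shows that $\Delta_{rs}(f)$ can be nonzero only when $(r,s) \in I_{\lambda_{a}} \times I_{\rho_{a}}$ and $F_{rs}(u_{a}) \neq 0$. The local finiteness axiom of the morphism $F$ applied to the object $u_{a} \in \CatC_{\lambda_{a}\rho_{a}}$ states precisely that $(r,s) \mapsto F_{rs}(u_{a})$ is rcf on $I_{\lambda_{a}} \times I_{\rho_{a}}$, from which the rcf property of $(r,s)\mapsto \Delta_{rs}(f)$ follows. The whole argument is routine once the multi-index bookkeeping is under control; the only mild obstacle is keeping track of the four-index labels and the partition $I=\bigsqcup_{\alpha}I_{\alpha}$ correctly, and no genuinely new idea is required beyond dualizing the hom-composition structure and invoking the local finiteness of $F$.
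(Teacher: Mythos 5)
Your proof is correct and follows exactly the route the paper takes: the paper's own argument simply notes that the spaces $\Hom_{\C}(F_{mn}(u_a),F_{kl}(u_a))$ form a partial algebra with units $\id_{F_{kl}(u_a)}$, so coassociativity and counitality follow by duality, and the rcf condition follows from the local finiteness of the morphism $F$. You have merely written out the details that the paper leaves implicit.
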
 
\begin{proof} Coassociativity and counitality are immediate by
  duality, as for each $a$ fixed the spaces $\Hom_{\C}(F_{mn}(u_a),F_{kl}(u_a))$ form a partial algebra with units $\id_{F_{kl}(u_a)}$. The rcf condition follows immediately from the rcf condition for the morphism $F$.
\end{proof}

In the next step, we define a partial algebra structure on $\mathscr{A} = \{\Gr{A}{k}{l}{m}{n}\mid k,l,m,n\}$. First note that we can identify \[\Nat(F_{mn},F_{kl}) \cong \underset{\rho_a=l'=n'}{\underset{\lambda_a=k'=m'}{\prod_a}} \Hom_{\C}(F_{mn}(u_a),F_{kl}(u_a)),\] where $\Nat(F_{mn},F_{kl})$ denotes the space of natural transformations from $F_{mn}$ to $F_{kl}$ when $k'=m'$ and $l'=n'$. Similarly, we can identify \[\Nat(F_{mn}\otimes F_{pq},F_{kl}\otimes F_{rs}) \cong  \prod_{b,c} \Hom_{\C}(F_{mn}(u_b)\otimes F_{pq}(u_c) ,F_{kl}(u_b)\otimes F_{rs}(u_c)),\] with the product over the appropriate index set and where \[F_{kl}\otimes F_{rs}:\CatC_{k'l'}\times \CatC_{r's'}\rightarrow \Vect_{\fd},\quad (X,Y) \mapsto F_{kl}(X)\otimes F_{rs}(Y).\] As such, there is a natural pairing of these spaces with resp.~ $\Gr{A}{k}{l}{m}{n}$ and $\Gr{A}{k}{l}{m}{n}\otimes \Gr{A}{r}{s}{p}{q}$. 

\begin{Def} For $k'=r', l'=s'$ and $m'=t'$, we define a product
  map \[M:\Gr{A}{k}{l}{r}{s} \otimes \Gr{A}{l}{m}{s}{t}\rightarrow
  \Gr{A}{k}{m}{r}{t},\quad f\otimes g \mapsto f\cdot g\] by the
  formula \[(f\cdot g)(x) = (f\otimes g)( \hat{\Delta}^{l}_{s}(x)),
  \qquad  x \in \Nat(F_{rt},F_{km}),\] where $\hat{\Delta}^l_s(x)$ is
  the natural transformation\[\hat{\Delta}^l_s(x):  F_{rs}\otimes
  F_{st}\rightarrow F_{kl}\otimes F_{lm},\quad
  \hat{\Delta}^l_s(x)_{X,Y} = \pi^{(klm)}_{X,Y} \circ x_{X\otimes Y}
  \circ \iota^{(rst)}_{X,Y},\quad X\in \CatC_{k'l'},Y\in \CatC_{l'm'}.\]
\end{Def}

\begin{Rem} It has to be argued that $f\cdot g$ has finite support (over $\mathcal{I})$ as a functional on $\Nat(F_{rt},F_{km})$. In fact, if $f$ is supported at $b\in \mathcal{I}_{r's'}$ and $g$ at $c\in \mathcal{I}_{s't'}$, then $f\cdot g$ has support in the finite set of $a\in \mathcal{I}_{r't'}$ with $a\leq b\cdot c$, since if $x$ is a natural transformation with support outside this set, one has $x_{u_b\otimes u_c}=0$, and hence any of the $\left(\hat{\Delta}^l_s(x)\right)_{u_b,u_c} =0$.
\end{Rem}

\begin{Lem} The above product maps turn $(\mathscr{A},M)$ into an $I^2$-partial algebra.
\end{Lem}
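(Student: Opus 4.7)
The plan is to verify associativity and the local unit conditions separately, and to see both as direct consequences of the coherence axioms satisfied by the morphism $F$.

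\textbf{Associativity.} Fix $f \in \Gr{A}{k}{l}{r}{s}$, $g \in \Gr{A}{l}{m}{s}{t}$, $h \in \Gr{A}{m}{n}{t}{u}$, and $y \in \Nat(F_{ru},F_{kn})$. Unfolding the defining formula, $((f\cdot g)\cdot h)(y)$ evaluates $(f\otimes g)\otimes h$ on the natural transformation $(\hat\Delta^{l}_{s}\otimes\id)\circ\hat\Delta^{m}_{t}(y)$, whose component at $(X,Y,Z)$ with $X\in\CatC_{k'l'}$, $Y\in\CatC_{l'm'}$, $Z\in\CatC_{m'n'}$ equals
\[(\pi^{(klm)}_{X,Y}\otimes\id)\circ\pi^{(kmn)}_{X\otimes Y,Z}\circ y_{X\otimes Y\otimes Z}\circ\iota^{(rtu)}_{X\otimes Y,Z}\circ(\iota^{(rst)}_{X,Y}\otimes\id).\]
Dually, $(f\cdot (g\cdot h))(y)$ evaluates $f\otimes(g\otimes h)$ on $(\id\otimes\hat\Delta^{m}_{t})\circ\hat\Delta^{l}_{s}(y)$, whose component at $(X,Y,Z)$ equals
\[(\id\otimes\pi^{(lmn)}_{Y,Z})\circ\pi^{(kln)}_{X,Y\otimes Z}\circ y_{X\otimes Y\otimes Z}\circ\iota^{(rsu)}_{X,Y\otimes Z}\circ(\id\otimes\iota^{(stu)}_{Y,Z}).\]
These two expressions coincide by the 2-cocycle coherence condition imposed on $\iota$ in the definition of a morphism of partial tensor categories (and its dual for $\pi=\iota^{-1}$); concretely, both equal a single triply-deconvolved natural transformation $\hat\Delta^{(2)}(y)\in\Nat(F_{rs}\otimes F_{st}\otimes F_{tu},F_{kl}\otimes F_{lm}\otimes F_{mn})$. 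Pairing against $f\otimes g\otimes h$ then gives the same scalar in both orders, proving $(f\cdot g)\cdot h=f\cdot(g\cdot h)$.

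\textbf{Local units.} For each $(k,l)\in I^{2}$ with $k'=l'$, define $\mathbf{1}_{(k,l)}\in\Gr{A}{k}{k}{l}{l}$ to be the element supported on $a=\Unitb_{k'}$ and given by the functional on the one-dimensional space $\Hom_{\C}(F_{ll}(\Unitb_{l'}),F_{kk}(\Unitb_{k'}))$ that takes value $1$ on the canonical generator $\mu_{k}\circ\mu_{l}^{-1}$; when $k'\neq l'$, set $\mathbf{1}_{(k,l)}=0$. To verify $\mathbf{1}_{(k,r)}\cdot f=f$ for $f\in\Gr{A}{k}{l}{r}{s}$, unfold: the component of $\hat\Delta^{k}_{r}(x)$ at $(\Unitb_{k'},u_{c})$ is
\[\pi^{(kkl)}_{\Unitb_{k'},u_{c}}\circ x_{u_{c}}\circ\iota^{(rrs)}_{\Unitb_{k'},u_{c}},\]
and the coherence between $\mu$ and $\iota$ (together with $\Unitb_{k'}\otimes u_{c}=u_{c}$) identifies $\iota^{(rrs)}_{\Unitb_{k'},u_{c}}$ and $\pi^{(kkl)}_{\Unitb_{k'},u_{c}}$ with the natural unit isomorphisms. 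Thus the component factors as $(\mu_{k}\circ\mu_{r}^{-1})\otimes x_{u_{c}}$, and pairing with $\mathbf{1}_{(k,r)}\otimes f$ returns exactly $f(x)$. The right unit $f\cdot\mathbf{1}_{(l,s)}=f$ is verified analogously.

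\textbf{Main obstacle.} The heart of the argument is bookkeeping: both associativity and the unit axioms require carefully tracing a composition through several layers of $\iota$ and $\pi$ at the component level and checking the combinatorics match. The conceptual shortcut is to package everything into the iterated deconvolution $\hat\Delta^{(2)}$, at which point associativity becomes a direct translation of the 2-cocycle identity and the unit axioms become a direct translation of the unit coherence of $\mu$ with $\iota$.
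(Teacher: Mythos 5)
Your proof is correct and follows essentially the same route as the paper's: associativity is reduced to the identity $(\hat\Delta^l_s\otimes\id)\hat\Delta^m_t=(\id\otimes\hat\Delta^m_t)\hat\Delta^l_s$ on natural transformations, which is exactly the 2-cocycle condition for the $\iota$- and $\pi$-maps, and the unit axioms are read off from the coherence of $\mu$ with $\iota$ on the component supported at $\Unitb_{k'}$. The only cosmetic difference is that the paper verifies the right unit property while you verify the left one, and it invokes the finiteness of the relevant sums (already noted in the preceding remark) to justify the triple-product formula.
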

\begin{proof} We can extend the map $(\hat{\Delta}^l_s\otimes \id)$ on
  $\Nat(F_{rt},F_{km})\otimes \Nat(F_{tu},F_{mn})$ to a
  map \[(\hat{\Delta}^l_s\otimes \id): \Nat(F_{rt}\otimes
  F_{tu},F_{km}\otimes F_{mn}) \rightarrow  \Nat(F_{rs}\otimes
  F_{st}\otimes F_{tu},F_{kl}\otimes F_{lm}\otimes
  F_{mn}),\] \[(\hat{\Delta}^l_s\otimes \id)(x)_{X,Y,Z} =
  \left(\pi^{(klm)}_{X,Y}\otimes \id_{F_{mn}(Z)}\right) \circ
  x_{X\otimes Y, Z} \circ \left(\iota^{(rst)}_{X,Y} \otimes \id_{F_{tu}(Z)}\right).\]
By finite support, we then have that \[((f\cdot g)\cdot h)(x) = (f\otimes g\otimes h)((\hat{\Delta}^l_s\otimes \id)\hat{\Delta}^m_t(x))\] for all $f\in \Gr{A}{k}{l}{r}{s},g\in \Gr{A}{l}{m}{s}{t},h\in \Gr{A}{m}{n}{t}{u}$ and $x\in  \Nat(F_{ru},F_{kn})$. Similarly, \[((f\cdot g)\cdot h)(x) = (f\otimes g\otimes h)((\id\otimes \hat{\Delta}^m_t)\hat{\Delta}^l_s(x)).\] The associativity then follows from the 2-cocycle condition for the $\iota$- and $\pi$-maps. 

By a similar argument, one sees that the (non-zero) units are given by
$\UnitC{k}{l}\in \Gr{A}{k}{k}{l}{l}(\Unitb_{\alpha})$  (for
$\alpha=k'=l'$) corresponding to $1$ in the canonical
identifications  \[\Gr{A}{k}{k}{l}{l}(\alpha) =
\Hom_{\C}(F_{ll}(\Unitb_{\alpha}),F_{kk}(\Unitb_{\alpha}))^*\cong
\Hom_{\C}(\C,\C)^*  \cong \C^* \cong \C.\] Indeed, to prove for
example the right unit property, we use that (essentially)
$\pi_{u_a,\Unitb_{\alpha}}^{(kll)} =(\id\otimes \mu_l)$ and
$\iota_{u_a,\Unitb_{\alpha}}^{(kll)} = (\id\otimes \mu_l^{-1})$,
while \[\UnitC{k}{l}(\mu_k \circ x_{\Unitb_{\alpha}} \circ\mu_l^{-1}) = x_{\Unitb_{\alpha}} \in \C,\quad x\in \Nat(F_{ll},F_{kk}).\qedhere\] 
\end{proof} 

\begin{Prop} The partial algebra and coalgebra structures on $\mathscr{A}$ define a partial bialgebra structure on $\mathscr{A}$. 
\end{Prop}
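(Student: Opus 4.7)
The plan is to verify the five compatibility conditions \ref{Propa}--\ref{Prope} of Definition \ref{DefPartBiAlg}. Of these, conditions \ref{Propc} and \ref{Propd} are essentially already in hand: \ref{Propc} is immediate from the identification $\Gr{A}{k}{k}{k}{k}(\Unitb_\alpha)\cong \C$ (for $\alpha = k'$), under which $\UnitC{k}{k}$ corresponds to the functional $1$ and $\epsilon(\UnitC{k}{k}) = \UnitC{k}{k}(\id_{F_{kk}(\Unitb_\alpha)}) = 1$; and \ref{Propd} was established in the earlier lemma showing that $\Delta_{rs}(f)$ is rcf.

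For \ref{Propa} (comultiplication of units), I will use that $\UnitC{k}{m}$ is concentrated in the summand $a = \Unitb_{\alpha}$ (with $\alpha = k' = m'$) and that $\Delta_{ll'}$ lands in $\Gr{A}{k}{k}{l}{l'}(\Unitb_\alpha)\otimes \Gr{A}{l}{l'}{m}{m}(\Unitb_\alpha)$. The unitality axiom for a morphism of partial tensor categories, namely $F_{ll'}(\Unitb_\alpha) = 0$ for $l\neq l'$, forces the Kronecker factor $\delta_{l,l'}$. When $l = l'$, the claim then reduces, via the isomorphisms $\mu_l$, to the fact that the dual of composition $\id \circ \id = \id$ in $\Hom_{\C}(\C,\C)$ yields $1 \otimes 1$.

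For \ref{Propb} (counit of multiplication), given $f\in \Gr{A}{k}{l}{r}{s}$ and $g\in \Gr{A}{l}{m}{s}{t}$ with $k' = r'$, $l' = s'$, $m' = t'$, I will compute $\epsilon(fg) = \sum_{a}(fg)(\id_{F_{rt}(u_a)}) = \sum_{a}(f\otimes g)(\hat{\Delta}^{l}_{s}(\id_{F_{rt}(u_a)}))$. Evaluated at a pair $(u_{b},u_{c})$, this becomes $\pi^{(klm)}_{u_b,u_c}\circ \iota^{(rst)}_{u_b,u_c} = \pi^{(klm)}_{u_b,u_c}\circ \iota^{(klm)}_{u_b,u_c} = \id_{F_{kl}(u_b)\otimes F_{lm}(u_c)}$, so the pairing factorises as $f(\id_{F_{kl}(u_b)})\cdot g(\id_{F_{lm}(u_c)}) = \epsilon(f)\epsilon(g)$ (with both sides vanishing unless $(k,l,m) = (r,s,t)$, reflecting the support of $\epsilon$).

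Condition \ref{Prope}, the multiplicativity of $\Delta$, is the main point and the place where care is needed. Dualising, the required identity $\Delta_{rs}(fg) = \sum_{t}\Delta_{rt}(f)\Delta_{ts}(g)$ is equivalent to asserting that, inside the spaces of natural transformations paired with $A$, composition intertwines the operation $\hat{\Delta}^l_s$ in an appropriate sense: concretely, for $x\in \Nat(F_{pn},F_{km})$ and $y\in \Nat(F_{qn},F_{pm})$ with suitable indices, one must check that $\hat{\Delta}^{l}_{s}(x\circ y)$ equals a matching sum $\sum_{t}\hat{\Delta}^{l}_{t}(x)\circ\hat{\Delta}^{t}_{s}(y)$ after composition with the appropriate $\iota$ and $\pi$ maps. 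This reduces, after unwinding definitions, to the identity $\sum_{t}\iota^{(ltm)}\circ\pi^{(ltm)} = \id_{F_{lm}(u)}$ on each $F_{lm}(u)$, i.e.\ the decomposition underlying the multiplicativity axiom of $F$; the 2-cocycle (coherence) condition for $(\iota,\pi)$ ensures the various insertions of unit and multiplicativity constraints fit together. This intertwining calculation is the Tannaka-reconstruction heart of the proposition and is the step I expect to consume the most bookkeeping; once it is in place, the partial bialgebra axioms are all verified.
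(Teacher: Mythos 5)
Your proposal is correct and follows essentially the same route as the paper: you verify the five axioms of Definition \ref{DefPartBiAlg}, handling \ref{Propb} via the identity $\hat{\Delta}^{l}_{s}(\id_{F_{km}})=\delta_{ls}\,\id_{F_{kl}}\otimes\id_{F_{lm}}$ and reducing \ref{Prope}, after dualisation to natural transformations, to the completeness relation $\sum_{v}\iota^{(uvw)}_{X,Y}\pi^{(uvw)}_{X,Y}=\id_{F_{uw}(X\otimes Y)}$ coming from the multiplicativity axiom of $F$. The only cosmetic differences are that the paper leaves \ref{Propa} and \ref{Propc} to the reader and does not need to invoke the 2-cocycle condition for \ref{Prope} (it is only used for associativity of the product).
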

\begin{proof} Let us check the properties in Definition \ref{DefPartBiAlg}. Properties \ref{Propa} and \ref{Propc} are left to the reader. Property \ref{Propd} was proven above. Property \ref{Propb} follows from the fact that for $k'=l'=s'=m'$, \[\hat{\Delta}^{l}_s(\id_{F_{km}}) = \delta_{ls} \id_{F_{kl}}\otimes \id_{F_{lm}}.\] 
It remains to show the multiplicativity property \ref{Prope}. This is equivalent with proving that, for each $x\in \Nat(F_{uw},F_{km})$ and $y\in \Nat(F_{rt},F_{uw})$ (with all first or second indices in the same class of $\mathscr{I}$), one has (pointwise) that (for $l'=s'$) \[ \hat{\Delta}^l_s(x\circ y) = \sum_{v,v'=l'} \hat{\Delta}^v_s(x)\circ \hat{\Delta}^l_v(y).\] This follows from the fact that $\sum_v \pi^{(uvw)}_{X,Y}\iota^{(uvw)}_{X,Y} \cong \id_{F_{uw}(X\otimes Y)}$ (where we again note that the left hand side sum is in fact finite).
\end{proof} 

Let us show now that the resulting partial bialgebra $\mathscr{A}$ has an invariant integral.

\begin{Def} Define $\phi: \Gr{A}{k}{k}{m}{m} \rightarrow \C$ as the functional which is zero on $\Gr{A}{k}{k}{m}{m}(a)$ with $a\neq \Unitb_{k'}$, and the canonical identification $\Gr{A}{k}{k}{m}{m}(k')\cong \C$ on the unit component (for $k'=m'$).
\end{Def}

\begin{Lem} The functional $\phi$ is an invariant integral.
\end{Lem}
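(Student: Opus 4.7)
The plan is to verify the normalisation $\phic{k}{k}(\UnitC{k}{k})=1$ and the left and right invariance formulas directly, reducing to a single homogeneous component of $A$ at an irreducible object $b\in\mathcal{I}$ and exploiting that $\phi$ only sees the trivial component $b=\Unitb_{k'}$.

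First I would unwind the identifications used in defining $\phi$: on $\Gr{A}{k}{k}{m}{m}(\Unitb_{k'})$ (automatically forcing $k'=m'$), the units $\mu_k,\mu_m$ give isomorphisms $F_{kk}(\Unitb_{k'})\cong\C\cong F_{mm}(\Unitb_{k'})$, so the dual of $\Hom_\C(F_{mm}(\Unitb_{k'}),F_{kk}(\Unitb_{k'}))\cong\C$ is canonically $\C$, and $\phi$ is the identity under this identification. In particular $\UnitC{k}{k}$ is sent to $1$, giving the normalisation. The same identifications show $\UnitC{k}{l}$ corresponds to $1\in\Gr{A}{k}{k}{l}{l}(\Unitb_{k'})$ when $k'=l'$, and equals $0$ otherwise.

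Next, for left invariance I would fix $k,l,m,p$ and $a\in A\pmat{k}{p}{m}{m}$, decompose $a$ according to $a=\sum_b a_b$ with $a_b\in\Gr{A}{k}{p}{m}{m}(b)$, and treat each summand separately. The comultiplication $\Delta_{ll}$ preserves the label $b$, so $(\id\otimes\phic{l}{m})\Delta_{ll}(a_b)$ can only be non-zero when $\phic{l}{m}$ does not annihilate the second leg, which forces $b=\Unitb_{l'}$. Combined with the requirement $\lambda_b=k',\rho_b=p'$ for $a_b\neq 0$, this gives $k'=p'=l'=m'$; then the unitality property $F_{kp}(\Unitb_{l'})=0$ for $k\neq p$ in the same class of $\varphi$ forces $k=p$. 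Thus only the diagonal component $a_{\Unitb_{k'}}\in\Gr{A}{k}{k}{m}{m}(\Unitb_{k'})\cong\C$ contributes, and on this piece the comultiplication $\Delta_{ll}$ is (by construction) the dual of the multiplication $\C\otimes\C\to\C$ under the identifications above, so it sends $\lambda\in\C$ to $\lambda\cdot\UnitC{k}{l}\otimes\UnitC{l}{m}$. Applying $(\id\otimes\phic{l}{m})$ then yields $\lambda\cdot\UnitC{k}{l}=\phic{k}{m}(a_{\Unitb_{k'}})\UnitC{k}{l}$, matching the right hand side (with $\delta_{k,p}=1$).

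For the remaining summands $a_b$ with $b\neq\Unitb_{l'}$, the left hand side vanishes by construction of $\phi$. The right hand side also vanishes: either $k\neq p$ (killing the Kronecker delta); or $k=p$ but $b\neq\Unitb_{k'}$, so $\phic{k}{m}(a_b)=0$; or $k=p$ and $b=\Unitb_{k'}\neq\Unitb_{l'}$, forcing $k'\neq l'$, in which case the prefactor $\UnitC{k}{l}$ itself is zero. The right invariance follows by the same argument with the roles of left and right tensor factors swapped and $\phic{k}{l}\otimes\id$ in place of $\id\otimes\phic{l}{m}$. The main delicate point is the bookkeeping around the different ways the right hand side can be zero, in particular the observation that $\UnitC{k}{l}$ itself kills the potentially problematic case $b=\Unitb_{k'}\neq\Unitb_{l'}$; everything else is a mechanical unpacking of the identifications.
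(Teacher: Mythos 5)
Your proof is correct and is essentially the paper's argument read on the dual side: the paper packages the same computation as the single identity $x\circ \hat{\phi}^l_m =\delta_{k,n}\, \UnitC{k}{l}(x)\,\hat{\phi}^k_m$ for the natural transformation $\hat{\phi}^l_m$ supported on the unit object (of which $\phi$ is just the evaluation), whereas you verify the dual pairing component by component over the irreducibles $b$. The key point is identical in both versions --- $\phi$ only sees the $\Unitb$-component, where all Hom-spaces are canonically $\C$ via the $\mu$'s --- and your bookkeeping of the degenerate cases, in particular that $\UnitC{k}{l}=0$ when $k'\neq l'$ disposes of the case $b=\Unitb_{k'}\neq \Unitb_{l'}$, is accurate.
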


\begin{proof} The normalisation condition $\phi(\UnitC{k}{k})=1$ is immediate by construction. Let us check left invariance, as right invariance will follow similarly.

Let $\hat{\phi}^k_l$ be the natural transformation from $F_{ll}$ to $F_{kk}$ which has support on multiples of $\Unitb_{k'}$, and with $(\hat{\phi}^k_l)_{\Unitb_{k'}} = 1$.  Then for $f\in \Gr{A}{k}{k}{l}{l}$, we have $\phi(f) = f(\hat{\phi}^k_l)$. The left invariance of $\phi$ then follows from the easy verification that for $x\in \Nat(F_{ll},F_{kn})$, \[x\circ \hat{\phi}^l_m =\delta_{k,n} \UnitC{k}{l}(x)\hat{\phi}^k_m.\qedhere\] 
\end{proof}

So far, we have constructed from $\CatCC$ and $F$ a partial bialgebra
$\mathscr{A}$ with invariant integral $\phi$. Let us further impose
for the rest of this section that $\CatCC$ admits duality.  We shall
use the following straightforward observation.
\begin{Lem}
  For all $k,l$ and $X\in \mathcal{C}_{k',l'}$,  the maps
  \begin{align*}
    \coev^{kl}_{X}  &:=  \pi^{(klk)}_{X,\hat X} \circ F_{kk}(\coev_{X})\colon \C \to F_{kl}(X)
    \otimes F_{lk}(\hat X), \\
    \ev^{kl}_{X} &:=  F_{ll}(\ev_{X}) \circ \iota^{(lkl)}_{\hat X,X} \colon
    F_{lk}(\hat X) \otimes F_{kl}(X) \to \C
  \end{align*}
  define a duality between $F_{kl}(X)$ and $F_{lk}(\hat X)$.
\end{Lem}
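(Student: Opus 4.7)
To verify the lemma I need to check the two snake identities for $\coev^{kl}_{X}$ and $\ev^{kl}_{X}$, namely that
\[
(\id_{F_{kl}(X)} \otimes \ev^{kl}_{X}) \circ (\coev^{kl}_{X} \otimes \id_{F_{kl}(X)}) = \id_{F_{kl}(X)}
\]
and symmetrically for $F_{lk}(\hat{X})$. The plan is to rewrite each composition as a single $\iota/\pi$-sandwich around $F$ applied to a categorical snake composite, so that functoriality of $F_{kl}$ and the snake identities for $\coev_X, \ev_X$ inside $\mathcal{C}$ finish the job.

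First I would expand the left-hand side using the definitions, obtaining
\[
(\id \otimes F_{ll}(\ev_X)) \circ (\id \otimes \iota^{(lkl)}_{\hat{X},X}) \circ (\pi^{(klk)}_{X,\hat{X}} \otimes \id) \circ (F_{kk}(\coev_X) \otimes \id),
\]
a morphism $F_{kl}(X) \to F_{kl}(X)$ factoring through $F_{kl}(X) \otimes F_{lk}(\hat{X}) \otimes F_{kl}(X)$. The key step is to identify this triple tensor product with the summand at intermediate indices $(l,k)$ inside $F_{kl}(X \otimes \hat{X} \otimes X)$, using the 2-cocycle coherence condition for $\iota$ (and its dual for $\pi$). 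Concretely, the coherence condition tells us that
\[
\iota^{(klk)}_{X, \hat{X}\otimes X}\circ (\id \otimes \iota^{(lkl)}_{\hat{X},X}) = \iota^{(kkl)}_{X\otimes \hat{X}, X} \circ (\iota^{(klk)}_{X,\hat{X}} \otimes \id),
\]
and taking adjoints gives the analogous relation for the $\pi$'s. After inserting $\iota \pi = \id$ on the summand $(l,k)$, one can combine the two $\iota$-maps (resp.\ $\pi$-maps) into a single threefold $\iota^{(kkkl)}_{X,\hat{X},X}$ (resp.\ $\pi^{(kkll)}$) and slide $F_{kk}(\coev_X)$ and $F_{ll}(\ev_X)$ past via naturality into $F_{kl}(\id_X \otimes \ev_X)$ and $F_{kl}(\coev_X \otimes \id_X)$ inside $F_{kl}(X \otimes \hat{X} \otimes X)$.

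At that point the composite becomes $F_{kl}$ applied to the categorical snake composite $(\id_X \otimes \ev_X)\circ(\coev_X \otimes \id_X) = \id_X$, together with an outer $\pi^{(kll)}_{X,\Unitb_{l'}} \iota^{(kkl)}_{\Unitb_{k'},X}$ that collapses via the unit constraints $\mu_k, \mu_l$ in the Coherence axiom to $\id_{F_{kl}(X)}$. The other snake identity is handled by the mirror argument, swapping the roles of $k,l$ and $X,\hat{X}$ and using Lemma~\ref{LemMorDua}(1) only to know that $X$ is a right dual of $\hat{X}$.

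The main obstacle is the bookkeeping in the middle step: one must carefully insert a projector $\iota^{(lkl)}_{\hat{X},X}\pi^{(lkl)}_{\hat{X},X}$ to justify combining consecutive $\iota$- and $\pi$-maps into a single threefold coherence isomorphism, and then invoke the 2-cocycle diagram (and its unit-axiom companion) to reorganise everything into $F_{kl}$ applied to a purely categorical expression. Once that reorganisation is done, no further analysis is needed.
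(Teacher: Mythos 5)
Your overall strategy is the natural one, and indeed essentially the only reasonable way to verify what the paper dismisses as a "straightforward observation" without proof: expand the composite, use the $2$-cocycle condition and naturality of the $\iota$- and $\pi$-maps to move everything into $F_{kl}(X\otimes\hat X\otimes X)$, recognise $F_{kl}$ applied to the categorical snake, and collapse the outer maps via the unit coherence for $\mu_k,\mu_l$. (There are some index slips along the way — the left-hand side of your cocycle identity should read $\iota^{(kll)}_{X,\hat X\otimes X}\circ(\id\otimes\iota^{(lkl)}_{\hat X,X})$ rather than $\iota^{(klk)}_{X,\hat X\otimes X}\circ\cdots$, and the "threefold" maps should carry the index string $(k,l,k,l)$ — but these are cosmetic.)

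The genuine gap is at the step where you "identify the triple tensor product with the summand at intermediate indices $(l,k)$" and then slide $F_{kk}(\coev_X)$ through by naturality. The composite contains $\pi^{(klk)}_{X,\hat X}\circ F_{kk}(\coev_X)$, which discards the components of $F_{kk}(\coev_X)$ lying in the summands $F_{km}(X)\otimes F_{mk}(\hat X)$ with $m\neq l$. Naturality and the cocycle identity therefore only identify your composite with the $(l,k)$-component of $F_{kl}\bigl((\id_X\otimes\ev_X)\circ(\coev_X\otimes\id_X)\bigr)$, not with all of $F_{kl}(\id_X)=\id_{F_{kl}(X)}$. To close the argument you must show that the discarded components contribute nothing, i.e.\ that
\[
F_{kl}(\id_X\otimes\ev_X)\circ\iota^{(kkl)}_{X\otimes\hat X,X}\circ\bigl(\iota^{(kmk)}_{X,\hat X}\pi^{(kmk)}_{X,\hat X}\otimes\id\bigr)=0\qquad\text{for all }m\neq l.
\]
This follows from one further application of the cocycle condition together with naturality, which rewrites the left-hand side through $\id\otimes F_{ml}(\ev_X)$; the latter map has target $F_{ml}(\Unitb_{l'})$, which vanishes for $m\neq l$ by the Unitality axiom in the definition of a morphism of partial tensor categories. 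That axiom is the one ingredient your sketch never invokes, and it is precisely what makes the lemma true; with this vanishing inserted, the rest of your argument (and its mirror image for the second snake identity) goes through.
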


\begin{Prop}\label{PropAnti} The partial bialgebra $\mathscr{A}$ is a regular partial Hopf algebra.
\end{Prop}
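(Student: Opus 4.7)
The plan is to construct an antipode explicitly using the duality on $\CatCC$, verify the antipode identities in their total form via Lemma \ref{lemma:antipode}, and deduce invertibility by running the same construction with right duals.

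\emph{Step 1: Construction of $S$.} For each $a\in\mathcal{I}$ with $u_a\in\CatC_{\alpha\beta}$, fix a left dual $\hat u_a\in\CatC_{\beta\alpha}$; by semi-simplicity we may assume $\hat u_a\cong u_{\bar a}$ for a chosen $\bar a\in\mathcal{I}_{\beta\alpha}$, so that $a\mapsto\bar a$ is an involution of $\mathcal{I}$ swapping $\lambda$ and $\rho$. The lemma preceding the proposition, applied to $F$, yields for every $(k,l)$ with $k'=\alpha$, $l'=\beta$ a perfect pairing between $F_{kl}(u_a)$ and $F_{lk}(u_{\bar a})$ induced by $F(\ev_{u_a})$ and $F(\coev_{u_a})$. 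Transposing along this pairing produces a canonical linear isomorphism
\[
\Hom_{\C}(F_{mn}(u_a),F_{kl}(u_a))^{*}\;\cong\;\Hom_{\C}(F_{lk}(u_{\bar a}),F_{nm}(u_{\bar a}))^{*},
\]
i.e.\ $\Gr{A}{k}{l}{m}{n}(a)\cong\Gr{A}{n}{m}{l}{k}(\bar a)$. I take $S$ to be the direct sum of these isomorphisms; by construction $S(A(K))\subseteq A(K^{\circ\bullet})$.

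\emph{Step 2: Antipode identities.} By Lemma \ref{lemma:antipode} it suffices to verify the total identities $a_{(1)}S(a_{(2)})=\Pi^{L}(a)$ and $S(a_{(1)})a_{(2)}=\Pi^{R}(a)$. Restricting to a matrix coefficient component $\Gr{A}{k}{l}{m}{n}(a)$, the coproduct $\Delta_{rs}$ is dual to composition in $\Hom(F_{\bullet\bullet}(u_a),F_{\bullet\bullet}(u_a))$, while the product $M$ is dual to $\hat\Delta^{l'}_{s}$ built from $\iota^{(klm)}$ and $\pi^{(klm)}$. Under the duality of Step 1, the combination $a_{(1)}S(a_{(2)})$ translates into the natural transformation obtained by inserting $F(\ev_{u_a})$ between the two halves of a factorisation of $u_a\otimes u_{\bar a}$ through the intermediate objects indexed by $l\in I_\beta$. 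The snake identity for $(u_a,u_{\bar a})$, applied under $F$ together with $\sum_{l}\iota^{(\cdot l\cdot)}\pi^{(\cdot l \cdot)}=\id$, then reduces this to a scalar multiple of the identity on $F_{kk}(\Unitb_\alpha)$, matching $\Pi^{L}(a)=\sum_{p}\epsilon(\lambda_p a)\lambda_p$ thanks to the explicit identification $\Gr{A}{k}{k}{l}{l}(\Unitb_{\alpha})\cong\C$. The right identity $S(a_{(1)})a_{(2)}=\Pi^{R}(a)$ follows by the symmetric argument using $F(\coev_{u_a})$.

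\emph{Step 3: Invertibility.} By Lemma \ref{LemMorDua}(1), every object that admits a left dual is itself a right dual of that object; in particular each $u_a$ admits a right dual $\check u_a\cong u_{\tilde a}$ with $\hat u_{\tilde a}\cong u_a$. Repeating the construction of Step~1 with right duals in place of left duals produces a map $S'\colon A\to A$ satisfying $S'(A(K))\subseteq A(K^{\circ\bullet})$, and the canonical isomorphisms $\check{\hat{X}}\cong X\cong\hat{\check{X}}$ force $S\circ S'=\id=S'\circ S$. Hence $S$ is invertible, making $\mathscr{A}$ a regular partial Hopf algebra.

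\emph{Main obstacle.} The delicate part is Step 2: one must track carefully how the categorical duality of $\CatCC$ interacts, through $F$, with the partial structures used to define both the product (involving the factorisation $F_{km}(X\otimes Y)\cong\bigoplus_{l\in I_\beta}F_{kl}(X)\otimes F_{lm}(Y)$) and the coproduct (dual to composition of natural transformations). The key identity to be pinned down is that under the isomorphism $F_{kn}(u_a\otimes u_{\bar a})\cong\bigoplus_{l}F_{kl}(u_a)\otimes F_{ln}(u_{\bar a})$, the morphisms $F(\ev_{u_a})$ and $F(\coev_{u_a})$ realise the contractions summing over the intermediate object index $l$ in a manner compatible with $\hat\Delta^l_s$; once this compatibility is laid out, both antipode identities become formal consequences of the snake identities combined with Schur's lemma applied to $\Unitb_\alpha$.
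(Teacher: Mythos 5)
Your proposal follows essentially the same route as the paper: the antipode is the dual of the transposition operation on natural transformations built from $F(\ev)$ and $F(\coev)$ (the paper's $\hat{S}$), the antipode identities reduce via the snake identities, naturality of the test transformation with respect to $\coev$, and $\sum_{l}\iota^{(\cdot l\cdot)}\pi^{(\cdot l\cdot)}=\id$ to evaluation at the unit object where $F_{kk}(\Unitb_\alpha)\cong\C$, and regularity comes from repeating the construction with right duals. The one point to tighten is Step 3: the mere existence of isomorphisms $\check{\hat{X}}\cong X$ does not by itself force $S\circ S'=\id$ — you should either choose the right-duality data on $\hat{u}_a$ to be the one induced by the left-duality data on $u_a$ and then invoke the snake identities, or (as the paper implicitly does) observe that $S'$ is an antipode for the opposite structure and appeal to uniqueness of antipodes.
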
 

\begin{proof} 
  For any $x\in \Nat(F_{mn},F_{kl})$, let us define $\hat{S}(x) \in
  \Nat(F_{lk},F_{nm})$ by
\begin{align*}
  \hat{S}(x)_X &= 
(\id \otimes \ev^{lk}_{X}) \circ (\id \otimes x_{\hat X}
  \otimes \id) \circ (\coev^{nm}_{X} \otimes \id).
\end{align*}
Then the assigment $\hat{S}$ dualizes to maps $S:\Gr{A}{k}{l}{m}{n} \rightarrow \Gr{A}{n}{m}{l}{k}$ by $S(f)(x) = f(\hat{S}(x))$. We claim that $S$ is an antipode for $\mathscr{A}$. 

Let us check for example the formula \[\sum_r f_{(1){\tiny \begin{pmatrix}k&l\\n & r\end{pmatrix}}} S(f_{(2){\tiny \begin{pmatrix} n & r \\ m & l\end{pmatrix}}}) = \delta_{k,m}\epsilon(f)\UnitC{k}{n}\] for $f\in \Gr{A}{k}{l}{m}{l}$. The other antipode identity follows similarly.

By duality, this is equivalent to the pointwise identity of natural
transformations \[\sum_r\hat{M}^n_r(\id\otimes
\hat{S})\hat{\Delta}^l_r(x) = \delta_{k,m}\UnitC{k}{n}(x)
\id_{F_{kl}},\quad x\in \Nat(F_{nn},F_{km})\] where $\hat{M}^n_r$ and
$(\id\otimes \hat{S})$ are dual to $\Delta_{nr}$ and $\id\otimes S$,  respectively. 

Let us fix $X\in \mathcal{C}_{k'l'}$. Then for any $x\in
\Nat(F_{nr},F_{kl})$, $y\in \Nat(F_{rn},F_{lm})$, we have
\begin{align*}
  \left(\hat{M}^n_r(\id\otimes \hat{S})(x\otimes y)\right)_X =
\big(\id \otimes \ev_{X}^{ml}\big)  \big(x_{X} \otimes y_{\hat X} \otimes \id\big) 
  \big(\coev_{X}^{nr} \otimes \id\big).
\end{align*}
For any $x\in \Nat(F_{nn},F_{km})$, we therefore have
\begin{align*}
  \left(\hat{M}^n_r(\id\otimes \hat{S})\hat\Delta^{l}_{r}(x)\right)_X &=
\big(\id \otimes \ev^{ml}_{X}\big)  \big(\pi^{(klm)}_{X,\hat X}x_{X\otimes \hat
    X}\iota^{(nrm)}_{X,\hat X} \otimes \id\big) 
  \big(\coev^{nr}_{X} \otimes \id\big).
\end{align*}
We sum over $r$, use naturality of $x$, and obtain
\begin{align*}
\sum_{r}    \left(\hat{M}^n_r(\id\otimes
  \hat{S})\hat\Delta^{l}_{r}(x)\right)_X &=
\big(\id \otimes \ev_{X}^{ml}\big) \big(\pi^{(klm)}_{X,\hat X}x_{X\otimes \hat
    X}F_{nn}(\coev_{X}) \otimes \id\big) \\
  &=\delta_{k,m} \UnitC{k}{n}(x)
\big(\id \otimes \ev_{X}^{ml}\big) 
\big(\pi^{(mlm)}_{X,\hat X}F_{mm}(\coev_{X})
  \otimes \id\big) \\
  &=\delta_{k,m} \UnitC{k}{n}(x)
\big(\id \otimes \ev_{X}^{ml}\big) 
\big(\coev^{ml}_{X}
  \otimes \id\big) \\
  &= \delta_{k,m} \UnitC{k}{n}(x) \id.
\end{align*}
Similarly, one shows that $\mathscr{A}$ with the opposite multiplication has an antipode, using right duality. It follows that $\mathscr{A}$ is a regular partial Hopf algebra.  
\end{proof} 

Assume now that $\CatCC$ is a partial fusion C$^*$-category, and $F$ a $\phi$-morphism from $\CatCC$ to $\{\Hilb_{\fd}\}_{I\times I}$. Let us show that $\mathscr{A}$, as constructed above, becomes a partial Hopf $^*$-algebra with positive invariant integral.

\begin{Def} We define $^*: \Gr{A}{k}{l}{m}{n}\rightarrow \Gr{A}{l}{k}{n}{m}$ by the formula \[f^*(x) = \overline{f(\hat{S}(x)^*)},\qquad x\in \Nat(F_{nm},F_{lk}).\]
\end{Def}

\begin{Lem} The operation $^*$ is an anti-linear, anti-multiplicative, comultiplicative involution.
\end{Lem}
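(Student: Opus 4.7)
The plan is to dualize each of the three non-trivial claims to a pointwise identity for natural transformations in $\CatCC$ and verify it by direct diagrammatic computation. Anti-linearity is immediate from the complex conjugation in $f^{*}(x) = \overline{f(\hat{S}(x)^{*})}$. The two key structural inputs will be: (i) the coherence data of $F$ are isometric, so $(\iota^{(klm)}_{X,Y})^{*} = \pi^{(klm)}_{X,Y}$ and $\mu_{k}^{*} = \mu_{k}^{-1}$, because $F$ is a morphism of partial fusion C$^{*}$-categories; and (ii) in $\CatCC$ the evaluation and coevaluation morphisms may, and shall, be chosen as standard solutions of the conjugate equations, so that $(\coev^{kl}_{X})^{*}$ coincides with $\ev^{lk}_{\hat X}$ under the canonical identification $\hat{\hat X}\cong X$, and dually for $\ev$ and $\coev$.

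For the involution, a direct unwinding gives $(f^{*})^{*}(x) = f(\hat{S}(\hat{S}(x)^{*})^{*})$, so it suffices to establish the pointwise equality $\hat{S}(\hat{S}(x)^{*})^{*} = x$ for every $x\in \Nat(F_{mn},F_{kl})$; substituting the definition of $\hat{S}$ twice, taking adjoints via (i) and (ii), and collapsing the four resulting nested caps and cups through the zig-zag identities reduces the composite to $x$. For anti-multiplicativity $(fg)^{*} = g^{*}f^{*}$, I would pair both sides with an arbitrary $x\in \Nat(F_{rt},F_{km})$ and, using $(fg)(y) = (f \otimes g)(\hat{\Delta}^{l}_{s}(y))$ and the definition of $*$, reduce to a pointwise identity that expresses $\hat{\Delta}^{l}_{s}(\hat{S}(x)^{*})$ in terms of $\hat{\Delta}^{l}_{s}(x)$ by applying $\hat{S}(-)^{*}$ slotwise and swapping the two tensor factors. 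Taking adjoints in $\hat{\Delta}^{l}_{s}(y)_{X,Y} = \pi^{(klm)}_{X,Y}\circ y_{X\otimes Y}\circ \iota^{(rst)}_{X,Y}$ turns $\iota$'s into $\pi$'s and vice versa by (i), which accounts for the required flip, and the remaining zig-zag manipulation is of the same type as for the involution. For the comultiplicative condition $\Delta_{rs}(f)^{*} = \Delta_{sr}(f^{*})$ a parallel dualization reduces the claim, after pairing with $\xi\otimes\eta$ of composable natural transformations, to the anti-multiplicativity $\hat{S}(\xi\circ\eta) = \hat{S}(\eta)\circ \hat{S}(\xi)$, the categorical analogue of the classical fact that an antipode reverses products, obtained graphically by inserting a cancelling $\coev$-$\ev$ pair on the inner strand and reassembling via naturality.

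The main obstacle is not conceptual but notational: one has to track simultaneously the indices $k,l,m,n,\ldots \in I$, their images in $\mathscr{I}$, and the orientation reversals induced by $\hat{S}$ and $*$. I would therefore begin by fixing standard solutions of the conjugate equations on each simple $u_{a}$, and then isolate a single graphical lemma to the effect that $x\mapsto \hat{S}(x)^{*}$ is an anti-linear, anti-multiplicative involution on each $\Nat(F_{mn},F_{kl})$; all three properties of $*$ then drop out by a uniform dualization argument.
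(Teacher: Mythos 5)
Your plan is correct and follows essentially the same route as the paper: dualize each claim to a pointwise identity for natural transformations, use the isometry $(\iota^{(klm)}_{X,Y})^{*}=\pi^{(klm)}_{X,Y}$ to get $\hat{\Delta}^{l}_{s}(x)^{*}=\hat{\Delta}^{s}_{l}(x^{*})$, choose standard duality solutions so that $x\mapsto\hat{S}(x)^{*}$ is involutive, and combine this with the anti-(co)multiplicativity of $\hat{S}$. The only cosmetic difference is that the paper imports $\hat{S}(xy)=\hat{S}(y)\hat{S}(x)$ and the anti-comultiplicativity of $\hat{S}$ from the already-established properties of the antipode $S$ of $\mathscr{A}$, whereas you rederive them by zig-zag arguments; both are fine.
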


\begin{proof} Anti-linearity is clear. Comultiplicativity follows from the fact that $(xy)^* = y^*x^*$ and $\hat{S}(xy) = \hat{S}(y)\hat{S}(x)$ for natural transformations. To see anti-multiplicativity of $^*$, note first that, since $S$ is anti-multiplicative for $\mathscr{A}$, we have $\hat{S}$ anti-comultiplicative on natural transformations. Now as $(\iota_{X,Y}^{(klm)})^* = \pi_{X,Y}^{(klm)}$ by assumption, we also have $\hat{\Delta}^l_s(x)^* = \hat{\Delta}^s_l(x^*)$, which proves anti-multiplicativity of $^*$ on $\mathscr{A}$.  Finally, involutivity follows from the involutivity of $x\mapsto \hat{S}(x)^*$, which is a consequence of the fact that one can choose $\ev_{\bar{X}}^{kl} = (\coev_{X}^{lk})^*$ and $\coev_{\bar{X}}^{kl} = (\ev_X^{lk})^*$.
\end{proof}

\begin{Prop} The couple $(\mathscr{A},\Delta)$ with the above $^*$-structure defines a partial compact quantum group.
\end{Prop}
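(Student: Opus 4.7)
The plan is to verify the two defining conditions in turn: that $\mathscr{A}$ is a partial Hopf $*$-algebra, and that the invariant integral $\phi$ constructed earlier is positive. The first is essentially immediate: Proposition \ref{PropAnti} produces the partial Hopf algebra structure, the preceding lemma shows that $*$ is an anti-linear, anti-multiplicative, comultiplicative involution, and the units $\UnitC{k}{l}$ are manifestly fixed by $*$ under the canonical scalar identification. Thus $\mathscr{A}$ is a partial Hopf $*$-algebra, and Corollary \ref{cor:involutive} confirms that all pieces fit together.

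For positivity of $\phi$, I will invoke Remark \ref{RemPos}: it suffices to show that every irreducible corepresentation of $\mathscr{A}$ is equivalent to a unitary one. I produce such a model directly from each irreducible $u_a \in \CatCC$. The canonical identification $\Gr{A}{k}{l}{m}{n}(a) = \Hom_\C(F_{mn}(u_a),F_{kl}(u_a))^{*}$ supplies a tautological element
\[ \Gr{X^{(a)}}{k}{l}{m}{n} \in \Gr{A}{k}{l}{m}{n}(a) \otimes \Hom_\C(F_{mn}(u_a),F_{kl}(u_a)) \]
corresponding to the identity under $V^{*}\otimes V \cong \End(V)$. The family $\mathscr{X}^{(a)}$ is a corepresentation on the rcfd Hilbert space $F_{*}(u_a) := \bigoplus_{k,l} F_{kl}(u_a)$, since axioms \eqref{eq:rep-comultiplication} and \eqref{eq:rep-counit} are the duals of the defining relations for $\Delta$ (composition of natural transformations) and $\epsilon$ (evaluation at the identity).

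The core technical step is unitarity, $(S\otimes \id)(\Gr{X^{(a)}}{n}{m}{l}{k}) = (\Gr{X^{(a)}}{l}{k}{n}{m})^{*}$. Unpacking the dualities, the left-hand side pairs with an arbitrary natural transformation $x$ via $\hat{S}(x)$, while the right-hand side pairs via $\hat{S}(x)^{*}$ composed with complex conjugation; the asserted equality reduces to the involutive nature of $x\mapsto \hat{S}(x)^{*}$, which was observed in the lemma preceding the proposition to follow from the C$^{*}$-compatibility of $F$ (the relations $(\iota^{(klm)}_{X,Y})^{*}=\pi^{(klm)}_{X,Y}$ and $\ev^{kl}_{\bar X} = (\coev^{lk}_X)^{*}$). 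Irreducibility of $\mathscr{X}^{(a)}$ follows from Schur's lemma: self-intertwiners restrict to natural endomorphisms of the restriction of $F$ to the block containing $u_a$, and these are scalars by irreducibility of $u_a$ together with faithfulness of $F$ (ensured by the surjectivity of $\varphi$).

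By construction the matrix coefficients of $\mathscr{X}^{(a)}$ span $A(a)$, and $A = \bigoplus_a A(a)$; hence Proposition \ref{prop:rep-weak-pw} combined with Schur orthogonality (Proposition \ref{prop:rep-orthogonality-1}) forces any irreducible corepresentation of $\mathscr{A}$ to be equivalent to some $\mathscr{X}^{(a)}$, and so to a unitary one. Remark \ref{RemPos} then delivers positivity of $\phi$. The only genuine obstacle in the proof is the unitarity calculation, where the various coherence maps $\iota, \pi, \mu$ and the categorical duality data $\ev^{kl}_X, \coev^{kl}_X$ must be tracked through the definitions of $S$ and $*$; everything else is a direct consequence of the construction once the C$^{*}$-structure is brought to bear.
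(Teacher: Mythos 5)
Your proof is correct, but it takes a genuinely different route from the paper's. The paper establishes positivity of $\phi$ by a direct computation: it notes that the components $\Gr{A}{k}{l}{m}{n}(a)$ are mutually $\phi$-orthogonal, writes $\phi(f^*g)=(\bar f\otimes g)\bigl((\hat S\otimes\id)\hat\Delta^k_m(\hat\phi^l_n)\bigr)$, and identifies $\hat\Delta^k_m(\hat\phi^l_n)_{\bar a,a}$ with $(\ev^{kl}_{a})^*(\ev^{kl}_{a})$ up to a positive scalar, which exhibits $\phi(f^*f)$ as manifestly nonnegative. You instead build the tautological corepresentations $\mathscr{X}^{(a)}$ on $F(u_a)$, verify they are unitary, irreducible, and that their matrix coefficients exhaust $A=\bigoplus_a A(a)$, and then invoke Remark \ref{RemPos}. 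This is legitimate: that remark is stated precisely for a partial Hopf $^*$-algebra with a not-necessarily-positive invariant integral (the paper itself uses it this way in the transitivity argument for co-Morita equivalence), and the inputs you need --- Proposition \ref{prop:rep-weak-pw}, Proposition \ref{prop:rep-orthogonality-1}, Corollary \ref{cor:rep-pw}, Lemma \ref{LemFaith} --- only require a regular partial Hopf algebra with an invariant integral, which is available here by Proposition \ref{PropAnti} and the lemmas preceding the statement. In effect you front-load the construction that the paper only carries out inside the proof of Theorem \ref{TheoTKPCQG}; your approach is more conceptual and produces the unitary irreducibles once and for all, while the paper's computation buys an explicit closed formula for the inner product $\phi(f^*g)$ in terms of the duality data.

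Two minor imprecisions, neither fatal. The unitarity $(S\otimes\id)(\Gr{X^{(a)}}{n}{m}{l}{k})=(\Gr{X^{(a)}}{l}{k}{n}{m})^*$ does not really hinge on the involutivity of $x\mapsto\hat S(x)^*$: pairing either side in its first leg against an arbitrary $x\in\Hom_{\C}(F_{mn}(u_a),F_{kl}(u_a))$ returns exactly $\hat S(x)$ --- on the left because $S(f)(x)=f(\hat S(x))$ and the tautological element reproduces its argument, on the right because summing $\overline{f_j^{\mathrm{dual}}(\hat S(x)^*)}$ against the adjoints $f_j^*$ of a basis recovers the double adjoint of $\hat S(x)$ --- so unitarity is a formal consequence of the definitions of $S$ and $^*$. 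And irreducibility is easier than you suggest: an endomorphism $T$ of $\mathscr{X}^{(a)}$ must satisfy $T_{kl}\circ x=x\circ T_{mn}$ for \emph{all} linear maps $x\in\Hom_{\C}(F_{mn}(u_a),F_{kl}(u_a))$, not merely natural transformations, so $T$ is scalar without any appeal to faithfulness of $F$.
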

\begin{proof} The only thing which is left to prove is that our
  invariant integral $\phi$ is a positive functional. Now it is easily
  seen from the definition of $\phi$ that the $\Gr{A}{k}{l}{m}{n}(a)$
  are all mutually orthogonal. Hence it suffices to prove that the
  sesquilinear inner product \[\langle f| g\rangle = \phi(f^*g)\] on
  $\Gr{A}{k}{l}{m}{n}(a)$ is positive-definite.

  Let us write $\bar{f}(x) = \overline{f(x^*)}$. Let again
  $\hat{\phi}^k_m$ be the natural transformation from $F_{mm}$ to
  $F_{kk}$ which is the identity on $\Unitb_{k'}$ and zero on other
  irreducible objects. Then by definition, \[\phi(f^*g) =
  (\bar{f}\otimes g)((\hat{S}\otimes
  \id)\hat{\Delta}^k_m(\hat{\phi}^l_n)).\] 

Assume  that $f(x) = \langle v'| x_a v\rangle$ and
  $g(x) = \langle w' | x_aw\rangle$ for $v,w\in F_{mn}(u_a)$ and
  $v',w'\in F_{kl}(u_a)$. Then
  $\overline{f}(x) = \langle v|x_{a} v'\rangle$ and
 using the expression for $\hat{S}$ as
  in Proposition \ref{PropAnti}, we find that
  \begin{align*}
    \phi(f^*g) &= \langle v \otimes w'|
    (\ev_{a}^{kl})_{23} 
    (\hat\Delta^{k}_{m}(\hat \phi^{l}_{n})_{\bar a, a})_{24} 
    (\coev^{mn}_{a})_{12} (v'\otimes
    w)\rangle.
  \end{align*}
  However, up to a positive non-zero scalar, which we may assume to be
  1 by proper rescaling, we
  have 
  \[\hat{\Delta}^k_m(\hat{\phi}^l_n)_{\bar{a}, a} =
  (\ev^{kl}_{a})^{*}(\ev^{kl}_{a}).\] Hence
  \begin{align*}
    \phi(f^*g) &=
\langle v \otimes w'|     (\ev^{kl}_{a})_{23}  (
  (\ev^{kl}_{a})^{*}(\ev^{kl}_{a}))_{24}
 (\coev^{mn}_{a})_{12} (v'\otimes w)\rangle \\
&= \langle v \otimes w'|        (\ev^{kl}_{a})_{23} 
  (\ev^{kl}_{a})^{*}_{24}
 (w\otimes v')\rangle \\
    &= \langle v|w\rangle (\ev^{kl}_{a}|v'\rangle_{2})
    (\ev_{a}^{kl}|w'\rangle_{2})^{*},
  \end{align*}
where $\ev_{a}^{kl}|z\rangle_{2}$ denotes the map $y \mapsto
\ev_{a}^{kl}(y\otimes z)$.
If $v=w$ and $v'=w'$, the expression above clearly becomes positive.
\end{proof} 
Let us say that an $I$-partial compact quantum group with hyperobject set
$\mathscr{I}$ and corresponding partition function $\phi \colon \mathscr{I} \to
\mathscr{P}(I)$  is \emph{based over $\phi$}.

\begin{Theorem} \label{TheoTKPCQG}

The assigment $\mathscr{A}\rightarrow (\Corep_u(\mathscr{A}),F)$ is (up to isomorphism/equivalence) a one-to-one correspondence between partial compact quantum groups based over $\varphi:I\twoheadrightarrow \mathscr{I}$ and $\mathscr{I}$-partial fusion C$^*$-categories $\CatCC$ with unital morphism $F$ to $\{\Hilb_{\fd}\}_{I\times I}$ based over $\varphi$. 
\end{Theorem}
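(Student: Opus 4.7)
My plan is to prove the bijection by verifying that the two constructions developed so far---namely $\mathscr{A} \mapsto (\Corep_u(\mathscr{A}), F_{\mathscr{A}})$ from Section 3 and the reverse construction $(\CatCC, F) \mapsto \mathscr{A}^{(\CatCC,F)}$ developed earlier in this section---are mutually inverse up to isomorphism/equivalence. The preceding results already verify that each construction produces an object of the correct type, so what remains is to exhibit natural isomorphisms for both compositions.

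First, given a partial compact quantum group $\mathscr{A}$ with a maximal family $\{(V^{(a)}, \mathscr{X}_a)\}_{a\in \mathcal{I}}$ of mutually inequivalent irreducible unitary corepresentations, I would invoke Corollary \ref{cor:rep-pw}, which supplies a linear isomorphism
\[\pi\colon \bigoplus_a\bigoplus_{k,l,m,n} \dual{F_{kl}(V^{(a)})}\otimes F_{mn}(V^{(a)}) \longrightarrow A,\qquad f\otimes w\mapsto (\id\otimes \omega_{f,w})(\Gr{(X_a)}{k}{l}{m}{n}).\]
Under the canonical pairing $\dual{U}\otimes V\cong \Hom_\C(V,U)^*$, the domain is precisely the total algebra of the reconstructed object $\mathscr{A}^{(\Corep_u(\mathscr{A}), F_{\mathscr{A}})}$. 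I then need to check that $\pi$ transports all of the partial Hopf $^*$-algebra data: multiplicativity follows from the construction of tensor products of corepresentations together with the definition of the dualised comultiplication $\hat\Delta^l_s$; comultiplicativity and the counit pairing fall out of identities \eqref{eq:rep-comultiplication} and \eqref{eq:rep-counit}; the units correspond to identity natural transformations on the trivial unit corepresentations; the $^*$-involution matches via $\hat{S}(\cdot)^*$ together with unitarity of each $X_a$; and the invariant integral is preserved by virtue of the Schur orthogonality relations (Corollary \ref{cor:rep-unitary-schur-orthogonality}).

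Second, starting from $(\CatCC, F)$ and the reconstructed $\mathscr{A}:=\mathscr{A}^{(\CatCC,F)}$, for each $X \in \CatC_{\alpha\beta}$ I would use the canonical pairing to produce a tautological family $\Gr{(X^\pi)}{k}{l}{m}{n}\in \Gr{A}{k}{l}{m}{n}\otimes \Hom_\C(F_{mn}(X), F_{kl}(X))$ corresponding to $\id_{\Hom_\C(F_{mn}(X), F_{kl}(X))}$ under the bidual inclusion. The partial coalgebra structure on $\mathscr{A}$, being defined by dualising composition in $\Hom_\C$, translates directly into the corepresentation axioms for $(X^\pi)$, while the $^*$-structure of $\mathscr{A}$ ensures unitarity. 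This assembles into a partial monoidal $^*$-functor $\Phi\colon \CatCC \to \Corep_u(\mathscr{A})$ that commutes strictly with $F$ and the forgetful morphism. Full faithfulness of each component $\Phi_{\alpha\beta}$ will follow from the very definition of the multiplication on $\mathscr{A}$: intertwiners $\pi_X\to \pi_Y$ correspond precisely to natural transformations between the restrictions of $F$, which by semi-simplicity of $\CatCC_{\alpha\beta}$ and faithfulness of $F$ (ensured by the surjectivity of $\varphi$) agree with $\Mor_{\CatCC}(X,Y)$.

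The hard part will be essential surjectivity of $\Phi$: every irreducible unitary corepresentation of $\mathscr{A}$ must be shown to be equivalent to some $\pi_{u_a}$ for an irreducible $u_a$ in $\CatCC$. Here the key observation is that, by construction, the total algebra decomposes as $A = \bigoplus_{a\in\mathcal{I}} A(a)$ with $A(a)$ spanned by matrix coefficients of $\pi_{u_a}$. Proposition \ref{prop:rep-weak-pw} then forces any irreducible unitary corepresentation $(V,\mathscr{Y})$ to share a non-zero matrix coefficient with some $\pi_{u_a}$, and Corollary \ref{cor:rep-unitary-orthogonality-1} then yields $\mathscr{Y}\cong \pi_{u_a}$. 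Combined with the semi-simplicity of both partial fusion C$^*$-categories, this would complete the proof that $\Phi$ is an equivalence and hence establish the desired bijection.
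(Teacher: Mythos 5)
Your proposal is correct and follows essentially the same route as the paper: one direction is the map $f\mapsto(\id\otimes f)(X_a)$ shown bijective via Corollary \ref{cor:rep-pw}, and the other is the tautological corepresentation $\sum_i e_i^*\otimes e_i$ on $F(u_a)$ together with the observation that these matrix coefficients span $A$, forcing the $\pi_{u_a}$ to exhaust the irreducibles by orthogonality. The only cosmetic difference is that you spell out full faithfulness and essential surjectivity separately where the paper compresses this into the maximality-of-the-family argument.
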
 

\begin{proof} Fix first $\mathscr{A}$, and let $\mathscr{B}$ be the
  partial Hopf $^*$-algebra constructed from $\Corep_u(\mathscr{A})$
  with its natural forgetful functor. Then we have a map $\mathscr{B}
  \rightarrow \mathscr{A}$ by \[ \Gr{B}{k}{l}{m}{n}(a) =
  \Hom(\Gr{V}{}{(a)}{m}{n},\Gr{V}{}{(a)}{k}{l})^* \rightarrow
  \Gr{A}{k}{l}{m}{n}(a): f \mapsto (\id\otimes f)(X_a),\] where the
  $(V^{(a)},\mathscr{X}_a)$ run over all irreducible unitary corepresentations
  of $\mathscr{A}$. By Corollary \ref{cor:rep-pw}, this map is
  bijective. From the definition of $\mathscr{B}$, it is easy to check
  that this map is a morphism of partial Hopf $^*$-algebras.

  Conversely, let $\CatCC$ be an $\mathscr{I}$-partial fusion
  C$^*$-category with unital morphism $F$ to
  $\{\Hilb_{\fd}\}_{I\times I}$ based over $\varphi$. Let
  $\mathscr{A}$ be the associated partial Hopf $^*$-algebra. For each
  irreducible $u_a \in \CatCC$, let $V^{(a)} = F(u_a)$,
  and \[\Gr{(X_a)}{k}{l}{m}{n} = \sum_i e_i^*\otimes e_i,\] where
  $e_i$ is a basis of $\Hom_{\C}(F_{mn}(u_{a}), F_{kl}(u_{a}))$ and
  $e_i^*$ a dual basis. Then from the definition of $\mathscr{A}$ it
  easily follows that $X_a$ is a unitary corepresentation for
  $\mathscr{A}$. Clearly, $\mathscr{X}_a$ is
  irreducible. As the matrix coefficients of the $\mathscr{X}_a$ span
  $\mathscr{A}$, it follows that the $\mathscr{X}_a$ form a maximal class of
  non-isomorphic unitary corepresentations of $\mathscr{A}$. Hence we
  can make a unique equivalence \[\CatCC\rightarrow
  \Corep_u(\mathscr{A}), \quad u \mapsto (F(u),\mathscr{X}_u)\] such that
  $u_a\rightarrow \mathscr{X}_a$. From the definitions of the coproduct and
  product in $\mathscr{A}$, it is readily verified that the natural
  morphisms $\iota^{(klm)}_{u,v}:F_{kl}(u)\otimes F_{lm}(v)\rightarrow
  F_{km}(u\otimes v)$ turn it into a monoidal equivalence.
\end{proof}

\section{Examples}

\subsection{Hayashi's canonical partial compact quantum groups} \label{SubSecCan}

The following generalizes Hayashi's original construction.

\begin{Exa} 
Let $\CatCC$ be an $\mathscr{I}$-partial fusion C$^*$-category. Let
$\mathcal{I}$ label a distinguished maximal set $\{u_k\}$ of mutually
non-isomorphic irreducible objects of $\CatC$, with associated
bigrading $\Gru{\mathcal{I}}{\alpha}{\beta}$ over
$\mathscr{I}$. Define \[F_{kl}(X)  = \Hom(u_k,  X\otimes u_l),\qquad
X\in \CatC_{\alpha\beta}, k\in \Gru{\mathcal{I}}{\alpha}{\gamma},l\in \Gru{\mathcal{I}}{\beta}{\gamma}.\] Then each $F_{kl}(X)$ is a Hilbert space by the inner product $\langle f,g\rangle = f^*g$. Put $F_{kl}(X) = 0$ for $k,l$ outside their proper domains. Then clearly the application $(k,l)\mapsto F_{kl}(X)$ is rcf. Moreover, we have isometric compatibility morphisms \[F_{kl}(X)\otimes F_{lm}(Y)\rightarrow F_{km}(X\otimes Y),\quad f\otimes g \mapsto (\id\otimes g)f,\] while $F_{kl}(\Unitb_{\alpha}) \cong \delta_{kl} \C$ for $k,l\in \Gru{\mathcal{I}}{\alpha}{\alpha}$. 

It is readily verified that $F$ defines a unital morphism from $\CatCC$ to $\{\Hilb_{\fd}\}_{\mathcal{I}\times \mathcal{I}}$ based over the partition \[\mathcal{I}_{\alpha} = \bigcup_{\beta} \Gru{\mathcal{I}}{\alpha}{\beta},\quad\alpha\in \mathscr{I}.\] From the Tannaka-Kre$\breve{\textrm{\i}}$n-Woronowicz reconstruction result, we obtain a partial compact quantum group $\mathscr{A}_{\CatCC}$ with object set $\mathcal{I}$, which we call the \emph{canonical partial compact quantum group} associated with $\CatCC$. 
\end{Exa} 

\begin{Exa} More generally, let $\CatCC$ be an $\mathscr{I}$-partial fusion C$^*$-category, and let $\CatDD$ be a \emph{semi-simple partial $\CatCC$-module C$^*$-category} based over a set $\mathscr{J}$ and function $\phi:\mathscr{J}\rightarrow \mathscr{I},k\mapsto k'$. That is, $\CatDD$ consists of a collection of semi-simple C$^*$-categories $\CatD_{k}$ with $k\in \mathscr{J}$, together with tensor products $\otimes: \CatC_{k'l'}\times \CatD_{l}\rightarrow \CatD_{k}$ satisfying the appropriate associativity and unit constraints. Then if $\mathcal{I}$ labels a distinguished maximal set $\{u_a\}$ of mutually non-isomorphic irreducible objects of $\CatD$, with associated grading $\mathcal{I}_{k}$ over $\mathscr{J}$, we can again define \[F_{ab}(X)  = \Hom(u_a,  X\otimes u_b),\qquad X\in \CatC_{k'l'}, a\in \mathcal{I}_{k},b\in\mathcal{I}_{l},\] and we obtain a unital morphism from $\CatCC$ to $\{\Hilb_{\fd}\}_{\mathcal{I}\times \mathcal{I}}$. The associated partial compact quantum group $\mathscr{A}_{\CatCC}$ will be called the \emph{canonical partial compact quantum group} associated with $(\CatCC,\CatDD)$. The previous construction coincides with the special case $\CatCC= \CatDD$ with $\mathscr{J} = \mathscr{I}\times \mathscr{I}$ and $\phi$ projection to the first factor.
\end{Exa}

\begin{Exa}\label{ExaErgo} As a particular instance, let $\G$ be a compact quantum group, and consider an ergodic action of $\G$ on a unital C$^*$-algebra $C(\mathbb{X})$. Then the collection of finitely generated $\G$-equivariant $C(\mathbb{X})$-Hilbert modules forms a module C$^*$-category over $\Rep_u(\G)$, cf.~ \cite{DCY1}. 
\end{Exa}

\subsection{Morita equivalence}


\begin{Def} Two partial compact quantum groups $\mathscr{G}$ and $\mathscr{H}$ are said to be \emph{Morita equivalent} if there exists an equivalence $\Rep_u(\mathscr{G}) \rightarrow \Rep_u(\mathscr{H})$ of partial fusion C$^*$-categories. 
\end{Def} 

In particular, if $\mathscr{G}$ and $\mathscr{H}$ are Morita equivalent they have the same hyperobject set, but they need not share the same object set.

Our goal is to give a concrete implementation of Morita equivalence, as has been done for compact quantum groups \cite{BDV1}. Note that we slightly changed their terminology of monoidal equivalence into Morita equivalence, as we feel the monoidality is intrinsic to the context. We introduce the following definition, in which indices are considered modulo 2. 

\begin{Def} A \emph{linking partial compact quantum group} consists of a partial compact quantum group $\mathscr{G}$ defined by a partial Hopf $^*$-algebra $\mathscr{A}$ over a set $I$ with a distinguished partition $I = I_1\sqcup I_2$ such that the units $\UnitC{i}{j} = \sum_{k\in I_i,l\in I_j} \UnitC{k}{l} \in M(A)$ are central, and such that for each $r\in I_i$, there exists $s\in I_{i+1}$ such that $\UnitC{r}{s}\neq 0$.
\end{Def}

If $\mathscr{A}$ defines a linking partial compact quantum group, we can split $A$ into four components $A^i_j = A\UnitC{i}{j}$. It is readily verified that the $A^i_i$ together with all $\Delta_{rs}$ with $r,s \in I_i$ define themselves partial compact quantum groups, which we call the \emph{corner} partial compact quantum groups of $\mathscr{A}$. 

\begin{Prop} Two partial compact quantum groups are Morita equivalent iff they arise as the corners of a linking partial compact quantum group.
\end{Prop}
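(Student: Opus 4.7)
The plan is to derive both directions from the Tannaka--Krein--Woronowicz correspondence of Theorem~\ref{TheoTKPCQG} by manipulating fiber functors on the representation category.

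For the forward direction, let $\mathscr{A}$ be a linking PCQG with corners $\mathscr{A}_1,\mathscr{A}_2$ and put $\mathscr{C}=\Rep_u(\mathscr{A})$ with its canonical fiber functor $F\colon\mathscr{C}\to\{\Hilb_{\fd}\}_{I\times I}$. First I would note that the non-degeneracy condition for linking PCQGs, combined with Remark~\ref{remark:index-equivalence}, forces every hyperobject equivalence class of $\sim$ on $I$ to meet both $I_1$ and $I_2$, so each restriction $\varphi|_{I_i}\colon I_i\twoheadrightarrow\mathscr{I}$ is surjective and the truncation $F|_{I_i}\colon\mathscr{C}\to\{\Hilb_{\fd}\}_{I_i\times I_i}$ is a valid unital morphism of partial fusion C$^*$-categories. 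Corollary~\ref{cor:rep-pw} then identifies the PCQG reconstructed from $(\mathscr{C},F|_{I_i})$ with the span of matrix coefficients supported in $I_i^4$, which by centrality of $\UnitC{i}{i}$ is exactly $A\UnitC{i}{i}=\mathscr{A}_i$; the restricted comultiplication agrees since $\Delta_{rs}(A^i_i)\subseteq A^i_i\otimes A^i_i$ for $r,s\in I_i$, again by centrality. Theorem~\ref{TheoTKPCQG} then yields equivalences $\Rep_u(\mathscr{A}_i)\simeq\mathscr{C}$ for $i=1,2$, whose composition is the desired Morita equivalence.

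For the converse, given a Morita equivalence $\Phi\colon\Rep_u(\mathscr{G}_1)\to\Rep_u(\mathscr{G}_2)$, set $\mathscr{C}=\Rep_u(\mathscr{G}_1)$ and transport the fiber functor of $\mathscr{G}_2$ via $\Phi$ to a second fiber functor $F_2\circ\Phi\colon\mathscr{C}\to\{\Hilb_{\fd}\}_{I_2\times I_2}$, using the tensor property of $\Phi$ to identify the common hyperobject set $\mathscr{I}$. Putting $I=I_1\sqcup I_2$ with induced surjection $\tilde\varphi$, I would define the block-diagonal fiber functor
\begin{equation*}
\tilde F_{kl}(X)=\begin{cases}F_1(X)_{kl}&\text{if }k,l\in I_1,\\ F_2(\Phi X)_{kl}&\text{if }k,l\in I_2,\\ 0&\text{if $k,l$ lie in different parts,}\end{cases}
\end{equation*}
with structure isomorphisms $\iota,\mu$ inherited diagonally from the two halves. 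Unitality, local finiteness and coherence are immediate from those of $F_1$ and $F_2\circ\Phi$; the essential multiplicativity check splits the middle-index sum in $\iota^{(klm)}_{X,Y}$ along $I=I_1\sqcup I_2$, and all mixed-block summands vanish by the block-diagonal recipe, reducing the remaining diagonal sum to the multiplicativity of $F_1$ or $F_2\circ\Phi$ on each side.

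Theorem~\ref{TheoTKPCQG} then yields a PCQG $\tilde{\mathscr{A}}$ reconstructed from $(\mathscr{C},\tilde F)$, and it remains to verify its linking structure. Since $\Gr{\tilde A}{k}{l}{m}{n}(u_a)=\Hom(\tilde F_{mn}(u_a),\tilde F_{kl}(u_a))^*$ is non-zero only when both fibers are, the block-diagonal support of $\tilde F$ forces $k,l$ (resp.\ $m,n$) to lie in a common part of the partition; a short computation with the left and right multipliers $\UnitC{i}{j}=\sum_{p\in I_i,q\in I_j}\UnitC{p}{q}$ then translates this directly into centrality of each $\UnitC{i}{j}$. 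Given $r\in I_i$, surjectivity of $\varphi_{i+1}$ produces $s\in I_{i+1}$ with $\tilde\varphi(s)=\tilde\varphi(r)=\alpha$, whence $\mu$ identifies both $\tilde F_{rr}(\Unitb_\alpha)$ and $\tilde F_{ss}(\Unitb_\alpha)$ with $\C$ and so $\UnitC{r}{s}\in\Gr{\tilde A}{r}{r}{s}{s}(\Unitb_\alpha)\cong\C$ is non-zero, giving the linking non-degeneracy. Finally the $I_i^4$-block of $\tilde{\mathscr{A}}$ coincides with the Tannaka--Krein reconstruction of $(\mathscr{C},F_1)$ respectively $(\mathscr{C},F_2\circ\Phi)$, which is $\mathscr{G}_i$ by Theorem~\ref{TheoTKPCQG}. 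I expect the principal technical obstacle to be this corner identification in both directions: matching not just the underlying algebras but the full partial Hopf $^*$-structure of the reconstructed diagonal block with that of $\mathscr{G}_i$, which hinges on showing that the Sweedler components $\Delta_{rs}$ indexed by $r,s$ in a fixed part restrict correctly to the diagonal corner under the comparison map of the Tannaka--Krein correspondence.
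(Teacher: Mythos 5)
Your direction ``Morita equivalent $\Rightarrow$ corners of a linking partial compact quantum group'' is essentially the paper's own argument: glue the two fiber functors block-diagonally over $I=I_1\sqcup I_2$, reconstruct via Theorem~\ref{TheoTKPCQG}, and read off centrality of the $\UnitC{i}{j}$ from the block-diagonal support of $\tilde F$ and the non-degeneracy from $\Hom(\tilde F_{ss}(\Unitb_\alpha),\tilde F_{rr}(\Unitb_\alpha))\neq 0$. For the other direction you genuinely diverge: the paper does not truncate the fiber functor and re-run reconstruction, but instead builds the restriction functor $\Corep_u(\mathscr{A})\to\Corep_u(\mathscr{A}_1)$ directly and verifies by hand that it is faithful (Lemma~\ref{LemInjMor}) and induces a bijection on irreducibles, using restrictions of the regular corepresentation (Lemma~\ref{lemma:regular-corep}, Corollary~\ref{cor:rep-pw-morphisms}). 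Your route --- reconstruct from $(\mathscr{C},F|_{I_1})$ and identify the result with $A\UnitC{1}{1}$ via the comparison map $f\mapsto(\id\otimes f)(X_a)$ of Theorem~\ref{TheoTKPCQG} together with Corollary~\ref{cor:rep-pw} applied to $\mathscr{A}$ --- is viable and outsources the essential-surjectivity work to the reconstruction theorem; the corner identification you flag as the main obstacle does go through along those lines.

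There is, however, one concrete gap. You assert that the truncation $F|_{I_i}$ is a valid unital morphism \emph{because} $\varphi|_{I_i}\colon I_i\to\mathscr{I}$ is surjective. Surjectivity only ensures the morphism is based over all of $\mathscr{I}$; it does not give the multiplicativity axiom. For $k,m\in I_1$ the truncated sum $\bigoplus_{l\in I_1\cap\mathscr{J}_\beta}F_{kl}(X)\otimes F_{lm}(Y)$ must still exhaust $F_{km}(X\otimes Y)$, and this fails unless every mixed component $F_{kl}(X)=\Gru{V}{k}{l}$ with $k\in I_1$, $l\in I_2$ vanishes. That vanishing is true, but it is a consequence of the centrality of the $\UnitC{i}{j}$: centrality kills $\Gr{A}{k}{l}{k}{l}$ for mixed $k,l$, and then the counit axiom \eqref{eq:rep-counit} (equivalently Lemma~\ref{lemma:rep-invertible}, as the paper invokes it) forces $\Gru{V}{k}{l}=0$. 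This is precisely the opening observation of the paper's faithfulness argument, and you must insert it before the truncated functor --- and hence your entire reconstruction of the corner --- is well defined.
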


\begin{proof} Suppose first that $\mathscr{G}_1$ and $\mathscr{G}_2$ are Morita equivalent partial compact quantum groups with associated partial Hopf $^*$-algebras $\mathscr{A}_1$ and $\mathscr{A}_2$ over respective sets $I_1$ and $I_2$. Then we may identify their corepresentation categories with the same abstract partial tensor C$^*$-category $\CatCC$ based over their common hyperobject set $\mathscr{I}$. Then $\CatCC$ comes endowed with two forgetful functors $F^{(i)}$ to $\{\Hilb_{\fd}\}_{I_i\times I_i}$ corresponding to the respective $\mathscr{A}_i$.

With $I = I_1\sqcup I_2$, we may then as well combine the $F^{(i)}$ into a global unital morphism $F:\CatCC \rightarrow \{\Hilb_{\fd}\}_{I\times I}$, with $F_{kl}(X)=F_{kl}^{(i)}(X)$ if $k,l\in I_i$ and $F_{kl}(X)=0$ otherwise. Let $\mathscr{A}$ be the associated partial Hopf $^*$-algebra constructed from the Tannaka-Kre$\breve{\textrm{\i}}$n-Woronowicz reconstruction procedure. 

From the precise form of this reconstruction, it follows immediately that $\Gr{A}{k}{l}{m}{n} =0$ if either $k,l$ or $m,n$ do not lie in the same $I_i$. Hence the $\UnitC{i}{j} = \sum_{k\in I_i,l\in I_j} \UnitC{k}{l}$ are central. 

Moreover, fix $k\in I_i$ and any $l\in I_{i+1}$ with $k'=l'$. Then $\Nat(F_{ll},F_{kk})\neq \{0\}$. It follows that $\UnitC{k}{l}\neq 0$. Hence $\mathscr{A}$ is a linking compact quantum group. It is clear that $\mathscr{A}_1$ and $\mathscr{A}_2$ are the corners of $\mathscr{A}$. 

Conversely, suppose that $\mathscr{A}_1$ and $\mathscr{A}_2$ arise from the corners of a linking partial compact quantum group defined by $\mathscr{A}$ with invariant integral $\phi$. We will show that the associated partial compact quantum groups $\mathscr{G}$ and $\mathscr{G}_1$ are Morita equivalent. Then by symmetry $\mathscr{G}$ and $\mathscr{G}_2$ are Morita equivalent, and hence also $\mathscr{G}_1$ and $\mathscr{G}_2$.

For $(V,\mathscr{X}) \in \Corep_u(\mathscr{A})$, let $F(V,\mathscr{X})
= (W,\mathscr{Y})$ be the pair obtained from $(V,\mathscr{X})$ by
restricting all indices to those contained in $I_1$. It is immediate that $(W,\mathscr{Y})$ is a unitary corepresentation of $\mathscr{A}_1$, and that the functor $F$  becomes a unital morphism in a trivial way. What remains to show is that $F$ is an equivalence of categories, i.e.~ that $F$ is faithful and essentially surjective. 

Let us first show that $F$ is faithful.  Lemma
\ref{lemma:rep-invertible} implies that for every $(V,\mathscr{X}) \in
\Corep_u(\mathscr{A})$, we have $\Gru{V}{k}{l}=0$ whenever $k\in
I_{i}$ and $l\in I_{i+1}$.  If $T$ is a morphism in
$\Corep_u(\mathscr{A})_{\alpha\beta}$ and $\Gru{T}{k}{l}=0$ for all
$k,l \in I_{1}$, we therefore get $\Gru{T}{k}{l}=0$ for all $k\in I$
and $l\in I_{1}$. Since $I_{\beta}\cap I_{1}$ is non-empty by
assumption, we can apply Lemma \ref{LemInjMor} and conclude that
$T=0$.

To complete the proof, we only need to show that $F$ induces a
bijection between isomorphism classes of irreducible unitary 
corepresentations of $\mathcal{A}$ and of $\mathcal{A}_{1}$. Note that
by Proposition \ref{prop:rep-cosemisimple} and Lemma
\ref{lemma:rep-regular-embedding}, each such class can be represented
by a restriction of the regular corepresentation of $\mathcal{A}$ or
$\mathcal{A}_{1}$, respectively.

So, let $(W,\mathscr{Y})$ be an irreducible restriction of the regular
corepresentation of $\mathcal{A}_{1}$. Pick a non-zero $a \in
\Gru{W}{m}{n}$, define $\Gru{V}{p}{q} \subseteq \bigoplus_{k,l}
\Gr{A}{k}{l}{p}{q}$ as in \eqref{eq:element-reg-corep} and form the
regular corepresentation $(V,\mathscr{X})$ of $\mathscr{A}$. Then
$\Gru{V}{p}{q} = \Gru{W}{p}{q}$ for all $p,q\in I_{1}$ by Lemma
\ref{lemma:regular-corep} (2) and hence $F(V,\mathscr{X}) =
(W,\mathscr{Y})$.  Since $F$ is faithful, $(V,\mathscr{X})$ must be
irreducible.

Conversely, let $(V,\mathscr{X})$ be an irreducible restriction of the
regular corepresentation of $\mathcal{A}$. Since $F$ is faithful,
there exist $k,l\in I_{1}$ such that $\Gru{V}{k}{l}\neq 0$. Applying
Corollary \ref{cor:rep-pw-morphisms}, we may assume that
$\Gru{V}{p}{q} \subseteq \Gr{A}{k}{l}{p}{q}$ for some $k,l\in I_{1}$
and all $p,q\in I$. But then $F(V,\mathscr{X})$ is a restriction of
the regular corepresentation of $\mathcal{A}_{1}$.  If
$F(V,\mathscr{X})$ would decompose into a direct sum of several
irreducible corepresentations, then the same would be true for
$(V,\mathscr{X})$ by the argument above. Thus, $F(V,\mathscr{X})$ is irreducible.

Finally, assume that
$(V,\mathscr{X})$ and $(W,\mathscr{Y})$ are 
inequivalent irreducible unitary corepresentations of $\mathcal{A}$. Then $\mathcal{C}(V,\mathscr{X}) \cap
\mathcal{C}(W,\mathscr{Y})=0$ by Corollary \ref{cor:rep-unitary-orthogonality-1} and hence $\mathcal{C}(F(V,\mathscr{X}))
\cap\mathcal{C}(F(W,\mathscr{Y})) =0$, whence $F(V,\mathscr{X})$ and
$F(W,\mathscr{Y})$ are inequivalent.  
\end{proof}

\begin{Exa} If $\mathscr{G}_1$ and $\mathscr{G}_2$ are Morita equivalent compact quantum groups, the total partial compact quantum group is the co-groupoid constructed in \cite{Bic1}. 
\end{Exa}

\begin{Exa}  Let $\G$ be a compact quantum group with ergodic action on a unital C$^*$-algebra $C(\mathbb{X})$. Consider the module C$^*$-category $\CatD$ of finitely generated $\G$-equivariant Hilbert $C(\mathbb{X})$-modules as in Example \ref{ExaErgo}. Then $\G$ is Morita equivalent with the canonical partial compact quantum group constructed from $(\Rep_u(\G),\CatD)$. The off-diagonal part of the associated linking partial compact quantum group was studied in \cite{DCY1}. We will make a detailed study of the case $\G = SU_q(2)$ in \cite{DCT2}, in particular for $\X$ a Podle\'{s} sphere. This will lead us to partial compact quantum group versions of the dynamical quantum $SU(2)$-group.
\end{Exa}

\subsection{Weak Morita equivalence}

\begin{Def} A \emph{linking} partial fusion C$^*$-category consists of a partial fusion C$^*$-category with a distinguished partition $\mathscr{I} =\mathscr{I}_1 \cup \mathscr{I}_2$ such that for each $\alpha\in \mathscr{I}_1$, there exists $\beta \in \mathscr{I}_{2}$ with $\CatC_{\alpha\beta}\neq \{0\}$.

The \emph{corners} of $\CatCC$ are the restrictions of $\CatCC$ to $\mathscr{I}_1$ and $\mathscr{I}_2$.
\end{Def}

The following notion is essentially the same as the one by M. M\"{u}ger \cite{Mug1}. 

\begin{Def} Two partial semi-simple tensor C$^*$-categories $\CatCC_1$ and $\CatCC_2$ with duality over respective sets $\mathscr{I}_1$ and $\mathscr{I}_2$ are called \emph{Morita equivalent} if there exists a linking partial fusion C$^*$-category $\CatCC$ over the set $\mathscr{I}=\mathscr{I}_1\sqcup \mathscr{I}_2$ whose corners are isomorphic to $\CatCC_1$ and $\CatCC_2$.

We say two partial compact quantum groups $\mathscr{G}_1$ and $\mathscr{G}_2$ are \emph{weakly Morita equivalent} if their representation categories $\Rep_u(\mathscr{G}_i)$ are Morita equivalent. 
\end{Def} 

One can prove that this is indeed an equivalence relation. 

\begin{Def}\label{DefCoLink} A \emph{co-linking partial compact quantum group} consists of a partial compact quantum group $\mathscr{G}$ defined by a Hopf $^*$-algebra $\mathscr{A}$ over an index set $I$, together with a distinguished partition $I = I_1\cup I_2$ such that  $\UnitC{k}{l}=0$ whenever $k\in I_i$ and $l\in I_{i+1}$, and such that for each $k\in I_i$, there exists $l\in I_{i+1}$ with $\Gr{A}{k}{l}{k}{l}\neq 0$.  
\end{Def} 

It is again easy to see that if we restrict all indices of a co-linking partial compact quantum group to one of the distinguished sets, we obtain a partial compact quantum group which we will call a corner. In fact, write $e_i = \sum_{k,l\in I_i} \UnitC{k}{l}$. Then we can decompose the total algebra $A$ into components $A_{ij} = e_{i}Ae_{j}$, and correspondingly write $A$ in matrix notation \[ A = \begin{pmatrix} A_{11} & A_{12}  \\ A_{21} & A_{22}\end{pmatrix},\] where multiplication is matrixwise and where comultiplication is entrywise. Note that we have $A_{12}A_{21} = A_{11}$, and similarly $A_{21}A_{12} = A_{22}$. Indeed, take $k\in I_1$, and pick $l\in I_2$ with $\Gr{A}{k}{l}{k}{l}\neq \{0\}$. Then in particular, we can find an $a\in \Gr{A}{k}{l}{k}{l}$ with $\epsilon(a)\neq 0$. Hence for any $m\in I_1$, we have $\UnitC{k}{m} = \UnitC{k}{m} a_{(1)}S(a_{(2)}) \in A_{12}A_{21}$. As this latter space contains all local units of $A_{11}$ and is a right $A_{11}$-module, it follows that it is in fact equal to $A_{11}$. We hence deduce that in fact $A_{11}$ and $A_{22}$ are Morita equivalent algebras, with the Morita equivalence implemented by $A$. 

\begin{Rem} For finite partial compact quantum groups, one can then
  easily show that the notion of a co-linking partial compact quantum
  group is dual to the notion of a linking partial compact quantum group.\end{Rem}

\begin{Def} We call two partial compact quantum groups \emph{co-Morita equivalent} if there exists a \emph{co-linking partial compact quantum group} having these partial compact quantum groups as its corners.
\end{Def}

\begin{Lem} Co-Morita equivalence is an equivalence relation. 
\end{Lem}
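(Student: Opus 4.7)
The plan is to verify the three axioms of an equivalence relation. Reflexivity and symmetry admit short constructions, while transitivity requires a genuine gluing construction and is the main obstacle.

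For reflexivity, given a partial compact quantum group $\mathscr{G}$ defined by a partial Hopf $^*$-algebra $\mathscr{A}$ over an index set $I$, I would form a ``trivial doubling'' $\mathscr{A}'$ over $I' = I\times\{1,2\}$ with homogeneous components
\[
\Gr{(A')}{(k,i)}{(l,j)}{(m,p)}{(n,q)} \;=\; \delta_{i,p}\,\delta_{j,q}\,\Gr{A}{k}{l}{m}{n},
\]
and structure maps (product, coproduct, counit, antipode, $*$-structure, invariant integral) inherited componentwise from $\mathscr{A}$. The off-diagonal units $\UnitC{(k,1)}{(l,2)}$ lie in components with mismatched second indices and therefore vanish, while for each $(k_0,1)\in I_1$ the space $\Gr{(A')}{(k_0,1)}{(k_0,2)}{(k_0,1)}{(k_0,2)} = \Gr{A}{k_0}{k_0}{k_0}{k_0}$ contains the non-zero unit $\UnitC{k_0}{k_0}$. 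Partition $I' = I_1\sqcup I_2$ with $I_i = I\times\{i\}$ then makes $\mathscr{A}'$ a co-linking partial compact quantum group whose two corners are canonically isomorphic to $\mathscr{A}$. Symmetry is immediate: if $\mathscr{H}$ is a co-linking PCQG for $(\mathscr{G}_1,\mathscr{G}_2)$ via $I = I_1\cup I_2$, then the identical datum with partition $I = I_2\cup I_1$ exhibits co-Morita equivalence of $(\mathscr{G}_2,\mathscr{G}_1)$.

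For transitivity, suppose $\mathscr{H}_{12}$ and $\mathscr{H}_{23}$ are co-linking PCQGs witnessing $\mathscr{G}_1\sim\mathscr{G}_2$ and $\mathscr{G}_2\sim\mathscr{G}_3$ respectively, with index sets $I_1\sqcup I_2$ and $I_2'\sqcup I_3$ and hyperobject sets $\mathscr{I}_1\sqcup\mathscr{I}_2$ and $\mathscr{I}_2\sqcup\mathscr{I}_3$. I would construct a co-linking PCQG $\mathscr{H}_{13}$ between $\mathscr{G}_1$ and $\mathscr{G}_3$ via the Tannaka--Kre\u{\i}n--Woronowicz reconstruction (Theorem \ref{TheoTKPCQG}). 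Write $\CatC_{12} = \Rep_u(\mathscr{H}_{12})$ and $\CatC_{23} = \Rep_u(\mathscr{H}_{23})$; these are partial fusion C$^*$-categories each equipped with a canonical fiber functor $F_{12},F_{23}$ based over $I_1\sqcup I_2$ and $I_2'\sqcup I_3$. Choosing an isomorphism $\mathscr{G}_2\cong\mathscr{G}_2'$, induced by a bijection $I_2\leftrightarrow I_2'$, the restrictions of $F_{12}$ and $F_{23}$ to the common subcategory $\Rep_u(\mathscr{G}_2)$ become intertwined. I would then form the ``glued'' partial fusion C$^*$-category $\CatC$ over $\mathscr{I}_1\cup\mathscr{I}_2\cup\mathscr{I}_3$: for $\alpha\in\mathscr{I}_1$ and $\gamma\in\mathscr{I}_3$, declare $\CatC_{\alpha\gamma}$ to consist of (finite direct sums and subobjects of) formal tensor products $X\otimes Y$ with $X\in\CatC_{12,\alpha\beta}$, $Y\in\CatC_{23,\beta\gamma}$ for some $\beta\in\mathscr{I}_2$, morphisms being balanced over $\Rep_u(\mathscr{G}_2)$ (the categorical analogue of the cotensor product $B_{12}\square_{A_2}C_{23}$ of the off-diagonal blocks). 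The glued fiber functor is obtained by tensoring $F_{12}$ and $F_{23}$ over the shared $I_2$-index. Restricting $\CatC$ to $\mathscr{I}_1\cup\mathscr{I}_3$ yields a linking partial fusion C$^*$-category with a fiber functor based over $I_1\sqcup I_3$, and Theorem \ref{TheoTKPCQG} reconstructs the desired $\mathscr{H}_{13}$, whose corners are $\mathscr{G}_1$ and $\mathscr{G}_3$ by construction.

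The main obstacle is the relative-tensor-product step: one must verify that the glued category $\CatC$ is genuinely a partial fusion C$^*$-category, i.e.\ semisimple with dualities, with positive-definite inner products on morphism spaces, and that the extended $F$ is a rcf isometric $^*$-functor faithful enough to be a valid fiber functor. After this, the two co-linking conditions for $\mathscr{H}_{13}$ follow almost automatically: (i) $\UnitC{k}{l}=0$ for $k\in I_1,l\in I_3$ is forced because $\mathscr{I}_1$ and $\mathscr{I}_3$ are disjoint hyperobject components of $\CatC|_{\mathscr{I}_1\cup\mathscr{I}_3}$, while (ii) the non-triviality ``for each $k\in I_1$ there is $l\in I_3$ with $\Gr{A}{k}{l}{k}{l}\neq 0$'' is produced by composing the non-triviality provided by $\mathscr{H}_{12}$ (giving some $l\in I_2$ with $\Gr{(A^{12})}{k}{l}{k}{l}\neq 0$, hence an irreducible corepresentation of $\mathscr{H}_{12}$ with a nontrivial $(k,l)$-component) with the one provided by $\mathscr{H}_{23}$ at index $l\in I_2$ (yielding $l'\in I_3$ with a corresponding irreducible of $\mathscr{H}_{23}$), whose tensor product furnishes a nonzero matrix coefficient in $\Gr{A}{k}{l'}{k}{l'}$.
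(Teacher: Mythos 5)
Your reflexivity and symmetry arguments are correct and essentially identical to the paper's: the ``trivial doubling'' over $I\times\{1,2\}$ is exactly the product of $\mathscr{A}$ with the partial compact quantum group $M_2(\C)$ coming from the pair groupoid on two points ($\Delta(e_{ij})=e_{ij}\otimes e_{ij}$), which is the co-linking object the paper uses, and symmetry is indeed immediate.

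For transitivity, however, you have replaced the proof by a statement of the problem. The paper works entirely on the algebra side: it forms the $3\times 3$ array $A_{\{1,2,3\}}$ with $A_{13}=A_{12}\underset{A_{22}}{\otimes}A_{23}$ and $A_{31}=A_{32}\underset{A_{22}}{\otimes}A_{21}$ (using that $A_{12}A_{21}=A_{11}$ etc., so the off-diagonal blocks form a Morita context), checks that the entrywise coproduct extends uniquely to a regular weak multiplier Hopf $^*$-algebra satisfying Proposition \ref{PropCharPBA}, and — crucially — proves positivity of the glued invariant integral via Remark \ref{RemPos}: every irreducible corepresentation has coefficients in a single block $A_{ij}$, and those landing in $A_{13}$ are direct summands of products $\mathscr{Y}\Circt\mathscr{Z}$ of unitarizable corepresentations because $A_{12}A_{23}=A_{13}$. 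The final non-triviality condition is checked with counits ($\epsilon(a)=\epsilon(b)=1\Rightarrow\epsilon(ab)=1$, so $\Gr{A}{k}{l}{k}{l}\neq 0$). Your categorical route — forming the relative (balanced) tensor product $\CatC_{12,\alpha\beta}$-by-$\CatC_{23,\beta\gamma}$ over $\Rep_u(\mathscr{G}_2)$ and gluing the fiber functors over the shared $I_2$-grading — is the standard composition of Morita equivalences of tensor categories and would in principle work, but the step you flag as ``the main obstacle'' (semisimplicity, duality and the C$^*$-structure of the glued blocks, descent of the tensored fiber functor to the balanced product, and its rcf/isometry properties in this infinite setting) is precisely where all the content of transitivity lies, and none of it is carried out. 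As written, the transitivity argument is a plan rather than a proof; the paper's algebraic gluing is the more elementary way to discharge exactly these verifications, and the positivity of the Haar functional on the new $A_{13}$-block is a point your sketch does not address at all.
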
 

\begin{proof} Symmetry is clear. Co-Morita equivalence of
  $\mathscr{A}$ with itself follows by considering as co-linking
  quantum groupoid the product of $\mathscr{A}$ with the partial
  compact quantum group $M_2(\C)$, where $\Delta(e_{ij}) =
  e_{ij}\otimes e_{ij}$, arising from a groupoid as in  Example \ref{ExaGrpd}.

Let us show the main elements to prove transitivity. Let us assume $\mathscr{G}_1$ and $\mathscr{G}_2$ as well as $\mathscr{G}_2$ and $\mathscr{G}_3$ are co-Morita equivalent. Let us write the global $^*$-algebras of the associated co-linking quantum groupoids as \[A_{\{1,2\}} = \begin{pmatrix} A_{11} & A_{12} \\ A_{21} & A_{22} \end{pmatrix}, \quad A_{\{2,3\}} = \begin{pmatrix} A_{22} & A_{23} \\ A_{32} & A_{33}\end{pmatrix}.\] Then we can define a new $^*$-algebra $A_{\{1,2,3\}}$ as \[ A_{\{1,2,3\}} = \begin{pmatrix} A_{11} & A_{12} &   A_{13} \\ A_{21} & A_{22} & A_{23} \\ A_{31} & A_{32} & A_{33} \end{pmatrix},\] where $A_{13} = A_{12}\underset{A_{22}}{\otimes } A_{23}$ and $A_{31} = A_{32}\underset{A_{22}}{\otimes} A_{21}$, and with multiplication and $^*$-structure defined in the obvous way. 

It is straightforward to verify that there exists a unique $^*$-homomorphism $\Delta: A_{\{1,2,3\}} \rightarrow M(A_{\{1,2,3\}}\otimes A_{\{1,2,3\}})$ whose restrictions to the $A_{ij}$ with $|i-j|\leq 1$ coincide with the already defined coproducts. We leave it to the reader to verify that $(A,\Delta)$ defines a regular weak multiplier Hopf $^*$-algebra satisfying the conditions of Proposition \ref{PropCharPBA}, and hence arises from a regular partial weak Hopf $^*$-algebra. 

Let now $\phi$ be the functional which is zero on the off-diagonal entries $A_{ij}$ and coincides with the invariant positive integrals on the $A_{ii}$. Then it is also easily checked that $\phi$ is invariant. To show that $\phi$ is positive, we invoke Remark \ref{RemPos}. Indeed, any irreducible corepresentation of $A_{\{1,2,3\}}$ has coefficients in a single $A_{ij}$. For those $i,j$ with $|i-j|\leq 1$, we know that the corepresentation is unitarizable by restricting to a corner $2\times 2$-block. If however the corepresentation $\mathscr{X}$ has coefficients living in (say) $A_{13}$, it follows from the identity $A_{12}A_{23}=A_{13}$ that the corepresentation is a direct summand of a product $\mathscr{Y}\Circt \mathscr{Z}$ of corepresentations with coefficients in respectively $A_{12}$ and $A_{23}$. This proves unitarizability of $\mathscr{X}$. It follows from Remark \ref{RemPos} that $\phi$ is positive, and hence $\mathscr{A}_{\{1,2,3\}}$ defines a partial compact quantum group.  

We claim that the subspace $\mathscr{A}_{\{1,3\}}$ (in the obvious notation) defines a co-linking compact quantum group between $\mathscr{G}_1$ and $\mathscr{G}_3$. In fact, it is clear that the $\mathscr{A}_{11}$ and $\mathscr{A}_{33}$ are corners of $\mathscr{A}_{\{1,3\}}$, and that $\UnitC{k}{l}=0$ for $k,l$ not both in $I_1$ and $I_{3}$. To finish the proof, it is sufficient to show now that for each $k\in I_1$, there exists $l\in I_{3}$ with $\Gr{A}{k}{l}{k}{l}\neq 0$, as the other case follows by symmetry using the antipode. But there exists $m\in I_2$ with $\Gr{A}{k}{m}{k}{m} \neq \{0\}$, and $l\in I_3$ with $\Gr{A}{m}{l}{m}{l}\neq\{0\}$. As in the discussion following Definition \ref{DefCoLink}, this implies that there exists $a\in \Gr{A}{k}{m}{k}{m}$ and $b\in \Gr{A}{m}{l}{m}{l}$ with $\epsilon(a)=\epsilon(b)=1$. Hence $\epsilon(ab)=1$, showing $\Gr{A}{k}{l}{k}{l}\neq \{0\}$.
\end{proof} 

\begin{Prop}\label{PropCoWeak} Assume that two partial compact quantum groups $\mathscr{G}_1$ and $\mathscr{G}_2$ are co-Morita equivalent. Then they are weakly Morita equivalent.
\end{Prop}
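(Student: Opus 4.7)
The plan is to show that the partial fusion C$^*$-category $\Rep_u(\mathscr{A})$ of the co-linking partial compact quantum group $\mathscr{A}$ \emph{is} the linking partial fusion C$^*$-category that witnesses the weak Morita equivalence of $\mathscr{G}_1$ and $\mathscr{G}_2$.

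First, I would analyse the hyperobject set $\mathscr{I}$ of $\mathscr{A}$. Since $\mathbf{1}_{kl} = 0$ whenever $k$ and $l$ lie in different parts of the partition $I = I_1 \sqcup I_2$, the equivalence relation $\sim$ of Remark \ref{remark:index-equivalence} never crosses this partition, and hence $\mathscr{I}$ decomposes as a disjoint union $\mathscr{I} = \mathscr{I}_1 \sqcup \mathscr{I}_2$ where $\mathscr{I}_i$ consists of the equivalence classes contained in $I_i$. Consequently, the partial fusion C$^*$-category $\Rep_u(\mathscr{A})$ (which is indexed by $\mathscr{I}\times\mathscr{I}$ by the results of Section 3) is naturally indexed by $(\mathscr{I}_1\sqcup\mathscr{I}_2) \times (\mathscr{I}_1\sqcup\mathscr{I}_2)$.

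Next, I would identify the corners. Given $\alpha,\beta \in \mathscr{I}_i$, an object of $\Rep_u(\mathscr{A})_{\alpha\beta}$ is a unitary corepresentation $(V,\mathscr{X})$ with $V = \bigoplus_{k,l} V^k_l$ supported on $(k,l) \in I_\alpha \times I_\beta \subseteq I_i\times I_i$. Because the relevant homogeneous components $\Gr{A}{k}{l}{m}{n}$ all lie in the corner $A_{ii} = e_i A e_i$, each such corepresentation is naturally a unitary corepresentation of $\mathscr{G}_i$, and conversely any unitary corepresentation of $\mathscr{G}_i$ extends by zero to $\mathscr{A}$. Combining this with Proposition \ref{prop:rep-cosemisimple} and Proposition \ref{prop:rep-unitarisable}, one obtains an equivalence of partial fusion C$^*$-categories between the restriction of $\Rep_u(\mathscr{A})$ to $\mathscr{I}_i\times\mathscr{I}_i$ and $\Rep_u(\mathscr{G}_i)$, so the two corners are precisely the desired representation categories.

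Finally, I would verify the linking condition: for every $\alpha \in \mathscr{I}_1$, some $\beta \in \mathscr{I}_2$ must satisfy $\Rep_u(\mathscr{A})_{\alpha\beta} \neq \{0\}$. Pick $k \in I_\alpha \subseteq I_1$. By definition of a co-linking partial compact quantum group, there exists $l \in I_2$ with $\Gr{A}{k}{l}{k}{l}\neq 0$; let $\beta \in \mathscr{I}_2$ be its equivalence class. By Proposition \ref{prop:rep-weak-pw}, the non-zero space $\Gr{A}{k}{l}{k}{l}$ must contain a matrix coefficient of some irreducible unitary corepresentation of $\mathscr{A}$, and such a corepresentation is then a non-zero object of $\Rep_u(\mathscr{A})_{\alpha\beta}$. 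Thus $\Rep_u(\mathscr{A})$ is a linking partial fusion C$^*$-category in the sense of the definition preceding the statement, with corners $\Rep_u(\mathscr{G}_1)$ and $\Rep_u(\mathscr{G}_2)$, proving that $\mathscr{G}_1$ and $\mathscr{G}_2$ are weakly Morita equivalent.

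The only step that requires a little care is the identification of the corners: one must check that the equivalence between restricted corepresentations of $\mathscr{A}$ and corepresentations of $\mathscr{A}_{ii}$ is compatible with tensor products and duals. This is straightforward because the coproduct of $\mathscr{A}$ restricts to the coproduct of $\mathscr{A}_{ii}$ on the corner, the antipode restricts likewise, and the tensor product of corepresentations $(V,\mathscr{X})\Circt(W,\mathscr{Y})$ supported in the same corner stays in that corner (the sum \eqref{eq:rep-product-blocks} has a contribution from index $l$ only when $l \in I_i$, by the grading of $\mathscr{A}$).
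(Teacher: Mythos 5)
Your proposal is correct and follows essentially the same route as the paper: decompose the hyperobject set along the partition $I=I_1\sqcup I_2$, identify the diagonal corners of $\Rep_u(\mathscr{A})$ with $\Rep_u(\mathscr{G}_i)$, and use the co-linking condition to produce a non-zero object in an off-diagonal block. The only cosmetic difference is that you invoke Proposition \ref{prop:rep-weak-pw} to find the witnessing irreducible corepresentation where the paper takes a regular corepresentation inside $\oplus_{m,n}\Gr{A}{k}{l}{m}{n}$; both work, and your remarks on compatibility of the corner identification with tensor products and duals fill in a detail the paper leaves implicit.
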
 
\begin{proof} 
Consider the corepresentation category $\CatCC$ of a co-linking partial compact quantum group $\mathscr{A}$ over $I = I_1\cup I_2$. Let $\varphi:I\rightarrow \mathscr{I}$ define the corresponding partition along the hyperobject set. Then by the defining property of a co-linking partial compact quantum group, also $\mathscr{I} = \mathscr{I}_1\cup \mathscr{I}_2$ with $\mathscr{I}_i=\varphi(I_i)$ is a partition. Hence $\CatCC$ decomposes into parts $\CatCC_{ij}$ with $i,j\in \{1,2\}$ and $\CatC_{ii}\cong \Rep_u(\mathscr{G}_i)$. 

To show that $\mathscr{G}_1$ and $\mathscr{G}_2$ are weakly Morita equivalent, it thus suffices to show that $\{\CatCC_{ij}\}$ forms a linking partial fusion C$^*$-category. But fix $\alpha\in I_1$ and $k\in I_{\alpha}$. Then as $\mathscr{A}$ is co-linking, there exists $l \in I_2$ with $\Gr{A}{k}{l}{k}{l}\neq \{0\}$. Hence there exists a non-zero regular unitary corepresentation inside $\oplus_{m,n}\Gr{A}{k}{l}{m}{n}$. If then $l\in I_{\beta}$ with $\beta\in \mathscr{I}_2$, it follows that $\CatC_{\alpha\beta}\neq 0$. By symmetry, we also have that for each $\alpha \in \mathscr{I}_2$ there exists $\beta \in \mathscr{I}_1$ with $\CatC_{\alpha\beta}\neq \{0\}$. This proves that the $\{\CatCC_{ij}\}$ forms a linking partial fusion C$^*$-category.
\end{proof}

\begin{Prop}\label{PropCoLink} Let $\CatCC$ be a linking $\mathscr{I}$-partial fusion C$^*$-category. Then the associated canonical partial compact quantum group is a co-linking partial compact quantum group. 
\end{Prop}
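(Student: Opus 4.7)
The plan is to exhibit a bipartition of the object set $\mathcal{I}$ of $\mathscr{A}_{\CatCC}$ directly from the given bipartition $\mathscr{I}=\mathscr{I}_1\sqcup\mathscr{I}_2$ and then verify the two conditions of Definition \ref{DefCoLink}. Recall that in Hayashi's construction the objects $k\in\mathcal{I}$ come bigraded by $k\in\Gru{\mathcal{I}}{\alpha}{\beta}\Leftrightarrow u_k\in\CatC_{\alpha\beta}$, and the partition function on $\mathcal{I}$ sends $k$ to its \emph{first} bigrading index $\alpha=k'$. The natural choice is therefore
\[\mathcal{I}_i:=\bigcup_{\alpha\in\mathscr{I}_i,\,\beta\in\mathscr{I}}\Gru{\mathcal{I}}{\alpha}{\beta},\qquad i\in\{1,2\},\]
which partitions $\mathcal{I}$.

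The first condition is then essentially immediate from the reconstruction formula: the local unit $\UnitC{k}{l}$ is by construction the canonical generator of the one-dimensional component $\Gr{A}{k}{k}{l}{l}(\Unitb_{\alpha})$ when $\alpha=k'=l'$, and vanishes otherwise. For $k\in\mathcal{I}_i$ and $l\in\mathcal{I}_{i+1}$ we have $k'\in\mathscr{I}_i$ and $l'\in\mathscr{I}_{i+1}$, so $\UnitC{k}{l}=0$.

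The real content lies in the second condition. Given $k\in\mathcal{I}_i$ with $u_k\in\CatC_{\alpha\gamma}$ and $\alpha\in\mathscr{I}_i$, the linking hypothesis on $\CatCC$ yields some $\beta\in\mathscr{I}_{i+1}$ with $\CatC_{\alpha\beta}\neq\{0\}$; pick any irreducible $u_a\in\CatC_{\alpha\beta}$. Its two-sided dual $\overline{u_a}\in\CatC_{\beta\alpha}$ is non-zero, and the semi-simple fusion structure forces $\overline{u_a}\otimes u_k\in\CatC_{\beta\gamma}$ to be non-zero as well (otherwise tensoring again with $u_a$ would annihilate $u_k$, since $\Unitb_\alpha$ is a direct summand of $u_a\otimes\overline{u_a}$). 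Pick any irreducible summand $u_l\subseteq\overline{u_a}\otimes u_k$; then $l\in\Gru{\mathcal{I}}{\beta}{\gamma}\subseteq\mathcal{I}_{i+1}$ and Frobenius reciprocity gives
\[F_{kl}(u_a)=\Hom(u_k,u_a\otimes u_l)\cong\Hom(\overline{u_a}\otimes u_k,u_l)\neq 0,\]
so that $\Gr{A}{k}{l}{k}{l}(a)=\Hom_{\C}(F_{kl}(u_a),F_{kl}(u_a))^{*}\neq 0$, as required.

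The only subtlety to keep track of is the bookkeeping of bigrading indices: $l$ must share its second index with $k$ (otherwise $F_{kl}$ already vanishes identically on $\CatC_{\alpha\beta}$), which is exactly why we take $u_l$ as a summand of $\overline{u_a}\otimes u_k\in\CatC_{\beta\gamma}$ rather than an arbitrary irreducible in $\CatC_{\beta *}$. No deeper obstacle is expected; the argument is a direct unwinding of Hayashi's construction together with the standard duality input on the category side.
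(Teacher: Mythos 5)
Your proof is correct, and its skeleton coincides with the paper's: the same partition $\mathcal{I}_i=\varphi^{-1}(\mathscr{I}_i)$ by the first bigrading index, the same immediate observation that $\UnitC{k}{l}=0$ when $k'$ and $l'$ lie in different $\mathscr{I}_i$, and the same use of the linking hypothesis to produce $\beta\in\mathscr{I}_{i+1}$ and a nonzero irreducible $u_a\in\CatC_{\alpha\beta}$. The only real divergence is in how the key nonvanishing $\Gr{A}{k}{l}{k}{l}\neq\{0\}$ is established. The paper works on the corepresentation side: it takes the irreducible corepresentation $(V,\mathscr{X})$ corresponding to $u_a$, invokes the support result (Lemma \ref{LemInjMor} and irreducibility) to find $l$ with $\Gru{V}{k}{l}\neq\{0\}$, and then uses the counit identity $(\epsilon\otimes\id)\Gr{X}{k}{l}{k}{l}=\id_{\Gru{V}{k}{l}}$ to get a nonzero matrix coefficient. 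You instead stay on the fiber-functor side of Hayashi's construction and exhibit the nonzero component $F_{kl}(u_a)=\Hom(u_k,u_a\otimes u_l)$ directly, choosing $u_l$ as an irreducible summand of $\overline{u_a}\otimes u_k$ and applying Frobenius reciprocity; your remark that $\overline{u_a}\otimes u_k\neq 0$ because $\Unitb_\alpha$ is a summand of $u_a\otimes\overline{u_a}$ is exactly the point that replaces the paper's appeal to the support lemma. Under the Tannaka dictionary $\Gru{V}{k}{l}=F_{kl}(u_a)$ the two arguments say the same thing, but yours is more self-contained (it needs only rigidity and semisimplicity of $\CatCC$, not the previously developed corepresentation machinery), and your bookkeeping of the second bigrading index $\gamma$ is handled correctly.
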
 

\begin{proof} Let $\mathscr{I}= \mathscr{I}_1\cup \mathscr{I}_2$ be the associated partition of $\mathscr{I}$. Let $\mathscr{A} = \mathscr{A}_{\CatCC}$ define the canonical partial compact quantum group with object set $I$ and hyperobject partition $\varphi:I\rightarrow \mathscr{I}$. Let $I=I_1\cup I_2$ with $I_i = \varphi^{-1}(\mathscr{I}_i)$ be the corresponding decomposition of $I$. By construction, $\UnitC{k}{l}=0$ if $k$ and $l$ are not both in $I_1$ or $I_2$. 

Fix now $k\in I_{\alpha}$ for some $\alpha \in I_i$. Pick $\beta\in I_{i+1}$ with $\CatC_{\alpha\beta}\neq\{0\}$, and let $(V,\mathscr{X})$ be a non-zero irreducible corepresentation inside $\CatC_{\alpha\beta}$. Then by irreducibility, we know that $\oplus_l \Gru{V}{k}{l} \neq \{0\}$, hence there exists $l\in I_{\beta}$ with $\Gru{V}{k}{l}\neq \{0\}$. As $(\epsilon\otimes \id)\Gr{X}{k}{l}{k}{l} = \id_{\Gru{V}{k}{l}}$, it follows that $\Gr{A}{k}{l}{k}{l} \neq 0$. This proves that $\mathscr{A}$ defines a co-linking partial compact quantum group.
\end{proof} 

\begin{Rem} Note however that the corners of the canonical partial compact quantum group associated to linking $\mathscr{I}$-partial fusion C$^*$-category \emph{are not} the canonical partial compact quantum groups associated to the corners of the linking $\mathscr{I}$-partial fusion C$^*$-category. Rather, they are Morita equivalent copies of these.
\end{Rem} 

\begin{Theorem} Two partial compact quantum groups $\mathscr{G}_1$ and $\mathscr{G}_2$ are weakly Morita equivalent if and only if they are connected by a string of Morita and co-Morita equivalences. 
\end{Theorem}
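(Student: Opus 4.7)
The proof splits into two implications, both relying on the Tannaka-Krein-Woronowicz correspondence (Theorem \ref{TheoTKPCQG}) and on Proposition \ref{PropCoLink}.

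For the direction ``string of Morita and co-Morita equivalences implies weakly Morita equivalent'', each Morita equivalence $\mathscr{G} \sim_M \mathscr{H}$ gives by definition an equivalence $\Rep_u(\mathscr{G}) \simeq \Rep_u(\mathscr{H})$, which produces a weak Morita equivalence by gluing two copies along the equivalence to form a trivial linking partial fusion C$^*$-category. Each co-Morita equivalence produces a weak Morita equivalence by Proposition \ref{PropCoWeak}. To chain these, one needs that weak Morita equivalence is itself an equivalence relation: reflexivity and symmetry are immediate, while transitivity follows from a gluing construction for linking partial fusion C$^*$-categories along a common corner, analogous to the transitivity argument for co-Morita equivalence given earlier in the paper.

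For the converse direction, suppose $\mathscr{G}_1$ and $\mathscr{G}_2$ are weakly Morita equivalent via a linking $\mathscr{I}$-partial fusion C$^*$-category $\CatCC$ whose corners $\CatCC_i$ are equivalent to $\Rep_u(\mathscr{G}_i)$. I would exhibit the chain
\[
\mathscr{G}_1 \;\sim_M\; \mathscr{A}_{\CatCC_1} \;\sim_M\; \mathscr{B}_1 \;\sim_{coM}\; \mathscr{B}_2 \;\sim_M\; \mathscr{A}_{\CatCC_2} \;\sim_M\; \mathscr{G}_2,
\]
where $\mathscr{A}_{\CatCC_i}$ denotes the Hayashi canonical partial compact quantum group of $\CatCC_i$ (Section \ref{SubSecCan}), and $\mathscr{B}_1, \mathscr{B}_2$ are the two corners of the canonical partial compact quantum group $\mathscr{A}_{\CatCC}$ associated with the bipartition $\mathscr{I} = \mathscr{I}_1 \sqcup \mathscr{I}_2$. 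The outer equivalences $\mathscr{G}_i \sim_M \mathscr{A}_{\CatCC_i}$ follow from Theorem \ref{TheoTKPCQG} since both sides have representation category equivalent to $\CatCC_i \simeq \Rep_u(\mathscr{G}_i)$. The central co-Morita equivalence $\mathscr{B}_1 \sim_{coM} \mathscr{B}_2$ is immediate from Proposition \ref{PropCoLink}.

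The main obstacle is to justify the inner Morita equivalences $\mathscr{B}_i \sim_M \mathscr{A}_{\CatCC_i}$, which is the unproved assertion of the remark following Proposition \ref{PropCoLink}. I would argue this by showing that $\mathscr{B}_i$ can itself be identified as a Tannaka-Krein reconstruction of $\CatCC_i$ equipped with a fibre functor different from the Hayashi one used to reconstruct $\mathscr{A}_{\CatCC_i}$. Unwinding the definitions, the nonzero components $\Gr{A}{k}{l}{m}{n}(a)$ of $\mathscr{A}_{\CatCC}$ with all four indices restricted to the $i$-th class force the irreducible $u_a$ to lie in $\CatCC_i$; hence $\mathscr{B}_i$ is precisely the reconstruction of $\CatCC_i$ from the fibre functor $F^{(i)}_{kl}(X) = \Hom_{\CatCC}(u_k, X \otimes u_l)$ indexed by the set $\mathcal{I}^{(i)}$ of all irreducibles of $\CatCC$ with source hyperobject in $\mathscr{I}_i$, which is strictly larger than the index set used for $\mathscr{A}_{\CatCC_i}$ because it includes the off-diagonal irreducibles in $\CatCC_{i,i+1}$. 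Since by Theorem \ref{TheoTKPCQG} the representation category reconstructed is $\CatCC_i$ in either case, both partial compact quantum groups have equivalent representation categories and are therefore Morita equivalent, completing the proof.
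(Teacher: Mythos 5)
Your proof is correct and follows essentially the same route as the paper: one direction via Proposition \ref{PropCoWeak} together with the (unproved but asserted) fact that weak Morita equivalence is an equivalence relation, and the converse via the chain through the corners of the canonical partial compact quantum group of the linking category, using Proposition \ref{PropCoLink} for the middle co-Morita link. The only difference is that you explicitly justify the Morita equivalence between those corners and the $\mathscr{G}_i$ (by recognizing each corner as a Tannaka--Kre\u{\i}n reconstruction of $\CatCC_i$ from the enlarged fibre functor), a point the paper relegates to the remark following Proposition \ref{PropCoLink}; your extra intermediate nodes $\mathscr{A}_{\CatCC_i}$ are harmless but not needed.
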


\begin{proof} Clearly if two partial compact quantum groups are Morita
  equivalent, they are weakly Morita equivalent. By Proposition
  \ref{PropCoWeak}, the same is true for co-Morita equivalence. This proves one direction of the theorem. 

Conversely, assume $\mathscr{G}_1$ and $\mathscr{G}_2$ are weakly Morita equivalent. Let $\CatCC$ be a linking fusion C$^*$-category between $\Rep_u(\mathscr{G}_1)$ and $\Rep_u(\mathscr{G}_2)$. Then $\mathscr{G}_i$ are Morita equivalent with the corners of the canonical partial compact quantum group associated to $\CatCC$. But Proposition \ref{PropCoLink} shows that these corners are co-Morita equivalent. 
\end{proof} 

\begin{Rem} 
\begin{enumerate}
\item Note that it is essential that we allow the string of equivalences to pass through partial compact quantum groups, even if we start out with (genuine) compact quantum groups.
\item One can show that if $\mathscr{G}$ is a finite partial compact quantum group, then $\mathscr{G}$ is weakly Morita equivalent with its dual $\widehat{\mathscr{G}}$ (defined by the dual weak Hopf $^*$-algebra). In fact, if $\mathscr{G}$ is the canonical partial compact quantum group associated to a finite partial fusion C$^*$-category, then $\mathscr{G}$ is isomorphic to the co-opposite of its dual, e.g.~ the case of dynamical quantum $SU(2)$ at roots of unity. In any case, it follows that two finite quantum groups $H$ and $G$ are weakly Morita equivalent if and only if they can be connected by a string of 2-cocycle-elements and 2-cocycle functionals. 
\end{enumerate}
\end{Rem}

\section{Partial compact quantum groups from reciprocal random walks}

In this section, we study in more detail the construction from Section \ref{SubSecCan} in case the category $\CatC$ is the Temperley-Lieb C$^*$-category.

\subsection{Reciprocal random walks}

We recall some notions introduced in \cite{DCY1}. We slightly change the terminology for the sake of convenience.

\begin{Def} Let $t\in \R_0$. A \emph{$t$-reciprocal random walk} consists of a quadruple $(\Gamma,w,\sgn,i)$ with \begin{itemize}
\item[$\bullet$] $\Gamma=(\Gamma^{(0)},\Gamma^{(1)},s,t)$ a graph with \emph{source} and \emph{target} maps \[s,t:\Gamma^{(1)}\rightarrow \Gamma^{(0)},\]
\item[$\bullet$] $w$ a function (the \emph{weight} function) $w:\Gamma^{(1)}\rightarrow \R_0^+$,
\item[$\bullet$] $\sgn$ a function (the \emph{sign} function) $\sgn:\Gamma^{(1)}\rightarrow \{\pm 1\}$,
\item[$\bullet$] $i$ an involution \[i:\Gamma^{(1)} \rightarrow \Gamma^{(1)},\quad e\mapsto \overline{e}\] with $s(\bar{e}) = t(e)$ for all edges $e$,
\end{itemize}
such that the following conditions are satisfied:
\begin{enumerate}[label=(\arabic*)]
\item (weight reciprocality) $w(e)w(\bar{e}) = 1$ for all edges $e$,
\item (sign reciprocality) $\sgn(e)\sgn(\bar{e}) = \sgn(t)$ for all edges $e$,
\item (random walk property) $p(e) = \frac{1}{|t|}w(e)$ satisfies $\sum_{s(e)=v} p(e) = 1$ for all $v\in \Gamma^{(0)}$.
\end{enumerate}
\end{Def}


Note that, by \cite[Proposition 3.1]{DCY1}, there is a uniform bound on the number of edges leaving from any given vertex $v$, i.e.~ $\Gamma$ has a finite degree.

For examples of $t$-reciprocal random walks, we refer to \cite{DCY1}. One particular example (which will be needed for our construction of dynamical quantum $SU(2)$) is the following.

\begin{Exa}\label{ExaGraphPod} Take $0<|q|<1$ and $x\in \R$. Write $2_q = q+q^{-1}$. Then we have the reciprocal $-2_q$-random walk \[\Gamma_x =(\Gamma_x,w,\sgn,i)\] with \[ \Gamma^{(0)} = \Z,\quad \Gamma^{(1)} = \{(k,l)\mid |k-l|= 1\}\subseteq \Z\times \Z\] with projection on the first (resp. second) leg as source (resp. target) map, with weight function \[w(k,k\pm 1) = \frac{|q|^{x+k\pm 1}+|q|^{-(x+k\pm 1)}}{|q|^{x+k}+|q|^{-(x+k)}},\] sign function \[\sgn(k,k+1) = 1,\quad \sgn(k,k-1) = -\sgn(q),\] and involution $\overline{(k,k+1)} = (k+1,k)$. 

By translation we can shift the value of $x$ by an integer. By a point reflection and changing the direction of the arrows, we can change $x$ into $-x$. It follows that by some (unoriented) graph isomorphism, we can always arrange to have $x\in \lbrack 0,\frac{1}{2}\rbrack$.
\end{Exa} 

\subsection{Temperley-Lieb categories}

Let now $0<|q|\leq 1$, and let $SU_q(2)$ be Woronowicz's twisted $SU(2)$ group \cite{Wor1}. Then $SU_q(2)$ is a compact quantum group whose category of finite-dimensional unitary representations $\Rep(SU_q(2))$ is generated by the spin $1/2$-representation $\pi_{1/2}$ on $\C^2$. It has the same fusion rules as $SU(2)$, and conversely any compact quantum group with the fusion rules of $SU(2)$ has its representation category equivalent to $\Rep(SU_q(2))$ as a tensor C$^*$-category. Abstractly, these tensor C$^*$-categories are referred to as the \emph{Temperley-Lieb C$^*$-categories}.

Let now $\Gamma = (\Gamma,w,\sgn,i)$ be a $-2_q$-reciprocal random walk. Define $\Hsp^{\Gamma}$ as the $\Gamma^{(0)}$-bigraded Hilbert space $l^2(\Gamma^{(1)})$, where the $\Gamma^{(0)}$-bigrading is given by \[\delta_e \in \Gru{\Hsp^{\Gamma}}{s(e)}{t(e)}\] for the obvious Dirac functions. Note that, because $\Gamma$ has finite degree, $\Hsp^{\Gamma}$ is \emph{row- and column finite-dimensional} (rcfd), i.e.~ $\oplus_{v\in \Gamma^{(0)}} \Gru{\Hsp^{\Gamma}}{v}{w}$ (resp.~ $\oplus_{w\in \Gamma^{(0)}} \Gru{\Hsp^{\Gamma}}{v}{w}$) is finite-dimensional for all $w$ (resp.~ all $v$). 

Consider now $R_{\Gamma}$ as the (bounded) map \[R_{\Gamma}:l^2(\Gamma^{(0)})\rightarrow \Hsp^{\Gamma}\underset{\Gamma^{(0)}}{\otimes} \Hsp^{\Gamma}\] given by \begin{eqnarray*} R_{\Gamma} \delta_v &=& \sum_{e,s(e) = v} \sgn(e)\sqrt{w(e)}\delta_e \otimes \delta_{\bar{e}}.\end{eqnarray*} Then $R_{\Gamma}^*R_{\Gamma} = |q|+|q|^{-1}$ and \[(R_{\Gamma}^*\underset{\Gamma^{(0)}}{\otimes} \id_{\Hsp^{\Gamma}})(\id_{\Hsp^{\Gamma}}\underset{\Gamma^{(0)}}{\otimes} R_{\Gamma}) = -\sgn(q)\id.\]



Hence, by the universal property of $\Rep(SU_q(2))$ (\cite[Theorem 1.4]{DCY1}, based on \cite{Tur1,EtO1,Yam1,Pin2,Pin3}), we have a strongly monoidal $^*$-functor
\begin{equation}\label{EqForget} F_{\Gamma}: \Rep(SU_q(2)) \rightarrow {}^{\Gamma^{(0)}}\Hilb_{\rcf}^{\Gamma^{(0)}}\end{equation} into the tensor C$^*$-category of rcfd $\Gamma^{(0)}$-bigraded Hilbert spaces such that $F_{\Gamma}(\pi_{1/2}) = \Hsp_{\Gamma}$ and $F_{\Gamma}(\mathscr{R}) = R_{\Gamma}$, with \[(\pi_{1/2},\mathscr{R},-\sgn(q)\mathscr{R})\] a solution for the conjugate equations for $\pi_{1/2}$. Up to equivalence, $F_{\Gamma}$ only depends upon the isomorphism class of $(\Gamma,w)$, and is independent of the chosen involution or sign structure. Conversely, any strong monoidal $^*$-functor from $\Rep(SU_q(2))$ into $\Gr{\Hilb}{I}{I}{}{\rcf}$ for some set $I$ arises in this way \cite{DCY2}.

\subsection{Universal orthogonal partial compact quantum groups}


It follows from the previous subsection and the
Tannaka-Kre$\breve{\textrm{\i}}$n-Woronowicz in Theorem \ref{TheoTKPCQG} that for each reciprocal random walk on a graph $\Gamma$, one obtains a $\Gamma^{(0)}$-partial compact quantum group $\mathscr{G}$, and conversely every partial compact quantum group $\mathscr{G}$ with the fusion rules of $SU(2)$ arises in this way. Our first aim is to give a direct representation of the associated algebras $A(\Gamma) = P(\mathscr{G})$ by generators and relations. We will write $\Gamma_{vw}\subseteq \Gamma^{(1)}$ for the set of edges with source $v$ and target $w$.

\begin{Theorem}\label{TheoGenRel} Let $0<|q|\leq 1$, and let $\Gamma = (\Gamma,w,\sgn,i)$ be a $-2_q$-reciprocal random walk. Let $A(\Gamma)$ be the total $^*$-algebra associated to the $\Gamma^{(0)}$-partial compact quantum group constructed from the fiber functor $F_{\Gamma}$ as in \eqref{EqForget}. Then $A(\Gamma)$ is the universal $^*$-algebra generated by a copy of the $^*$-algebra of finitely supported functions on $\Gamma^{(0)}\times \Gamma^{(0)}$ (with the Dirac functions written as $\UnitC{v}{w}$) and elements $(u_{e,f})_{e,f\in \Gamma^{(1)}}$ where $u_{e,f}\in \Gr{A(\Gamma)}{s(e)}{t(e)}{s(f)}{t(f)}$ and 
\begin{eqnarray} 
\label{EqUni1}\sum_{v\in \Gamma^{(0)}}\sum_{g\in \Gamma_{vw}} u_{g,e}^*u_{g,f} = \delta_{e,f}\mathbf{1}\Grru{w}{t(e)}, \qquad \forall w\in \Gamma^{(0)}, e,f\in \Gamma^{(1)},\\ 
\label{EqUni2}\sum_{w\in \Gamma^{(0)}} \sum_{g\in \Gamma_{vw}} u_{e,g}u_{f,g}^* = \delta_{e,f} \mathbf{1}\Grru{s(e)}{v}\qquad \forall v\in \Gamma{(0)}, e,f\in \Gamma^{(1)},\\ 
\label{EqInt}u_{e,f}^* \;=\; \sgn(e)\sgn(f)\sqrt{\frac{w(f)}{w(e)}} u_{\bar{e},\bar{f}},\qquad \forall e,f\in \Gamma^{(1)}.
\end{eqnarray}

If moreover $v,w\in \Gamma^{(0)}$ and $e,f\in \Gamma^{(1)}$, we have \[\Delta_{vw}(u_{e,f}) = \underset{t(g) = w}{\sum_{s(g) = v}} u_{e,g}\otimes u_{g,f},\]
\[\varepsilon(u_{e,f}) = \delta_{e,f}\] and \[S(u_{e,f}) = u_{f,e}^*.\] 
\end{Theorem}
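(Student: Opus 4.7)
The plan is to apply Tannaka-Kre\u{\i}n reconstruction (Theorem \ref{TheoTKPCQG}) together with the universal property of $\Rep(SU_q(2))$ as the free rigid tensor C$^*$-category on a self-conjugate object with specified normalisation (\cite[Theorem 1.4]{DCY1}). First I identify the $u_{e,f}$ concretely: let $\mathscr{X}^\Gamma$ denote the unitary corepresentation of $A(\Gamma)$ corresponding under Theorem \ref{TheoTKPCQG} to $\pi_{1/2} \in \Rep(SU_q(2))$, whose underlying bigraded Hilbert space is $F_\Gamma(\pi_{1/2})=\Hsp^\Gamma$ with orthonormal basis $\{\delta_e\}_{e\in \Gamma^{(1)}}$. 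Set $u_{e,f} := (\id \otimes \omega_{\delta_e, \delta_f})(\Gr{(X^\Gamma)}{s(e)}{t(e)}{s(f)}{t(f)})$; then the coproduct and counit formulas of the theorem are immediate from Definition \ref{definition:corep}, and $S(u_{e,f}) = u_{f,e}^*$ follows from the unitarity identity $(S \otimes \id)(X^\Gamma) = (X^\Gamma)^*$ of Lemma \ref{lemma:rep-invertible}. Expanding this same unitarity identity componentwise, as in the proof of Lemma \ref{lemma:rep-regular-unitary}, yields \eqref{EqUni1} and \eqref{EqUni2}. For \eqref{EqInt}, I use that $F_\Gamma$ sends the triple $(\pi_{1/2},\mathscr{R},-\sgn(q)\mathscr{R})$ solving the conjugate equations in $\Rep(SU_q(2))$ to $(\Hsp^\Gamma, R_\Gamma, -\sgn(q)R_\Gamma)$, so $R_\Gamma$ must intertwine the trivial corepresentation on $l^2(\Gamma^{(0)})$ with $\mathscr{X}^\Gamma \Circt \mathscr{X}^\Gamma$; unpacking this intertwining equation in the basis $\{\delta_e\}$ and invoking $w(e)w(\bar e)=1$ and $\sgn(e)\sgn(\bar e) = -\sgn(q)$ produces precisely $u_{e,f}^* = \sgn(e)\sgn(f)\sqrt{w(f)/w(e)}\, u_{\bar e, \bar f}$.

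Next I establish universality. Let $B$ denote the universal $^*$-algebra of the statement; sending generators to generators gives a $^*$-homomorphism $\pi \colon B \to A(\Gamma)$, surjective because $\pi_{1/2}$ tensor-generates $\Rep(SU_q(2))$, so by Proposition \ref{prop:rep-weak-pw} and multiplicativity of matrix coefficients under $\Circt$ every element of $A(\Gamma)$ is a polynomial in the $u_{e,f}$ and local units. To construct an inverse, I equip $B$ with the evident partial $^*$-algebra structure, graded by the source/target data of the generators and with local units $\mathbf{1}(v,w)$; I then define $\Delta$, $\epsilon$, $S$ on generators by the formulas of the theorem and extend as a $^*$-homomorphism, homomorphism, and antimultiplicative map respectively, verifying they descend to $B$. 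The multiplier $U^\Gamma := \sum_{e,f} u_{e,f} \otimes E_{e,f}$ is then a unitary corepresentation of $B$ on $\Hsp^\Gamma$ in the sense of Lemma \ref{lemma:rep-multiplier}, and \eqref{EqInt} says exactly that $(U^\Gamma, R_\Gamma, -\sgn(q)R_\Gamma)$ solves the conjugate equations in the resulting tensor C$^*$-category of unitary corepresentations of $B$. The universal property of $\Rep(SU_q(2))$ then promotes this data uniquely to a strong monoidal $^*$-functor $G \colon \Rep(SU_q(2)) \to \Corep_u(B)$ satisfying $F_B \circ G = F_\Gamma$, where $F_B$ is the forgetful fiber functor; by naturality of Theorem \ref{TheoTKPCQG} this morphism of data corresponds to a morphism of partial Hopf $^*$-algebras $A(\Gamma) \to B$, inverse to $\pi$ on the generators $(\id \otimes \omega_{\delta_e,\delta_f})(\mathscr{X}^\Gamma)$.

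The main obstacle is verifying well-definedness of $\Delta$, $\epsilon$, and $S$ on $B$ — that their values on generators respect \eqref{EqUni1}, \eqref{EqUni2}, and \eqref{EqInt}. Compatibility with the unitarity relations is straightforward using the $^*$-homomorphism property of $\Delta$ and counitality. The substantive check is compatibility of $\Delta$ with \eqref{EqInt}: on one side $\Delta(u_{e,f}^*) = \sum_g u_{e,g}^* \otimes u_{g,f}^*$, and on the other, after reindexing $g \leftrightarrow \bar g$ in $\Delta(u_{\bar e, \bar f}) = \sum_g u_{\bar e, g} \otimes u_{g, \bar f}$ and applying \eqref{EqInt} to both tensor factors, the total scalar cancels against the prefactor $\sgn(e)\sgn(f)\sqrt{w(f)/w(e)}$ by virtue of $\sgn(g)^2 = 1$ together with the telescoping identity $\sqrt{w(f)/w(e)}\sqrt{w(e)/w(g)}\sqrt{w(g)/w(f)} = 1$, yielding the required equality. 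The analogous checks for $\epsilon$ and $S$ are routine, after which the remainder of the argument is formal.
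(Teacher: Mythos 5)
Your proposal is correct and follows essentially the same route as the paper: one first checks that the matrix coefficients of the corepresentation $F_\Gamma(\pi_{1/2})$ satisfy \eqref{EqUni1}--\eqref{EqInt} (unitarity for the first two, the intertwiner $R_\Gamma$ combined with unitarity for the third), and then endows the universal algebra with a partial Hopf $^*$-algebra structure, observes that $R_\Gamma$ makes the generating corepresentation self-conjugate with the right normalisation, and invokes the universal property of $\Rep(SU_q(2))$ together with Tannaka--Kre\u{\i}n reconstruction to produce the inverse homomorphism. The only cosmetic difference is that you spell out the compatibility of $\Delta$ with \eqref{EqInt} (the telescoping of the $\sgn$ and weight factors), which the paper dismisses as trivial.
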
 

Note that the sums in \eqref{EqUni1} and \eqref{EqUni2} are in fact finite, as $\Gamma$ has finite degree. 

\begin{proof} Let $(\Hsp,V)$ be the generating unitary corepresentation of $A(\Gamma)$ on $\Hsp = l^2(\Gamma^{(1)})$. Then $V$ decomposes into parts \[ \Gr{V}{k}{l}{m}{n} = \sum_{e,f} v_{e,f} \otimes e_{e,f} \in \Gr{A}{k}{l}{m}{n}\otimes B(\Gru{\Hsp}{m}{n},\Gru{\Hsp}{k}{l}),\] where the $e_{e,f}$ are elementary matrix coefficients and with the sum over all $e$ with $s(e)=k,t(e)=l$ and all $f$ with $s(f) = m, t(f)=n$. By construction $V$ defines a unitary corepresentation of $A(\Gamma)$, hence the relations \eqref{EqUni1} and \eqref{EqUni2} are satisfied for the $v_{e,f}$. Now as $R_{\Gamma}$ is an intertwiner between the trivial representation on $\C^{(\Gamma^{(0)})} = \oplus_{v\in \Gamma^{(0)}} \C$ and $V\Circtv{\Gamma^{(0)}} V$, we have for all $v\in \Gamma^{(0)}$ that \begin{equation}\label{EqMorR}\underset{t(f)=s(h),t(e)=s(g)}{\sum_{e,f,g,h\in \Gamma^{(1)}}} v_{e,f}v_{g,h}\otimes \left((e_{e,f}\otimes e_{g,h})\circ R_{\Gamma} \delta_v\right) = \sum_w \UnitC{w}{v}\otimes R_{\Gamma}\delta_v,\end{equation} hence
\[\underset{t(e)=s(g),s(k)=v}{\sum_{e,g,k}} \sgn(k)\sqrt{w(k)}\left( v_{e,k}v_{g,\bar{k}} \otimes \delta_e\otimes \delta_{g}\right) =\underset{s(k)=w}{\sum_{w,k}}\sgn(k)\sqrt{w(k)} \left(\UnitC{w}{v} \otimes \delta_k\otimes \delta_{\bar{k}}\right).\] Hence if $t(e) = s(g)=z$, we have \[\sum_{k,s(k)=v} \sgn(k)\sqrt{w(k)} v_{e,k}v_{g,\bar{k}} =  \delta_{e,\bar{g}} \sgn(e)\sqrt{w(e)}\UnitC{s(e)}{v}.\] Multiplying to the left with $v_{e,l}^*$ and summing over all $e$ with $t(e) = z$, we see from \eqref{EqUni1} that also relation \eqref{EqInt} is satisfied. Hence the $v_{e,f}$ satisfy the universal relations in the statement of the theorem. The formulas for comultiplication, counit and antipode then follow immediately from the fact that $V$ is a unitary corepresentation.

Let us now a priori denote by $B(\Gamma)$ the $^*$-algebra determined by the relations \eqref{EqUni1},\eqref{EqUni2} and \eqref{EqInt} above, and write $\mathscr{B}(\Gamma)$ for the associated $\Gamma^{(0)}\times \Gamma^{(0)}$-partial $^*$-algebra induced by the local units $\UnitC{v}{w}$. Write $\Delta(\UnitC{v}{w}) = \sum_{z\in \Gamma^{(0)}} \UnitC{v}{z}\otimes \UnitC{z}{w}$ and \[\Delta(u_{e,f}) = \sum_{g\in \Gamma^{(1)}} u_{e,g}\otimes u_{g,f},\] which makes sense in $M(B(\Gamma)\otimes B(\Gamma))$ as the degree of $\Gamma$ is finite. Then we compute for $w\in \Gamma^{(0)}$ and $e,f\in \Gamma^{(1)}$ that \begin{eqnarray*} \sum_{v\in \Gamma^{(0)}}\sum_{g\in \Gamma_{vw}}\Delta(u_{g,e})^*\Delta(u_{g,f}) &=& \sum_{v\in \Gamma^{(0)}}\sum_{g\in \Gamma_{vw}} \sum_{h,k\in \Gamma^{(1)}} u_{g,h}^*u_{g,k}\otimes u_{h,e}^*u_{k,f}\\ &=& \sum_{h,k\in \Gamma^{(1)}} \delta_{h,k} \UnitC{w}{t(h)}\otimes u_{h,e}^*u_{k,f}\\ &=&  \sum_{z\in \Gamma^{(0)}}\underset{t(h)=z}{\sum_{h\in \Gamma^{(1)}}} \UnitC{w}{z}\otimes u_{h,e}^*u_{h,f} \\ &=& \delta_{e,f} \sum_{z\in \Gamma^{(0)}} \UnitC{w}{z}\otimes \UnitC{z}{t(e)}\\ &=& \delta_{e,f} \Delta(\UnitC{w}{t(e)}).\end{eqnarray*}  Similarly, the analogue of \eqref{EqUni2} holds for $\Delta(u_{e,f})$. As also \eqref{EqInt} holds trivially for $\Delta(u_{e,f})$, it follows that we can define a $^*$-algebra homomorphism \[\Delta:B(\Gamma)\rightarrow M(B(\Gamma)\otimes B(\Gamma))\] sending $u_{e,f}$ to $\Delta(u_{e,f})$ and $\UnitC{v}{w}$ to $\Delta(\UnitC{v}{w})$. Cutting down, we obtain maps \[\Delta_{vw}:\Gr{B(\Gamma)}{r}{s}{t}{z}\rightarrow \Gr{B(\Gamma)}{r}{s}{v}{w}\otimes \Gr{B(\Gamma)}{v}{w}{t}{z}\] which then satisfy the properties \ref{Propa}, \ref{Propd} and \ref{Prope} of Definition \ref{DefPartBiAlg}. Moreover, the $\Delta_{vw}$ are coassociative as they are coassociative on generators.

Let now $e_{v,w}$ be the matrix units for $l^2(\Gamma^{(0)})$. Then one verifies again directly from the defining relations of $B(\Gamma)$ that one can define a $^*$-homomorphism \[\widetilde{\varepsilon}: B(\Gamma)\rightarrow B(l^2(\Gamma^{(0)})),\quad \left\{\begin{array}{lll} \UnitC{v}{w}&\mapsto &\delta_{v,w}\, e_{v,v}\\ u_{e,f}&\mapsto& \delta_{e,f}\, e_{s(e),t(e)}\end{array}\right.\] We can hence define a map $\varepsilon: B(\Gamma)\rightarrow \C$ such that \[\widetilde{\varepsilon}(x) = \varepsilon(x) e_{v,w},\qquad  \forall x\in \Gr{B(\Gamma)}{v}{w}{v}{w},\] and which is zero elsewhere. Clearly it satisfies the conditions \ref{Propb} and \ref{Propc} of Definition \ref{DefPartBiAlg}. As $\varepsilon$ satisfies the counit condition on generators, it follows by partial multiplicativity that it satisfies the counit condition on the whole of $B(\Gamma)$, i.e.~ $B(\Gamma)$ is a partial $^*$-bialgebra. 

It is clear now that the $u_{e,f}$ define a unitary corepresentation $U$ of $B(\Gamma)$ on $\Hsp^{\Gamma}$. Moreover, from \eqref{EqUni1} and \eqref{EqInt} we can deduce that $R_{\Gamma}: \C_{\Gamma^{(0)}}\rightarrow \Hsp^{\Gamma}\underset{\Gamma^{(0)}}{\otimes}\Hsp^{\Gamma}$ is a morphism from $\C^{(\Gamma^{(0)})}$ to $U\Circtv{\Gamma^{(0)}} U$ in $\Corep_{\rcf,u}(\mathscr{B}(\Gamma))$, cf.~ \eqref{EqMorR}. From the universal property of $\Rep(SU_q(2))$, it then follows that we have a (unique and faithful) strongly monoidal $^*$-functor \[G^{\Gamma}: \Rep(SU_q(2)) \rightarrow \Corep_{\rcf,u}(\mathscr{B}(\Gamma))\] such that $G^{\Gamma}(\pi_{1/2}) = U$. On the other hand, as we have a $\Delta$-preserving $^*$-homomorphism $B(\Gamma)\rightarrow A(\Gamma)$ by the universal property of $\mathscr{B}(\Gamma)$, we have a strongly monoidal $^*$-functor $H^{\Gamma}:  \Corep_{\rcf,u}(\mathscr{B}(\Gamma))\rightarrow \Corep_u(\mathscr{A}(\Gamma)) = \Rep(SU_q(2))$ which is inverse to $G^{\Gamma}$. Then since the commutation relations of $\mathscr{A}(\Gamma)$ are completely determined by the morphism spaces of $\Rep(SU_q(2))$, it follows that we have a $^*$-homomorphism $\mathscr{A}(\Gamma)\rightarrow \mathscr{B}(\Gamma)$ sending $v_{e,f}$ to $u_{e,f}$. This proves the theorem. 
\end{proof}

\subsection{Dynamical quantum $SU(2)$ from the Podle\'{s} graph}


Let us now fix a $-2_q$-reciprocal random walk, and assume further
that there exists a finite set $T$ partitioning $\Gamma^{(1)} = \cup_a
\Gamma^{(1)}_a$ such that for each $a\in T$ and $v\in \Gamma^{(0)}$,
there exists a unique $e_a(v)\in \Gamma^{(1)}_a$ with source $v$. Write $av$ for the range of $e_a(v)$. Assume moreover that $T$ has an involution $a\mapsto \bar{a}$ such that $\overline{e_a(v)} = e_{\bar{a}}(av)$. Then for each $a$, the map $v\mapsto av$ is a bijection on $\Gamma^{(0)}$ with inverse $v\mapsto \bar{a}v$. In particular, also for each $w\in \Gamma^{(0)}$ there exists a unique $f_w(a) \in \Gamma^{(1)}_a$ with target $w$.

Let us further write $w_a(v) = w(e_a(v))$ and $\sgn_a(v) =
\sgn(e_a(v))$. Let $A(\Gamma)$ be the total $^*$-algebra of the
associated partial compact quantum group. Using Theorem
\ref{TheoGenRel}, we have the following presentation of
$A(\Gamma)$. Let $B$ be the $^*$-algebra of finitely supported
functions on $\Gamma^{(0)}\times \Gamma^{(0)}$, whose Dirac functions
we write as $\UnitC{v}{w}$. Then $A(\Gamma)$ is generated by a copy of
$B$ and elements \[(u_{a,b})_{v,w} := u_{e_a(v),e_b(v)} \in
\Gr{A(\Gamma)}{v}{av}{w}{bw}\] for $a,b\in T$ and $v,w\in
\Gamma^{(0)}$ with defining relations \begin{eqnarray*} \sum_{a\in T}
  (u_{a,b})_{\bar{a}v,w}^* (u_{a,c})_{\bar{a}v,z}&=& \delta_{w,z}
  \delta_{b,c} \UnitC{v}{bw},\\ \sum_{a\in T} (u_{b,a})_{w,v}
  (u_{c,a})_{z,v}^* &=& \delta_{b,c}\delta_{w,z} \UnitC{w}{v}\\
  (u_{a,b})_{v,w}^* &=&
  \frac{\sgn_b(w)\sqrt{w_b(w)}}{\sgn_a(v)\sqrt{w_a(v)}}(u_{\bar{a},\bar{b}})_{av,bw}.\end{eqnarray*}

Let us now consider $M(A(\Gamma))$, the multiplier algebra of $A(\Gamma)$. For a function $f$ on $\Gamma^{(0)}\times \Gamma^{(0)}$, write $f(\lambda,\rho) = \sum_{v,w} f(v,w)\UnitC{v}{w} \in M(A(\Gamma))$. Similarly, for a function $f$ on $\Gamma^{(0)}$ we write $f(\lambda) = \sum_{v,w} f(v)\UnitC{v}{w}$ and $f(\rho) = \sum_{v,w}f(w)\UnitC{v}{w}$. We then write for example $f(a\lambda,\rho)$ for the element corresponding to the function $(v,w)\mapsto f(av,w)$.

We can further form in $M(A(\Gamma))$ the elements $u_{a,b} =
\sum_{v,w} (u_{a,b})_{v,w}$. Then $u=(u_{a,b})$ is a unitary
m$\times$m matrix for
$m=\#T$. Moreover, \begin{equation}\label{EqAdju}u_{a,b}^* =
  u_{\bar{a},\bar{b}}\frac{\gamma_b(\rho)}{\gamma_a(\lambda)},\end{equation}
where $\gamma_a(v) = \sgn_a(v)\sqrt{w_a(v)}$.   We then have the
following commutation relations between functions on
$\Gamma^{(0)}\times \Gamma^{(0)}$ and the entries of
$u$: \begin{equation}\label{EqGradu} f(\lambda,\rho)u_{a,b} =
  u_{a,b}f(\bar{a}\lambda,\bar{b}\rho),\end{equation} where
$f(\bar{a}\lambda,\bar{b}\rho)$ is given by $(v,w) \mapsto f(\bar{a}v,\bar{b}w)$.
 The coproduct is given by
$\Delta(u_{a,b}) = \Delta(1) \sum_c(u_{a,c}\otimes u_{c,b})$. Note
that the $^*$-algebra generated by the $u_{a,b}$ is no longer a weak
Hopf $^*$-algebra when $\Gamma^{(0)}$ is infinite, but rather one can
turn it into a Hopf algebroid. 

\begin{Rem}
  The weak multiplier Hopf algebra $A(\Gamma)$ is related to the free
  orthogonal dynamical quantum groups introduced in
  \cite{timmermann:free} as follows.  Denote by $G$ the free group
  generated by the elements of $T$ subject to the relation
  $\bar{a}=a^{-1}$ for all $a\in T$. By assumption on $\Gamma$, the
  formula $(af)(v):=f(\bar{a}v)$ defines a left action of $G$ on
  $\Fun(\Gamma^{(0)})$. Denote by $C\subseteq \Fun(\Gamma^{(0)})$ the
  unital subalgebra generated by all $\gamma_{a}$ and their inverses
  and translates under $G$,  write the
  elements of $T \subseteq G$ as a tuple in the form
  $\nabla=(a_{1},\bar{a_{1}},\ldots,a_{n},\bar{a_{n}})$, and define a
  $\nabla\times\nabla$ matrix $F$ with values in $C$ by $F_{a,b} :=
  \delta_{b,\bar{a}} \gamma_{a}$.  Then the free orthogonal dynamical
  quantum group $A_{\mathrm{o}}^{C}(\nabla,F,F)$ introduced in
  \cite{timmermann:free} is the universal unital $*$-algebra generated
  by a copy of $C\otimes C$ and the entries of a unitary $\nabla\times\nabla$-matrix
  $v=(v_{a,b})$ satisfying
  \begin{align*}
    v_{a,b}(f \otimes g) &= (af\otimes bg) v_{a,b}, &
    (aF_{a,\bar{a}}\otimes 1)v_{\bar{a},\bar{b}}^{*} &=
    v_{a,b}(1\otimes F_{b,\bar{b}})
  \end{align*}
  for all $f,g\in C$ and $a,b\in \nabla$. The second equation
  can be rewritten in the form
  $v_{\bar{a},\bar{b}}^{*}=v_{a,b}(\gamma_{a}^{-1} \otimes
  \gamma_{b})$.   Comparing with
  \eqref{EqAdju} and \eqref{EqGradu}, we see that there exists a
  $*$-homomorphism
  \begin{align*}
  A^{C}_{\mathrm{o}}(\nabla,F,F)  \to
  M(A(\Gamma)), \quad
  \begin{cases}
    f\otimes g&
\mapsto f(\lambda)g(\rho), \\
    v_{a,b} &\mapsto u_{\bar{a},\bar{b}}.
  \end{cases}
\end{align*}
 The two quantum groupoids are related by an
analogue of the unital base changes considered for dynamical quantum
groups in \cite[Proposition 2.1.12]{timmermann:free}. Indeed, Theorem
\ref{TheoGenRel} shows that $A(\Gamma)$ is the image of
$A^{C}_{\mathrm{o}}(\nabla,F,F)$ under a non-unital base change from
$C$ to $\Fun_{f}(\Gamma^{(0)})$ along the natural map $C \to
M(\Fun_{f}(\Gamma^{(0)}))$.

\end{Rem}
\begin{Exa}
  As a particular example, consider the Podle\'{s} graph of Example
  \ref{ExaGraphPod} at parameter $x\in \lbrack
  0,\frac{1}{2}\rbrack$. Then one can take $T = \{+,-\}$ with the
  non-trivial involution, and label the edges $(k,k+1)$ with $+$ and
  the edges $(k+1,k)$ with $-$. Let us write \[F(k) = |q|^{-1}w_+(k) =
  |q|^{-1}\frac{|q|^{x+k+1}+|q|^{-x-k-1}}{|q|^{x+k}+|q|^{-x-k}},\] and
  further put\[\alpha =
  \frac{F^{1/2}(\rho-1)}{F^{1/2}(\lambda-1)}u_{--},\qquad \beta =
  \frac{1}{F^{1/2}(\lambda-1)}u_{-+}.\] Then the unitarity of
  $(u_{\epsilon,\nu})_{\epsilon,\nu}$ together with \eqref{EqAdju} and
  \eqref{EqGradu} are equivalent to the commutation
  relations \begin{equation}\label{EqqCom} \alpha \beta =
    qF(\rho-1)\beta\alpha \qquad \alpha\beta^* =
    qF(\lambda)\beta^*\alpha\end{equation} \begin{equation}\label{EqDet}
    \alpha\alpha^* +F(\lambda)\beta^*\beta = 1,\qquad
    \alpha^*\alpha+q^{-2}F(\rho-1)^{-1}\beta^*\beta =
    1,\end{equation}\begin{equation*} F(\rho-1)^{-1}\alpha\alpha^*
    +\beta\beta^* = F(\lambda-1)^{-1},\qquad F(\lambda)\alpha^*\alpha
    +q^{-2}\beta\beta^* =
    F(\rho),\end{equation*} \begin{equation}\label{EqGrad}
    f(\lambda)g(\rho)\alpha = \alpha f(\lambda+1)g(\rho+1),\qquad
    f(\lambda)g(\rho)\beta = \beta
    f(\lambda+1)g(\rho-1).\end{equation}

  These are precisely the commutation relations for the dynamical
  quantum $SU(2)$-group as in \cite[Definition 2.6]{KoR1}, except that
  the precise value of $F$ has been changed by a shift in the
  parameter domain by a complex constant. The (total) coproduct on
  $A_x$ also agrees with the one on the dynamical quantum
  $SU(2)$-group, namely \begin{eqnarray*} \Delta(\alpha) &=& \Delta(1)
    (\alpha\otimes \alpha - q^{-1}\beta\otimes \beta^*),\\
    \Delta(\beta) &=& \Delta(1)(\beta\otimes \alpha^* +\alpha\otimes
    \beta)\end{eqnarray*} where $\Delta(1) = \sum_{k\in \Z}
  \rho_k\otimes \lambda_k$.
\end{Exa}

\bibliographystyle{habbrv}

\begin{thebibliography}{10}

\bibitem{AN1}
N.~Andruskiewitsch and S.~Natale.
\newblock Double categories and quantum groupoids.
\newblock {\em Publ. Mat. Urug.}, 10:11--51, 2005.

\bibitem{Bic1}
J.~Bichon.
\newblock Hopf-galois objects and cogroupoids.
\newblock {\em Rev. Un. Mat. Argentina}, In press.

\bibitem{BDV1}
J.~Bichon, A.~De~Rijdt, and S.~Vaes.
\newblock Ergodic coactions with large multiplicity and monoidal equivalence of
  quantum groups.
\newblock {\em Comm. Math. Phys.}, 262(3):703--728, 2006.

\bibitem{Boh2}
G.~B{\"o}hm.
\newblock Comodules over weak multiplier bialgebras.
\newblock {\em Internat. J. Math.}, 25(5):1450037 (57 pages), 2014.

\bibitem{BCJ}
G.~B{\"o}hm, S.~Caenepeel, and K.~Janssen.
\newblock Weak bialgebras and monoidal categories.
\newblock {\em Comm. Algebra}, 39(12):4584--4607, 2011.

\bibitem{Boh1}
G.~B{\"o}hm, J.~G\'{o}mez-Torrecillas, and E.~L\'{o}pez-Centella.
\newblock Weak multiplier bialgebras.
\newblock {\em Trans. Amer. Math. Soc.}, In press.

\bibitem{Boh3}
G.~B{\"o}hm, F.~Nill, and K.~Szlach{\'a}nyi.
\newblock Weak {H}opf algebras. {I}. {I}ntegral theory and {$C^*$}-structure.
\newblock {\em J. Algebra}, 221(2):385--438, 1999.

\bibitem{Dau1}
J.~Dauns.
\newblock Multiplier rings and primitive ideals.
\newblock {\em Trans. Amer. Math. Soc.}, 145:125--158, 1969.

\bibitem{DCT2}
K.~De~Commer and T.~Timmermann.
\newblock Dynamical quantum $su(2)$ groups on the operator algebra level.
\newblock In preparation.

\bibitem{DCY2}
K.~{De Commer} and M.~{Yamashita}.
\newblock {Tannaka-Kre$\breve{\textrm{\i}}$n duality for compact quantum
  homogeneous spaces. II. Classification of quantum homogeneous spaces for
  quantum SU(2)}.
\newblock {\em ArXiv e-prints}, Dec. 2012, 1212.3413.

\bibitem{DCY1}
K.~De~Commer and M.~Yamashita.
\newblock Tannaka-{K}re\u\i n duality for compact quantum homogeneous spaces.
  {I}. {G}eneral theory.
\newblock {\em Theory Appl. Categ.}, 28:No. 31, 1099--1138, 2013.

\bibitem{ENO1}
P.~Etingof, D.~Nikshych, and V.~Ostrik.
\newblock On fusion categories.
\newblock {\em Ann. of Math. (2)}, 162(2):581--642, 2005.

\bibitem{EtO1}
P.~Etingof and V.~Ostrik.
\newblock Module categories over representations of {${\rm SL}_q(2)$} and
  graphs.
\newblock {\em Math. Res. Lett.}, 11(1):103--114, 2004.

\bibitem{EtV1}
P.~Etingof and A.~Varchenko.
\newblock Solutions of the quantum dynamical {Y}ang-{B}axter equation and
  dynamical quantum groups.
\newblock {\em Comm. Math. Phys.}, 196(3):591--640, 1998.

\bibitem{Hai1}
P.~H. Hai.
\newblock Tannaka-{K}rein duality for {H}opf algebroids.
\newblock {\em Israel J. Math.}, 167:193--225, 2008.

\bibitem{Hay2}
T.~Hayashi.
\newblock Quantum group symmetry of partition functions of {IRF} models and its
  application to {J}ones' index theory.
\newblock {\em Comm. Math. Phys.}, 157(2):331--345, 1993.

\bibitem{Hay1}
T.~Hayashi.
\newblock Compact quantum groups of face type.
\newblock {\em Publ. Res. Inst. Math. Sci.}, 32(2):351--369, 1996.

\bibitem{Hay3}
T.~Hayashi.
\newblock Face algebras. {I}. {A} generalization of quantum group theory.
\newblock {\em J. Math. Soc. Japan}, 50(2):293--315, 1998.

\bibitem{Hay8}
T.~{Hayashi}.
\newblock {A canonical Tannaka duality for finite seimisimple tensor
  categories}.
\newblock {\em ArXiv Mathematics e-prints}, Apr. 1999, math/9904073.

\bibitem{Hay6}
T.~Hayashi.
\newblock Face algebras and tensor categories.
\newblock {\em S\=urikaisekikenky\=usho K\=oky\=uroku}, (1082):34--47, 1999.
\newblock Representation theory and noncommutative harmonic analysis (Japanese)
  (Kyoto, 1998).

\bibitem{Hay4}
T.~Hayashi.
\newblock Face algebras and unitarity of {${\rm SU}(N)_L$}-{TQFT}.
\newblock {\em Comm. Math. Phys.}, 203(1):211--247, 1999.

\bibitem{Hay5}
T.~Hayashi.
\newblock Galois quantum groups of {${\rm II}_1$}-subfactors.
\newblock {\em Tohoku Math. J. (2)}, 51(3):365--389, 1999.

\bibitem{Hay7}
T.~Hayashi.
\newblock A brief introduction to face algebras.
\newblock In {\em New trends in {H}opf algebra theory ({L}a {F}alda, 1999)},
  volume 267 of {\em Contemp. Math.}, pages 161--176. Amer. Math. Soc.,
  Providence, RI, 2000.

\bibitem{JoS1}
A.~Joyal and R.~Street.
\newblock An introduction to {T}annaka duality and quantum groups.
\newblock In {\em Category theory ({C}omo, 1990)}, volume 1488 of {\em Lecture
  Notes in Math.}, pages 413--492. Springer, Berlin, 1991.

\bibitem{KoR1}
E.~Koelink and H.~Rosengren.
\newblock Harmonic analysis on the {${\rm SU}(2)$} dynamical quantum group.
\newblock {\em Acta Appl. Math.}, 69(2):163--220, 2001.

\bibitem{Mug1}
M.~M\"{u}ger.
\newblock From subfactors to categories and topology i. frobenius algebras in
  and morita equivalence of tensor categories.
\newblock {\em J. Pure Appl. Alg.}, 180:81--157, 2003.

\bibitem{Nes1}
S.~{Neshveyev}.
\newblock {Duality theory for nonergodic actions}.
\newblock {\em ArXiv e-prints}, Mar. 2013, 1303.6207.

\bibitem{NeT1}
S.~Neshveyev and L.~Tuset.
\newblock {\em Compact quantum groups and their representation categories},
  volume~20 of {\em Cours Sp\'ecialis\'es [Specialized Courses]}.
\newblock Soci\'et\'e Math\'ematique de France, Paris, 2013.

\bibitem{Nik1}
D.~Nikshych and L.~Vainerman.
\newblock Finite quantum groupoids and their applications.
\newblock In {\em New directions in {H}opf algebras}, volume~43 of {\em Math.
  Sci. Res. Inst. Publ.}, pages 211--262. Cambridge Univ. Press, Cambridge,
  2002.

\bibitem{Nil1}
F.~{Nill}.
\newblock {Axioms for Weak Bialgebras}.
\newblock {\em ArXiv Mathematics e-prints}, May 1998, math/9805104.

\bibitem{Ost1}
V.~Ostrik.
\newblock Module categories, weak {H}opf algebras and modular invariants.
\newblock {\em Transform. Groups}, 8(2):177--206, 2003.

\bibitem{Pfe1}
H.~Pfeiffer.
\newblock Tannaka-{K}re\u\i n reconstruction and a characterization of modular
  tensor categories.
\newblock {\em J. Algebra}, 321(12):3714--3763, 2009.

\bibitem{Pin2}
C.~Pinzari.
\newblock The representation category of the {W}oronowicz quantum group {${\rm
  S}\sb \mu {\rm U}(d)$} as a braided tensor {$C\sp *$}-category.
\newblock {\em Internat. J. Math.}, 18(2):113--136, 2007.

\bibitem{Pin3}
C.~{Pinzari} and J.~E. {Roberts}.
\newblock {Ergodic actions of compact quantum groups from solutions of the
  conjugate equations}.
\newblock {\em ArXiv e-prints}, Aug. 2008, 0808.3326.

\bibitem{Sch3}
P.~Schauenburg.
\newblock {\em Tannaka duality for arbitrary {H}opf algebras}, volume~66 of
  {\em Algebra Berichte [Algebra Reports]}.
\newblock Verlag Reinhard Fischer, Munich, 1992.

\bibitem{Sch1}
P.~Schauenburg.
\newblock Face algebras are {$\times_R$}-bialgebras.
\newblock In {\em Rings, {H}opf algebras, and {B}rauer groups
  ({A}ntwerp/{B}russels, 1996)}, volume 197 of {\em Lecture Notes in Pure and
  Appl. Math.}, pages 275--285. Dekker, New York, 1998.

\bibitem{Sch2}
P.~Schauenburg.
\newblock Weak {H}opf algebras and quantum groupoids.
\newblock In {\em Noncommutative geometry and quantum groups ({W}arsaw, 2001)},
  volume~61 of {\em Banach Center Publ.}, pages 171--188. Polish Acad. Sci.,
  Warsaw, 2003.

\bibitem{Sto1}
J.~V. Stokman.
\newblock Vertex-{IRF} transformations, dynamical quantum groups and harmonic
  analysis.
\newblock {\em Indag. Math. (N.S.)}, 14(3-4):545--570, 2003.

\bibitem{Szl1}
K.~{Szlachanyi}.
\newblock {Fiber functors, monoidal sites and Tannaka duality for
  bialgebroids}.
\newblock {\em ArXiv e-prints}, July 2009, 0907.1578.

\bibitem{Tak2}
M.~Takeuchi.
\newblock Groups of algebras over {$A\otimes \overline A$}.
\newblock {\em J. Math. Soc. Japan}, 29(3):459--492, 1977.

\bibitem{timmermann:free}
T.~Timmermann.
\newblock Free dynamical quantum groups and the dynamical quantum group
  {$\mathrm{SU}^\mathrm{d}_q(2)$}.
\newblock In P.~M. Soltan and W.~Pusz, editors, {\em Operator Algebras and
  Quantum Groups}, volume~98 of {\em Banach Center Publ.}, pages 311--341.
  Polish Academy of Sciences, Polish Academy of Sciences, Institute of
  Mathematics, Warsaw, 2012.

\bibitem{Tur1}
V.~G. Turaev.
\newblock {\em Quantum invariants of knots and 3-manifolds}, volume~18 of {\em
  de Gruyter Studies in Mathematics}.
\newblock Walter de Gruyter \& Co., Berlin, revised edition, 2010.

\bibitem{VDae1}
A.~Van~Daele.
\newblock Multiplier {H}opf algebras.
\newblock {\em Trans. Amer. Math. Soc.}, 342(2):917--932, 1994.

\bibitem{VDae2}
A.~Van~Daele.
\newblock An algebraic framework for group duality.
\newblock {\em Adv. Math.}, 140(2):323--366, 1998.

\bibitem{VDW2}
A.~Van~Daele and S.~Wang.
\newblock Weak multiplier {H}opf algebras. {P}reliminaries, motivation and
  basic examples.
\newblock In P.~M. Soltan and W.~Pusz, editors, {\em Operator Algebras and
  Quantum Groups}, volume~98 of {\em Banach Center Publ.}, pages 367--415.
  Polish Academy of Sciences, Polish Academy of Sciences, Institute of
  Mathematics, Warsaw, 2012.

\bibitem{VDW1}
A.~{Van Daele} and S.~{Wang}.
\newblock {Weak Multiplier Hopf Algebras. The main theory}.
\newblock {\em ArXiv e-prints}, Oct. 2012, 1210.4395.
\newblock to appear in \emph{Crelles Journal}.

\bibitem{Wor1}
S.~L. Woronowicz.
\newblock Twisted {${\rm SU}(2)$} group. {A}n example of a noncommutative
  differential calculus.
\newblock {\em Publ. Res. Inst. Math. Sci.}, 23(1):117--181, 1987.

\bibitem{Wor2}
S.~L. Woronowicz.
\newblock Tannaka-{K}re\u\i n duality for compact matrix pseudogroups.
  {T}wisted {${\rm SU}(N)$} groups.
\newblock {\em Invent. Math.}, 93(1):35--76, 1988.

\bibitem{Yam1}
S.~{Yamagami}.
\newblock {A categorical and diagrammatical approach to Temperley-Lieb
  algebras}.
\newblock {\em ArXiv Mathematics e-prints}, May 2004, math/0405267.

\end{thebibliography}

\end{document}